\numberwithin{equation}{section}
\renewcommand{\tocsection}[3]{%
	\indentlabel{\@ifnotempty{#2}{\bfseries\ignorespaces#1 #2\quad}}\bfseries#3}
\renewcommand{\tocsubsection}[3]{%
	\indentlabel{\@ifnotempty{#2}{\ignorespaces#1 #2\quad}}#3}
\newcommand\@dotsep{4.5}
\def\@tocline#1#2#3#4#5#6#7{\relax
	\ifnum #1>\c@tocdepth 
	\else
	\par \addpenalty\@secpenalty\addvspace{#2}%
	\begingroup \hyphenpenalty\@M
	\@ifempty{#4}{%
		\@tempdima\csname r@tocindent\number#1\endcsname\relax
	}{%
		\@tempdima#4\relax
	}%
	\parindent\z@ \leftskip#3\relax \advance\leftskip\@tempdima\relax
	\rightskip\@pnumwidth plus1em \parfillskip-\@pnumwidth
	#5\leavevmode\hskip-\@tempdima{#6}\nobreak
	\leaders\hbox{$\m@th\mkern \@dotsep mu\hbox{.}\mkern \@dotsep mu$}\hfill
	\nobreak
	\hbox to\@pnumwidth{\@tocpagenum{\ifnum#1=1\bfseries\fi#7}}\par
	\nobreak
	\endgroup
	\fi}
\renewcommand\csname r@tocindent0\endcsname{0pt}
\def\l@subsection{\@tocline{2}{0pt}{2.5pc}{5pc}{}}
\newcommand{\C}{\mathcal{C}}
\newcommand{\bk}{\mathbbm{k}}
\newcommand{\A}{\mathcal{A}}
\newcommand{\E}{\mathcal{E}}
\newcommand{\B}{\mathcal{B}}
\newcommand{\V}{\mathcal{V}}
\newcommand{\K}{\mathcal{K}}
\newcommand{\G}{\mathcal{G}}
\newcommand{\BB}{{\bf B}}
\newcommand{\mono}{\rightarrowtail}
\newcommand{\epi}{\twoheadrightarrow}
\DeclareMathOperator*{\colim}{co{\lim}}
\newcommand{\kalg}{\bk\operatorname{Alg}}
\newcommand{\kcalg}{\bk\operatorname{CAlg}}
\newcommand{\kvect}{\bk\textnormal{-}\!\operatorname{Vect}}
\newcommand{\Rvect}{\mathbb{R}\textnormal{-}\!\operatorname{Vect}}
\newcommand{\GrMod}{\operatorname{GrMod}}
\newcommand{\Mod}{\operatorname{Mod}}
\newcommand{\comod}{\operatorname{CMd}}
\newcommand{\Ab}{\operatorname{Ab}}
\newcommand{\Ban}{\operatorname{Ban}_{1}}
\newcommand{\Banb}{\operatorname{Ban}_{b}}
\newcommand{\Mon}{\operatorname{Mon}}
\newcommand{\Bimon}{\operatorname{BiMon}}
\newcommand{\CMon}{\operatorname{CMon}}
\newcommand{\VMon}{\Mon(\V)}
\newcommand{\CoMon}{\operatorname{CoMon}}
\newcommand{\VCoMon}{\CoMon(\V)}
\newcommand{\VBimon}{\Bimon(\V)}
\newcommand{\AMod}{{}_A\!\Mod}
\newcommand{\IMod}{{}_I\!\Mod}
\newcommand{\BMod}{{}_B\!\Mod}
\newcommand{\RMod}{{}_R\!\Mod}
\newcommand{\SMod}{{}_S\!\Mod}
\newcommand{\ModA}{\Mod_A}
\newcommand{\ModI}{\Mod_I}
\newcommand{\ModB}{\Mod_B}
\newcommand{\AModA}{{}_A\!\Mod_A}
\newcommand{\Bimod}[2]{{}_{#1}\!\Mod_{#2}}
\newcommand{\Bicomod}[2]{{}^{#1}\!\Mod^{#2}}
\newcommand{\HBimod}[4]{{}^{#1}_{#3}\!\Mod^{#2}_{#4}}
\newcommand{\HBimodA}{{}^{A}_{A}\!\Mod^{A}_{A}}
\newcommand{\HBimodB}{{}^{B}_{B}\!\Mod^{B}_{B}}
\newcommand{\Leftmod}[1]{{}_{#1}\!\Mod}
\newcommand{\BModB}{{}_B\!\Mod_B}
\newcommand{\AModB}{{}_A\!\Mod_B}
\newcommand{\CalcA}{\operatorname{Calc}^1_A}
\newcommand{\CalcB}{\operatorname{Calc}^1_B}
\newcommand{\Ch}{\operatorname{CoCh}}
\newcommand{\dr}{\operatorname{dR}}
\newcommand{\Set}{\operatorname{Set}}
\newcommand{\EMod}{\Mod(\E)}
\newcommand{\EModE}{{}_{E_0}\!\Mod_{E_0}}
\newcommand{\EEModE}{{}_E\!\Mod_E}
\newcommand{\EAlg}{\operatorname{Calc}^\bullet}
\newcommand{\Calc}{\operatorname{Calc}^1}
\newcommand{\HCalc}{\operatorname{HCalc}^1}
\newcommand{\Sub}{\operatorname{Sub}}
\newcommand{\ass}{\operatorname{Ass}}
\newcommand{\Op}{\operatorname{Op}}
\newcommand{\smoothop}{\operatorname{Smooth}}
\newcommand{\Brd}{\operatorname{Brd}}
\newcommand{\suitable}{weakly $\V$-geometrical}
\newcommand{\verysuitable}{$\V$-geometrical}
\newcommand{\wgeomfun}{\V\operatorname{-Geom}_w}
\newcommand{\geomfun}{\V\operatorname{-Geom}}
\newcommand{\id}{\mathrm{id}}
\newcommand{\op}{\mathrm{op}}
\newcommand{\rev}{\tx{rev}}
\newcommand{\black}{\color{black}}
\newcommand{\tx}{\textnormal}
\newcommand{\norm}[1]{\left\lVert{#1}\right\rVert}
\theoremstyle{definition}
\newtheorem{defi}{Definition}[subsection]
\newtheorem{eg}[defi]{Example}
\theoremstyle{plain}
\newtheorem{theo}[defi]{Theorem}
\newtheorem{prop}[defi]{Proposition}
\newtheorem{cor}[defi]{Corollary}
\newtheorem{lemma}[defi]{Lemma}
\newtheorem{notation}[defi]{Notation}
\newtheorem*{theo*}{Theorem}
\newtheorem*{prop*}{Proposition}
\newtheorem*{cor*}{Corollary}
\newtheorem*{lemma*}{Lemma}
\newtheorem*{notation*}{Notation}
\theoremstyle{remark}
\newtheorem{rmk}[defi]{Remark}
\definecolor{byzantine}{rgb}{0.74, 0.2, 0.64}
\begin{document}
\thanks{The authors thank John Bourke, Mauro Mantegazza, Reamonn Ó Buachalla, Shahn Majid, Ji\v{r}í Rosicky, Thomas Webber, and Henrik Winther for useful discussions. K. J. F. was partially supported by the DFG
priority program Geometry at Infinity SPP 2026: ME 4899/1-2. G. L. was supported by the PNRR - Young Researcher Seal of Excellence Grant.
G. T. acknowledges with gratitude the support of the EPSRC, under the grant EP/X027139/1}
\title{Canonical differential calculi via functorial geometrization}
\author{Keegan J.~Flood, Gabriele Lobbia, Giacomo Tendas}
\address{Faculty of Mathematics and Computer Science\\
  UniDistance Suisse\\
  Schinerstrasse 18\\
  3900 Brig\\
  Switzerland}
  \email{keegan.flood@unidistance.ch}
\address{Department of Mathematics\\
	University of Milan\\
	Via Cesare Saldini 50\\
	20133 Milan\\ 
	Italy}
  \email{lobbia.math@gmail.com}
\address{Department of Mathematics\\ 
	University of Manchester\\ 
	Alan Turing Building, Manchester\\ 
	M13 9PL, UK}
	\email{giacomo.tendas@manchester.ac.uk}

\subjclass[2020]{Primary 18M05, 16D90, 18C40, 58B34, 58B32, 16E45; Secondary 18M60, 18C35, 18C40}


\begin{abstract}
Given a category $\E$, we establish sufficient conditions on a faithful isofibration $\E\rightarrow\Mon(\V)$ valued in the category of monoids internal to a monoidal additive category $\V$ such that $\E$ admits a canonical functor to the category of first order differential calculi in $\V$.
Generalizing the procedure of extending a first order differential calculus to its maximal prolongation to this setting, we obtain a canonical functor from $\E$ to the category of differential calculi in $\V$.
This yields a simultaneous generalization of the de Rham complex on $C^{\infty}$-rings, the K\"{a}hler differentials on commutative algebras, and the universal differential calculus on associative algebras.
As a consequence, such categories $\E$ admit natural analogues of the notions of smooth map and diffeomorphism, as well as a functorial de Rham theory. 
Moreover, whenever two such faithful isofibrations to $\Mon(\V)$ factor suitably, their corresponding de Rham functors are related via a comparison map.
Developing this theory requires first extending the noncommutative geometry formalism of differential calculi from associative algebras to the setting of monoids internal to monoidal additive categories.
\end{abstract}

\maketitle

\tableofcontents

\newpage
\section{Introduction}

The respective canonical notions of differential form play a fundamental role in both differential and algebraic geometry.
For $C^\infty$-rings one works with the de Rham forms (cf.~\cite[Chapter 5.2]{joyce2019algebraic} \cite{stel2013cosimplicial}). 
Likewise, for commutative algebras one considers the Kähler differentials (cf.~\cite[Section 8]{hartshorne1977graduate}). 
In noncommutative geometry, one typically equips an associative algebra $A$ with further structure such that it can then be regarded as encoding a ``noncommutative space''. 
A general axiomatic approach is via the notion of a \emph{differential calculus} (also called \emph{exterior algebra} as in \cite{beggs2020quantum}), introduced by Woronowicz \cite{woronowicz1989differential}, which is a dg-algebra $\Omega^\bullet$ over $\Omega^0=A$ generated in degree zero via the differential and the product in the dg-algebra.
One challenge the differential calculus framework presents is that a given associative algebra typically admits many distinct first order differential calculi.
So, \emph{a priori}, in the differential calculus framework, there is no intrinsic notion of smooth map, diffeomorphism, or de Rham theory attached to a class of algebras (e.g. a subcategory of the category of $\bk$-algebras closed under isomorphism\footnote{In particular, this means that the inclusion functor is a faithful isofibration.}).

Given an associative algebra $A$, there is a canonical choice of first order differential calculus though, namely the universal first order differential calculus, obtained as the kernel of the multiplication on $A$.  
It is widely studied (see for instance \cite{cuntz1995algebra}) although it is often regarded to be of considerably less geometric interest\footnote{For instance, the de Rham cohomology for the universal differential calculus is concentrated in degree $0$ (cf.~\cite[Theorem 1.33]{beggs2020quantum}). Further, the corresponding differential operators are degenerate in the sense that every $\bk$-linear map $E\rightarrow F$ between $A$-modules $E$ and $F$ is differential operator of order at most $1$ (cf.~\cite[Proposition 4.3]{flood2025jet})} than the preceding classical commutative examples. 
Therefore, a standard practice in the noncommutative setting is to consider additional structures (bimodule relations, covariance conditions, centrality constraints, or analyticity assumptions) in order to obtain more geometrically interesting notions of differential calculus. 
Unfortunately, such an approach is, in general, insufficient to yield existence or uniqueness of a compatible notion of form, and such procedures are typically not functorial.
The foundational work on differential calculi \cite[p.~126]{woronowicz1989differential} specifically remarks upon the ``unpleasant contrast'' with the classical case which this lack of functoriality\footnote{In this quote, Woronowicz is specifically referring to the case of quantum groups.} results in.
	There are of course also functorial constructions of differential calculi, (cf.~\cite{karoubi1987homologie,ginzburg2005double}), but these are \emph{ad hoc}  in the sense that they apply to a particular type of category (e.g., models of the associative operad), and do not provide a general theory which depends on the categorical input data.

In this article we treat the geometry of a category $\E$ as a \emph{relative} notion: it emerges when $\E$ is viewed in relation to a category of monoids $\Mon(\V)$ via a faithful isofibration $(-)_0\colon \E\to \Mon(\V)$, where $\V$ is a monoidal additive category. 
Under suitable hypotheses on $(-)_0$ we obtain a canonical, functorial assignment of first order differential calculi to objects of $\E$, and hence, via maximal prolongation\footnote{See, e.g.,\cite[Lemma 1.32]{beggs2020quantum} and \cite[Section 4]{schauenburg1996differential} for the notion of maximal prolongation in the associative algebra setting.}, a canonical differential calculus.
This shifts the emphasis from attaching first order differential calculi to individual algebras, or even categories of algebras, to extracting differential geometric structure canonically from categorical input.
We show that the construction recovers the known canonical functorial constructions as special cases:
\begin{itemize}
    \item $\E=\Mon(\V)$ and $(-)_0=1_{\Mon(\V)}$, yields the universal 
         differential calculus;
    \item $\V$ braided and $\E=\CMon(\V)$ yields
          (the braided generalization of) the Kähler differentials, 
    \item $\V=\mathbb{R}\text{-Vect}$ and $\E=C^\infty$-$\operatorname{Ring}$ yields the classical 
          de Rham complex.
\end{itemize}
As further examples, in the braided monoidal setting, we describe the canonical calculi on a natural class of categories of monoids, including as a special case the category of dyslectic monoids. 

The main idea is that the geometry of $\E$ is seen to arise from a well-behaved faithful isofibration $(-)_0\colon \E\rightarrow\Mon(\V)$. 
Thus, the classification and study of such functors $(-)_0$ emerges as a new question of interest. 
Note that, in practice, one often restricts to sufficiently regular (in a suitable sense) subcategories of a given $\E$, such as smooth manifolds, smooth affine varieties, etc., and this provides a heuristic for obtaining a canonical differential calculus in the noncommutative setting as well.the
Namely, one suitably modifies\footnote{This is analogous to synthetic differential geometry, where, for instance, one replaces the category of smooth manifolds with a better behaved category into which the category of smooth manifolds suitably embeds (cf.~\cite{moerdijk2013models}).} the category under consideration until it satisfies the conditions of the main results, \Cref{thm:canonical-calc} and \Cref{remark:functor-Calc_E}, necessary to admit a canonical differential calculus.

\subsection{Some implications}
In classical differential geometry, diffeomorphism invariance is a unifying principle: from fundamental constructions such as de Rham cohomology and the Lie bracket of vector fields, to moduli theory, to physical applications such as general relativity and the Hamiltonian formalism. 
By contrast, in the noncommutative setting of an associative algebra equipped with a differential calculus, there is no evident analogue of the notion of a smooth map or diffeomorphism\footnote{The notion of diffeomorphism has been considered in some restricted noncommutative settings, e.g. \cite{majid1998quantum}.}. 
There is, of course, the notion of a differential algebra map (cf.~\cite[p.~3-4]{beggs2020quantum}), namely an algebra homomorphism $\phi\colon A\to B$ together with an $A$-bimodule map $\phi_*\colon \Omega_A\to\Omega_B$ such that the standard compatibility diagram commutes. 
But such maps do not arise from a functorial assignment of calculi to algebras, and therefore do not provide an intrinsic analogue of smooth maps. 
This obstructs the development of noncommutative counterparts of theories that classically rely on diffeomorphism invariance,
and can restrict noncommutative geometry to the study of algebras in isolation, detached from any specific geometric ‘‘flavor'' (differential, algebraic, analytic, etc.), prohibiting a Grothendieck-style perspective in which morphisms take precedence over objects.

However, given a faithful isofibration $(-)_0\colon \E\to\Mon(\V)$ admitting the canonical differential calculus construction described above, several classical geometric phenomena reappear.
First, morphisms in $\E$ then play the role of smooth maps (cf. \Cref{rmk:diffeos}) in this more general setting, providing a framework for the extension of classical results that require a notion of morphism. 
Second, automorphism groups in $\E$ serve as analogues of diffeomorphism groups and can be used to build invariants, moduli spaces, etc.
Third, there are implications for situations in which one constructs new objects from old; for instance, given $A\in\E$ and a module over $A$ (thought of as sections of a bundle over $A$), then a reasonable notion of  ‘‘algebra of functions on the total space of the module'' would yield an object in the slice category $\E/A$.

In classical differential geometry, the de Rham functor is fundamental. 
It underlies the Stokes’ theorem, certain topological invariants  and characteristic classes, as well as aspects of Chern–Weil theory, Chern–Simons theory, and index theory.
In the noncommutative setting, there already exists an object-wise notion of de Rham cohomology for a given differential calculus. 
However, the utility of the de Rham construction in the classical case lies, in part, in its functoriality: morphisms of spaces induce morphisms in cohomology, ensuring invariance under diffeomorphisms.

Our construction extends this feature to the noncommutative setting. 
Building upon our canonical first order calculus construction, the maximal prolongation procedure extends this first order differential calculus to a differential calculus, the composition of which yields the de Rham functor (cf.~\Cref{def:de-Rham-funct}) whose corresponding cohomology (cf.~\Cref{rmk:deRhamCohomology}) provides noncommutative homological invariants which are analogous to the topological invariants in the classical setting.
Thus, this work provides a novel route to approach a noncommutative analogue of the aspects of Chern–Weil theory, Chern–Simons theory, index theory, and related constructions which classically arise from the de Rham cohomology.
One could then explore in what sense this geometric approach connects back to the extant $K$-theoretic formulations (see for example~\cite{connes1985non}). 

Our construction also yields a way to compare related $\E$'s:
given a suitable factorization of $\E_1\rightarrow\Mon(\V)$ through $\E_2\rightarrow\Mon(\V)$, 
there is a comparison map (cf.~\Cref{rmk:comparison-of-de-Rham-functors} and \Cref{rmk:comparison-of-de-Rham-cohomology-functors}) between their corresponding de Rham cohomology functors, e.g., the forgetful functor from $C^\infty$-rings to commutative rings induces a comparison map relating algebraic and smooth de Rham cohomology (cf.~\Cref{ex:smooth_algebraic_de_rham}).
Thus, our theory provides a setting in which to relate homological invariants in different geometric contexts. 
In principle, one could attempt to prove analogues of Grothendieck's comparison theorem (cf.~\cite[Theorem 1]{grothendieck1966rham}).

Together, these results show that several of the obstacles in extending classical differential geometric constructions to the noncommutative setting are obviated by working with a suitably well-behaved category $\E$ equipped with a faithful isofibration to $\Mon(\V)$ satisfying sufficient assumptions. 
In the setting where $\V$ is further a monoidal model category, one could consider homotopy invariance of the de Rham functors considered above as well as a generalization of the cotangent complex (cf. \cite{quillen1970co,illusie2006complexe,grothendieck1968categories}) to this setting, but that goes beyond the scope of this article and will therefore be addressed in subsequent work.

\subsection{Overview of results}
In \Cref{sec:Preliminaries} we recall the relevant background material on monoidal categories, as well as monoids, and their corresponding module categories, therein.
Our construction and main results require that we first generalize the notion of first order differential calculi to the setting of monoids internal to a monoidal additive category $\V$ and show that the standard results concerning first order differential calculi extend to this broader setting (cf.~\Cref{sec:diff-calc}).
This is of independent interest: it extends the differential calculus framework from $\bk$-algebras to monoids in monoidal additive categories.
Consequently the theory applies to $G$-equivariant algebras for $G$ a compact group, algebras graded by a monoid, dg-algebras, algebras under a fixed algebra, small algebroids, small dg-categories, etc.
	Hence our theory provides a unified framework for treating the noncommutative differential geometric generalizations of many approaches to classical, graded-commutative, higher, and relative geometry.

In \Cref{sec:ext-restr-calc} we show that a morphism of monoids induces an adjunction between the corresponding categories of first order differential calculi and, further, show how it arises from the extension-restriction of scalars adjunction.
In \Cref{subsec:calc_and_left_adjoints} we show that the left adjoint to the square-zero extension functor $\Mod(\V)\rightarrow\Mon(\V)$ factors through the inclusion $\Calc(\V)\rightarrow\Mod(\V)$ via the left adjoint to the projection functor $\Calc(\V)\rightarrow\Mon(\V)$.
\Cref{sect:bimon} shows that the notion of bicovariant first order differential calculi, and a few basic results about them, extend to the setting of monoidal additive categories.
Note that to apply our main results to $\Bimon(\V)$, one would need to realize them as a subcategory of $\Mon(\V)$ to obtain a canonical first order differential calculus, and only then ask whether the resulting calculus is bicovariant\footnote{This is reasonable since bimonoids, and in particular Hopf monoids, are interpreted as noncommutative analogues of Lie groups (or, more generally, symmetry encoding objects) and, by analogy with the classical case, one would expect functorial differential calculus construction on the entire category of spaces, not just on a subcategory of symmetry encoding objects therein.}.
 
 The pullback, denoted $\Mod(\E)$, of $(-)_0$ along the square-zero extension functor (cf.~\Cref{square_zero}), can then naturally be regarded as the category of bimodules over objects in $\E$ (cf.~\Cref{pullback_construction}).  
In \Cref{def:suitable} we introduce the notion of \emph{\suitable} functor, which is enough to imply (cf. \Cref{F'-gives-calculus1})  that a left adjoint $L'$ (if it exists) to the pullback of the square-zero extension functor along $(-)_0$, necessarily factors through the category of generalized first order differential calculus objects internal to $\Mod(\E)$; under these hypotheses the unit of the adjunction encodes the corresponding differential.
In \Cref{F'-gives-calculus2} we then establish further conditions such that $L'$ factors through the category of first order differential calculus objects internal to $\Mod({\E})$ (cf.~\Cref{def:first-ord-diff-calc-E}). 
\Cref{inducing-F'1} then gives sufficient conditions (primarily local presentability and accessibility) on $\V$, $\E$, and $(-)_0$ to yield the existence of the left adjoints necessary for the preceding results.
\Cref{thm:canonical-calc} and \Cref{remark:functor-Calc_E} then show when these (generalized) first order differential calculus objects internal to $\E$ induce first order differential calculi in the usual sense.
More precisely, 
we obtain a functor $\E\rightarrow \Calc(\V)$ suitably factoring via a left adjoint through $\Mod(\E)$.
In \Cref{subsec:operads} we show that for suitable  $\V$, the theory simplifies considerably when $\E\rightarrow\Mon(\V)$ is induced by an operad under the associative operad.

In \Cref{sec:extalg} we extend our treatment of first order differential calculi to differential calculi, and show that the maximal prolongation procedure is left adjoint to the truncation $\EAlg(\V)\rightarrow \Calc(\V)$, as in the standard noncommutative setting \cite[Lemma 1.32]{beggs2020quantum}. 
We conclude in \Cref{subsec:deRham} with a treatment of the de Rham functors, their cohomology, and induced comparison maps between them arising from our theory.

\subsection{Alternative approaches}		
Another possible setting in which (generalized) differential calculi have been studied, is that of K\"ahler and (co)differential categories, see~\cite{blute2011kahler,blute2015derivations,ONeill:master-thesis,ONeill:phd-thesis}. Indeed, starting from a symmetric monoidal category $\V$ enriched over commutative monoids and endowed with an {\em algebra modality} (a monad on $\V$ with additional structure), they can define notions of derivations and modules of K\"ahler differentials. 

 	We take a different approach mainly for two reasons. On the one hand, we want to go beyond the commutative case, defining our notions over a monoidal additive category $\V$ (that need not be symmetric), and by taking modules over not-necessarily-commutative monoids; for this the setting of~\cite{blute2015derivations} does not seem suitable, as symmetry and commutativity are assumed from the beginning. 
	Some steps in this direction have been made in \cite[Section~7]{ONeill:master-thesis}  (mainly the definition of \emph{$A$-derivation} for a monoid $A$; that is,  \Cref{def:enriched-1st-diff-calc} without the surjectivity condition) and \cite[Section~3.2]{ONeill:phd-thesis}. 
 	On the other hand, in order to make this paper accessible to non-experts in category theory, we attempt to use the minimal categorical structure possible. 
 	
 	Regarding the definition of categories of modules, we decided to follow a classical approach (with monoids and modules over them); in the literature the notion of ``Beck module'' has also been used to work with differentials~\cite{beck1967triples,quillen1970co}. 
 	Given a category $\C$ with pullbacks, such modules are defined as abelian group objects in the slice categories $\C/A$; the category of all such structures is called the ``tangent category of $\C$''. 
 	The usual notion of modules over a ring, as well as that of module over a smooth ring, have been shown to fall within this framework; the key ingredient that makes this work is that a map of (smooth) rings $A\otimes B\to C$, is the same as a bilinear function $A\times B\to C$. 
 	Since this correspondence does not exist in general for a monoidal additive category $\V$, we do not pursue this approach herein.

\subsection{Summary}
This work applies category theory and categorical algebra to address a long-standing gap in the noncommutative differential geometry literature. 
It provides sufficient conditions under which a category may be regarded as one of noncommutative differential-geometric objects, and develops a functorial procedure for realizing them as such.
An important consequence is to reveal the limitations of the traditional approach of working with a single algebra equipped with a (first order) differential calculus.
While it is of interest to explore how far one can go using only the data of a (first order) differential calculus, the geometric flavor of classical settings fundamentally depends on the functorial assignment of differential calculi to algebras. 
The framework developed here provides the tools to reproduce this dependence in the noncommutative setting.
Thus, we see that the search for a canonical differential calculus can be treated via the choice of a suitable faithful isofibration to $\Mon(\V)$, rather than algebra-wise, or even category-wise.

\section{Preliminaries: Monoids and modules in a monoidal category}\label{sec:Preliminaries}

In this section we will introduce the relevant background material, including monoidal categories, and monoids and modules in them. 
We conclude the section (cf. \Cref{sec:ext-restr-calc}) with a treatment of the extension-restriction of scalars adjunction for bimodules in a monoidal category. 

\subsection{Monoidal categories}

In this subsection we recall the definitions of monoidal category (and related concepts) and fix our notation.

\begin{defi}\label{defi:moncat}
A \emph{monoidal category} $\V=(\V,\otimes,I,\alpha,\lambda,\rho)$ consists of the following data:
\begin{enumerate} 
	\item A category $\V$.
	\item A functor $\otimes\colon \V\times\V\longrightarrow\V$ termed the monoidal (or tensor) product.
	\item An object $I\in\V$ termed the unit. 
	\item A natural isomorphism termed the associator with components $\alpha_{X,Y,Z}\colon (X\otimes Y)\otimes Z \longrightarrow X\otimes(Y\otimes Z)$. 
	\item A natural isomorphism termed the left unitor with components $\lambda_X\colon I\otimes X\longrightarrow X$. 
	\item A natural isomorphism termed the right unitor with components $\rho_X\colon X\otimes I\longrightarrow X$. 
\end{enumerate}
Further, the following coherence diagrams must commute.
\begin{center}
\scalebox{0.875}{
\begin{tikzcd}[ampersand replacement=\&]
	\& {(W\otimes (X\otimes Y))\otimes Z} \\
	{((W\otimes X)\otimes Y)\otimes Z} \&\& {W\otimes ((X\otimes Y)\otimes Z)} \\
	{(W\otimes X)\otimes(Y\otimes Z)} \&\& {(W\otimes (X\otimes(Y\otimes Z)))}
	\arrow["{\alpha_{W,X\otimes Y,Z}}"{pos=0.7}, from=1-2, to=2-3]
	\arrow["{\alpha_{W,X,Y}\otimes 1_Z}"{pos=0.3}, from=2-1, to=1-2]
	\arrow["{\alpha_{W\otimes X,Y,Z}}"', from=2-1, to=3-1]
	\arrow["{1_W\otimes\alpha_{X,Y,Z}}", from=2-3, to=3-3]
	\arrow["{\alpha_{W,X,Y\otimes Z}}"', from=3-1, to=3-3]
\end{tikzcd}}
\scalebox{0.875}{
\begin{tikzcd}[ampersand replacement=\&]
	{(X\otimes I)\otimes Y} \&\& {X\otimes (I\otimes Y)} \\
	\& {X\otimes Y}
	\arrow["{\alpha_{X,I,Y}}", from=1-1, to=1-3]
	\arrow["{\rho_X\otimes 1_Y}"'{pos=0.4}, from=1-1, to=2-2]
	\arrow["{1_X\otimes\lambda_Y}"{pos=0.4}, from=1-3, to=2-2]
\end{tikzcd}}
\end{center}
\end{defi}

\begin{defi}
	A monoidal category $\V=(\V,\otimes,I,\alpha,\lambda,\rho)$ is called:
	\begin{itemize}
		\item \emph{left closed} if, for any $A\in\V$, the functor $A\otimes-\colon\V\to\V$ has a right adjoint $[A,-]_l$ that is, there are natural isomorphisms $\V(A\otimes X,Y)\cong\V(X,[A,Y]_l)$;
		
		\item \emph{right closed} if, for any $A\in\V$, the functor $-\otimes A\colon\V\to\V$ has a right adjoint $[A,-]_r$; that is, there are natural isomorphisms $\V(X\otimes A,Y)\cong\V(X,[A,Y]_r)$;

	\item \emph{biclosed} if it is both left and right closed with $[A,-]_l=[A,-]_r$.
	\end{itemize}
\end{defi}

\begin{defi}
	(\cite[Definition~2.1]{JoyalStreet:brad-mon-cat}) A \emph{braided monoidal category}  is a monoidal category $\V$ equipped with a \emph{braiding}; that is, for any $X,Y\in\V$, natural isomorphisms $\beta_{X,Y}\colon X\otimes Y\to Y\otimes X$ satisfying the following axioms. 
	\begin{center}
	\begin{tikzcd}[ampersand replacement=\&]
		{(X\otimes Y)\otimes Z} \& {X\otimes (Y\otimes Z)} \\
		{(Y\otimes X)\otimes Z} \& {(Y\otimes Z)\otimes X} \\
		{Y\otimes (X\otimes Z)} \& {Y\otimes (Z\otimes X)}
		\arrow["{\alpha_{X,Y,Z}}", from=1-1, to=1-2]
		\arrow["{\beta_{X,Y}\otimes 1_Z}"', from=1-1, to=2-1]
		\arrow["{\beta_{X,Y\otimes Z}}", from=1-2, to=2-2]
		\arrow["{\alpha_{Y,X,Z}}"', from=2-1, to=3-1]
		\arrow["{\alpha_{Y,Z,X}}", from=2-2, to=3-2]
		\arrow["{1_Y\otimes\beta_{Z,X}}"', from=3-1, to=3-2]
	\end{tikzcd}
		\hspace{0.5cm}
		\begin{tikzcd}[ampersand replacement=\&]
			{X\otimes (Y\otimes Z)} \& {(X\otimes Y)\otimes Z} \\
			{X\otimes(Z\otimes Y)} \& {Z\otimes (X\otimes Y)} \\
			{(X\otimes Z)\otimes Y} \& {(Z\otimes X)\otimes Y}
			\arrow["{\alpha_{X,Y,Z}^{-1}}", from=1-1, to=1-2]
			\arrow["{1_X\otimes\beta_{Y,Z}}"', from=1-1, to=2-1]
			\arrow["{\beta_{X\otimes Y,Z}}", from=1-2, to=2-2]
			\arrow["{\alpha_{X,Z,Y}^{-1}}"', from=2-1, to=3-1]
			\arrow["{\alpha_{Y,X,Z}^{-1}}", from=2-2, to=3-2]
			\arrow["{\beta_{Z,X}\otimes 1_Y}"', from=3-1, to=3-2]
		\end{tikzcd}
	\end{center}
	A braiding is called a \emph{symmetry} if, for any $X,Y\in\V$, $\beta_{X,Y}^{-1}=\beta_{Y,X}$. 
	In this case the two axioms above are equivalent. 
We call a monoidal category equipped with a symmetry a \emph{symmetric monoidal category}. 
\end{defi}

\subsection{Monoids in monoidal categories}
There are various equivalent ways of encoding the classical notion of unital associative $\bk$-algebra over a commutative ring $\bk$. 
In particular, one can realize a unital associative $\bk$-algebra $A$ as monoid internal to the (symmetric) monoidal category $(\Mod,\otimes_{\bk})$ of $\bk$-modules with monoidal product given by the tensor product of $\bk$-modules.
One benefit of this perspective is that we can readily generalize from algebras internal to $\Mod$ to algebras internal to a monoidal category $\V$.  
\begin{defi}\label{defi:monoid}
Given a monoidal category $\V=(\V,\otimes,I,\alpha,\lambda,\rho)$, a \emph{monoid} $A=(A,m,i)$ in $\V$ consists of the following data:
\begin{enumerate}
	\item An object $A\in\V$. 
	\item A morphism $m\colon A\otimes A\to A$ termed the \emph{multiplication}. 
	\item A morphism $i\colon I\to A$ termed the unit. 
\end{enumerate}
Further, the following diagrams must commute.
\begin{center}
\[\begin{tikzcd}
	{(A\otimes A)\otimes A} && {A\otimes A} && {A\otimes I} & {A\otimes A} & {I\otimes A} \\
	{A\otimes (A\otimes A)} & {A\otimes A} & A &&& A
	\arrow["{m\otimes 1_A}", from=1-1, to=1-3]
	\arrow["\alpha"', from=1-1, to=2-1]
	\arrow["m", from=1-3, to=2-3]
	\arrow["{1_A\otimes i}", from=1-5, to=1-6]
	\arrow["{\rho_A}"', from=1-5, to=2-6]
	\arrow["m"{description}, from=1-6, to=2-6]
	\arrow["{i\otimes 1_A}"', from=1-7, to=1-6]
	\arrow["{\lambda_A}", from=1-7, to=2-6]
	\arrow["{1_A\otimes m}"', from=2-1, to=2-2]
	\arrow["m"', from=2-2, to=2-3]
\end{tikzcd}\]
\end{center} 
\end{defi}

We will see that several aspects of the theory of monoids and modules over a monoidal category can be captured using enriched category theory. We briefly recall the definition of enriched category below.

\begin{rmk}
	Given a monoidal category $\V$ as above, we can consider categories enriched over $\V$, or $\V$-categories~(cf. \cite{Kel82:libro}), which consists of
	\begin{itemize}
		\item a collection of objects $\C_0$;
		\item for any $X,Y\in\C_0$, an hom-object $\C(X,Y)\in\V$;
		\item for any $X,Y,Z\in\C_0$, a composition morphism $-\circ-\colon\C(Y,Z)\otimes\C(X,Y)\to\C(X,Z)$ in $\V$;
		\item for any $X\in\C_0$, a morphism $1_X\colon I\to\C(X,Y)\in\V$;
	\end{itemize}
	satisfying axioms making $-\circ-$ associative and $1_X$ identities (for the precise definition and examples see \cite[Section~1.2]{Kel82:libro}). 
\end{rmk}

\begin{prop}
	\label{monoid-V-cat}
	Let $\V$ be a monoidal category.
	Then the following are equivalent.
	\begin{enumerate} 
		\item A monoid $(A,m,i)$ in $\V$.
		\item A $1$-object $\V$-category $\BB A$.
	\end{enumerate}
\end{prop}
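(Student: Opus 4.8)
The plan is to exhibit a bijective correspondence between the two kinds of data and check that it respects the defining axioms on each side. The correspondence is essentially a matter of unravelling definitions: a $1$-object $\V$-category $\BB A$ has a single object $*$, a single hom-object $\BB A(*,*) \in \V$, a composition morphism $\BB A(*,*)\otimes\BB A(*,*)\to\BB A(*,*)$, and an identity morphism $I\to\BB A(*,*)$. Setting $A := \BB A(*,*)$, $m :=$ the composition, and $i :=$ the identity morphism gives precisely the underlying data $(A,m,i)$ of a monoid in $\V$, and conversely.

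First I would make this assignment precise in both directions. Given a monoid $(A,m,i)$, define $\BB A$ to have object set $\{*\}$, hom-object $\BB A(*,*) = A$, composition $m$, and unit $1_* = i$. Given a $1$-object $\V$-category $\C$ with object $*$, define $A = \C(*,*)$, $m = (-\circ-)$, $i = 1_*$. These two constructions are visibly mutually inverse on the level of data, so the only content is matching the axioms. Next I would observe that under this identification the associativity axiom for the $\V$-category composition (the pentagon-type diagram involving $\alpha$ in \cite[Section~1.2]{Kel82:libro}) is \emph{literally} the associativity diagram in \Cref{defi:monoid}, once one notes that with a single object all the hom-objects appearing are equal to $A$. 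Similarly, the left and right identity axioms for $1_*$ in the $\V$-category become exactly the two unit triangles for $i$ in \Cref{defi:monoid}, using the unitors $\lambda_A$ and $\rho_A$. Thus a choice of data on one side satisfies its axioms if and only if the corresponding data on the other side does.

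I do not anticipate a genuine obstacle here; the statement is essentially a tautology in the spirit of the deleted proposition on $1$-object bicategories versus monoidal categories, and the mild care required is purely bookkeeping: one must be careful that the variance and ordering of the tensor factors in Kelly's composition convention $\C(Y,Z)\otimes\C(X,Y)\to\C(X,Z)$ match the convention $m\colon A\otimes A\to A$ used in \Cref{defi:monoid} (both are ``left factor acts/composes last''), and that the coherence isomorphisms $\alpha,\lambda,\rho$ are inserted in the same places. Once the conventions are aligned, the equivalence of categories $\Mon(\V) \simeq \{1\text{-object }\V\text{-categories}\}$ follows by the same dictionary applied to morphisms: a morphism of monoids $f\colon A\to B$ corresponds to a $\V$-functor $\BB A\to\BB B$ that is forced to send $*\mapsto *$ and acts as $f$ on the hom-object, and $\V$-functoriality (preservation of composition and identities) is precisely the condition that $f$ is a monoid homomorphism. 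I would state this last point briefly to justify the word ``equivalent'', then conclude.
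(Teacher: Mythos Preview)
Your proposal is correct and follows essentially the same approach as the paper's proof: both set up the dictionary $A=\BB A(*,*)$, $m=$ composition, $i=1_*$, and observe that the monoid axioms coincide with the $\V$-category axioms. Your additional remark about morphisms of monoids corresponding to $\V$-functors is handled in the paper as a separate remark rather than inside the proof, but is equally straightforward.
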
 
	\begin{proof}
		If $\C$ is a $\V$-category with one object, denoted by $*$, then the hom-object $A:=\C(*,*)$ is a monoid in $\V$: the multiplication $m$ is defined to be the composition map $\C(*,*)\otimes \C(*,*)\to \C(*,*)$, while the unit $i$ is the identity $1_*\colon I\to \C(*,*)$.
		The associativity and unit rule hold for $(A,m,i)$ as they correspond to the fact that composition is associative and preserves the identity. 
		Conversely, if $(A,m,i)$ is a monoid in $\V$ it easily follows that $\textnormal{Ob}(\BB A)=\{*\}$ and $\BB A(*,*):=A$, with composition and identity maps defined by $m$ and $i$ respectively, define a $\V$-category $\BB A$.
	\end{proof}

A morphism between monoids in $\V$ is a morphism that preserves the multiplication and the unit. 
In detail, a morphism of monoids $f\colon(A,m_A,i_A)\to(B,m_B,i_B)$ is morphism $f\colon A\to B$ in $\V$ that makes the following diagrams
\begin{center}
	\begin{tikzpicture}[baseline=(current  bounding  box.south), scale=2]
		
		\node (a0) at (0,0.7) {$A\otimes A$};
		\node (b0) at (0,0) {$B\otimes B$};
		\node (c0) at (1,0.7) {$A$};
		\node (d0) at (1,0) {$B$};
		
		\node (a'0) at (2,0.7) {$I$};
		\node (b'0) at (2.8,0) {$B$};
		\node (c'0) at (2.8,0.7) {$A$};
		
		\path[font=\scriptsize]
		
		(a0) edge [->] node [left] {$f\otimes f$} (b0)
		(a0) edge [->] node [above] {$m_A$} (c0)
		(b0) edge [->] node [below] {$m_B$} (d0)
		(c0) edge [->] node [right] {$f$} (d0)
		
		(a'0) edge [->] node [below] {$i_B$} (b'0)
		(a'0) edge [->] node [above] {$i_A$} (c'0)
		(c'0) edge [->] node [right] {$f$} (b'0);
	\end{tikzpicture}	
\end{center}
commute. 
It is straightforward to see that we then obtain a category $\VMon$ whose objects are monoids in $\V$ and whose arrows are morphisms of monoids. This category is studied for instance in~\cite[Section~15]{street_2007}.

\begin{rmk}
	Following \Cref{monoid-V-cat}, to give a morphism of monoids $f\colon(A,m_A,i_A)\to(B,m_B,i_B)$ is the same as giving a $\V$-functor $F\colon \BB A\to \BB B$ between the $\V$-categories induced by the monoids $A$ and $B$. 
	Indeed, such a $\V$-functor is completely determined by assigning $F(*)=*$ and by giving a map 
	$$ f\colon A=\BB A(*,*)\longrightarrow B=\BB B(*,*) $$
	in $\V$ preserving composition and identity, which translate into the preservation of multiplication and unit of the monoids. 
	As a consequence $\VMon$ can equivalently be described as the full subcategory of $\V\tx{-}\mathbf{Cat}$ spanned by the $\V$-categories with a single object. 
\end{rmk}

\begin{eg}
	\label{eg:unit-monoid}
	The unit $I$ of the monoidal structure defines a monoid $(I,\lambda_I,1_I)$; the monoid axioms are implied by those defining the monoidal structure on $\V$. 
	Moreover, for any monoid $(A,m,i)$ in $\V$, the unit map $i\colon I\to A$ defines a morphism of monoids $i\colon (I,\lambda_I,1_I)\to (A,m,i)$.
\end{eg}

\begin{eg}\label{MonoidInChain}
	Given a monoidal category $\V$ which is an additive category, we can consider the additive category $\Ch(\V)$ of non-negatively graded cochain complexes in $\V$: an object $A\in\Ch(\V)$ is a family $A=(A^n)_{n\geq 0}$ together with maps $d_A^n\colon A^n\to A^{n+1}$ such that $d_A^{n+1}d_A^n=0$. One often writes $A=\bigoplus_{n\geq 0}A^n$ fo indicate the cochain complex; that should be understood only as notation, not as an infinite direct sum in $\V$ (which may not exist).
	
	Note that this can be seen the category of $\mathbf{Ab}$-enriched functors $[\mathbb N,\V]$, where $\mathbb N$ is the $\mathbf{Ab}$-category with objects the natural numbers and homs: $\mathbb N(n,m)=\mathbb Z$ for $m=n,n+1$ and $0$ otherwise (with trivial composition maps).
	
	From this it follows that $\Ch(\V)=[\mathbb N,\V]$ inherits a tensor product from $\V$ defined as:
	$$ (A\otimes B)^n= \bigoplus_{i+j=n} A^i\otimes B^j$$
	in $\V$, with differential 
	$$d_{A\otimes B}^n= \bigoplus_{i+j=n} d_A^i\otimes 1_{B^j}+ (-1)^i 1_{A^i}\otimes d_B^j.$$
	The unit of this tensor product is the cochain complex $\bar I$ with $I$ in degree $0$, and $0$ in all other degrees.
	
	Then, unpacking the definitions, a monoid in $\Ch(\V)$ amounts to a cochain complex $(A=\bigoplus_{n\geq 0}A^n,d)$ together with\begin{enumerate}
		\item a unit $i\colon I\to A^0$ such that $d(i)=0$;
		\item an operator $\wedge^i_j\colon A^i\otimes A^j\to A^{i+j}$ which is associative, respects the unit $i$, and satisfies 
		$$ d^{i+j}(\wedge^i_j)= \wedge^{i+1}_j(d^i\otimes  1_{A^j})+ (-1)^i \wedge^i_{j+1}(1_{A^i}\otimes d^j). $$
	\end{enumerate}
	Indeed, condition (i) follows from the fact that $i\colon \bar I\to A$ needs to be a map of cochain complexes. 
	Similarly, the various $\wedge^i_j$ need to be induced by a morphism of cochain complexes
	$$ \wedge \colon A\otimes A\to A,$$
	which by definition needs to satisfy $d\circ \wedge = \wedge \circ d_{A\otimes A}$. Following the definition of $d_{A\otimes A}$ this translates into (ii).
	
	Such a structure is commonly known as a \emph{dg-algebra}.
\end{eg}

In the definition below, we can see how to recover the notion of commutative associative algebras (and similar commutative notions) when the considered monoidal category $\V$ is braided. 
 
 \begin{defi}
	\label{def:comm-mon}
 Let $\V$ be a braided monoidal category. 
 Given $n\in\mathbb{Z}$, we define an \emph{$n$-commutative monoid} in $\V$ to be a monoid $(A,m,i)\in\Mon(\V)$ satisfying the following commutativity axiom 
 \[\begin{tikzcd}[ampersand replacement=\&]
 	{A\otimes A} \&\& {A\otimes A} \\
 	\& A
 	\arrow["{\beta^n_{A,A}}", from=1-1, to=1-3]
 	\arrow["m"', from=1-1, to=2-2]
 	\arrow["m", from=1-3, to=2-2]
 \end{tikzcd}\]
 where $\beta^n_{A,A}$ denotes the composite of $\beta_{A,A}$ with itself $n$-times if $n$ is positive, or the composite of $\beta_{A,A}^{-1}$ with itself $(-n)$-times if $n$ is negative. We define $\CMon_n(\V)$ as the full subcategory of $\VMon$ whose objects are $n$-commutative monoids. 
\end{defi}
\begin{rmk}
If $n=0$, then $\beta^0_{A,A}=\id_{A\otimes A}$, hence $\CMon_0(\V)= \Mon(\V)$.
If $n=1$, these are simply known as \emph{commutative monoids} and we write $\CMon(\V)\colonequals \CMon_1(\V)$.
 If $n=2$, these are known as \emph{dyslectic monoids} (cf.~\cite{pareigis1995braiding}).
If $\V$ is symmetric, then for any $n\in\mathbb{N}$, $\CMon(\V)=\CMon_{2n+1}(\V)$ and $\Mon(\V)=\CMon_{2n}(\V)$.
\end{rmk}
\begin{rmk}\label{rmk:comm-mon-full-subcat}
It is straightforward to show that $\CMon_n(\V)$ is equivalent to $\CMon_{-n}(\V)$, for any $n\in\mathbb{Z}$.
Then, given $q\in\mathbb{Z}$, one can show that $\CMon_n(V)$ is a full subcategory of $\CMon_{qn}(\V)$.
It follows that $\CMon_n(\V)$ is a full subcategory of $\CMon_0(\V)=\Mon(\V)$, for any $n\in\mathbb{Z}$.
\end{rmk} 

\begin{eg}
\begin{enumerate}[(i)]
	\item[]
	\item For any braided monoidal category, the monoid structure on the monoidal unit $(I,\lambda_I,1_I)$ described in \Cref{eg:unit-monoid} is commutative. 
	
	\item For the symmetric monoidal category $\kvect$ of $\bk$-vector spaces, we have $\CMon(\kvect)=\kcalg$ the category of commutative algebras.

	\item Following \Cref{MonoidInChain}, for $\V$ braided, a commutative monoid in $\Ch(\V)$ is a commutative dg-algebra. 
	
\end{enumerate}
\end{eg}
\black 
\subsection{Module categories associated to a monoid}

Just as one defines modules for ring in classical algebra, one can also extend this notion to modules over monoids in a monoidal category. 
We recall the preliminary definitions, indicate the relevant references and provide the results which we will require in the subsequent sections.

\begin{defi}
	Let $(A,m,i)$ be a monoid in $\V$. 
	A {\em left module over $A$}, or {\em left $A$-module}, consists of an object $M\in \V$ together with a morphism $\mu\colon A\otimes M\to M$ in $\V$ making the diagrams below commute.
	\begin{center}
		\begin{tikzpicture}[baseline=(current  bounding  box.south), scale=2]
			
			\node (a0) at (-1.5,0.7) {$(A\otimes A)\otimes M$};
			\node (a1) at (0,0.7) {$A\otimes (A\otimes M)$};
			\node (b0) at (-1.5,0) {$A\otimes M$};
			\node (c0) at (1.5,0.7) {$A\otimes M$};
			\node (d0) at (1.5,0) {$M$};
			
			\node (a'0) at (2.8,0.7) {$I\otimes M$};
			\node (b'0) at (4,0) {$M$};
			\node (c'0) at (4,0.7) {$A\otimes M$};
			
			\path[font=\scriptsize]
			
			(a0) edge [->] node [left] {$m\otimes 1_M$} (b0)
			(a0) edge [->] node [above] {$\alpha$} (a1)
			(a1) edge [->] node [above] {$1_A\otimes \mu$} (c0)
			(b0) edge [->] node [below] {$\mu$} (d0)
			(c0) edge [->] node [right] {$\mu$} (d0)
			
			(a'0) edge [->] node [below] {$\lambda$} (b'0)
			(a'0) edge [->] node [above] {$i\otimes 1_M$} (c'0)
			(c'0) edge [->] node [right] {$\mu$} (b'0);
		\end{tikzpicture}	
	\end{center}
	A morphism of left $A$-modules $f\colon (M,\mu)\to (N,\nu)$ consists of a map $f\colon M\to N$ in $\V$ making the following diagram 
	\begin{center}
		\begin{tikzpicture}[baseline=(current  bounding  box.south), scale=2]
			
			\node (a0) at (0,0.7) {$A\otimes M$};
			\node (b0) at (0,0) {$A\otimes N$};
			\node (c0) at (1,0.7) {$M$};
			\node (d0) at (1,0) {$N$};

			\path[font=\scriptsize]
			
			(a0) edge [->] node [left] {$1_A\otimes f$} (b0)
			(a0) edge [->] node [above] {$\mu$} (c0)
			(b0) edge [->] node [below] {$\nu$} (d0)
			(c0) edge [->] node [right] {$f$} (d0);
		\end{tikzpicture}	
	\end{center}
	commute. Left $A$-modules and their morphisms form a category $\AMod$.
\end{defi}

\begin{rmk}\label{leftmodules-V-funct}
	If the monoidal category $\V$ is {\em left closed}, meaning that every functor $-\otimes A$ has a right adjoint $[A,-]$, then left $A$-modules can be equivalently described as certain $\V$-functors.
	
	First notice that $\V$ itself is a $\V$-category (that we denote again by $\V$) with hom-objects $\V(A,B):=[A,B]$, see~\cite[Section~1.6]{Kel82:libro}. Then, the data of a left $A$-module is equivalent to that of a $\V$-functor ${\bf M} \colon\BB A\to \V$.
	Indeed, $\bf M$ is completely determined by the object $M:=\bf M(*)$ in $\V$ and the map between hom-objects
	$$ A=\BB(*,*)\longrightarrow [{\bf M}(*),{\bf M}(*)]=[M,M], $$
	which, by the adjunction, corresponds to a morphism $\mu\colon A\otimes M\to M$ in $\V$. 
	Then the axioms expressing that $\bf M$ is a $\V$-functor translate into those saying that $(M,\mu)$ is a left $A$-module.
	
	Similarly, to give a morphism of left $A$-modules $f\colon (M,\mu)\to (N,\nu)$ is the same as giving a $\V$-natural transformation $\bf M\Rightarrow \bf N$ between the induced $\V$-functors. 
	It follows that
	$$ \AMod \cong [\BB A,\V] $$
	where $[\BB A,\V]$ is the category of $\V$-functors $\BB A\to \V$ and $\V$-natural transformations between them.
\end{rmk}

The notion of {\em right module over $A$}, or {\em right $A$-module}, is completely analogous and can be obtained by writing all the instances of $A$ in the definition above on the right-hand-side of $M$ rather thank on the left. 
Thus, a right $A$-module consists of an object $M\in \V$ together with a map $\mu\colon M\otimes A\to M$ making the corresponding squares commute.

\begin{rmk}\label{rightmodules-V-funct}
	Given a monoidal category $\V=(\V,\otimes,\alpha,\rho,\lambda,I)$ we can define another monoidal structure on $\V$ by taking
	$$ \V^{\rev}:=(\V,\widehat\otimes,\alpha^{-1},\lambda,\rho,I) $$
	where we define $A\ \widehat{\otimes}\ B := B\otimes A$; that is, $\widehat\otimes\cong \otimes\circ \sigma\colon \V\times \V\to\V$, with $\sigma\colon\V\times\V\to \V\times\V$ switching the components. 
	
	Then, monoids in $\V$ are the same as monoids in $\V^{\rev}$ (since they involve only tensoring with the same objects), so that $\VMon\cong\tx{Mon}( \V^{\rev})$.
	While, to give a right module over $A$, seen as a monoid in $\V$, is the same as to give left module over $A$, seen as a monoid in $\V^{\rev}$.
	
	As a consequence of Remark~\ref{leftmodules-V-funct}, if the monoidal category $\V$ is right closed (meaning that $\V^{\rev}$ is left closed), then 
	$$ \ModA \cong [\BB A,\V^{\rev}] $$
	where $[\BB A,\V^{\rev}]$ is the category of $\V^{\rev}$-functors $\BB A\to \V^{\rev}$ and $\V^{\rev}$-natural transformations between them.
\end{rmk}

\begin{eg}
	Given a monoid $(A,m,i)$, the pair $(A,m)$ is both a left and a right $A$-module.
	Indeed, the module axioms translate into the associativity and unity axioms defining the monoid structure.
\end{eg}

\begin{rmk}\label{I-mod}
	Consider the monoid $(I,\lambda,1_I)$ induced by the unit of the monoidal structure on $\V$. 
	Then, for any left $I$-module, the unit axiom of the definition forces the morphism $\mu\colon I\otimes M\to M$ to be the isomorphism $\lambda$. It follows that a left $I$-module is simply an object of $\V$, so that $\IMod\cong \V$.
	The same argument shows that also $\ModI\cong \V$.
\end{rmk}

\begin{defi}\label{bimod}
	Let $(A,m_A,i_A)$ and $(B,m_B,i_B)$ be monoids in $\V$. 
	An {\em $A$-$B$ bimodule} consists of an object $M\in \V$ together with morphisms $\mu\colon A\otimes M\to M$ and $\nu\colon M\otimes B\to M$ for which:\begin{enumerate}
		\item $(M,\mu)$ is a left $A$-module;
		\item $(M,\nu)$ is a right $B$-module;
		\item the following diagram, expressing associativity between $\mu$ and $\nu$,
			\begin{center}
				\begin{tikzpicture}[baseline=(current  bounding  box.south), scale=2]
					
					\node (a0) at (-1.5,0.7) {$(A\otimes M)\otimes B$};
					\node (a1) at (0,0.7) {$A\otimes (M\otimes B)$};
					\node (b0) at (-1.5,0) {$M\otimes B$};
					\node (c0) at (1.5,0.7) {$A\otimes M$};
					\node (d0) at (1.5,0) {$M$};
					
					\path[font=\scriptsize]
					
					(a0) edge [->] node [left] {$\mu\otimes 1_B$} (b0)
					(a0) edge [->] node [above] {$\alpha$} (a1)
					(a1) edge [->] node [above] {$1_A\otimes \nu$} (c0)
					(b0) edge [->] node [below] {$\nu$} (d0)
					(c0) edge [->] node [right] {$\mu$} (d0);
				\end{tikzpicture}	
			\end{center}
		commutes.
	\end{enumerate}
	
	A morphism of $A$-$B$ bimodules $f\colon (M,\mu^M,\nu^M)\to (N,\mu^N,\nu^N)$ consists of a map $f\colon M\to N$ in $\V$ which defines a morphism between the underlying left $A$-modules and right $B$-modules. 
	The category of $A$-$B$ bimodules and morphisms between them is denoted by $\AModB$.
\end{defi}

\begin{eg}\label{f-bimod}
	Given a morphism of monoids $f\colon (A,m_A,i_A)\to (B,m_B,i_B)$, then the triple
	$$ (B,m_B\colon B\otimes B\to B, \nu_B\colon B\otimes A\xrightarrow{1_B\otimes f}B\otimes B\xrightarrow{m_B} B) $$
	defines a $B$-$A$ bimodule. 
	Indeed, we know that $(B,m_B)$ is a left $B$-module, while the properties (ii) and (iii) of the definition above hold since $f$ is a morphism of monoids.\\
	Symmetrically, the triple 
	$$ (B, \mu_B\colon A\otimes B\xrightarrow{f\otimes 1_B}B\otimes B\xrightarrow{m_B} B,m_B\colon B\otimes B\to B) $$
	defines an $A$-$B$ bimodule. 
	
	 Analogously, $B$ can also be seen as an $A$-$B$ bimodule with actions $(B,m_B,A\otimes B\xrightarrow{f\otimes 1_B}B\otimes B\xrightarrow{m_B} B)$. 
\end{eg}

\begin{rmk}
	Following Remark~\ref{I-mod}, we obtain the isomorphisms of categories
	$${}_A\!\Mod_I\cong \AMod,\hspace{40pt}    {}_I\!\Mod_A\cong \ModA, \hspace{40pt} {}_I\!\Mod_I\cong \V.$$
	This is because the commutativity of (iii) in Definition~\ref{bimod} above becomes redundant if $\mu=\lambda\colon I\otimes M\to M$ or $\nu=\rho\colon M\otimes I\to M$.
\end{rmk}

\begin{rmk}
	Assume that $\V$ is a biclosed monoidal category with equalizers, so that in particular we can apply Remarks~\ref{leftmodules-V-funct} and~\ref{rightmodules-V-funct}.
	Then the category $\ModB\cong[\BB B,\V^{\rev}]$ is actually a $\V$-category, with hom-objects defined by the equalizers
	\begin{center}
		\begin{tikzpicture}[baseline=(current  bounding  box.south), scale=2]
			
			\node (0) at (-3.7,0) {$\ModB(M,N)$};
			\node (z) at (-2.2,0) {$[M,N]$};	
			\node (b) at (0,0) {$[M\otimes B,N]$;};

			\path[font=\scriptsize]

			(0) edge [>->] node [below] {} (z)
			
			([yshift=1.5pt]z.east) edge [->] node [above] {$[\nu^M, N]$} ([yshift=1.5pt]b.west)
			([yshift=-1.5pt]z.east) edge [->] node [below] {$ [M,\nu^N]\circ (-\otimes B)$} ([yshift=-1.5pt]b.west);
		\end{tikzpicture}
	\end{center}
	see for instance \cite[Section~3]{gordon1999gabriel}. 
	Then, unwrapping the definitions, one obtains 
	$$ \AModB \cong [\BB A, \ModB]$$
	expressing the category of $A$-$B$ bimodules as that of $\V$-functors $\BB A\to \ModB$ and $\V$-natural transformations between them.
	
	If $\V$ has all finite limits and the tensor product is symmetric, then $\AModB$ inherits a structure of $\V$-category.
	This can be seen thanks to the isomorphism above and following \cite[Section~2.2]{Kel82:libro}. 
	In this case the hom-objects of $\AModB$ are given by the pullback
	\begin{center}
		\begin{tikzpicture}[baseline=(current  bounding  box.south), scale=2]
			
			\node (a0) at (0,0.8) {$\AModB(M,N)$};
			\node (a0') at (0.3,0.6) {$\lrcorner$};
			\node (b0) at (1.6,0.8) {$\AMod(M,N)$};
			\node (c0) at (0,0) {$\ModB(M,N)$};
			\node (d0) at (1.6,0) {$[M,N],$};
			
			\path[font=\scriptsize]
			
			(a0) edge [->] node [above] {} (b0)
			(a0) edge [->] node [left] {} (c0)
			(b0) edge [>->] node [right] {} (d0)
			(c0) edge [>->] node [below] {} (d0);
		\end{tikzpicture}	
	\end{center} 
	where $\AMod(M,N)$ is defined by an equalizer similar to that used to define $\ModB(M,N)$.
\end{rmk}

\begin{rmk}
	\label{rmk:bimodules-as-modules-if-V-symm}
	In classical algebra, when we have a consider algebras over commutative rings, we can see bimodules as modules. 
	Analogously, if we consider $\V$ a symmetric monoidal category\footnote{We are not aware whether this holds for $\V$ only braided instead of symmetric.}, then bimodules in $\V$ can be seen as particular left (or right) modules.
	In order to see this we will start with a duality construction.
	
	If $\V$ is braided\footnote{Notice that for this duality construction we need $\V$ to be only \textit{braided}, while to see bimodules as modules we will need it to be symmetric.} (with braiding isomorphism $\beta_{A,B}\colon A\otimes B\to B\otimes A$), given any monoid $A=(A,m_A,i_A)\in\Mon(\V)$ we can define a new monoid structure $A^\op$ on $A$ using the symmetry, i.e. $A^\op:=(A,m_A\circ \beta_{A,A},i_A)$. The unit axioms for $A^\op$ follow from naturality of $\beta$ and the axioms for $A$ (switched). 
	For the associativity we use again the naturality of $\beta$, the associativity of $A$ and coherence of the braiding. 
	A clarifying example is when we consider a ring $R$ (i.e. a monoid in the symmetric monoidal category of abelian groups): $R^\op$ is the ring where the sum is switched, i.e. $a+_\op b:=b+a$. 
	
	Then, if $\V$ is moreover symmetric, we can see that $\AModB\cong \Leftmod{B^\op\otimes A}$, where the monoid $B^\op\otimes A$ is defined through the symmetry and the two multiplications. 
	This can be proven directly with lengthy calculations, but assuming $\V$ is moreover closed monoidal we can give the following abstract proof. 
	\begin{enumerate}[1.]
		\item Notice that, by definition $\BB (B^\op)=(\BB B)^\op$, where the latter is the dual $\V$-category (where composition is reversed using the symmetry). 
		We recall that with this notation, $\V^\rev=\V^\op$. 
		\item Then, we can see that $\ModA\cong \Leftmod{A^\op}$, since using \Cref{leftmodules-V-funct} we get the following chain of equivalences. 
		$$\ModA\cong [\BB A,\V^\op]=[(\BB A)^\op,\V]=[\BB (A^\op),\V]= \Leftmod{A^\op}$$
		\item Finally, we recall that if $\V$ is symmetric and closed, then the category of $\V$-categories is monoidal closed with tensor product the enriched product category (see for instance \cite[Section~2.3]{Kel82:libro}). 
		Therefore, we get the following chain of equivalences.
		$$\AModB\cong [\BB A, \ModB]=[\, \BB A,\,[\BB (B^\op),\V]\,]\cong[\, \BB (B^\op)\times \BB(A),\V\,]\cong[\, \BB (B^\op\otimes A),\V\,]\cong \Leftmod{B^\op\otimes A}$$
		The second to last equivalence follows from $\BB (B^\op)\times \BB(A)\cong \BB (B^\op\otimes A)$, which one can check directly from the definitions. 
	\end{enumerate}
\end{rmk}

The following proposition follows from \cite[Section~11]{shulman2008framed}, see also \cite[Definition~4.1.9]{brandenburg2014tensor}.

\begin{prop}\label{tensor-bimodules}
	Let $\V$ be a monoidal category with coequalizers that are preserved by the tensor product. 
	For any triple of monoids $A$, $B$, and $C$ in $\V$ there is a functor
	$$ -\otimes _B-\colon \AModB \times {}_B\!\Mod_C \longrightarrow {}_A\!\Mod_C$$
	defined by sending an $A$-$B$ bimodule $(M,\mu^M,\nu^M)$ and a $B$-$C$ bimodule $(N,\mu^N,\nu^N)$ to the $A$-$C$ bimodule $M\otimes_B N$ defined as the coequalizer
	\begin{center}
		\begin{tikzpicture}[baseline=(current  bounding  box.south), scale=2]
			
			\node (z) at (-2,0) {$(M\otimes B)\otimes N$};	
			\node (b) at (0,0) {$M\otimes N$};
			\node (b1) at (1.3,0) {$M\otimes_B N$};

			\path[font=\scriptsize]

			(b) edge [->>] node [below] {} (b1)
			
			([yshift=1.5pt]z.east) edge [->] node [above] {$\nu^M\otimes 1_N$} ([yshift=1.5pt]b.west)
			([yshift=-1.5pt]z.east) edge [->] node [below] {$(1_M\otimes \mu^N )\circ \alpha$} ([yshift=-1.5pt]b.west);
		\end{tikzpicture}
	\end{center}
in $\V$. The left and right actions are obtained by using the universal property of coequalizers, the fact that these are preserved by the tensor products, and the left and right actions of $M$ and $N$ respectively.
\end{prop}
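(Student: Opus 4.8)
The plan is to endow the coequalizer $M\otimes_B N$ with an $A$-$C$-bimodule structure, verify the bimodule axioms, and then check functoriality of the assignment. Throughout, by Mac Lane's coherence theorem we work as though $\V$ is strict, suppressing associators and unitors and reinstating them only where the hypothesis that $\otimes$ preserves coequalizers in each variable is invoked. Write $q\colon M\otimes N\twoheadrightarrow M\otimes_B N$ for the defining coequalizer.

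To construct the left $A$-action, note that since $A\otimes-$ preserves coequalizers the map $1_A\otimes q$ exhibits $A\otimes(M\otimes_B N)$ as the coequalizer of the pair $1_A\otimes(\nu^M\otimes 1_N)$ and $1_A\otimes(1_M\otimes\mu^N)$ (suppressing $\alpha$) on $A\otimes M\otimes B\otimes N$. It therefore suffices to show that $q\circ(\mu^M\otimes 1_N)\colon A\otimes M\otimes N\to M\otimes_B N$, precomposed with those two maps, yields equal morphisms $A\otimes M\otimes B\otimes N\to M\otimes_B N$; this reduces, using axiom (iii) of \Cref{bimod} for $M$ (so that $\mu^M\circ(1_A\otimes\nu^M)=\nu^M\circ(\mu^M\otimes 1_B)$) together with the defining relation $q\circ(\nu^M\otimes 1_N)=q\circ(1_M\otimes\mu^N)$, to a short chain of equalities. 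We thus obtain $\bar\mu\colon A\otimes(M\otimes_B N)\to M\otimes_B N$ with $\bar\mu\circ(1_A\otimes q)=q\circ(\mu^M\otimes 1_N)$; the right $C$-action $\bar\nu$ is obtained symmetrically from the fact that $-\otimes C$ preserves coequalizers and the right action $\nu^N$ on $N$.

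Each module axiom for $\bar\mu$ and $\bar\nu$, and their compatibility, is then verified by precomposing the two sides with a suitable iterated application of $q$ — which is again a coequalizer because $\otimes$ preserves coequalizers in the relevant variable — and invoking the corresponding axiom for $M$ or $N$: associativity and unitality of $\bar\mu$ descend from those of $\mu^M$, those of $\bar\nu$ from $\nu^N$, and the commutation of $\bar\mu$ with $\bar\nu$ from the fact that $\mu^M\otimes 1_N$ and $1_M\otimes\nu^N$ act on disjoint tensor factors of $M\otimes N$; hence $(M\otimes_B N,\bar\mu,\bar\nu)$ is an $A$-$C$-bimodule. For functoriality, given $f\colon M\to M'$ in $\AModB$ and $g\colon N\to N'$ in ${}_B\!\Mod_C$, right $B$-linearity of $f$ and left $B$-linearity of $g$ show that $q'\circ(f\otimes g)$ coequalizes the pair defining $M\otimes_B N$, hence factors uniquely as $(f\otimes_B g)\circ q$; that $f\otimes_B g$ is $A$-$C$-linear and that $(-)\otimes_B(-)$ preserves identities and composites follow by the same precompose-with-$q$ argument, now using the linearity of $f$, $g$ and functoriality of $\otimes$.

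I expect no step to be conceptually deep: each reduces, via the universal property of the coequalizer and the preservation hypothesis, to an axiom already available for $M$, $N$, or $\otimes$. The genuine work is bookkeeping — threading the associativity isomorphisms of $\V$ through every diagram so that the various "evidently equal" reassociations are genuinely identified, and tracking which variable of $\otimes$ the preservation hypothesis is applied to at each stage. Working up to coherence, as above, is what keeps this under control; alternatively one can simply note that this is the horizontal composition in the bicategory of monoids, bimodules, and bimodule maps in $\V$ (cf.\ \cite{shulman2008framed}).
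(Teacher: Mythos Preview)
Your proof is correct and is the standard direct argument. The paper does not actually prove this proposition; it simply cites \cite[Section~11]{shulman2008framed} and \cite[Definition~4.1.9]{brandenburg2014tensor}, which is exactly the alternative you mention in your final sentence. So your write-up supplies strictly more detail than the paper itself, while still pointing to the same bicategorical source.
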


\begin{rmk}\label{biclosed-preserves}
	If $\V$ is biclosed, then the hypotheses on the tensor product are automatically satisfied: the functors $A\otimes-$ and $-\otimes B$ preserve any colimits that exist in $\V$ (being left adjoints to the internal homs).
\end{rmk}

As a consequence we can define a monoidal structure on $\AModA$; this is a special case of \cite[Theorem~11.5]{shulman2008framed}.

\begin{prop}
	\label{prop:otimes_A-preserves-coequ}
	Let $\V$ be a monoidal category with coequalizers that are preserved by the tensor product, and $A=(A,m,i)$ be a monoid in $\V$. 
	Then $\AModA$ has a monoidal structure which is defined by
	$$ -\otimes _A-\colon \AMod_A \times {}_A\!\Mod_A \longrightarrow {}_A\!\Mod_A$$
	and whose unit is the $A$-bimodule $(A,m,m)$. 
	Moreover, $\AModA$ has coequalizers and $\otimes_A$ preserves them in each variable.\\
	If in addition $\V$ is left (resp. right) closed and has equalizers, then also $\AModA$ is left (resp. right) closed and has equalizers.
\end{prop}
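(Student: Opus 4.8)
The plan is to transport the monoidal structure of $\V$ across the defining coequalizers, the decisive leverage being the hypothesis that $\otimes$ preserves coequalizers in each variable (which, by \Cref{biclosed-preserves}, holds automatically when $\V$ is biclosed). Setting $A=B=C$ in \Cref{tensor-bimodules} already produces the functor $-\otimes_A-\colon\AModA\times\AModA\to\AModA$, with $M\otimes_A N$ presented as the coequalizer of $\nu^M\otimes 1_N$ and $(1_M\otimes\mu^N)\circ\alpha$ on $(M\otimes A)\otimes N\rightrightarrows M\otimes N$; write $q_{M,N}\colon M\otimes N\twoheadrightarrow M\otimes_A N$ for the quotient. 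Every structure morphism below will be induced, via the universal property of these coequalizers, from the corresponding morphism of $\V$, and each identity it must satisfy will be checked by precomposing with the jointly epic family of $q$'s and thereby reduced to the coherence axioms of $\V$. This is exactly the situation treated in \cite[Theorem~11.5]{shulman2008framed} (see also \cite[Definition~4.1.9]{brandenburg2014tensor}), whose strategy we follow.

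For the unitors, the crucial observation is that for every $A$-bimodule $M$ the left action $\mu^M\colon A\otimes M\to M$ is a \emph{split} (hence absolute) coequalizer of the pair $m\otimes 1_M$ and $(1_A\otimes\mu^M)\circ\alpha$ on $(A\otimes A)\otimes M\rightrightarrows A\otimes M$, the splitting being the extra degeneracy of the bar construction, i.e.\ insertion of the unit $i$; the split-coequalizer identities follow from the unitor triangle of $\V$ together with the monoid and module unit axioms. Since this is precisely the pair presenting $A\otimes_A M$, we obtain a natural isomorphism $A\otimes_A M\xrightarrow{\cong}M$, and symmetrically $M\otimes_A A\xrightarrow{\cong}M$, so that $(A,m,m)$ is a unit object. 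For the associator I would present both $(M\otimes_A N)\otimes_A P$ and $M\otimes_A(N\otimes_A P)$ as colimits of one and the same finite diagram assembled from the objects $(M\otimes N)\otimes P$, $((M\otimes A)\otimes N)\otimes P$, $(M\otimes N)\otimes A\otimes P$, $((M\otimes A)\otimes N)\otimes A\otimes P$ and their reassociations: this is legitimate because the functors $-\otimes A$, $-\otimes P$, $M\otimes-$ (and so on) preserve the ``outer'' coequalizers, so that the iterated tensor-and-coequalizer is a coequalizer of coequalizers and may be computed in either order. The associator $\alpha$ of $\V$ then descends uniquely to the desired isomorphism, and naturality, the pentagon, and the triangle are inherited from $\V$ via the jointly-epic reduction.

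Next I would prove the ``moreover'' clauses concerning coequalizers. Given a parallel pair of $A$-bimodule maps, form the coequalizer $q$ of the underlying pair in $\V$; since $A\otimes-$ and $-\otimes A$ preserve coequalizers, both actions descend uniquely along $q$, making the quotient an $A$-bimodule and $q$ a bimodule map, and one checks (using that $q$ is epic and that the relevant tensors are again coequalizers) that $q$ is the coequalizer in $\AModA$. Hence the forgetful functor $\AModA\to\V$ creates coequalizers. That $\otimes_A$ preserves coequalizers in each variable then follows from the same property of $\otimes$ together with interchange of colimits, since $M\otimes_A(-)$ is a coequalizer of the coequalizer-preserving functors $M\otimes(-)$ and $(M\otimes A)\otimes(-)$, and symmetrically for $(-)\otimes_A M$.

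Finally, assume $\V$ is left closed and has equalizers. The forgetful functor $\AModA\to\V$ creates equalizers: the equalizer $e\colon E\to M$ of the underlying pair in $\V$ carries, by its universal property (using $fe=ge$) and the monicity of $e$, unique bimodule actions making $e$ a bimodule map, and this is the equalizer in $\AModA$; so $\AModA$ has equalizers. To obtain a right adjoint to $M\otimes_A(-)$, use the coequalizer presentation to write $\AModA(M\otimes_A N,P)$ as an equalizer of the functors $\AModA(M\otimes N,P)$ and $\AModA((M\otimes A)\otimes N,P)$ of $N$, represent each of these (via the internal hom of $\V$, imposing the residual linearity by a further equalizer, which exists) by a bimodule internal-hom object with its induced actions, and take the equalizer of the two representing objects in $\AModA$; since $\AModA(N,-)$ preserves equalizers, the result represents $\AModA(M\otimes_A N,-)$. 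The right-closed case is symmetric, working in $\V^{\rev}$. The genuine obstacle in all of this is the associator and its coherence --- organizing the iterated tensor-and-coequalizer as a single colimit so that $\alpha$ visibly descends, and then reducing the pentagon and triangle to those of $\V$; this is conceptually routine but bookkeeping-heavy, which is why the cleanest route, once the hypothesis ``$\otimes$ preserves coequalizers'' has been recorded, is to invoke \cite[Theorem~11.5]{shulman2008framed}.
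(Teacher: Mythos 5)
The paper offers no proof of this proposition beyond the remark that it is a special case of \cite[Theorem~11.5]{shulman2008framed}, and your proposal ultimately defers to exactly that citation while correctly sketching the standard underlying argument: the split (hence absolute) bar-coequalizer for the unitors, interchange of colimits for the associator and for preservation of coequalizers (matching the technique the paper itself uses in \Cref{cor:otimes_A-preserves-epimorp} and \Cref{Otimes_A cancels A}), creation of coequalizers and equalizers by the forgetful functor, and the equalizer presentation of the internal hom. This is consistent with, and a legitimate expansion of, the paper's approach; no gaps.
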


Next we study what properties $\AModB$ inherits from $\V$.

\begin{prop}\label{forgetful}
	Let $A$ and $B$ be monoids in $\V$, and let $U\colon \AModB\to\V$ be the forgetful functor taking a bimodule to its underlying object. 
	Then:\begin{enumerate}
		\item $U$ is a monadic right adjoint;
		\item the left adjoint $F\colon \V\to \AModB$ sends an object $M$ to the $A$-$B$ bimodule 
		$$(A\otimes M\otimes B, m_A\otimes 1_M\otimes 1_B, 1_A\otimes 1_M\otimes m_B);$$
		\item $\AModB$ has, and $U$ preserves, any shape of colimits that $\V$ has and that are preserved by the tensor product.
	\end{enumerate}
\end{prop}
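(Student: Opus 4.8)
The plan is to verify the three claims in sequence, using the monadicity criterion for (2) and (1), and a direct colimit argument for (3). First I would construct the putative left adjoint $F\colon\V\to\AModB$ by the formula $FM = (A\otimes M\otimes B,\ m_A\otimes 1_M\otimes 1_B,\ 1_A\otimes 1_M\otimes m_B)$, suppressing associators; one checks that the left $A$-action and right $B$-action satisfy the module axioms of \Cref{bimod} — these reduce to the associativity and unit axioms for $m_A$ and $m_B$ together with naturality of $\alpha$ — and that they commute in the sense of condition (iii), which is immediate since the two actions operate on disjoint tensor factors. To exhibit the adjunction $F\dashv U$, I would produce the unit $\eta_M\colon M\to A\otimes M\otimes B$ as $M\xrightarrow{\lambda_M^{-1}}I\otimes M\xrightarrow{i_A\otimes 1_M} A\otimes M\xrightarrow{\rho^{-1}}A\otimes M\otimes I\xrightarrow{1\otimes 1\otimes i_B}A\otimes M\otimes B$ (again up to coherence isomorphisms), and the counit $\varepsilon_N\colon A\otimes N\otimes B\to N$ for an $A$-$B$ bimodule $(N,\mu,\nu)$ as $A\otimes N\otimes B\xrightarrow{\mu\otimes 1_B}N\otimes B\xrightarrow{\nu}N$. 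The triangle identities then follow from the module axioms for $N$ and from the monoid unit laws; this is routine diagram chasing with coherence.

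For claim (1), I would invoke Beck's monadicity theorem. The functor $U$ creates (hence reflects) isomorphisms because a morphism of bimodules is invertible precisely when its underlying $\V$-morphism is. It remains to check that $U$ creates coequalizers of $U$-split pairs: given a parallel pair in $\AModB$ whose image under $U$ has a split coequalizer $q\colon UN\to Q$ in $\V$, the splitting data transport the two actions $\mu,\nu$ along $q$ to well-defined actions on $Q$ (using that $q$, being split epi, is preserved by $A\otimes-\otimes B$, so that the relevant squares can be filled), making $Q$ the coequalizer in $\AModB$ with $U$-image the given one; uniqueness is automatic. Combined with the existence of the left adjoint from the previous paragraph, Beck's theorem yields that $U$ is monadic.

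For claim (3), suppose $\V$ has colimits of some shape $J$ and that $\otimes$ preserves them in each variable (equivalently, as in \Cref{biclosed-preserves}, this is automatic when $\V$ is biclosed). Given a diagram $D\colon J\to\AModB$, form $\colim(U\circ D)$ in $\V$; the left action on each $D_j$ assembles, via the universal property and the fact that $A\otimes-$ preserves the colimit (so $A\otimes\colim(U D)\cong\colim(A\otimes U D)$), into a left $A$-action on $\colim(UD)$, and symmetrically for the right $B$-action; the bimodule compatibility (iii) holds by a further application of the universal property. One checks this is the colimit of $D$ in $\AModB$ and that $U$ sends it to $\colim(UD)$. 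The main obstacle is bookkeeping rather than conceptual: in claim (1) one must be careful that the split-coequalizer splitting in $\V$ genuinely interacts with the tensor product correctly (split epimorphisms are absolute colimits, so they are preserved by any functor, in particular by $A\otimes-\otimes B$ — this is the key point that makes the action transport well-defined), and throughout one must manage the associator and unitor insertions; I would state once that all diagrams are taken up to the coherence isomorphisms and not belabor them.
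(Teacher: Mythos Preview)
Your proposal is correct. The paper takes a slightly more economical route: rather than constructing the adjunction via unit/counit and then invoking Beck's theorem, it observes directly that $T:=(A\otimes-)\otimes B$ is a monad on $\V$ (with unit and multiplication induced by those of $A$ and $B$) and that the category of $T$-algebras is precisely $\AModB$; monadicity of $U$ is then tautological, since the forgetful functor from any Eilenberg--Moore category is monadic. For (iii) the paper simply cites a standard result (Borceux, \emph{Handbook of Categorical Algebra}, Proposition~4.3.2) that a monadic functor creates any colimits preserved by the monad, whereas you construct the colimits by hand. Your approach is perfectly sound and arguably more self-contained; the paper's is shorter because identifying the monad up front makes both monadicity and the left adjoint formula immediate, and the colimit statement then drops out of the cited lemma without further work.
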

\begin{proof}
	Points (i) and (ii) follow from the standard fact asserting that the functor 
	$$ T:=(A\otimes -)\otimes B\colon \V\longrightarrow \V,$$ 
	together with the units and multiplications of $A$ and $B$, induces a monad on $\V$ whose algebras are precisely the $A$-$B$ bimodules. On one hand, if $(M,t\colon TM\to M)$ is a $T$-algebra, then we can define maps
	$$ \mu\colon A\otimes M\cong (A\otimes M)\otimes I\xrightarrow{(1_A\otimes 1_M)\otimes i_B} TM\xrightarrow{\ t\ } M $$
	and 
	$$ \nu\colon M\otimes B\cong( I\otimes M)\otimes B\xrightarrow{(i_A\otimes 1_M)\otimes 1_B} TM\xrightarrow{\ t\ } M $$
	that make $(M,\mu,\nu)$ into a bimodule.
	Conversely, if $(M,\mu,\nu)$ is a bimodule then the map $t\colon TM\to M$, defined by taking the diagonal of the commutative square in Definition~\ref{bimod}(iii), makes $M$ into a $T$-algebra.
	
	Point (iii) is now a consequence of \cite[Proposition~4.3.2]{Bor94:libro}. 
\end{proof}

The hypotheses on the tensor product of $\V$ below are satisfied whenever the monoidal structure is biclosed, see Remark~\ref{biclosed-preserves}.

\begin{cor}\label{abelian-modules}
	Let $\V$ be an abelian monoidal category for which the tensor product functors preserve cokernels and finite direct sums. 
	Then for any monoids $A$ and $B$ in $\V$, the category $\AModB$ is abelian and the forgetful functor $U\colon \AModB\to \V$ is an exact functor.
\end{cor}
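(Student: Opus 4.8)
The plan is to deduce both assertions from \Cref{forgetful} together with the standard fact that a category of algebras for an additive, cokernel-preserving monad on an abelian category is again abelian, with the forgetful functor exact. First I would observe that under the stated hypotheses the endofunctor $T = (A\otimes-)\otimes B\colon \V\to\V$ from the proof of \Cref{forgetful} is additive: this is immediate since $A\otimes-$ and $-\otimes B$ are additive (they preserve finite direct sums by hypothesis, and any finite-direct-sum-preserving functor between additive categories is additive). Hence the monad $T$ is an additive monad, and $\AModB\cong\V^T$ inherits a preadditive structure for which $U$ is additive; kernels and cokernels in $\V^T$ are computed by lifting those of $\V$ along $T$, which is possible precisely because $T$ preserves cokernels (for cokernels) and preserves all limits being—wait, here one must be careful: $T$ need not preserve kernels. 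The correct statement is that if $T$ preserves cokernels then $\V^T$ has cokernels created by $U$; kernels in $\V^T$ exist because $U$, being monadic, creates all limits, and limits are computed in $\V$. So $\AModB$ has all finite limits (from monadicity, \Cref{forgetful}(i)) and all finite colimits that $\V$ has and $\otimes$ preserves — in particular finite direct sums and cokernels — by \Cref{forgetful}(iii).

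Next I would verify the abelian axioms. Since $U$ creates limits, a zero object, binary products, and kernels in $\AModB$ are just those of $\V$ equipped with the evident $T$-algebra structure; dually, by \Cref{forgetful}(iii) (applied to the shapes: empty diagram, two-element discrete diagram, parallel pair), $\AModB$ has a zero object, binary coproducts, and cokernels, and these are preserved by $U$. Because $U$ is additive, faithful, and preserves and reflects zero objects, biproducts, kernels and cokernels, the relevant exactness conditions transfer from $\V$: given a morphism $f$ in $\AModB$, its kernel and cokernel are computed in $\V$, hence $\ker(\coker f)$ and $\coker(\ker f)$ are computed in $\V$, where they agree since $\V$ is abelian; as $U$ is faithful and these comparison maps live over the corresponding iso in $\V$, the canonical map $\coker(\ker f)\to\ker(\coker f)$ in $\AModB$ is an isomorphism. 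Thus $\AModB$ is abelian. Exactness of $U$ is then automatic: $U$ is additive and preserves kernels (it creates all limits) and cokernels (\Cref{forgetful}(iii)), hence preserves all finite limits and colimits, i.e.\ is exact.

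The only delicate point — and the one I would be most careful about — is the asymmetry in how kernels versus cokernels are obtained: cokernels in $\AModB$ come from \Cref{forgetful}(iii) and genuinely use the hypothesis that $\otimes$ preserves cokernels (so that $T$ does), whereas kernels come for free from monadicity and require no hypothesis on $\otimes$; conflating the two, or trying to "lift kernels along $T$", would be a mistake since $T$ is not assumed to preserve kernels. Once this is kept straight, everything else is a routine transfer of structure along the monadic additive faithful functor $U$, and the final remark that biclosedness implies the hypotheses is exactly \Cref{biclosed-preserves}.
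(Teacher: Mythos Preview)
Your proposal is correct and follows essentially the same approach as the paper: both use \Cref{forgetful} to show that $U$ creates limits (hence kernels, products, zero object) and preserves the relevant finite colimits (cokernels, direct sums), then transfer the abelian axiom from $\V$ by observing that the comparison map in $\AModB$ maps under $U$ to an isomorphism in $\V$. One small slip: where you write ``as $U$ is faithful'' to conclude that the canonical map $\coker(\ker f)\to\ker(\coker f)$ is an isomorphism, faithfulness alone does not suffice---you need that $U$ \emph{reflects isomorphisms} (is conservative), which you do have since $U$ is monadic; the paper invokes exactly this property.
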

\begin{proof}
	Recall that a category is called abelian if and only if (\cite[Definition~1.4.1]{Bor94:libro}):\begin{enumerate}
		\item it has a zero object as well as finite products and coproducts;
		\item it has kernels and cokernels;
		\item every monomorphism is a kernel, and every epimorphism is a cokernel.
	\end{enumerate}
	By Proposition~\ref{forgetful} above, it follows that $\AModB$ has, and $U$ preserves, all the limits and colimits listed in points (i) and (ii). 
	
	To show the validity of (iii) we use that in addition $U$ reflects isomorphisms (being monadic). Indeed, let $m\colon M\to N$ be a monomorphism in $\AModB$ and let $k\colon K\to N$ be the kernel of the cokernel of $m$; we need to prove that the map $f\colon M\to K$, induced by the universal property of the limit, is an isomorphism.
	Since $U$ preserves kernels and cokernels, and $\V$ is abelian, it follows that $Uf$ is an isomorphism in $\V$. Thus $f$ itself is an isomorphism in $\AModB$ since $U$ reflects isomorphisms. 
	The same argument applies to the case of epimorphisms.
\end{proof}

\begin{cor}
	\label{cor:otimes_A-preserves-epimorp}
	Let $\V$ be a monoidal category with coequalizers that are preserved by the tensor product, and $A=(A,m,i)$ be a monoid in $\V$. 
	Then the tensor product functors 	
	$$-\otimes_A M, M\otimes_A-\colon \AMod_A\longrightarrow {}_A\!\Mod_A$$ 
	preserve any shape of colimits that exist in $\V$ and are preserved by $\otimes$ in each variable.\\
	In addition, if $\V$ is additive and so is $\otimes$, then $\otimes_A$ preserves all existing finite colimits in each variable. 
\end{cor}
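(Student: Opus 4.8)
The plan is to deduce both assertions from the description of $\otimes_A$ as a coequalizer in \Cref{tensor-bimodules} together with the fact that coequalizers commute with all colimits. Fix a shape $J$ for which $\V$ has $J$-indexed colimits and $\otimes$ preserves them in each variable. By \Cref{forgetful}(iii) (applied with $B=A$) the category $\AModA$ then has $J$-indexed colimits and the forgetful functor $U\colon\AModA\to\V$ preserves them; moreover $U$ is monadic by \Cref{forgetful}(i), so it reflects isomorphisms. I will show that $-\otimes_A M$ preserves $J$-colimits; the case of $M\otimes_A-$ is entirely symmetric, using that $\otimes$ also preserves $J$-colimits in its second variable.

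The argument itself goes as follows. Let $D\colon J\to\AModA$ be a diagram with colimit $(L,(\lambda_j)_j)$, and let $c\colon\colim_j(D_j\otimes_A M)\to L\otimes_A M$ be the comparison morphism induced by the cocone $\lambda_j\otimes_A 1_M$. Since $U$ preserves the colimit $L$ and each $\lambda_j$ is a bimodule map, the right $A$-action on $L$ is exhibited as the $J$-colimit of the right $A$-actions on the $D_j$; hence, because $\otimes$ commutes with $J$-colimits in the first variable, the parallel pair
$$(UL\otimes A)\otimes UM\;\rightrightarrows\;UL\otimes UM$$
whose coequalizer is $U(L\otimes_A M)$ (by \Cref{tensor-bimodules}) is precisely the $J$-colimit of the parallel pairs whose coequalizers are the $U(D_j\otimes_A M)$. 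Then I would invoke the interchange of colimits (coequalizers commute with $J$-colimits) to obtain $U(L\otimes_A M)\cong\colim_j U(D_j\otimes_A M)\cong U\big(\colim_j(D_j\otimes_A M)\big)$, the last isomorphism because $U$ preserves $J$-colimits. Tracing the universal properties shows this composite isomorphism is $Uc$, and since $U$ reflects isomorphisms, $c$ is an isomorphism in $\AModA$; this is the first claim.

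For the final sentence, suppose $\V$ is additive with additive tensor product. Then $\V$, being additive and having coequalizers, has all finite colimits, and $\otimes$—which preserves finite coproducts by additivity and coequalizers by hypothesis—preserves every finite colimit in each variable. Applying the first part of the corollary to each finite shape $J$ then shows that $\otimes_A$ preserves all finite colimits, all of which exist in $\AModA$ by \Cref{forgetful}(iii). (In particular the first part already subsumes \Cref{prop:otimes_A-preserves-coequ}, taking $J$ to be the coequalizer shape.)

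The step I expect to be the main obstacle is purely one of bookkeeping: verifying that the chain of isomorphisms identifying $U(L\otimes_A M)$ with $U\big(\colim_j(D_j\otimes_A M)\big)$ is natural and agrees with $U$ applied to the canonical comparison $c$, so that one may legitimately reflect isomorphisms along $U$. Everything else is a formal consequence of \Cref{forgetful} and \Cref{tensor-bimodules} together with the interchange of colimits, and no new idea is required.
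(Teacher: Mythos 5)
Your proposal is correct and follows essentially the same route as the paper's proof: both reduce to the conservative, colimit-preserving forgetful functor $U\colon\AModA\to\V$, identify $U(-\otimes_A M)$ via the defining coequalizer of \Cref{tensor-bimodules}, and conclude by the interchange of coequalizers with $J$-colimits, with the additive case handled identically by noting that finite coproducts and coequalizers generate all finite colimits. The only difference is cosmetic (you make the comparison map and the reflection of isomorphisms explicit, which the paper leaves implicit).
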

\begin{proof}
	Let $\C$ be a small category such that $\C$-colimits exist in $\V$ and are preserved in each variable by $\otimes$. Then, by Proposition~\ref{forgetful}(iii), $\C$-colimits exist in $\AMod_A$ and are preserved by the forgetful functor 
	$$ U\colon \AMod_A\longrightarrow \V.$$
	Since $U$ is also conservative, to show that $M\otimes_A-$ preserves $\C$-colimits it is enough to prove that $U(M\otimes_A -)\colon\AMod_A\to \V$ preserves them. Given a diagram $H\colon\C\to\AMod_A$, note that $U(M\otimes_A \colim H)$ is defined as the coequalizer
	\begin{center}
		\begin{tikzpicture}[baseline=(current  bounding  box.south), scale=2]
			
			\node (z) at (-2.4,0) {$(M\otimes A)\otimes U(\colim H)$};	
			\node (b) at (0.2,0) {$M\otimes U(\colim H)$};
			\node (b1) at (2.1,0) {$U(M\otimes_A \colim H)$};

			\path[font=\scriptsize]

			(b) edge [->>] node [below] {} (b1)
			
			([yshift=1.5pt]z.east) edge [->] node [above] {$\nu^M\otimes 1_{U(\colim H)}$} ([yshift=1.5pt]b.west)
			([yshift=-1.5pt]z.east) edge [->] node [below] {$(1_M\otimes \mu )\circ \alpha$} ([yshift=-1.5pt]b.west);
		\end{tikzpicture}
	\end{center}
	in $\V$, by Proposition~\ref{tensor-bimodules}. But $U$, $(M\otimes A)\otimes-$, and $M\otimes-$ preserve $\C$-colimits, and coequalizers commute with any colimits. Thus $U(M\otimes_A \colim H)$ coincides with the colimit of the coequalizers
	\begin{center}
		\begin{tikzpicture}[baseline=(current  bounding  box.south), scale=2]
			
			\node (z) at (-2.1,0) {$(M\otimes A)\otimes UHc$};	
			\node (b) at (0,0) {$M\otimes UHc$};
			\node (b1) at (1.4,0) {$M\otimes_A UHc$.};

			\path[font=\scriptsize]

			(b) edge [->>] node [below] {} (b1)
			
			([yshift=1.5pt]z.east) edge [->] node [above] {$\nu^M\otimes 1_{UHc}$} ([yshift=1.5pt]b.west)
			([yshift=-1.5pt]z.east) edge [->] node [below] {$(1_M\otimes \mu )\circ \alpha$} ([yshift=-1.5pt]b.west);
		\end{tikzpicture}
	\end{center}
	taken for $c\in\C$. In other words
	$$ U(M\otimes_A \colim H)\cong \colim M\otimes_A UH- $$
	concluding the first part of the statement. The same arguments apply to $-\otimes_A M$.\\
	If $\V$ is additive and in addition $\otimes$ preserves finite direct sums, then $\V$ has all finite colimits and $\otimes$-preserves them in each variable (since finite coproducts and coequalizers generate all finite colimits). Thus the result follows from the previous part.
\end{proof}

\begin{rmk}\label{Otimes_A cancels A}
	Consider $(A\otimes A, m\otimes 1_A, 1_A\otimes m)$ as an $A$-bimodule. Then for any $M\in\AModA$ we have an isomorphism
	$$ M\otimes_A(A\otimes A)\cong M\otimes A $$
	in $\V$. Indeed, the coequalizer defining $M\otimes_A(A\otimes A)$ is given by 
	\begin{center}
		\begin{tikzpicture}[baseline=(current  bounding  box.south), scale=2]
			
			\node (z) at (-2.5,0) {$M\otimes A\otimes A\otimes A$};	
			\node (b) at (0,0) {$M\otimes A\otimes A$};
			\node (b1) at (1.9,0) {$M\otimes A$.};

			\path[font=\scriptsize]

			(b) edge [->>] node [above] {$\nu^M\otimes 1_A$} (b1)
			
			([yshift=1.5pt]z.east) edge [->] node [above] {$\nu^M\otimes 1_{A\otimes A}$} ([yshift=1.5pt]b.west)
			([yshift=-1.5pt]z.east) edge [->] node [below] {$1_M\otimes m\otimes 1_A$} ([yshift=-1.5pt]b.west);
		\end{tikzpicture}
	\end{center}
	This is actually a split coequalizer, with splitting $1_M\otimes i\otimes 1_A\colon M\otimes A\cong M\otimes (I\otimes A)\to M\otimes (A\otimes A)$.
\end{rmk}

\subsection{The extension and restriction of scalars adjunction}
\label{sec:ext-restr-scalars}

In this section we will describe the generalization of the extension and restriction to scalars adjunction for bimodules over monoids in a monoidal category. 
We recall that given two rings $R$ and $S$ and a ring homomorphism $f\colon R\to S$, we get the following adjunction
\[\begin{tikzcd}[ampersand replacement=\&]
	\SMod \& \RMod
	\arrow[""{name=0, anchor=center, inner sep=0}, "{f_!}"', curve={height=6pt}, from=1-2, to=1-1]
	\arrow[""{name=1, anchor=center, inner sep=0}, "{f^\ast}"', curve={height=6pt}, from=1-1, to=1-2]
	\arrow["\bot"{description}, draw=none, from=1, to=0]
\end{tikzcd}\]
where $f_!$ is the extension of scalars and $f^\ast$ the restriction of scalars. 
Note that the explicit description of the extension of scalars is $f_!=S\otimes_R-$. 

In the monoidal setting we can do something similar, starting with a monoidal category $\V$, two monoids in it $A$ and $B$ together with a monoid morphism $f\colon A\to B$. Moreover, we will see how, if we are given a second monoid morphism $f'\colon A'\to B'$, this can be generalized to an adjunction between $\Bimod{A}{A'}$ and $\Bimod{B}{B'}$. 
Clearly, if $\V$ is symmetric and closed then the adjunction between bimodules follows directly by the left (or right) modules one using \Cref{rmk:bimodules-as-modules-if-V-symm}. 
That said, as shown in this section, we can obtain the bimodules adjunction without assuming $\V$ symmetric. 

First we treat the restriction of scalars; the version for left modules (i.e. the case $A'=B'=I$ and $f'=1_I$) \black can also be found in \cite[Chapter~15, Page~103]{street_2007}. 

\begin{prop}\label{prop:def-restr-of-scalars}
	For any pair of monoid morphisms $(A,m_A,i_A)\xrightarrow{f}(B,m_B,i_B)$ and $(A',m_{A'},i_{A'})\xrightarrow{f'}(B',m_{B'},i_{B'})$ in $\Mon(\V)$, there is an induced functor
	$$(f,f')^\ast\colon\Bimod{B}{B'}\to\Bimod{A}{A'}$$
	which we call \emph{restriction of scalars}. 
\end{prop}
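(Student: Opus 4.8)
The plan is to define $(f,f')^\ast$ on objects by precomposing the two actions of a $B$-$B'$ bimodule with $f$ and $f'$, to define it on morphisms as the identity on underlying arrows, and then to check that this assignment lands in $\Bimod{A}{A'}$ and is functorial. Concretely, given an object $(M,\mu,\nu)$ of $\Bimod{B}{B'}$ with left action $\mu\colon B\otimes M\to M$ and right action $\nu\colon M\otimes B'\to M$, I would set
$$(f,f')^\ast(M,\mu,\nu) := \bigl(M,\ \mu\circ(f\otimes 1_M),\ \nu\circ(1_M\otimes f')\bigr),$$
and for a morphism $g\colon(M,\mu^M,\nu^M)\to(N,\mu^N,\nu^N)$ of $B$-$B'$ bimodules I would let $(f,f')^\ast(g)$ be the same underlying morphism $g\colon M\to N$ in $\V$.

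First I would verify that $(M,\mu\circ(f\otimes 1_M))$ is a left $A$-module: the unit axiom follows from $f\circ i_A=i_B$ together with the unit axiom for $\mu$ and naturality of $\lambda$, while the associativity axiom follows from $f\circ m_A=m_B\circ(f\otimes f)$ (that is, $f$ being a monoid morphism), naturality of the associator $\alpha$ applied to $f\otimes f\otimes 1_M$, and the associativity axiom for $\mu$. The symmetric argument, using that $f'$ is a monoid morphism, shows that $(M,\nu\circ(1_M\otimes f'))$ is a right $A'$-module. The compatibility condition (iii) of \Cref{bimod} between the two restricted actions then follows from the corresponding condition for $(\mu,\nu)$ together with naturality of $\alpha$ with respect to $f\otimes 1_M\otimes f'$. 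For a morphism $g$, the square expressing that $g$ is a morphism of left $A$-modules (resp. right $A'$-modules) is obtained by precomposing the square expressing that $g$ is a morphism of left $B$-modules (resp. right $B'$-modules) with $f\otimes 1_M$ (resp. $1_M\otimes f'$), so it still commutes; hence $(f,f')^\ast(g)$ is a morphism in $\Bimod{A}{A'}$. Functoriality is then immediate, since $(f,f')^\ast$ leaves underlying objects and morphisms unchanged, hence preserves identities and composition.

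There is no genuine obstacle here beyond the bookkeeping of coherence isomorphisms; the only points requiring care are the systematic insertions of $\alpha$, $\lambda$, $\rho$ when transporting the module and bimodule axioms along $f$ and $f'$. Alternatively — and more conceptually — one can invoke \Cref{forgetful}: the category $\Bimod{A}{A'}$ is the category of algebras for the monad $T_{A,A'}=(A\otimes-)\otimes A'$ on $\V$, and the natural transformation with components $f\otimes 1_M\otimes f'\colon T_{A,A'}(M)\to T_{B,B'}(M)$ is a morphism of monads precisely because $f$ and $f'$ preserve multiplications and units. Restriction of algebras along a morphism of monads then yields the desired functor $\Bimod{B}{B'}=\V^{T_{B,B'}}\to\V^{T_{A,A'}}=\Bimod{A}{A'}$, which unwinds to exactly the explicit formula above. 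For accessibility I would present the explicit construction and only mention the monad-theoretic viewpoint as a remark.
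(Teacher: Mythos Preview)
Your proposal is correct and follows essentially the same approach as the paper: define the restricted actions by precomposing with $f\otimes 1_M$ and $1_M\otimes f'$, verify the left/right module axioms using that $f,f'$ are monoid morphisms, check the bimodule compatibility via naturality of $\alpha$, and take the identity on morphisms. The monad-theoretic reformulation you add at the end is a pleasant bonus not present in the paper's proof, but the core argument is the same.
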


\begin{proof} 
	Let $(N,B\otimes N\xrightarrow{\nu} N,N\otimes B'\xrightarrow{\mu}N)$ be a $B$-$B'$ bimodule. 
	We define $(f,f')^\ast(N,\nu,\mu)$ as $N$ itself equipped with the following $A$-$A'$ actions. 
	\begin{center}
	\begin{tikzcd}[ampersand replacement=\&]
		{A\otimes N} \& {B\otimes N} \& N
		\arrow["{f\otimes 1_N}", from=1-1, to=1-2]
		\arrow["\nu", from=1-2, to=1-3]
	\end{tikzcd}
	\hspace{0.5cm}
\begin{tikzcd}[ampersand replacement=\&]
	{N\otimes A'} \& {N\otimes B'} \& N
	\arrow["{1_N\otimes f'}", from=1-1, to=1-2]
	\arrow["\mu", from=1-2, to=1-3]
\end{tikzcd}
	\end{center}
	
	The associativity and unital axioms for each actions follow from the ones for $N$ plus the fact that $f$ and $f'$ are monoid morphisms. 
	Moreover, the diagram below show that the left and right actions are compatible with each other. 
	\begin{center}
	\scalebox{0.9}{
	\begin{tikzcd}[ampersand replacement=\&]
		{(A\otimes N)\otimes A'} \&\& {A\otimes (N\otimes A')} \&\& {A\otimes (N\otimes B')} \& {A\otimes N} \\
		\&\& {B\otimes(N\otimes A')} \\
		{(B\otimes N)\otimes A'} \&\& {(B\otimes N)\otimes B'} \&\& {B\otimes (N\otimes B')} \& {B\otimes N} \\
		{N\otimes A'} \&\& {N\otimes B'} \&\&\& N
		\arrow["\alpha", from=1-1, to=1-3]
		\arrow["{(f\otimes 1_N)\otimes 1_{A'}}"', from=1-1, to=3-1]
		\arrow["{1_A\otimes(1_N\otimes f')}", from=1-3, to=1-5]
		\arrow["{f\otimes1_{N\otimes A'}}"{description}, from=1-3, to=2-3]
		\arrow["{1_A\otimes\mu}", from=1-5, to=1-6]
		\arrow["{f\otimes1_{N\otimes B'}}"{description}, from=1-5, to=3-5]
		\arrow["{f\otimes 1_N}", from=1-6, to=3-6]
		\arrow["{1_B\otimes (1_N\otimes f')}"{description}, from=2-3, to=3-5]
		\arrow["\alpha"{description}, from=3-1, to=2-3]
		\arrow["{1_{B\otimes N}\otimes f'}"', from=3-1, to=3-3]
		\arrow["{\nu\otimes 1_{A'}}"', from=3-1, to=4-1]
		\arrow["\alpha"', from=3-3, to=3-5]
		\arrow["{\nu\otimes 1_{B'}}"{description}, from=3-3, to=4-3]
		\arrow["{1_B\otimes\mu}"', from=3-5, to=3-6]
		\arrow["\nu", from=3-6, to=4-6]
		\arrow["{1_N\otimes f'}"', from=4-1, to=4-3]
		\arrow["\mu"', from=4-3, to=4-6]
	\end{tikzcd}
	}
	\end{center}
	
	The definition of $(f,f')^\ast$ on morphisms is the identity. The only thing to check is that if $t\colon (N,\nu,\mu)\to(N',\nu',\mu')$ is a morphism in $\Bimod{B}{B'}$, then $t\colon(N,\nu\circ f\otimes N,\mu\circ N\otimes f')\to(N',\nu'\circ f\otimes N',\mu'\circ N\otimes f')$ is a morphism in $\Bimod{A}{A'}$. This is true because the four squares below commute 
	\begin{center}
\begin{tikzcd}[ampersand replacement=\&]
	{A\otimes N} \& {B\otimes N} \& N \\
	{A\otimes N'} \& {B\otimes N'} \& {N'}
	\arrow["{f\otimes 1_N}", from=1-1, to=1-2]
	\arrow["{1_A\otimes t}"', from=1-1, to=2-1]
	\arrow["\nu", from=1-2, to=1-3]
	\arrow["{1_B\otimes t}"{description}, from=1-2, to=2-2]
	\arrow["t", from=1-3, to=2-3]
	\arrow["{f\otimes 1_{N'}}"', from=2-1, to=2-2]
	\arrow["{\nu'}"', from=2-2, to=2-3]
\end{tikzcd}
	\hspace{0.5cm}
	\begin{tikzcd}[ampersand replacement=\&]
		{N\otimes A'} \& {N\otimes B'} \& N \\
		{N'\otimes A'} \& {N'\otimes B'} \& {N'}
		\arrow["{1_N\otimes f'}", from=1-1, to=1-2]
		\arrow["{t\otimes 1_{A'}}"', from=1-1, to=2-1]
		\arrow["\mu", from=1-2, to=1-3]
		\arrow["{1_B\otimes t}"{description}, from=1-2, to=2-2]
		\arrow["t", from=1-3, to=2-3]
		\arrow["{1_{N'}\otimes f'}"', from=2-1, to=2-2]
		\arrow["{\mu'}"', from=2-2, to=2-3]
	\end{tikzcd}
	\end{center}
	by functoriality of $\otimes$ and the requirement that $t\in\Bimod{B}{B'}$. 
\end{proof}

Other standard facts about restriction of scalars are listed below. By a {\em continuous} functor we mean one that preserves all existing limits.

\begin{prop}\label{restriction-properties}
	The following hold for any monoid morphisms $f\colon A\to B$ and $f'\colon A'\to B'$:
	\begin{enumerate}
		\item $(f,f')^\ast\colon\Bimod{B}{B'}\to\Bimod{A}{A'}$ is continuous, faithful, and conservative (that is, reflects isomorphisms);
		\item $(f,f')^\ast(-)\cong (B\otimes_B-)\otimes_{B'}B'$, where, following \Cref{f-bimod}, we are seeing $B$ as an $A$-$B$ bimodule (via $f$) and $B'$ as a $B'$-$A'$ bimodule (via $f'$);
		\item if $\V$ has finite colimits and $\otimes$ preserves them in each variable, then  $(f,f')^\ast$ also preserves them;
		\item if $\V$ is abelian and the tensor product functors preserve finite colimits, then $(f,f')^\ast$ is an exact functor between abelian categories. 
	\end{enumerate}
\end{prop}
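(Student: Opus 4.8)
The plan is to prove the four properties of $(f,f')^\ast$ in sequence, leaning heavily on \Cref{prop:def-restr-of-scalars} (which constructs the functor), \Cref{forgetful} (the forgetful functor $U\colon \Bimod{A}{A'}\to\V$ is monadic, hence conservative and continuous, and creates limits), and the obvious compatibility $U_A\circ (f,f')^\ast = U_B$ where $U_A,U_B$ are the respective forgetful functors.

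\medskip

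\textbf{Part (i).} First I would observe that, by construction in \Cref{prop:def-restr-of-scalars}, $(f,f')^\ast$ is the identity on underlying objects and on morphisms, so $U_A(f,f')^\ast = U_B$. Faithfulness is then immediate since $U_B$ is faithful (by \Cref{forgetful}, $U_B$ is monadic, hence faithful). For conservativity, if $(f,f')^\ast t$ is an isomorphism in $\Bimod{A}{A'}$ then $U_A(f,f')^\ast t = U_B t$ is an isomorphism in $\V$; since $U_B$ is monadic it reflects isomorphisms, so $t$ is an isomorphism. For continuity, I would use that $U_A$ creates limits (monadic functors create all limits that exist in the base); given a limit cone in $\Bimod{B}{B'}$, apply $U_B$ to get a limit cone in $\V$, which equals the image under $U_A$ of the $(f,f')^\ast$-image; since $U_A$ creates limits, $(f,f')^\ast$ sends the limit cone to a limit cone. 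Alternatively, and perhaps more cleanly, part (ii) below exhibits $(f,f')^\ast$ as a composite of functors of the form $C\otimes_B-$ and $-\otimes_{B'}C'$, but since those are left adjoints one cannot directly conclude continuity that way — so the monadicity argument is the right route.

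\medskip

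\textbf{Part (ii).} Here I would invoke \Cref{f-bimod}: $B$ carries an $A$-$B$ bimodule structure via $f$ (namely $(B,m_B, m_B\circ(f\otimes 1_B))$, viewing it with left $A$-action through $f$), and symmetrically $B'$ is a $B'$-$A'$ bimodule via $f'$. Then $(B\otimes_B -)\otimes_{B'}B'$ is a functor $\Bimod{B}{B'}\to\Bimod{A}{A'}$ by \Cref{tensor-bimodules}. The claim is that it is naturally isomorphic to $(f,f')^\ast$. Using \Cref{Otimes_A cancels A} (or rather the analogous split-coequalizer argument: $B\otimes_B N\cong N$ as left $B$-modules, the splitting coming from the unit $i_B$), one sees $B\otimes_B N\cong N$ with the $A$-action being precisely $\nu\circ(f\otimes 1_N)$ — which is the left action defined in \Cref{prop:def-restr-of-scalars}. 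Symmetrically $N\otimes_{B'}B'\cong N$ with right $A'$-action $\mu\circ(1_N\otimes f')$. Composing and checking naturality (the isomorphisms are induced by the canonical unit/counit maps, hence natural) gives the identification.

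\medskip

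\textbf{Parts (iii) and (iv).} For (iii): under the hypothesis that $\V$ has finite colimits preserved by $\otimes$ in each variable, \Cref{forgetful}(iii) gives that $\Bimod{A}{A'}$ and $\Bimod{B}{B'}$ have finite colimits and both forgetful functors preserve (and, being monadic, create) them. Given a finite colimit cocone in $\Bimod{B}{B'}$, its image under $U_B$ is a colimit cocone in $\V$; this equals the $U_A$-image of the $(f,f')^\ast$-image, and since $U_A$ creates finite colimits, $(f,f')^\ast$ preserves the colimit. For (iv): by \Cref{abelian-modules}, under the stated hypotheses both bimodule categories are abelian and both forgetful functors are exact. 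Since $(f,f')^\ast$ is additive (identity on underlying objects, and $U$ is additive and faithful) and preserves finite limits by (i) and finite colimits by (iii), it is exact; equivalently, exactness of a sequence in either category is detected by $U$, and $U_A(f,f')^\ast=U_B$ preserves exactness.

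\medskip

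\textbf{Main obstacle.} The only genuinely non-formal step is part (ii): establishing the natural isomorphism $(f,f')^\ast(-)\cong(B\otimes_B-)\otimes_{B'}B'$, since one must unwind the coequalizer definitions of the relative tensor products, verify via the split-coequalizer splittings (as in \Cref{Otimes_A cancels A}) that $B\otimes_B N\cong N$ and $N\otimes_{B'}B'\cong N$ carry exactly the $A$- and $A'$-actions prescribed in \Cref{prop:def-restr-of-scalars}, and confirm naturality. Everything else is a routine consequence of monadicity of the forgetful functors together with results already established (\Cref{forgetful}, \Cref{abelian-modules}).
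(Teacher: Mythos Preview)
Your proposal is correct and follows essentially the same approach as the paper: both arguments hinge on the commutative triangle $U_{A,A'}\circ(f,f')^\ast = U_{B,B'}$ together with monadicity of the forgetful functors (\Cref{forgetful}) for parts (i) and (iii), and on the split-coequalizer identification $B\otimes_B N\cong N$ (as in \Cref{Otimes_A cancels A}) with the induced actions matching those of \Cref{prop:def-restr-of-scalars} for part (ii). Your treatment of (iv) is slightly more explicit than the paper's---you invoke both (i) and (iii) to get left and right exactness, whereas the paper just says ``direct consequence of (iii)''---but the substance is the same.
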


\begin{proof}
\begin{enumerate}[(i)]
	\item[]
	\item Consider the forgetful functors $U_{A,A'}\colon\Bimod{A}{A'}\to\V$ and  $U_{B,B'}\colon\Bimod{B}{B'}\to\V$, then by construction the triangle below commutes.
	\begin{center}
		\begin{tikzpicture}[baseline=(current  bounding  box.south), scale=2]
			
			\node (a0) at (0,0.7) {$\Bimod{B}{B'}$};
			\node (b0) at (1.2,0.7) {$\Bimod{A}{A'}$};
			\node (c0) at (0.6,0) {$\V$};
			
			\path[font=\scriptsize]
			
			(a0) edge [->] node [above] {$(f,f')^\ast$} (b0)
			(a0) edge [->] node [left] {$U_{B,B'}$} (c0)
			(b0) edge [->] node [right,xshift=0.1cm] {$U_{A,A'}$} (c0);
		\end{tikzpicture}	
	\end{center} 
	By Proposition~\ref{forgetful}, the functors $U_{A,A'}$ and $U_{B,B'}$ are both monadic, and hence in particular continuous, faithful, and reflect isomorphisms. This is enough to imply that also $(f,f')^\ast$ has these properties. 
	
	\item This follows easily by the definition of the tensor product of bimodules (Proposition~\ref{tensor-bimodules}).
	Indeed, if we see $B$ as an $A$-$B$ bimodule (with left $A$-action induced by restricting along $f$) it follows that $B\otimes_B M\cong M$ for any $M\in\BMod$ (see Remark~\ref{Otimes_A cancels A}). Moreover the left $A$-action on $B\otimes_B M$ corresponds to that defined on $(f,f')^\ast M$. 
	Then, similarly we can see $(B\otimes_B M)\otimes_B'B'\cong M\otimes_{B'}B'\cong M$ with $A$-$A'$ actions the same as $(f,f')^\ast M$. 
	
	\item  We consider the same commutative triangle as in (i) knowing moreover that $U_{A,A'}$ and $U_{B,B'}$ both preserve finite colimits by Proposition~\ref{forgetful}. It follows that $(f,f')^\ast$ also preserves them. 
	
	\item This is now a direct consequence of point (iii). 
\end{enumerate}

\end{proof}

In the classical case the extension of scalars can be described as \emph{tensoring} with the codomain $B$. We get a similar description also in our setting; see also~\cite[Remark~4.1.22]{brandenburg2014tensor}.

\begin{prop}\label{prop:def-ext-of-scalars}
	Let $\V$ be a monoidal category with coequalizers that are preserved by the tensor product.
	For any pair of monoid morphisms $(A,m_A,i_A)\xrightarrow{f}(B,m_B,i_B)$ and $(A',m_{A'},i_{A'})\xrightarrow{f'}(B',m_{B'},i_{B'})$ in $\Mon(\V)$, we can define the functor
	$$(f,f')_!:=(B\otimes_A-)\otimes_{A'}B'\colon\Bimod{A}{A'}\to\Bimod{B}{B'},$$
	called \emph{extension of scalars}, which is left adjoint to $(f,f')^\ast$. 
\end{prop}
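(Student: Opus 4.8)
The plan is to verify the adjunction $(f,f')_! \dashv (f,f')^\ast$ directly by producing a natural isomorphism of hom-sets
\[
\Bimod{B}{B'}\big((f,f')_!M,\,N\big)\;\cong\;\Bimod{A}{A'}\big(M,\,(f,f')^\ast N\big)
\]
for $M\in\Bimod{A}{A'}$ and $N\in\Bimod{B}{B'}$, assembling it out of the two one-sided adjunctions attached to $f$ and $f'$ separately. First I would treat the case $A'=B'=I$, $f'=1_I$, i.e. the left-module extension of scalars $f_! = B\otimes_A(-)$. Here I would exhibit, for a left $A$-module $M$ and a left $B$-module $N$, the bijection between left $B$-module maps $B\otimes_A M\to N$ and left $A$-module maps $M\to f^\ast N$: by the universal property of the coequalizer defining $B\otimes_A M$ (Proposition~\ref{tensor-bimodules}), a $B$-module map out of it is the same as a $B$-balanced map $B\otimes M\to N$, which in turn (using the unit $i_B$ and the $B$-action on $N$) corresponds to an $A$-module map $M\to f^\ast N$; the unit of this adjunction is $M\cong I\otimes_A M \xrightarrow{i_B\otimes 1} B\otimes_A M$ and the counit is induced by the action $B\otimes_B N\cong N$. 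Naturality in both variables is routine from functoriality of the coequalizer construction.

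Next I would do the symmetric right-hand version for $f'\colon A'\to B'$, giving $(-)\otimes_{A'}B' \dashv (-)\otimes_{B'}B'$; by Proposition~\ref{restriction-properties}(ii), post-composing with $(-)\otimes_{B'}B'$ is exactly the right-hand half of restriction of scalars. Then I would compose: since $(f,f')_! = (B\otimes_A -)\otimes_{A'}B'$ and, by Proposition~\ref{restriction-properties}(ii), $(f,f')^\ast \cong (B\otimes_B -)\otimes_{B'}B'$, the desired adjunction is the composite of the two established ones,
\[
\Bimod{B}{B'}\big((B\otimes_A M)\otimes_{A'}B',\,N\big)
\cong \Bimod{B}{A'}\big(B\otimes_A M,\,N\big)
\cong \Bimod{A}{A'}\big(M,\,(f,f')^\ast N\big),
\]
where the first isomorphism uses the $f'$-adjunction in the right slot and the second uses the $f$-adjunction in the left slot. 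One must check that the two one-sided operations act on genuinely disjoint sides of the bimodule and hence commute (both as functors and at the level of the adjunction triangles); this is where the associativity/coherence bookkeeping lives, but it is formal. Well-definedness of $(f,f')_!$ as a functor — i.e. that the iterated relative tensor product indeed lands in $\Bimod{B}{B'}$ and is functorial — follows from Proposition~\ref{tensor-bimodules} applied twice (with $C=B'$ and then with the roles adjusted), using that $\V$ has coequalizers preserved by $\otimes$.

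The main obstacle I anticipate is purely organizational rather than conceptual: making the ``one side at a time'' argument rigorous requires knowing that $B\otimes_A M$ retains its right $A'$-action (so that $(-)\otimes_{A'}B'$ still makes sense on it) and that the relative tensor products on the two sides genuinely commute up to coherent isomorphism — a Fubini-type statement for iterated coequalizers. Since $\V$ is only assumed monoidal with $\otimes$-preserved coequalizers (not symmetric, not closed), one cannot invoke Remark~\ref{rmk:bimodules-as-modules-if-V-symm} to reduce bimodules to one-sided modules, so this commutation has to be checked by hand from the coequalizer presentations; however, because each of $B\otimes_A(-)$ and $(-)\otimes_{A'}B'$ is built from colimits that $\otimes$ preserves in the relevant variable, the two coequalizers can be formed in either order and the standard interchange-of-colimits argument applies. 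Everything else — the explicit unit $\eta_M\colon M\to (f,f')^\ast(f,f')_!M$ induced by $i_B$ and $i_{B'}$, the counit induced by the $B$- and $B'$-actions, and the triangle identities — then reduces to diagram chases of the same flavour as the proof of Proposition~\ref{prop:def-restr-of-scalars}.
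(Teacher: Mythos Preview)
Your proposal is correct and follows essentially the same route as the paper: factor $(f,f')_!$ as $(1_B,f')_!\circ (f,1_{A'})_!$ and $(f,f')^\ast$ as $(f,1_{A'})^\ast\circ (1_B,f')^\ast$, establish each one-sided adjunction via its unit (built from $i_B$, resp.\ $i_{B'}$, and the coequalizer map) and counit (induced by the relevant action), and compose. The paper works directly in the bimodule categories $\Bimod{A}{A'}\leftrightarrows\Bimod{B}{A'}\leftrightarrows\Bimod{B}{B'}$ rather than first treating the pure left/right module cases, which sidesteps the ``upgrade'' you flag in your obstacle paragraph; but this is a difference of presentation, not of substance. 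One small notational slip: in your displayed chain the middle term should carry the restricted module $(1_B,f')^\ast N$ rather than $N$, though of course the underlying object is unchanged.
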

\begin{proof}(Sketch)
	Following Example~\ref{f-bimod}, the morphism $f$ induces a structure of $B$-$A$ bimodule on $B$, while the morphism $f'$ induces a structure of $A'$-$B$ bimodule on $B'$. 
	Thus, using Proposition~\ref{tensor-bimodules} twice, the functor
	$$ (f,f')_!:=(B\otimes_A-)\otimes_{A'}B'\colon\Bimod{A}{A'}\longrightarrow\Bimod{B}{B'}$$
	is well-defined.
	Now, we want to prove that this is a left adjoint to $(f,f')^\ast$.
	In order to do so, we first notice that, by definition,  $(f,f')_!=(f,1_{A'})_!\circ(1_B,f)_!$ and $(f,f')^\ast=(1_B,f)^\ast\circ(f,1_{A'})^\ast$. 
	Then, we can show that we get two adjunctions as below, which give the desired one $(f,f')_!\dashv(f,f')^\ast$ as composite. 
	\[\begin{tikzcd}[ampersand replacement=\&]
		{\Bimod{A}{A'}} \&\& {\Bimod{B}{A'}} \&\& {\Bimod{B}{B'}}
		\arrow[""{name=0, anchor=center, inner sep=0}, "{(f,1_{A'})^\ast}"'{xshift=0.2cm}, curve={height=6pt}, from=1-1, to=1-3]
		\arrow["{(f,f)^\ast}"', curve={height=35pt}, from=1-1, to=1-5]
		\arrow[""{name=1, anchor=center, inner sep=0}, "{(f,1_{A'})_!}"'{xshift=0.2cm}, curve={height=6pt}, from=1-3, to=1-1]
		\arrow[""{name=2, anchor=center, inner sep=0}, "{(1_B,f')^\ast}"'{xshift=-0.2cm}, curve={height=6pt}, from=1-3, to=1-5]
		\arrow["{(f,f')_!}"', curve={height=35pt}, from=1-5, to=1-1]
		\arrow[""{name=3, anchor=center, inner sep=0}, "{(1_B,f')_!}"'{xshift=-0.2cm}, curve={height=6pt}, from=1-5, to=1-3]
		\arrow["\bot"{description}, draw=none, from=1, to=0]
		\arrow["\bot"{description}, draw=none, from=3, to=2]
	\end{tikzcd}\]
	 
	 Both adjunctions are obtained in a similar way, hence we will show only the leftmost one. For convenience, we will write $f_!=(f,1_{A'})_!$ and $f^\ast=(f,1_{A'})^\ast$. 
	 The unit of the adjunction $\eta_M\colon M\to f^\ast f_!M$, for $M\in\Bimod{A}{A'}$, is given by the composite
	 $$ \eta_M\colon M\xrightarrow{\lambda_M^{-1}} I\otimes M\xrightarrow{i_B\otimes 1_M} B\otimes M \xrightarrow{\ q\ }B\otimes_A M=f^\ast f_!M,$$
	 where $q$ is the map defining $B\otimes_A M$ as a quotient of $B\otimes M$ (see Proposition~\ref{tensor-bimodules}).  
	  Conversely, the counit of the adjunction $\epsilon_N\colon f_! f^\ast N\to N$, for $N\in\Bimod{B}{A'}$, is the map
	 $$ \epsilon_N\colon f_! f^\ast N=B\otimes_A f^\ast N=B\otimes_A N \longrightarrow N $$
	 induced by the universal property of the coequalizer defining $B\otimes_A N$ applied to the left $B$-action $\nu_N\colon B\otimes N\to N$ (this coequalizes the pair of maps since $\nu_N$ distributes over $m_B$). \footnote{The proof that the co/unit described give an adjunction is completely analogous to the left modules extension-restriction of scalars adjunction described in, for example, \cite[Chapter~15, Page~103]{street_2007}.}
\end{proof}

\begin{rmk}
	We can give explicit descriptions of the unit and counit of the extension-restriction of scalars adjunction for generic pairs of morphisms $(f,f')$.
	\begin{itemize}
		\item The unit $\eta_M\colon M\to (B\otimes_AM)\otimes_{A'}B'=:(f,f')^\ast (f,f')_!M$, for $M\in\AMod$, is given by the composite
		$$ \eta_M\colon M\cong (I\otimes M)\otimes I\xrightarrow{(i_B\otimes 1_M)\otimes i_{B'}} (B\otimes M)\otimes B' \xrightarrow{\ q\otimes 1_{B'} }(B\otimes_A M)\otimes B'\xrightarrow{q'}(B\otimes_A M)\otimes_{A'} B',$$
		where $q$ is the map defining $B\otimes_A M$ as a quotient of $B\otimes M$ and similarly $q'$ for $-\otimes_{A'}-$  (see Proposition~\ref{tensor-bimodules}). 
		\item For the counit we first define the map $\widetilde{\epsilon_N}\colon B\otimes_A N \longrightarrow N$ as the map induced by the universal property of the coequalizer defining $B\otimes_A N$ applied to the left action $\nu_N\colon B\otimes N\to N$ (this coequalizes the pair of maps since $\nu_N$ distributes over $m_B$). 
		Notice that this is the counit of the adjunction $(f,1)_!\dashv(f,1)^\ast$.  
		Then, similarly, using $\widetilde{\epsilon_N}$ and the right $B'$-action of $B\otimes_AN$ we get the counit 
		$$\epsilon_N\colon (B\otimes_AN)\otimes_{A'}B'\to N$$ 
		as the map induced by the universal property of $(B\otimes_AN)\otimes_{A'}B'$ as a coequalizer. 
	\end{itemize}
\end{rmk}

\begin{rmk}
	The adjunction $(f,f')_!\dashv (f,f')^\star$ induces a monad $T:= (f,f')^\ast (f,f')_!\colon \Bimod{A}{A'}\to\Bimod{A}{A'}$ with unit $\eta$ as defined in the proof of \Cref{prop:def-ext-of-scalars} and multiplication:
	$$ T^2M= [\,B\otimes_A[\,(B\otimes_A M)\otimes_{A'}B'\,]\,]\otimes_{A'}B'\xrightarrow{(f,f')^\ast(\epsilon_{(f,f')_!M})}(B\otimes_A M)\otimes_{A'}B' M=TM. $$ 
	Similarly, we have a comonad $S:= (f,f')_! (f,f')^\ast\colon \Bimod{B}{B'}\to\Bimod{B}{B'}$ whose counit is $\epsilon$ (again defined in the proof of \Cref{prop:def-ext-of-scalars}) and whose comultiplication is defined by
	$$ SN= (B\otimes_A f^\ast N)\otimes_{A'}B'\xrightarrow{(f,f')_!(\eta_{(f,f')^\ast N})} [\,B\otimes_A[\,(B\otimes_A (f,f')^\ast N)\otimes_{A'}B'\,]\,]=S^2N. $$
\end{rmk}

\begin{rmk}
	When $A'=B'=I$ and $f'=1_I$, we recover the classical extension-restriction of scalars adjunction for left modules, see for instance \cite[Chapter~15, Page~103]{street_2007}. Similarly we get the one for right modules setting $A=B=I$ and $f=1_I$.  $f=1_I$.
	\begin{center}
	\begin{tikzpicture}[baseline=(current  bounding  box.south), scale=2]

		\node (f) at (0,0.4) {$\BMod$};
		\node (g) at (1.5,0.4) {$\AMod$};
		\node (h) at (0.75,0.45) {$\bot$};
		
		\path[font=\scriptsize]

		([yshift=-1.3pt]f.east) edge [->] node [below] {$f^\ast=(f,1_I)^\ast$} ([yshift=-1.3pt]g.west)
		([yshift=1.5pt]f.east) edge [bend left,<-] node [above] {$f_!=(f,1_I)_!$} ([yshift=1.5pt]g.west);
	\end{tikzpicture}
	\hspace{1cm}
	\begin{tikzpicture}[baseline=(current  bounding  box.south), scale=2]

		\node (f) at (0,0.4) {$\ModB$};
		\node (g) at (1.5,0.4) {$\ModA$};
		\node (h) at (0.75,0.45) {$\bot$};
		
		\path[font=\scriptsize]

		([yshift=-1.3pt]f.east) edge [->] node [below] {$f'^\ast=(1_I,f')^\ast$} ([yshift=-1.3pt]g.west)
		([yshift=1.5pt]f.east) edge [bend left,<-] node [above] {$f'_!=(1_I,f')_!$} ([yshift=1.5pt]g.west);
	\end{tikzpicture}
	\end{center}

	Moreover, setting $A=A'=B'=I$, $f=i_B\colon I\to B$ and $f'=1_I$, we recover the free-forgetful adjunction between $\V={}_I\!\Mod$ and $\BMod$. Similarly, we also get the free-forgetful adjunction with $\ModB$. 
	\begin{center}
		
		\begin{tikzpicture}[baseline=(current  bounding  box.south), scale=2]

			\node (f) at (0,0.4) {$\BMod$};
			\node (g) at (1.5,0.4) {$\V$};
			\node (h) at (0.8,0.45) {$\bot$};
			
			\path[font=\scriptsize]

			([yshift=-1.3pt]f.east) edge [->] node [below] {$(i_B)^\ast=(i_B,1_I)^\ast$} ([yshift=-1.3pt]g.west)
			([yshift=1.5pt]f.east) edge [bend left,<-] node [above] {$(i_B)_!=(i_B,1_I)_!$} ([yshift=1.5pt]g.west);
		\end{tikzpicture}
		\hspace{1cm}
		\begin{tikzpicture}[baseline=(current  bounding  box.south), scale=2]

			\node (f) at (0,0.4) {$\ModB$};
			\node (g) at (1.5,0.4) {$\V$};
			\node (h) at (0.8,0.45) {$\bot$};
			
			\path[font=\scriptsize]

			([yshift=-1.3pt]f.east) edge [->] node [below] {$(i_{B'})^\ast=(1_I,i_{B'})^\ast$} ([yshift=-1.3pt]g.west)
			([yshift=1.5pt]f.east) edge [bend left,<-] node [above] {$(i_{B'})_!=(1_I,i_{B'})_!$} ([yshift=1.5pt]g.west);
		\end{tikzpicture}
	\end{center}	

\end{rmk}

\section{First order differential calculi}
\label{sec:diff-calc}

In this section we will define the monoidal version of first order differential calculus over a monoid in an additive monoidal category $\V$ with kernel and cokernels (i.e. $\Ab$-enriched and with finite limits and colimits). 
Additive monoidal here means that $\otimes$ is an additive functor in each variable, following for instance \cite{Ho:embedding-thm-ab-mon-cat}.

\begin{eg}\label{eg:additive-monoidal-cats}
	The following are examples of monoidal additive categories. In most cases, the fact that the tensor product is additive in both variables follows from the fact that the monoidal structure is biclosed (and hence tensoring is cocontinuous in both variables).
\begin{enumerate}[(i)]
	\item Any fusion category \cite{FusionCats,egno:tensor-cats}, e.g., finite dimensional $\bk$-representations of a group $G$ (or algebraic representations of an affine algebraic group $G$).
	\item The category $\Mod$ of modules over a commutative ring $\bk$ is monoidal abelian. 
	This is also symmetric and closed.  
	\item The category $\GrMod$ of graded modules over a graded commutative ring equipped with the graded tensor product is monoidal abelian.
	This is also symmetric and closed.  
	\item The category (co)chain complexes over a commutative ring $\bk$ is monoidal abelian. 
	This is also symmetric and closed.  
	\item The category of condensed abelian groups is monoidal abelian.
	This is also symmetric and closed, see~\cite[p.~13]{scholze2019condensed}.
	\item The category of solid abelian groups is symmetric monoidal abelian,  see~\cite[p.~34 and p.~42]{scholze2019condensed}. The fact that the tensor product is additive in both variables follows from the fact that it is obtained by first taking the tensor product as condensed abelian groups (which is additive) and then applying an (additive) left adjoint. 
	\item Let $\V$ be symmetric monoidal closed and abelian, and $\C$ be a small category enriched on $\V$. Then the category $\V\tx{-Prof}(\C)$ of endo-profunctors (i.e. distributors) on $\C$ is monoidal additive and closed. By definition  $\V\tx{-Prof}(\C):=[\C\otimes\C^\op,\V]$, is the category of $\V$-enriched functors out of $\C\otimes\C^\op$ (see~\cite[1.4]{Kel82:libro}) into $\V$, where the monoidal structure is given by profunctor composition, see~\cite{shulman2008framed} in particular Example~5.9(2).
	\item The category $\Ban$ of Banach spaces with short linear operators as morphisms is symmetric monoidal but not $\mathbf{Ab}$-enriched. $\Banb$ of Banach spaces with bounded linear operators is a quasi-abelian category that is symmetric monoidal closed if endowed with the projective tensor product (see for instance~\cite[Section~4.4]{ST2012traces}). 
	\item The category of Bornological abelian group is quasi-abelian monoidal \cite{Bambozzi2014OnAG} This is also symmetric and closed. 
	\item The category of complete Bornological spaces is closed symmetric quasi-abelian monoidal \cite{savage2023koszul}. 
	\item The category of inductive limits of Banach spaces is closed symmetric quasi-abelian monoidal \cite{savage2023koszul}.  
	\item Let $\V$ monoidal pre-abelian such that the monoidal product preserves coequalizers, and $A$ and object in $\VMon$, then $\AModA$ is monoidal pre-abelian. 
	If $\V$ is left (resp. right) closed, then $\AModA$ is left (resp. right) closed as well.
\end{enumerate}
\end{eg}

\subsection{Preliminary definitions}

We start by defining what it means to be a first order differential calculus over a monoid $(A,m,i)$ in an additive monoidal category $\V$, generalizing directly the ordinary definition for associative $\bk$-algebras (for $\bk$ a commutative ring). First we fix some notation.

\begin{notation}
	For a bimodule $(M,\mu,\nu)\in\AModA$, and morphisms $f\colon X\to M$ and $g\colon Y\to A$, with abuse of notation, we write: 
	\begin{center}
		$ g\cdot f := \mu(g\otimes f)$,\\
		$ f\cdot g := \nu(f\otimes g)$.
	\end{center}
	Even though we use the same notation for two different actions, whether we are using $\mu$ or $\nu$ is understood from the codomains of the morphisms we are considering. 
\end{notation}

\begin{defi}\label{def:enriched-1st-diff-calc}
A \emph{first order differential calculus} $\Omega^1_d$ over a monoid $A$ in $\V$ is an object $(\Omega^1_d,\mu_d,\nu_d)$ in $\AModA$ equipped with a \emph{differential}, that is a morphism $d\colon A\rightarrow \Omega^1_d$ in $\V$, satisfying the following axioms.
\begin{enumerate}
\item (Leibniz rule) The equality $dm= d\cdot 1_A + 1_A\cdot d$, depicted below, holds in $\V$.
\begin{equation}\label{eq:leibniz-enriched}
\begin{tikzcd}[ampersand replacement=\&]
	{A\otimes A} \& A \& {\Omega_d^1}
	\arrow["m", from=1-1, to=1-2]
	\arrow["{d\cdot 1_A+1_A\cdot d}"', curve={height=18pt}, from=1-1, to=1-3]
	\arrow["d", from=1-2, to=1-3]
\end{tikzcd}
\end{equation}
\item (Surjectivity condition) $\Omega^1_d$ is generated as a left module by $dA$, the image of $d$; that is, $1_A\cdot d$ is an epimorphism in $\V$. 
\begin{equation}\label{eq:surj-cond-enriched}
\begin{tikzcd}[ampersand replacement=\&]
	{A\otimes A} \& {A\otimes\Omega_d^1} \& {\Omega_d^1}
	\arrow["{1_A\otimes d}", from=1-1, to=1-2]
	\arrow["{1_A\cdot d}"', curve={height=18pt}, two heads, from=1-1, to=1-3]
	\arrow["{\mu_d}", from=1-2, to=1-3]
\end{tikzcd}
\end{equation}
\end{enumerate}
We call \emph{generalized first order differential calculus} a pair $(\Omega^1_d,d\colon A\to\Omega^1_d)$ as above which satisfies only the Leibniz rule \eqref{eq:leibniz-enriched}
\end{defi} 

A generalized first order differential calculus is also termed an \emph{$A$-derivation} in \cite[Definition~7.0.1]{ONeill:master-thesis}.

\begin{rmk}
	Following \cite[Definition~7.0.1]{ONeill:master-thesis}, one could notice that for the definition of first order differential calculus (\Cref{def:enriched-1st-diff-calc}) we did not need an $\Ab$-enriched category but a $\CMon$-enriched one would have been enough. 
	This said, the $\Ab$-enrichment requirement will be necessary  later on for the existence of the universal calculus (see \Cref{sec:univ first ord calc}).
\end{rmk}

\begin{rmk}
	Given monoids $A,B$ in $\V$, the forgetful functor $U\colon\AModB\to\V$ preserves and reflects epimorphisms: it preserves them by Proposition~\ref{forgetful}~(iii) and reflects them since every conservative functor does. It follows that, in the definition above, to say that $1_A\cdot d$ is an epimorphism in $\V$ is the same as saying that it is an epimorphism in $\AMod$. \\
	Similarly, in \Cref{prop:leibz-then-left-right} below, the map $(d\cdot 1_A)$ is an epimorphism in $\V$ if and only if it is such in $\ModA$, and $(1_A\cdot d\cdot 1_A)$ is an epimorphism in $\V$ if and only if it is such in $\AModA$. 
\end{rmk}

In the definition above, it might look unnatural to require only the surjectivity condition for the left module structure on $\Omega^1_d$ and nothing for the right one. 
This choice is justified by the next proposition. 

\begin{prop}\label{prop:leibz-then-left-right}
Let $(M,l_M,r_M)\in\AModA$ be a $A$-bimodule equipped with a map $d\colon A\to M$ satisfying the Leibniz rule \eqref{eq:leibniz-enriched}. The following are equivalent:\begin{enumerate}
	\item $(1_A\cdot d)\colon A\otimes A\to M$ is an epimorphism in $\V$;	
	\item $(d\cdot 1_A)\colon A\otimes A\to M$ is an epimorphism in $\V$;
	\item $(1_A\cdot d\cdot 1_A)\colon A\otimes A\otimes A\to M$ is an epimorphism in $\V$.
\end{enumerate}
\end{prop}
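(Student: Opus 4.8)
The plan is to prove the cyclic chain of implications $(i)\Rightarrow(iii)\Rightarrow(ii)\Rightarrow(i)$, using the Leibniz rule to rewrite $1_A\cdot d\cdot 1_A$ in terms of $1_A\cdot d$ and of $d\cdot 1_A$, so that each of the three maps is expressed as a composite or a sum involving the others post-composed with module actions. The key algebraic identity is the consequence of \eqref{eq:leibniz-enriched} that $d(m(a\otimes b)) = (da)\cdot b + a\cdot(db)$; tensoring appropriately on the left with $A$ and using associativity/unitality of the bimodule structure, one gets, as morphisms $A\otimes A\otimes A\to M$,
\begin{equation*}
1_A\cdot d\cdot 1_A = (1_A\cdot d)\circ(1_A\otimes m) - 1_A\cdot(d m)\cdot\text{(nothing)}\ ,
\end{equation*}
or more precisely the relation $(1_A\cdot d)\circ(1_A\otimes m) = 1_A\cdot d\cdot 1_A + (\text{stuff})\cdot(1_A\cdot d)$ coming from $d\,m = d\cdot 1_A + 1_A\cdot d$ tensored with $1_A$ on the left. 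The point is that each such identity exhibits one of the three maps as a sum of composites in which another of the three maps appears as the \emph{last} factor, so that if the latter is epi then so is the former (and $\V$ being additive, a sum of morphisms that factor through a common epi still factors through that epi — more carefully, if $g = h_1 f + h_2 f' $ with $f$ epi and we already know the relevant cancellation, one argues directly that $g$ is epi).

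More concretely: for $(i)\Rightarrow(iii)$, post-compose $1_A\cdot d$ with $1_A\otimes(-)\cdot 1_A$ in the obvious way, or rather observe that $1_A\cdot d\cdot 1_A\colon A\otimes A\otimes A\to M$ equals $r_M\circ((1_A\cdot d)\otimes 1_A)$ up to the associator, and a tensor product of an epi with an identity is again epi in an additive monoidal category (since $\otimes$ is additive in each variable, hence right exact, hence preserves epimorphisms — this is where the additivity hypothesis on $\V$ enters). Then $1_A\cdot d\cdot 1_A$ is a composite of the epi $(1_A\cdot d)\otimes 1_A$ with $r_M$; to see $r_M$ restricted appropriately is epi one uses the Leibniz rule once more. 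For $(iii)\Rightarrow(ii)$, precompose with $1_A\otimes 1_A\otimes i\colon A\otimes A\to A\otimes A\otimes A$ (wait — that lands wrongly); rather use the unitality of the right action: $d\cdot 1_A = (1_A\cdot d\cdot 1_A)\circ(i\otimes 1_A\otimes 1_A)$ followed by the left unit isomorphism, which does \emph{not} immediately give an epi. The correct move for $(iii)\Rightarrow(ii)$ and $(ii)\Rightarrow(i)$ is symmetric to $(i)\Rightarrow(iii)$: use the Leibniz rule to trade a left-multiplication for a right-multiplication plus a $d\,m$ term, exploiting that $d\cdot 1_A$ and $1_A\cdot d$ differ precisely by $d\,m$ composed with $m\otimes 1_A$ resp.\ $1_A\otimes m$, which are epis (as $m$ is part of a monoid with unit, $m\otimes 1$ is split epi via $i\otimes 1$).

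The main obstacle I anticipate is bookkeeping the Leibniz rule at the level of morphisms in $\V$ without elementwise notation: one must carefully insert associators $\alpha$ and unitors $\lambda,\rho$ and check that the commuting diagrams expressing "$1_A\cdot d\cdot 1_A$ factors as (epi)$\,\circ\,$(something) plus (epi)$\,\circ\,$(something)" actually commute, using only the bimodule axioms from \Cref{bimod} and the Leibniz square \eqref{eq:leibniz-enriched}. The genuinely substantive input, beyond diagram-chasing, is the observation that in a monoidal additive category $\otimes$ preserves epimorphisms in each variable (it is additive, hence cokernel-preserving, hence epi-preserving) and that $m\colon A\otimes A\to A$ is a split epimorphism (split by $i\otimes 1_A$ post-composed with $\lambda$), so that $m\otimes 1_A$ and $1_A\otimes m$ are split epis; combined with the fact that a sum $f + g$ of morphisms with $f$ a split epi and $g$ arbitrary need careful handling, but in our case the decomposition will be of the form (morphism)$\,\circ\,$(split epi) $+$ (morphism)$\,\circ\,$(the target epi), from which epiness of the total follows once one knows the target map in question is epi by hypothesis. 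I would organize the whole argument as: (1) record the three consequences of Leibniz as commuting diagrams; (2) note $\otimes$ preserves epis and $m$ is split epi; (3) deduce each implication by composing/adding.
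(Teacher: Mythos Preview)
Your proposal contains a genuine error and misses a key simplification.

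The error: you claim that $\otimes$ is ``additive in each variable, hence right exact, hence preserves epimorphisms''. Additive functors do \emph{not} in general preserve cokernels or epimorphisms; $\Hom(A,-)$ is additive but only left exact. The standing hypothesis here is merely that $\V$ is monoidal additive, with no assumption that cokernels exist or are preserved by $\otimes$. Your argument for $(i)\Rightarrow(iii)$, which rests on $(1_A\cdot d)\otimes 1_A$ being an epimorphism, therefore has a gap that cannot be closed under the stated hypotheses.

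The paper's proof avoids this entirely by running the factorization the other way. For $(i)\Rightarrow(iii)$ one simply observes
\[
1_A\cdot d \;=\; (1_A\cdot d\cdot 1_A)\circ(1_A\otimes 1_A\otimes i),
\]
using that $i$ is a unit for the right action; since the left-hand side is an epimorphism, so is the outermost factor $1_A\cdot d\cdot 1_A$. No preservation property of $\otimes$ is needed. For the converse $(iii)\Rightarrow(i)$ the paper uses precisely the Leibniz manipulation you gesture at: one computes
\[
1_A\cdot d\cdot 1_A \;=\; (1_A\cdot d)\circ(1_A\otimes m - m\otimes 1_A),
\]
exhibiting $1_A\cdot d\cdot 1_A$ as $(1_A\cdot d)$ post-composed with something, so if the former is epi then so is $1_A\cdot d$. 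The equivalence $(ii)\Leftrightarrow(iii)$ is then symmetric. Your instinct to use Leibniz for the harder direction was correct; what you were missing is the trivial unit trick for the easy direction, which makes all the worry about $\otimes$ preserving epis, $m$ being split epi, and sums of morphisms through a common epi entirely unnecessary.
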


\begin{proof}
We shall prove the equivalence $(i)\Leftrightarrow(iii)$; then $(ii)\Leftrightarrow(iii)$ will be totally analogous.

$(i)\Rightarrow(iii)$. We can write
$$ 1_A\cdot d = (1_A\cdot d\cdot 1_A)\circ (1_A\otimes 1_A\otimes \eta) $$
where $\eta$ is the unit of $A$. Thus, if $1_A\cdot d$ is an epimorphism, so is $1_A\cdot d\cdot 1_A$.

$(iii)\Rightarrow(i)$. Using the Leibniz rule we can write
\begin{align}
	1_A\cdot d\cdot 1_A&=1_A\cdot(dm- 1_A\cdot d)\tag{Leibniz}\\
	&= 1_A\cdot (d\circ m)- m\cdot d\tag{linear and $1_A\cdot1_A=m$}\\
	&= (1_A\cdot d)\circ (1_A\otimes m)-(1_A\cdot d)\circ (m\otimes 1_A)\tag{functoriality}\\
	&= (1_A\cdot d)\circ (1_A\otimes m - m\otimes 1_A);\notag
\end{align}
therefore, if $1_A\cdot d\cdot 1_A$ is an epimorphism, so is $1_A\cdot d$.
\end{proof}

\begin{rmk}\label{surjectivity-bimodule-map}
	The advantage in using $(1_A\cdot d\cdot 1_A)\colon A\otimes A\otimes A\to M$, rather than the other two, is that this is a map of $A$-bimodules. Indeed, we can see $A\otimes A\otimes A$ as an $A$-bimodule with left action $m\otimes 1_A\otimes 1_A$ and right action $1_A\otimes 1_A\otimes m$; in other words, $A\otimes A\otimes A$ is the free $A$-bimodule on the object $A\in\V$.
\end{rmk}

A property about first order differential calculi:

\begin{lemma}
	Let $(\Omega^1_d,d)$ be a first order differential calculus on $A$. 
	Then $d\circ i=0$, where $i\colon I\to A$ is the unit of the monoid $A$.
\end{lemma}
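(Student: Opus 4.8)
The plan is to exploit the Leibniz rule \eqref{eq:leibniz-enriched} by precomposing it with the morphism $i\otimes i\colon I\otimes I\to A\otimes A$, and then to read off the vanishing of $d\circ i$ from the unit axioms together with the $\Ab$-enrichment of $\V$. Since composition in $\V$ is additive in each variable, precomposing the identity $dm=d\cdot 1_A+1_A\cdot d$ with $i\otimes i$ preserves the sum on the right-hand side, so it is enough to compute the three resulting composites $I\otimes I\to\Omega^1_d$.

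For the left-hand side I would use the unit axiom of the monoid $A$ (\Cref{defi:monoid}), namely $m\circ(i\otimes 1_A)=\lambda_A$, so that $m\circ(i\otimes i)=m\circ(i\otimes 1_A)\circ(1_I\otimes i)=\lambda_A\circ(1_I\otimes i)=i\circ\lambda_I$ by naturality of $\lambda$; hence $d\circ m\circ(i\otimes i)=d\circ i\circ\lambda_I$. For the first summand on the right, functoriality of $\otimes$ gives $(d\cdot 1_A)\circ(i\otimes i)=\nu_d\circ(d\otimes 1_A)\circ(i\otimes i)=\nu_d\circ(di\otimes i)=\nu_d\circ(1_{\Omega^1_d}\otimes i)\circ(di\otimes 1_I)$, which by the right-module unit axiom equals $\rho_{\Omega^1_d}\circ(di\otimes 1_I)$, and this is $di\circ\rho_I$ by naturality of $\rho$. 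Symmetrically, the second summand is $(1_A\cdot d)\circ(i\otimes i)=\mu_d\circ(1_A\otimes d)\circ(i\otimes i)=\mu_d\circ(i\otimes di)=\mu_d\circ(i\otimes 1_{\Omega^1_d})\circ(1_I\otimes di)=\lambda_{\Omega^1_d}\circ(1_I\otimes di)=di\circ\lambda_I$, using the left-module unit axiom and naturality of $\lambda$.

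Putting these together, the Leibniz rule yields $di\circ\lambda_I=di\circ\rho_I+di\circ\lambda_I$ as morphisms $I\otimes I\to\Omega^1_d$. Cancelling $di\circ\lambda_I$ (here using that $\V$ is $\Ab$-enriched, not merely $\CMon$-enriched) gives $di\circ\rho_I=0$, and since $\rho_I$ is an isomorphism we conclude $d\circ i=0$. I do not anticipate any genuine obstacle: the argument is a short diagram chase, and the only points requiring care are the bookkeeping of the coherence isomorphisms $\lambda,\rho$ and their naturality, and the (essential) use of subtraction of morphisms to cancel $di\circ\lambda_I$. Note that the surjectivity condition of \Cref{def:enriched-1st-diff-calc} is never used, so the statement in fact holds verbatim for generalized first order differential calculi.
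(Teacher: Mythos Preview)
Your proof is correct and follows essentially the same approach as the paper: precompose the Leibniz rule with $i\otimes i$, use the monoid and module unit axioms to simplify each term, and cancel using the $\Ab$-enrichment. The only difference is that you track the unitors $\lambda,\rho$ explicitly, whereas the paper suppresses them (identifying $I\otimes I$ with $I$ as in Notation~\ref{i-tensor}) to write the conclusion directly as $d\circ i = d\circ i + d\circ i$.
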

\begin{proof}
	By restricting the Leibniz rule along $i\otimes i\colon I\cong I\otimes I\to A\otimes A$ and applying the functoriality of the tensor product we obtain
	\begin{align}
			d\circ i&=d\circ m\circ (i\otimes i)\tag{monoid unit}\\
			&= (d\cdot 1_A)\circ (i\otimes i)+(1_A\cdot d)\circ (i\otimes i)\tag{Leibniz}\\
			&= (d\circ 1)\cdot (1_A\circ i)+( 1_A\circ i)\cdot (d\circ i)\tag{functoriality}\\
			&= (d\circ i)\cdot i +i\cdot (d\circ i)\notag\\
			&= d\circ i + d\circ i \tag{$i$ unit for $\mu_d$ and $\nu_d$}
	\end{align}
	By additivity this implies $d\circ i=0$.
\end{proof}

\subsection{The universal first order differential calculus}\label{sec:univ first ord calc}

In this subsection we extend the notion of universal first order differential calculus from the setting of associative algebras to monoids in monoidal additive categories. 

\begin{notation}\label{i-tensor}
	For simplicity of notation, given a morphism $f\colon B\to C$, we will write $i\otimes f\colon B\to A\otimes C$ to actually mean the composite $(i\otimes f)\circ \lambda_B^{-1}$. Similarly, by $f\otimes i\colon B\to C\otimes A$ we really mean the composite $(f\otimes i)\circ \rho_B^{-1}$.
\end{notation}

Let $(A,m,i)$ be a monoid in a monoidal additive category $\V$. 
Then, let us consider the kernel $\iota^1_u\colon\Omega_u^1\mono A\otimes A$ of the multiplication map $m\colon A\otimes A\to A$. 
By left/right unital axioms for $m$ we have that $m\circ( i\otimes 1_A)=1_A=m\circ (1_A\otimes i)$; hence by the universal property of the kernel there exists a unique map $d_u\colon A\to \Omega_u^1$ making the triangle below commutative.
\[\begin{tikzcd}[ampersand replacement=\&]
	{\Omega_u^1} \& {A\otimes A} \& A \\
	A
	\arrow["{\iota^1_u}", tail, from=1-1, to=1-2]
	\arrow["m", from=1-2, to=1-3]
	\arrow["{\exists!\;d}", from=2-1, to=1-1]
	\arrow["{i\otimes 1_A-1_A\otimes i}"', from=2-1, to=1-2]
\end{tikzcd}\]
In the next proposition, we show that $(\Omega_u^1,d_u)$ is a first order differential calculus over $A$, and  
we refer to this as the \emph{universal first order differential calculus} over the monoid $A$.

\begin{prop}\label{prop:univ-1-d-cal-well-def}
With the definitions given above, $(\Omega_u^1,d_u)$ is a first order differential calculus over $A$. Moreover $(d_u\cdot 1_A)$ is a split epimorphism in $\V$ with right inverse $-\iota_u^1$.
\end{prop}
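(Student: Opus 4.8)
The plan is to handle the two assertions in turn: first I would equip $\Omega^1_u$ with the $A$-bimodule structure that is implicit in the statement (without it the expression $d_u\cdot 1_A$ has no meaning), then verify the Leibniz rule together with the claimed splitting, and finally read off the surjectivity condition from \Cref{prop:leibz-then-left-right}. For the bimodule structure, recall from \Cref{forgetful}(ii) that $A\otimes A$ is the free $A$-bimodule on $I\in\V$, with left action $m\otimes 1_A$ and right action $1_A\otimes m$; associativity of $m$ then shows that $m\colon A\otimes A\to A$ is a morphism of $A$-bimodules onto $(A,m,m)$. Since $U\colon\AModA\to\V$ is monadic (\Cref{forgetful}(i)) it creates limits, so the kernel $\iota^1_u\colon\Omega^1_u\mono A\otimes A$ of $m$, formed in $\V$, lifts uniquely to a kernel in $\AModA$; this equips $\Omega^1_u$ with actions $(\mu_d,\nu_d)$ for which $\iota^1_u$ is bimodule-linear.

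Next I would prove the Leibniz rule \eqref{eq:leibniz-enriched}. Since $\iota^1_u$ is a monomorphism it suffices to verify the identity after post-composition with $\iota^1_u$. Using right-linearity of $\iota^1_u$ and the defining equation $\iota^1_u d_u=i\otimes 1_A-1_A\otimes i$, one computes
\[
\iota^1_u\circ(d_u\cdot 1_A)=(1_A\otimes m)\circ\big((\iota^1_u d_u)\otimes 1_A\big)=i\otimes m-1_{A\otimes A},
\]
where the last equality uses the unit axiom for $m$ in the notational convention of \Cref{i-tensor}; symmetrically, left-linearity gives $\iota^1_u\circ(1_A\cdot d_u)=1_{A\otimes A}-m\otimes i$. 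Adding these, the two copies of $1_{A\otimes A}$ cancel, leaving $i\otimes m-m\otimes i=(i\otimes 1_A-1_A\otimes i)\circ m=\iota^1_u\circ d_u\circ m$; cancelling the monomorphism $\iota^1_u$ gives the Leibniz rule.

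For the ``moreover'' claim I would post-compose the first displayed formula with $\iota^1_u$. The summand $i\otimes m$ contributes $(i\otimes 1_A)\circ m\circ\iota^1_u=0$, since $m\circ\iota^1_u=0$ by definition of the kernel, so $\iota^1_u\circ(d_u\cdot 1_A)\circ\iota^1_u=-\iota^1_u=\iota^1_u\circ(-1_{\Omega^1_u})$; cancelling $\iota^1_u$ yields $(d_u\cdot 1_A)\circ(-\iota^1_u)=1_{\Omega^1_u}$, i.e.\ $d_u\cdot 1_A$ is a split epimorphism in $\V$ with right inverse $-\iota^1_u$. In particular $d_u\cdot 1_A$ is an epimorphism, so by the implication (ii)$\Rightarrow$(i) of \Cref{prop:leibz-then-left-right} the map $1_A\cdot d_u$ is an epimorphism; this is the surjectivity condition \eqref{eq:surj-cond-enriched}, and together with the Leibniz rule it shows that $(\Omega^1_u,d_u)$ is a first order differential calculus.

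I do not anticipate a genuine obstacle; the only point requiring care is the bookkeeping with the unitors and associators absorbed into the convention of \Cref{i-tensor} — for instance the identities $(i\otimes 1_A)\circ m=i\otimes m$, $(1_A\otimes i)\circ m=m\otimes i$, and $(1_A\otimes m)\circ\big((i\otimes 1_A)\otimes 1_A\big)=i\otimes m$ — which amounts to routine coherence verifications.
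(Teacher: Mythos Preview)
Your proof is correct and follows essentially the same approach as the paper: you lift the kernel to $\AModA$ via monadicity of $U$, verify Leibniz after post-composing with the monomorphism $\iota_u^1$ using the same two key identities $\iota_u^1\circ(d_u\cdot 1_A)=i\otimes m-1_{A\otimes A}$ and $\iota_u^1\circ(1_A\cdot d_u)=1_{A\otimes A}-m\otimes i$, and obtain the splitting $(d_u\cdot 1_A)\circ(-\iota_u^1)=1$ by the same calculation. The only cosmetic difference is that you explicitly invoke \Cref{prop:leibz-then-left-right} to pass from ``$d_u\cdot 1_A$ is epi'' to the surjectivity condition ``$1_A\cdot d_u$ is epi'', whereas the paper leaves this implication implicit.
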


\begin{proof}
	
First, we need to show that $\Omega_u^1$ inherits a structure of $A$-bimodule. 
Note that $A\otimes A$ has a canonical $A$-bimodule structure whose left and right actions are respectively $m\otimes 1_A$ and $1_A\otimes m$. 
Moreover, by associativity, $m\colon A\otimes A\to A$ is a map in $\AModA$. 
Since kernels in $\AModA$ are computed as in $\V$ (see \Cref{forgetful}), then $\iota_u\colon\Omega_u^1\mono A\otimes A\in\AModA$. 
Therefore, in particular, $\Omega_u^1$ is an $A$-bimodule. 
Explicitly, its left action $l_u\colon A\otimes \Omega_u^1\to \Omega_u^1$ is defined as the dashed map below
\begin{center}
	\begin{tikzpicture}[baseline=(current  bounding  box.south), scale=2]
		
		\node (a1) at (0,0.8) {$A\otimes \Omega_u^1$};
		\node (b1) at (1.5,0.8) {$A\otimes A\otimes A$};
		\node (c1) at (3,0.8) {$A\otimes A$};
		\node (a0) at (0,0) {$\Omega_u^1$};
		\node (b0) at (1.5,0) {$A\otimes A$};
		\node (c0) at (3,0) {$A$};
		
		\path[font=\scriptsize]
		
		(a1) edge [->] node [above] {$1_A\otimes \iota_u$} (b1)
		(b1) edge [->] node [above] {$1_A\otimes m$} (c1)
		(a0) edge [>->] node [below] {$\iota_u^1$} (b0)
		(b0) edge [->] node [below] {$m$} (c0)
		(a1) edge [dashed, ->] node [left] {$l_u$} (a0)
		(b1) edge [->] node [right] {$m\otimes 1_A$} (b0)
		(c1) edge [->] node [right] {$m$} (c0);
		
	\end{tikzpicture}	
\end{center} 
induced by the universal property of the kernel using that the top horizontal composite is $0$. 
	
Next we shall prove that the Leibniz rule holds for $d_u\colon A\to \Omega_u^1$.
Since $\iota_u$ is a monomorphism, this amounts to show that
$$ \iota_u^1\circ d_u\circ m=\iota_u^1\circ (d_u\cdot 1_A)+\iota_u^1\circ (1_A\cdot d_u). $$
On one hand we have
\begin{align}
	\iota_u^1 \circ d_u \circ m &= (i\otimes 1_A)\circ  m - (1_A\otimes i)\circ  m \tag{def of $d_u$}\\
	& = (i\otimes m) - (m\otimes i),\tag{$m$ acts on $A$}	
\end{align}
where we are identifying $A\otimes I$ and $I\otimes A$ with $A$ as in Notation~\ref{i-tensor}.
For the other, note that
\begin{align}
	\iota_u^1\circ (d_u\cdot 1_A) &= (1_A\otimes m)\circ (\iota_u^1\otimes 1_A)\circ (d_u\otimes 1_A) \tag{def of $r_u$} \\
	&= (1_A \otimes m)\circ ((\iota_u^1\circ  d_u) \otimes 1_A) \tag{functoriality} \\
	&= (1_A \otimes m)\circ (i\otimes 1_A\otimes 1_A - 1_A\otimes i\otimes 1_A) \tag{def of $d_u$} \\
	&= i\otimes m - (1_A\otimes m)\circ (1_A\otimes i\otimes 1_A) \tag{functoriality}\\
	&= i\otimes m - 1_A\otimes 1_A.\tag{$i$ unit for $m$}
\end{align}
Similarly,
\begin{align}
	\iota_u^1\circ (1_A\cdot d_u) 
	&=  1_A\otimes 1_A - m\otimes i.\notag
\end{align}
Putting everything together we obtain
\begin{align}
	\iota_u^1\circ (d_u\cdot 1_A)+\iota_u^1\circ (1_A\cdot d_u)& = i\otimes m - 1_A\otimes 1_A + 1_A\otimes 1_A - m\otimes i\notag\\
	&= i\otimes m - m\otimes i\notag\\
	&= \iota_u^1\circ  d_u \circ m.\notag
\end{align}

It remains to prove the surjectivity condition; that is to show that $(d_u\cdot 1_A)$ is an epimorphism. 
We achieve that by proving the equality $-(d_u\cdot 1_A)\iota_u^1=1_{\Omega_u^1}$, so that $(d_u\cdot 1_A)$ is actually a split epimorphism.
Consider the following chain of equalities
\begin{align}
	\iota_u^1\circ (d_u\cdot 1_A)\circ \iota_u^1 &= (i\otimes m - 1_A\otimes 1_A)\circ \iota_u^1 \tag{seen above}\\
	&= (i\otimes m)\circ \iota_u^1 - \iota_u^1\tag{functoriality}\\
	&= (i\otimes( m\circ \iota_u^1)) - \iota_u^1\tag{$\iota_u^1$ acts on $A$}\\
	&= -\iota_u^1.\tag{$m\iota_u^1=0$}
\end{align}
Since $\iota_u^1$ is a monomorphism, this implies that $-(d_u\cdot 1_A)\circ \iota_u^1=1_{\Omega_u^1}$.
\end{proof}

\begin{prop}\label{universal-prop}
The first order differential calculus $(\Omega_u^1,d_u)$ is universal; that is, for any other first order differential calculus $(\Omega_d^1,d)$, there exists a unique morphism $f\colon \Omega_u^1\to\Omega_d^1$ in $\AModA$ making the triangle
\[\begin{tikzcd}[ampersand replacement=\&]
	\& A \\
	{\Omega_u^1} \&\& {\Omega_d^1}
	\arrow["{d}", from=1-2, to=2-3]
	\arrow["{d_u}"', from=1-2, to=2-1]
	\arrow["{\exists! f}"', from=2-1, to=2-3]
\end{tikzcd}\]
commute in $\V$.
Such a morphism is always an epimorphism.
\end{prop}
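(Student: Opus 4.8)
The plan is to write down the comparison morphism by hand (no kernel universal property is needed here, unlike for $d_u$ itself) as
$$f:=(1_A\cdot d)\circ\iota^1_u\;=\;\mu_d\circ(1_A\otimes d)\circ\iota^1_u\colon\Omega^1_u\longrightarrow\Omega^1_d,$$
and then to extract every claimed property from just three ingredients: the kernel equation $m\circ\iota^1_u=0$, the Leibniz rule for $d$, and the surjectivity conditions for the two calculi. I would organise the argument as four checks, in this order: (a) $f\circ d_u=d$; (b) $f$ is a morphism of $A$-bimodules; (c) $f$ is the unique such morphism; (d) $f$ is an epimorphism. Throughout, the coherence isomorphisms of $\V$ are suppressed as in the rest of \Cref{sec:diff-calc}.

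For (a), I would precompose with the monomorphism $\iota^1_u$ and use $\iota^1_u\circ d_u=(i\otimes 1_A)-(1_A\otimes i)$ (in the convention of \Cref{i-tensor}). The first summand contributes $\mu_d\circ(i\otimes d)=d$, by the unit axiom for the left $A$-module $\Omega^1_d$, while the second contributes $\mu_d\circ\big(1_A\otimes(d\circ i)\big)=0$, since $d\circ i=0$ by the lemma following \Cref{surjectivity-bimodule-map}; as $\iota^1_u$ is monic this yields $f\circ d_u=d$. For (b), recall that $\iota^1_u$ is a morphism of $A$-bimodules into $(A\otimes A,\,m\otimes 1_A,\,1_A\otimes m)$, so the actions $l_u,r_u$ of $\Omega^1_u$ are characterised by $\iota^1_u\circ l_u=(m\otimes 1_A)\circ(1_A\otimes\iota^1_u)$ and $\iota^1_u\circ r_u=(1_A\otimes m)\circ(\iota^1_u\otimes 1_A)$. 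Left compatibility $\mu_d\circ(1_A\otimes f)=f\circ l_u$ then follows formally, using only associativity of $\mu_d$ in the shape $\mu_d\circ\big(1_A\otimes(1_A\cdot d)\big)=(1_A\cdot d)\circ(m\otimes 1_A)$.

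The right compatibility $\nu_d\circ(f\otimes 1_A)=f\circ r_u$ is the one place where the calculus axioms genuinely bite, and the step I expect to need the most care. Here I would expand $d\circ m=(d\cdot 1_A)+(1_A\cdot d)$ and invoke the bimodule compatibility \Cref{bimod}(iii) for $\Omega^1_d$ to obtain $\nu_d\circ\big((1_A\cdot d)\otimes 1_A\big)=(1_A\cdot d)\circ(1_A\otimes m)-(1_A\cdot d)\circ(m\otimes 1_A)$; precomposing this identity with $\iota^1_u\otimes 1_A$ annihilates the second term because $m\circ\iota^1_u=0$, while the first term rewrites as $(1_A\cdot d)\circ\iota^1_u\circ r_u=f\circ r_u$. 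This is the crux; the rest is bookkeeping.

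For (c), given another bimodule map $f'$ with $f'\circ d_u=d$, the difference $g:=f-f'$ is a bimodule map with $g\circ d_u=0$; being in particular a left $A$-module map, $g\circ(1_A\cdot d_u)=\mu_d\circ(1_A\otimes g)\circ(1_A\otimes d_u)=\mu_d\circ\big(1_A\otimes(g\circ d_u)\big)=0$, and since $1_A\cdot d_u$ is an epimorphism (\Cref{prop:univ-1-d-cal-well-def} together with \Cref{prop:leibz-then-left-right}) we conclude $g=0$. Finally, for (d), the same left-module identity applied to $f$ itself gives $f\circ(1_A\cdot d_u)=\mu_d\circ\big(1_A\otimes(f\circ d_u)\big)=\mu_d\circ(1_A\otimes d)=1_A\cdot d$, which is an epimorphism by the surjectivity axiom for $(\Omega^1_d,d)$; since $f\circ(1_A\cdot d_u)$ is epic so is $f$.
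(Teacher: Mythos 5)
Your proposal is correct and follows essentially the same route as the paper: the same explicit formula $f=(1_A\cdot d)\circ\iota^1_u$, the same verification of $f\circ d_u=d$ and of bimodule compatibility (your right-action check, which reduces everything to $m\circ\iota^1_u=0$ after expanding via Leibniz, is the paper's computation reorganized), and the same use of the epimorphism $1_A\cdot d_u$ for uniqueness and of $1_A\cdot d$ for epicness. The only cosmetic differences are that you prove uniqueness via the difference $f-f'$ (valid since $\AModA$ is additive) and that the remark ``as $\iota^1_u$ is monic'' in step (a) is superfluous, as that computation is direct.
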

\begin{proof}
	Define $f$ as the composite
	$$ f:=(1_A\cdot d)\circ \iota_u^1\colon \Omega_u^1\xrightarrow{\ \iota_u^1\ } A\otimes A\xrightarrow{1_A\otimes d}A\otimes \Omega_d^1\xrightarrow{\ l_d\ }\Omega_d^1. $$
	First let us observe that $f$ makes the triangle commute in $\V$:
	\begin{align}
		f\circ d_u&=(1_A\cdot d)\circ \iota_u^1\circ d_u\tag{def of $f$}\\
		&= (1_A\cdot d)\circ(i\otimes 1_A - 1_A\otimes i)\tag{def of $\iota_u^1$}\\
		&= (1_A\cdot d)\circ(i\otimes 1_A) - (1_A\cdot d)\circ (1_A\otimes i)\tag{functoriality}\\
		&= i\cdot d - 1_A\cdot (d\circ i)\notag \tag{functoriality}\\
		&= d -0 \tag{$i$ unit for $l_d$ and $d\circ i=0$}\\
		&= d\notag.
	\end{align}
	
	Next we prove that $f$ is a map of bimodules; this amounts to showing that $l_d\circ (1_A\otimes  f)= f\circ l_u$ and $r_d\circ (f\otimes 1_A)= f\circ r_u$.
	Then
	\begin{align}
		f\circ l_u &= (1_A\cdot d)\circ\iota_u^1\circ l_u\tag{def of $f$}\\
		&= (1_A\cdot d)\circ (m\otimes 1_A)\circ (1_A\otimes\iota_u^1)\tag{def of $\iota_u^1$}\\
		&= (m\cdot d)\circ (1_A\otimes\iota_u^1)\tag{functoriality}\\
		&= 1_A\cdot(1_A\cdot d)\circ (1_A\otimes\iota_u^1)\tag{action of $l_d$}\\
		&= l_d\circ (1_A\otimes (1_A\cdot d))\circ (1_A\otimes\iota_u^1)\tag{def of $-\cdot -$}\\
		&= l_d\circ (1_A\otimes  f)\tag{functoriality},
	\end{align} 
	and
	\begin{align}
		f\circ r_u &= (1_A\cdot d)\circ\iota_u^1\circ r_u\tag{def of $f$}\\
		&= (1_A\cdot d)\circ (1_A\otimes m)\circ (\iota_u^1\otimes 1_A)\tag{def of $r_u$}\\
		&= (1_A\cdot d\circ m)\circ (\iota_u^1\otimes 1_A)\tag{functoriality}\\
		&= (1_A\cdot (d\cdot 1_A) + 1_A\cdot (1_A\cdot d) ) \circ (\iota_u^1\otimes 1_A)\tag{Leibniz}\\
		&= ((1_A\cdot d)\cdot 1_A + m\cdot d ) \circ (\iota_u^1\otimes 1_A)\tag{associativity} \\
		&= r_d\circ ((1_A\cdot d)\otimes 1_A)\circ (\iota_u^1\otimes 1_A) + (m\circ \iota_u^1)\cdot d\tag{def of $-\cdot -$}\\
		&= r_d\circ (f\otimes 1_A)\tag{$m\circ \iota_u^1=0$}.
	\end{align} 
	
	Now, given any map of bimodules $g\colon \Omega_u^1\to\Omega_d^1$ such that $g\circ d_u=d$, then
	$$ g\circ (1_A\cdot d_u) = 1_A\cdot(g\circ d_u)= 1_A\cdot d ,$$
	where the first equality holds since $g$ is a map of bimodules, and as such respects multiplication on the left. 
	
	This implies the uniqueness of $f$: given any other $g$ as above we have
	$$ g\circ (1_A\cdot d_u) = 1_A\cdot d = f\circ (1_A\cdot d_u), $$
	but $(1_A\cdot d_u)$ is an epimorphism (by the surjectivity condition on $\Omega_u^1$); thus $f=g$.
	
	The fact that $f$ is an epimorphism is also a consequence of the equality $f\circ (1_A\cdot d_u)= 1_A\cdot d$ since $1_A\cdot d$ is an epimorphism (by the surjectivity condition on $\Omega_d^1$).
\end{proof}

\begin{lemma}\label{lem:epi-from-univ-always-calc}
	Let $(\Omega_u^1,d_u)$ be the universal calculus on a monoid $A$ and $p\colon\Omega_u^1\epi D$ an epimorphism in $\AModA$.
	Then, $(D,d)$, with $d:=p d_u$, is a first order differential calculus on $A$. 
\end{lemma}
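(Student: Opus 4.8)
The plan is to verify directly the two axioms of \Cref{def:enriched-1st-diff-calc} for the pair $(D,d)$ with $d:=p\circ d_u$, exploiting that $p$ is a morphism of $A$-bimodules and an epimorphism, together with the fact (\Cref{prop:univ-1-d-cal-well-def}) that $(\Omega_u^1,d_u)$ is already a first order differential calculus. Since $D\in\AModA$ by hypothesis and $d$ is a composite of morphisms in $\V$, only the Leibniz rule and the surjectivity condition require checking.

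For the Leibniz rule, I would apply $p$ to the identity $d_u\circ m=d_u\cdot 1_A+1_A\cdot d_u$, which holds in $\Omega_u^1$ by \Cref{prop:univ-1-d-cal-well-def}. Because $p$ respects the left and right $A$-actions, one has $p\circ(d_u\cdot 1_A)=(p\circ d_u)\cdot 1_A=d\cdot 1_A$ and $p\circ(1_A\cdot d_u)=1_A\cdot(p\circ d_u)=1_A\cdot d$; bilinearity of composition in the additive category $\V$ then gives $d\circ m=p\circ d_u\circ m=p\circ(d_u\cdot 1_A)+p\circ(1_A\cdot d_u)=d\cdot 1_A+1_A\cdot d$, as required.

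For the surjectivity condition, writing $l_D$ and $l_u$ for the left actions of $D$ and $\Omega_u^1$, the equality $l_D\circ(1_A\otimes p)=p\circ l_u$ (again because $p$ is a bimodule map) yields $1_A\cdot d=l_D\circ(1_A\otimes(p\circ d_u))=p\circ l_u\circ(1_A\otimes d_u)=p\circ(1_A\cdot d_u)$. Now $1_A\cdot d_u$ is an epimorphism in $\V$ since $(\Omega_u^1,d_u)$ satisfies the surjectivity condition, and $p$ is an epimorphism by hypothesis; hence their composite $1_A\cdot d$ is an epimorphism in $\V$. This establishes both axioms, so $(D,d)$ is a first order differential calculus on $A$.

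I do not anticipate a genuine obstacle here: the argument is a short diagram chase whose only ingredient beyond the definitions is the interaction of $p$ with the module actions, which is exactly what ``morphism of $A$-bimodules'' encodes. One could alternatively invoke \Cref{prop:leibz-then-left-right} to replace the condition on $1_A\cdot d$ by the equivalent condition on $d\cdot 1_A$ or on $1_A\cdot d\cdot 1_A$, but this is not needed.
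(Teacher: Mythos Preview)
Your proof is correct and follows essentially the same approach as the paper: apply $p$ to the Leibniz identity for $d_u$, use that $p$ is an $A$-bimodule map to pull it through the actions, and then observe that $1_A\cdot d = p\circ(1_A\cdot d_u)$ is a composite of epimorphisms. The only implicit step, in both your argument and the paper's, is that the epimorphism $p$ in $\AModA$ remains an epimorphism in $\V$, which holds since the forgetful functor preserves epimorphisms.
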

\begin{proof}
	Let us prove that $(D,d)$ satisfies the Leibniz rule:
	\begin{align}
		d\circ m&=p\circ d_u \circ m\tag{def of $d$}\\
		&= p \circ (d_u\cdot 1_A + 1_A\cdot d_u)\tag{Leibniz for $d_u$}\\
		&= p\circ  (d_u\cdot 1_A) + p\circ (1_A\cdot d_u)\tag{additivity}\\
		&= p\circ  \nu_u\circ (d_u\otimes 1_A) + p\circ  \mu_u\circ (1_A\otimes d_u)   \tag{def of $\cdot$}\\
		&= \nu_u\circ (p\otimes 1_A)\circ (d_u\otimes 1_A) + \mu_u\circ (1_A\otimes p)\circ (1_A\otimes d_u) \tag{$p$ bimodule map}\\
		&= \nu_u\circ ( (p\circ d_u)\otimes 1_A) + \mu_u\circ (1_A\otimes (p\circ d_u)) \tag{functoriality}\\
		&= d\cdot 1_A + 1_A\cdot d. \tag{def of $d$ and $\cdot$}
	\end{align}
	It remains to show that $1_A\cdot d$ is an epimorphism. But it was shown in the equalities above that
	$$ 1_A\cdot d = p\circ (1_A\cdot d_u) $$
	where $p$ is an epimorphism by hypothesis and $(1\cdot d_u)$ is since $\Omega^1_u$ is a first order differential calculus. Thus $1_A\cdot d$ is also an epimorphism and $(D,d)$ is a first order differential calculus on $A$.
\end{proof}

\begin{rmk}\label{kernel-counit}
We now show that the kernel of the counit of the restriction-extension to scalars $\AMod\to\V$ is the functor $\Omega^1_u\otimes_A-$. 
It is enough to prove this pointwise, and it corresponds to showing that for each left $A$-module $M$, the object $\Omega^1_u\otimes_A M$ is the kernel of the scalar multiplication $\mu_M\colon A\otimes M\to M$. 
Consider the solid part of the diagram below.
\begin{center}
	\begin{tikzpicture}[baseline=(current  bounding  box.south), scale=2]
		
		\node (a2) at (-0.4,1.6) {$\Omega_u^1\otimes A\otimes  M$};
		\node (b2) at (1.4,1.6) {$A^{\otimes 3}\otimes M$};
		\node (a1) at (-0.4,0.8) {$\Omega_u^1\otimes  M$};
		\node (b1) at (1.4,0.8) {$A^{\otimes 2}\otimes M$};
		\node (c1) at (3.2,0.8) {$A\otimes M$};
		\node (a0) at (-0.4,0) {$\Omega_u^1\otimes_A M$};
		\node (b0) at (1.4,0) {$A\otimes M$};
		\node (c0) at (3.2,0) {$M$};
		
		\path[font=\scriptsize]
		
		(a1) edge [->] node [above] {$\iota_u\otimes 1_M$} (b1)
		(b1) edge [->] node [above] {$m\otimes 1_M$} (c1)
		(a0) edge [dashed, ->] node [below] {$g$} (b0)
		(b0) edge [->] node [below] {$\mu_M$} (c0)
		(a1) edge [->>] node [left] {$q$} (a0)
		(b1) edge [->] node [right] {$1_A\otimes \mu_M$} (b0)
		(c1) edge [->] node [right] {$\mu_M$} (c0)
		
		([xshift=1.5pt]a2.south) edge [->] node [right] {$r_u\otimes 1_M$} ([xshift=1.5pt]a1.north)
		([xshift=-1.5pt]a2.south) edge [->] node [left] {$1_{\Omega^1_u}\otimes \mu_M$} ([xshift=-1.5pt]a1.north)
		
		([xshift=1.5pt]b2.south) edge [->] node [right] {$1_A\otimes m\otimes 1_M$} ([xshift=1.5pt]b1.north)
		([xshift=-1.5pt]b2.south) edge [->] node [left] {$1_{A^{\otimes 2}}\otimes \mu_M$} ([xshift=-1.5pt]b1.north)
		
		(a2) edge [->] node [above] {$\iota_u\otimes 1_{A\otimes M}$} (b2);
	\end{tikzpicture}	
\end{center} 
Here, the left vertical fork is the coequalizer defining $\Omega_u^1\otimes_A M$. 
The top squares (one with the left vertical maps, one with the right vertical maps) commute by definition of $\mu_M$ and $r_u$; same for the bottom right square. 
Thus, since $1_A\otimes \mu_M$ coequalizes the two arrows on top of it, by the universal property of $q$ there exists a map $g_M$ as dashed above making the square commute. 
The composite of $\mu_M\circ g$ is $0$ since also the middle horizontal composite is. 

Now, let $k\colon K\to A\otimes M$ be the kernel of $\mu_M$; then there is an induced map $s\colon \Omega^1_u\otimes_A M\to K$ such that $k\circ s=g$.
Conversely, we can define the composite
$$ t\colon K\xrightarrow{ k } A\otimes M\xrightarrow{d\otimes 1_M}\Omega^1_u\otimes M\xrightarrow{q}  \Omega^1_u\otimes_A M.$$
It is easy to see that $s\circ t=1$ and $t\circ s=1$, proving our claim.

\end{rmk}

We denote by $\CalcA$ the category whose objects are first order differential calculi over $A$, and whose morphisms are maps of $A$-bimodules $f\colon\Omega_d^1\to\Omega_{d'}^1$ which make the following triangle commute. 
\begin{center}
	\begin{tikzpicture}[baseline=(current  bounding  box.south), scale=2]
		
		\node (c0) at (0.6,0) {$A$};
		\node (a0) at (0,-0.7) {$\Omega_d^1$};
		\node (b0) at (1.2,-0.7) {$\Omega_{d'}^1$};
		
		\path[font=\scriptsize]
		
		(a0) edge [->] node [below] {$f$} (b0)
		(c0) edge [->] node [above] {$d\ \ \ $} (a0)
		(c0) edge [->] node [above] {$\ \ \ d'$} (b0);
	\end{tikzpicture}	
\end{center}

\begin{prop}
	The following properties hold for $\CalcA$:\begin{enumerate}
		\item $\Omega_u^1$ is an initial object; 
		\item every morphism is an epimorphism when regarded in $\AModA$;
		\item there is at most one morphism between any two objects.
	\end{enumerate}
	Therefore $\CalcA$ is a preorder with initial element $\Omega_u^1$ and terminal element the trivial first order differential calculus $(0\in\AModA,0\colon A\to 0)$.
\end{prop}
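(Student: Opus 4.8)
The plan is to deduce the whole proposition from the universal property established in \Cref{universal-prop} together with the surjectivity axiom of \Cref{def:enriched-1st-diff-calc}; there is no serious obstacle here, and the only point requiring a little care is the bookkeeping between $\V$ and $\AModA$ when invoking epi/mono properties, which is taken care of by \Cref{forgetful} and the remark recorded after \Cref{def:enriched-1st-diff-calc} (the forgetful functor $\AModA\to\V$ is faithful and preserves and reflects epimorphisms).

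For (i), \Cref{prop:univ-1-d-cal-well-def} says that $(\Omega_u^1,d_u)$ is an object of $\CalcA$. Given any $(\Omega_d^1,d)\in\CalcA$, the morphism $f\colon\Omega_u^1\to\Omega_d^1$ produced by \Cref{universal-prop} is a map of $A$-bimodules with $f\circ d_u=d$, i.e.\ precisely a morphism $\Omega_u^1\to(\Omega_d^1,d)$ in $\CalcA$, and \Cref{universal-prop} also asserts its uniqueness. Hence $\Omega_u^1$ is initial.

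For (iii) — which I would settle before (ii), since it is what yields the preorder conclusion — suppose $f,g\colon(\Omega_d^1,d)\to(\Omega_{d'}^1,d')$ are morphisms in $\CalcA$, so $f\circ d=d'=g\circ d$. Because $f$ and $g$ respect the left action, $f\circ(1_A\cdot d)=1_A\cdot(f\circ d)=1_A\cdot d'=1_A\cdot(g\circ d)=g\circ(1_A\cdot d)$ as morphisms in $\V$; since $1_A\cdot d$ is an epimorphism in $\V$ by the surjectivity axiom for $\Omega_d^1$, and the forgetful functor is faithful, $f=g$. This is the cancellation argument already used in the proof of \Cref{universal-prop}. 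Combined with (i), this shows $\CalcA$ is a preorder with initial object $\Omega_u^1$.

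For (ii), if $f\colon(\Omega_d^1,d)\to(\Omega_{d'}^1,d')$ lies in $\CalcA$, the same computation gives $f\circ(1_A\cdot d)=1_A\cdot d'$; the right-hand side is an epimorphism in $\V$ by the surjectivity axiom for $\Omega_{d'}^1$, so the left factor $f$ is an epimorphism in $\V$, hence in $\AModA$. Finally, for the terminal object one checks that the zero bimodule $0$ with $d=0\colon A\to 0$ is an object of $\CalcA$: the Leibniz rule reads $0=0+0$, and $1_A\cdot d\colon A\otimes A\to 0$ is an epimorphism since every morphism into a zero object is one. For each $(\Omega_d^1,d)$ the unique bimodule map $\Omega_d^1\to 0$ commutes with the differentials and so is a morphism in $\CalcA$, unique by (iii); thus $(0,0)$ is terminal.
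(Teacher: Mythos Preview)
Your proof is correct. The only difference from the paper's argument is the choice of ``generating epimorphism'' onto $\Omega_d^1$: you cancel along the surjectivity map $1_A\cdot d\colon A\otimes A\to\Omega_d^1$, whereas the paper cancels along the unique arrow $!_d\colon\Omega_u^1\to\Omega_d^1$ from the initial object (so that $f\circ !_d=!_{d'}=g\circ !_d$ gives (iii), and the factorization $!_{d'}=f\circ !_d$ with $!_{d'}$ epi gives (ii)). The underlying logic is identical. Your route has the small advantage that (ii) and (iii) do not depend on (i), so the argument would still go through for any subcategory of generalized calculi satisfying the surjectivity axiom, without reference to $\Omega_u^1$; the paper's route is marginally cleaner once initiality is in hand, since it avoids recomputing the compatibility $f\circ(1_A\cdot d)=1_A\cdot d'$.
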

\begin{proof}
	Point (i) is a consequence of \Cref{universal-prop}. 
	For (ii), note that for any first order differential calculus $\Omega_d^1$ the unique map $\Omega_u^1\to\Omega_d^1$ is an epimorphism by Proposition~\ref{universal-prop}. 
	In general, if we are given a morphism $f\colon \Omega_d^1\to \Omega_{d'}^1$, then the commutativity of the triangle
	\begin{center}
		\begin{tikzpicture}[baseline=(current  bounding  box.south), scale=2]
			
			\node (c0) at (0.6,0) {$\Omega_u^1$};
			\node (a0) at (0,-0.7) {$\Omega_d^1$};
			\node (b0) at (1.2,-0.7) {$\Omega_{d'}^1$};
			
			\path[font=\scriptsize]
			
			(a0) edge [->] node [below] {$f$} (b0)
			(c0) edge [->>] node [above] {$!_d\ \ \ $} (a0)
			(c0) edge [->>] node [above] {$\ \ \ !_{d'}$} (b0);
		\end{tikzpicture}	
	\end{center} 
	in $\AModA$, where both the left and right legs are epimorphisms, implies that $f$ is an epimorphism too.
	
	Finally, (iii) is again a consequence of the commutativity of the triangle above: given any pair of maps $f,g\colon \Omega_d^1\to \Omega_{d'}^1$, since $!_df=\ !_{d'}=\ !_{d}g$ and $!_d$ is an epimorphism, it follows that $f=g$.
\end{proof}

\begin{rmk}
	Fix a monoid $A$ in $\V$. Given a first order differential calculus $\Omega_{d}^1$ over $A$, we get a subobject of $\Omega_{u}^1$ by taking the kernel of the epimorphism $p_d\colon\Omega_{u}\twoheadrightarrow\Omega_{d}^1$. 
	On the other hand, if we start with a subobject $i\colon N\hookrightarrow\Omega_{u}^1$ of the universal first order differential calculus, taking the cokernel of $i$ gives a first order differential calculus on $A$ (by \Cref{lem:epi-from-univ-always-calc}). 
	In other words, we get functors as below, which we can prove form an adjunction. 
\[\begin{tikzcd}[ampersand replacement=\&]
	\CalcA \& {\ \Sub(\Omega_{u}^1)}
	\arrow[""{name=0, anchor=center, inner sep=0}, "{\tx{ker}}"', curve={height=12pt}, from=1-1, to=1-2]
	\arrow[""{name=1, anchor=center, inner sep=0}, "{\tx{coker}}"', curve={height=12pt}, from=1-2, to=1-1]
	\arrow["\bot"{description}, draw=none, from=0, to=1]
\end{tikzcd}\]
In fact, since both categories are posetal, to get an adjunction is enough to prove that, for any subobject $i\colon N\hookrightarrow\Omega_{u}^1$ and first order differential calculus $\Omega_{d}^1$,
there exists $N\to\tx{ker}(p_{d})\in\Sub(\Omega_{u}^1)$ if and only if there exists a morphism $\tx{coker}(i)\to \Omega_{d}^1\in\CalcA$. 
This can be seen looking at the diagram below and using the universal properties of kernels and cokernels. 
\[\begin{tikzcd}[ampersand replacement=\&]
	{N} \& {\Omega_{u}^1} \& {\text{coker}(i)} \\
	{\text{ker}(p_{d})} \& {\Omega_{u}^1} \& {\Omega_{d}^1}
	\arrow["{i}", tail, from=1-1, to=1-2]
	\arrow[dashed, from=1-1, to=2-1]
	\arrow[two heads, from=1-2, to=1-3]
	\arrow[equals, from=1-2, to=2-2]
	\arrow[dashed, from=1-3, to=2-3]
	\arrow[tail, from=2-1, to=2-2]
	\arrow["{p_{d}}"', from=2-2, to=2-3]
\end{tikzcd}\]
	Moreover, if $\V$ is abelian, this adjunction can be shown to be an equivalence (using that every epimorphism is the cokernel of its kernel). 
\end{rmk}

\subsection{The extension-restriction of scalars on first order differential calculi}
\label{sec:ext-restr-calc}

In this subsection we will show that, considering $A,B\in\VMon$ and $f\colon A\to B$ a monoid morphism, then the extension-restriction of scalars adjunction $f_!\dashv f^\ast$ \footnote{See \Cref{sec:ext-restr-scalars}. With abuse of notation, we write $f_!:=(f,f)_!$ and $f^\ast:=(f,f)^\ast$.} (below left) induces an adjunction between their corresponding categories of first order differential calculi (below right). 
\[\begin{tikzcd}[ampersand replacement=\&]
	\BModB \& \AModA \& \CalcB \& \CalcA
	\arrow[""{name=0, anchor=center, inner sep=0}, "{f^\ast}"', curve={height=6pt}, from=1-1, to=1-2]
	\arrow[""{name=1, anchor=center, inner sep=0}, "{f_!}"', curve={height=6pt}, from=1-2, to=1-1]
	\arrow[""{name=2, anchor=center, inner sep=0}, "{F^\ast}"', curve={height=6pt}, from=1-3, to=1-4]
	\arrow[""{name=3, anchor=center, inner sep=0}, "{F_!}"', curve={height=6pt}, from=1-4, to=1-3]
	\arrow["\bot"{description}, draw=none, from=0, to=1]
	\arrow["\bot"{description}, draw=none, from=2, to=3]
\end{tikzcd}\]

In order to show this adjunction, we will use the following strategy: define the assignments on objects of $F_!$ and $F^*$, prove functoriality of one of them and the adjunction natural isomorphism, which then will guarantee the other assignment to be a functor as well. 

\begin{rmk}\label{rmk:induced-morph-betw-univ-calc}
	Let $A,B\in\VMon$ and $f\colon A\to B$ a monoid morphism. Then, there exists a unique map between the universal calculi $f_u\colon \Omega_{A,u}^1\to \Omega_{B,u}^1$ in $\V$ making the square below commutative. 
	\[\begin{tikzcd}[ampersand replacement=\&]
		{\Omega^1_{A,u}} \& {A\otimes A} \& A \\
		{\Omega_{B,u}^1} \& {B\otimes B} \& B
		\arrow[tail, from=1-1, to=1-2]
		\arrow["{\exists!\,f_u}"', dashed, from=1-1, to=2-1]
		\arrow["{m_A}", from=1-2, to=1-3]
		\arrow[from=1-2, to=2-2]
		\arrow[from=1-3, to=2-3]
		\arrow[tail, from=2-1, to=2-2]
		\arrow["{m_B}"', from=2-2, to=2-3]
	\end{tikzcd}\]
	Indeed, this is induced by the universal property of kernels.
\end{rmk}

\begin{prop}\label{prop:fu-bimod-map}
	Given $A,B\in\VMon$ and $f\colon A\to B$ a monoid morphism, the  morphism $f_u$ defined in \Cref{rmk:induced-morph-betw-univ-calc} is a morphism of $A$-bimodules $f_u\colon\Omega_{A,u}^1\to f^\ast(\Omega_{B,u}^1)\in\AModA$. 
\end{prop}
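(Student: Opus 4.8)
The plan is to exhibit $f_u$ as the morphism induced on kernels by a commutative square that already lives in $\AModA$, so that bimodule-compatibility is automatic rather than checked by two separate diagrams.

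First recall the relevant structures. By \Cref{forgetful}, kernels in $\AModA$ are computed as in $\V$; in particular the kernel inclusion $\iota^1_{A,u}\colon\Omega^1_{A,u}\mono A\otimes A$ is a monomorphism of $A$-bimodules exhibiting $\Omega^1_{A,u}$ as the kernel of the $A$-bimodule map $m_A\colon A\otimes A\to A$, where $A\otimes A$ carries its canonical $A$-bimodule structure $(m_A\otimes 1_A,\,1_A\otimes m_A)$ and $A$ carries $(m_A,m_A)$ (see the proof of \Cref{prop:univ-1-d-cal-well-def}); the same holds over $B$. By \Cref{restriction-properties}(i) the restriction functor $f^\ast\colon\BModB\to\AModA$ is continuous, hence preserves kernels, so $f^\ast(\Omega^1_{B,u})$ is the kernel, taken in $\AModA$, of the $A$-bimodule map $f^\ast(m_B)\colon f^\ast(B\otimes B)\to f^\ast(B)$, and its underlying object in $\V$ is again $\Omega^1_{B,u}$.

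Next I would check that $f\otimes f\colon A\otimes A\to f^\ast(B\otimes B)$ and $f\colon A\to f^\ast(B)$ are morphisms of $A$-bimodules, and that the square
\[
\begin{tikzcd}[ampersand replacement=\&]
	{A\otimes A} \& A \\
	{f^\ast(B\otimes B)} \& {f^\ast(B)}
	\arrow["{m_A}", from=1-1, to=1-2]
	\arrow["{f\otimes f}"', from=1-1, to=2-1]
	\arrow["f", from=1-2, to=2-2]
	\arrow["{f^\ast(m_B)}"', from=2-1, to=2-2]
\end{tikzcd}
\]
commutes in $\AModA$. Its underlying square in $\V$ commutes because $m_B\circ(f\otimes f)=f\circ m_A$, which is exactly the statement that $f$ is a monoid morphism. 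That $f\colon A\to f^\ast(B)$ is an $A$-bimodule map is \Cref{f-bimod} (restricting the $B$-bimodule $B$ along $f$ on both sides). That $f\otimes f$ is an $A$-bimodule map is the analogous one-line verification using functoriality of $\otimes$ and $m_B\circ(f\otimes f)=f\circ m_A$: for the left actions, $(m_B\otimes 1_B)\circ(f\otimes 1_{B\otimes B})\circ(1_A\otimes(f\otimes f))=(m_B\circ(f\otimes f))\otimes f=(f\circ m_A)\otimes f=(f\otimes f)\circ(m_A\otimes 1_A)$, and symmetrically on the right. Hence the square above commutes in $\AModA$, its bottom row has kernel $f^\ast(\Omega^1_{B,u})$, and so there is a unique morphism $g\colon\Omega^1_{A,u}\to f^\ast(\Omega^1_{B,u})$ in $\AModA$ with $\iota^1_{B,u}\circ g=(f\otimes f)\circ\iota^1_{A,u}$. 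Applying the forgetful functor $U\colon\AModA\to\V$, the underlying map of $g$ is the unique arrow in $\V$ making the square of \Cref{rmk:induced-morph-betw-univ-calc} commute, i.e. it is $f_u$. Therefore $f_u\colon\Omega^1_{A,u}\to f^\ast(\Omega^1_{B,u})$ is a morphism of $A$-bimodules.

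I expect no genuine obstacle here: the only point requiring care is that all three kernels in sight ($\Omega^1_{A,u}$, $\Omega^1_{B,u}$, and $f^\ast(\Omega^1_{B,u})$) are computed in $\V$, which is precisely what \Cref{forgetful} and the continuity of $f^\ast$ provide; everything else is functoriality of $\otimes$ together with the monoid-morphism identity $m_B\circ(f\otimes f)=f\circ m_A$. If one prefers a bare-hands argument, one may instead postcompose each of the two desired identities with the monomorphism $\iota^1_{B,u}$, use $\iota^1_{B,u}\circ f_u=(f\otimes f)\circ\iota^1_{A,u}$ and the explicit formula $\iota^1_u\circ l_u=(m\otimes 1_A)\circ(1_A\otimes\iota^1_u)$ for the action (and its right-hand analogue) from the proof of \Cref{prop:univ-1-d-cal-well-def}; the resulting chase again collapses to $m_B\circ(f\otimes f)=f\circ m_A$.
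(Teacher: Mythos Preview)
Your proof is correct and in fact cleaner than the paper's. The paper proceeds by a direct diagram chase: to show that $f_u$ preserves, say, the right action, it postcomposes with the monomorphism $\iota^1_{B,u}$ and then verifies commutativity of a moderately large diagram built from the definitions of $f_u$, the actions on $\Omega^1_{B,u}$, and the monoid-morphism identity $m_B\circ(f\otimes f)=f\circ m_A$; the left action is declared analogous. This is exactly the ``bare-hands'' alternative you sketch in your final paragraph.

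Your main argument is more conceptual: you lift the square $m_B\circ(f\otimes f)=f\circ m_A$ to $\AModA$ once and for all, invoke continuity of $f^\ast$ (\Cref{restriction-properties}(i)) so that $f^\ast(\Omega^1_{B,u})$ is the kernel of $f^\ast(m_B)$ in $\AModA$, and then let the universal property of the kernel in $\AModA$ hand you an $A$-bimodule map whose underlying arrow in $\V$ must be $f_u$ by uniqueness. This avoids any explicit action-preservation check, at the cost of first verifying that $f\otimes f$ and $f$ are $A$-bimodule maps---which you do correctly, and which is a strictly easier computation than the paper's. The two approaches ultimately rest on the same ingredients (kernels are created in $\V$, and $f$ is a monoid map), but yours packages them so that the bimodule compatibility is forced rather than checked.
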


\begin{proof}
	We will prove that $f_u$ preserve the right action, the proof for the left one is completely analogous. 
	Recalling the definition of the $A$-right action on $f^\ast(\Omega_{B,u}^1)$ (see \Cref{prop:def-restr-of-scalars}), to show that $f_u$ preserve the right action, since $\Omega_{B,u}^1\mono B\otimes B$ is a monomorphism, is equivalent to prove that the outer diagram below is commutative. 
\[\begin{tikzcd}[ampersand replacement=\&]
	{\Omega^1_{A,u}\otimes A} \& {A\otimes A\otimes A} \\
	{\Omega_{B,u}^1\otimes A} \& {\Omega^1_{A,u}} \& {A\otimes A} \\
	{\Omega_{B,u}^1\otimes B} \& {\Omega_{B,u}^1} \& {B\otimes B} \\
	\& {B\otimes B\otimes B}
	\arrow[from=1-1, to=1-2]
	\arrow["{f_u\otimes1_A}"', from=1-1, to=2-1]
	\arrow[from=1-1, to=2-2]
	\arrow["{m_A\otimes 1_A}", from=1-2, to=2-3]
	\arrow["{1\otimes f}"', from=2-1, to=3-1]
	\arrow[tail, from=2-2, to=2-3]
	\arrow[from=2-2, to=3-2]
	\arrow["{f\otimes f}", from=2-3, to=3-3]
	\arrow[from=3-1, to=3-2]
	\arrow[from=3-1, to=4-2]
	\arrow[tail, from=3-2, to=3-3]
	\arrow["{m_B\otimes 1_B}"', from=4-2, to=3-3]
\end{tikzcd}\]
To show this, it is enough to notice that the following diagram is commutative.
\[\begin{tikzcd}[ampersand replacement=\&]
	{\Omega^1_{A,u}\otimes A} \&\&\& {A\otimes A\otimes A} \&\& {A\otimes A} \\
	{\Omega_{B,u}^1\otimes A} \&\& {\Omega_{A,u}^1\otimes B} \& {A\otimes A\otimes B} \& {A\otimes B} \\
	{\Omega_{B,u}^1\otimes B} \&\&\& {B\otimes B\otimes B} \&\& {B\otimes B}
	\arrow[from=1-1, to=1-4]
	\arrow["{f_u\otimes1_A}"', from=1-1, to=2-1]
	\arrow["{1\otimes f}"{description}, from=1-1, to=2-3]
	\arrow["{m_A\otimes 1_A}", from=1-4, to=1-6]
	\arrow["{1\otimes f}"', from=1-4, to=2-4]
	\arrow["{1\otimes f}"', from=1-6, to=2-5]
	\arrow["{f\otimes f}", from=1-6, to=3-6]
	\arrow["{1\otimes f}"', from=2-1, to=3-1]
	\arrow[from=2-3, to=2-4]
	\arrow["{f_u\otimes 1}"{description}, from=2-3, to=3-1]
	\arrow["{m_A\otimes 1}"', from=2-4, to=2-5]
	\arrow["{f\otimes f\otimes1}"', from=2-4, to=3-4]
	\arrow["{f\otimes 1}"', from=2-5, to=3-6]
	\arrow[from=3-1, to=3-4]
	\arrow["{m_B\otimes 1_B}"', from=3-4, to=3-6]
\end{tikzcd}\]
\end{proof} 

Now, we are ready to define the assignments on objects of $F_!$ and $F^*$ in the following two remarks. 

\begin{rmk}[$F_!\colon\CalcA\to\CalcB$]\label{rmk:def-F!}
	Let $A,B\in\VMon$ and $f\colon A\to B$ be a monoid morphism. Since the  morphism $f_u$ defined in \Cref{rmk:induced-morph-betw-univ-calc} is a $A$-bimodule morphism $f_u\colon\Omega_{A,u}^1\to f^\ast(\Omega_{B,u}^1)\in\AModA$ (by \Cref{prop:fu-bimod-map}), then it corresponds to a unique $\widehat{f_u}\colon f_!\Omega_{A,u}^1\to\Omega_{B,u}^1\in\BModB$ through the adjunction $f_!\dashv f^*$. 
	Now, for any $\Omega_{A,d}^1\in\CalcA$, we define $F_!\Omega_{A,d}^1$ through the pushout below. 
	\[\begin{tikzcd}[ampersand replacement=\&]
		{f_!\Omega^1_{A,u}} \& {f_!\Omega^1_{A,d}} \\
		{\Omega^1_{B,u}} \& {F_!\Omega_{A,d}^1}
		\arrow[two heads, from=1-1, to=1-2]
		\arrow["{\widehat{f_u}}"', from=1-1, to=2-1]
		\arrow["{\widehat{f_d}}", from=1-2, to=2-2]
		\arrow[two heads, from=2-1, to=2-2]
		\arrow["\lrcorner"{anchor=center, pos=0.125, rotate=180}, draw=none, from=2-2, to=1-1]
	\end{tikzcd}\]
	Since epimorphisms are preserved by $f_!=-\otimes_A-$ (see \Cref{prop:otimes_A-preserves-coequ}) and are stable under pushouts, in the diagram above we have an epimorphism $\Omega_{B,u}^1\twoheadrightarrow F_!\Omega_{A,d}^1$. Hence, this provides a calculus on $B$ (by \Cref{lem:epi-from-univ-always-calc}). 
	We might also use the following notation  $F_!\Omega_{A,d}^1:=\Omega_{B,f_!d}$. 
\end{rmk}

\begin{rmk}[$F^*\colon\CalcB\to\CalcA$]\label{rmk:def-Fast}
	Let $A,B\in\VMon$ and $f\colon A\to B$ be a monoid morphism. 
	In order to define $F^*$, we take $\Omega_{B,\delta}\in\CalcB$ and we consider the pullback below in $\AModA$, where $N_{B,\delta}$ is the kernel of $q_\delta$. Therefore we obtain a monomorphism $f^\ast j_\delta$, since $f^\ast$ preserves them (\Cref{restriction-properties}). 
	Since, monomorphisms are preserved by pullbacks, the induced map $P\mono _A\Omega_u^1$ is a monomorphism. 
	Then, if we take the cokernel of this map we obtain an epimorphism $\Omega_{A,u}^1\epi F^*\Omega_{B,\delta}^1$; hence giving a calculus structure on $F^*\Omega_{B,\delta}^1$ (by \Cref{lem:epi-from-univ-always-calc}). 
\[\begin{tikzcd}[ampersand replacement=\&]
	P \& {\Omega^1_{A,u}} \& {F^*\Omega_{B,\delta}^1} \\
	{f^*N_{B,\delta}} \& {f^*\Omega_{B,u}} \& {f^*\Omega^1_{B,\delta}}
	\arrow["{i_{f^\ast\delta}}", tail, from=1-1, to=1-2]
	\arrow["{k^\delta}"', from=1-1, to=2-1]
	\arrow["\lrcorner"{anchor=center, pos=0.125}, draw=none, from=1-1, to=2-2]
	\arrow["{p_{f^\ast \delta}}", two heads, from=1-2, to=1-3]
	\arrow["{f_u}", from=1-2, to=2-2]
	\arrow["{\exists!f^\delta}"{pos=0.4}, dashed, from=1-3, to=2-3]
	\arrow["{f^\ast j_\delta}"', tail, from=2-1, to=2-2]
	\arrow["{f^\ast q_\delta}"', two heads, from=2-2, to=2-3]
\end{tikzcd}\]
	We might also use the following notation $F^*\Omega_{B,\delta}^1:=\Omega^1_{A,f^\ast \delta}$.

	Moreover, we can prove that $P$ is actually the kernel of $\Omega_{A,u}^1\epi\Omega^1_{A,f^\ast \delta}$. 
	In order to check this, we start with a morphism $l\colon L\to\Omega_{A,u}^1$ such that $p_{f^\ast\delta}\circ l=0$. 
	Hence, $0=f^\delta\circ p_{f^\ast\delta}\circ l=f^\ast q_\delta\circ f_u\circ l$, and so by the universal property of the kernel $f^\ast j_\delta$, there exists a unique map $k_l\colon L\to f^\ast N_{B,\delta}$ such that $f^\ast j_\delta \circ k_l=f_u\circ l$.
	Therefore, by the universal property of the pullback we get a unique map $L\to P$ with the required property. 
	We then denote $P=N_{A,f^\ast\delta}$, as the kernel of $p_{f^\ast\delta}$.  
\end{rmk}

\begin{prop}
The assignments in \Cref{rmk:def-F!} and \ref{rmk:def-Fast} can be completed to an adjunction 
\[\begin{tikzcd}[ampersand replacement=\&]
	\CalcB \& \CalcA
	\arrow[""{name=0, anchor=center, inner sep=0}, "{F^\ast}"', curve={height=6pt}, from=1-1, to=1-2]
	\arrow[""{name=1, anchor=center, inner sep=0}, "{F_!}"', curve={height=6pt}, from=1-2, to=1-1]
	\arrow["\bot"{description}, draw=none, from=0, to=1]
\end{tikzcd}\]
between the categories of first order differential calculi over $A$ and $B$. 
\end{prop}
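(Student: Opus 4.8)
The plan is to exploit that $\CalcA$ and $\CalcB$ are preorders, as established above: for such categories an adjunction $F_!\dashv F^\ast$ is nothing but a Galois connection, so it suffices to verify that, for every $\Omega^1_{A,d}\in\CalcA$ and every $\Omega^1_{B,\delta}\in\CalcB$, there is a morphism $F_!\Omega^1_{A,d}\to\Omega^1_{B,\delta}$ in $\CalcB$ if and only if there is a morphism $\Omega^1_{A,d}\to F^\ast\Omega^1_{B,\delta}$ in $\CalcA$. Naturality of the resulting bijection is automatic, since any two parallel composites in a preorder agree, and monotonicity of $F_!$ and $F^\ast$ follows formally from the Galois condition (functoriality of $F_!$ is in any case immediate from the universal property of the pushout used to define it in \Cref{rmk:def-F!}). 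Throughout I will use the bookkeeping device that, for a calculus with defining epimorphism $p_d\colon\Omega^1_{A,u}\epi\Omega^1_{A,d}$, a morphism of calculi corresponds to an inclusion of the associated subobjects $N_{A,d}:=\ker p_d\mono\Omega^1_{A,u}$, and similarly over $B$.

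I would first rewrite the left-hand condition. Unfolding the pushout of \Cref{rmk:def-F!} and using that $f_!$ is a left adjoint, hence right exact (cf.\ \Cref{prop:otimes_A-preserves-coequ}), applying $f_!$ to the short exact sequence $N_{A,d}\mono\Omega^1_{A,u}\epi\Omega^1_{A,d}$ identifies the kernel of $f_!p_d$ with the image of $f_!N_{A,d}\to f_!\Omega^1_{A,u}$; pushing forward along $\widehat{f_u}$ then shows that the kernel of the structure epimorphism $\Omega^1_{B,u}\epi F_!\Omega^1_{A,d}$ is the image of $f_!N_{A,d}\to f_!\Omega^1_{A,u}\xrightarrow{\widehat{f_u}}\Omega^1_{B,u}$. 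Hence a morphism $F_!\Omega^1_{A,d}\to\Omega^1_{B,\delta}$ exists precisely when this image lies in $N_{B,\delta}$, i.e.\ when the composite $f_!N_{A,d}\to\Omega^1_{B,u}\xrightarrow{q_\delta}\Omega^1_{B,\delta}$ vanishes. Transporting across the adjunction $f_!\dashv f^\ast$, and recalling that $\widehat{f_u}$ is by construction the adjunct of $f_u\colon\Omega^1_{A,u}\to f^\ast\Omega^1_{B,u}$, this is equivalent to the vanishing of the composite
\[ N_{A,d}\longrightarrow\Omega^1_{A,u}\xrightarrow{\ f_u\ }f^\ast\Omega^1_{B,u}\xrightarrow{\ f^\ast q_\delta\ }f^\ast\Omega^1_{B,\delta}. \]

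Next I would rewrite the right-hand condition and check it matches. A morphism $\Omega^1_{A,d}\to F^\ast\Omega^1_{B,\delta}$ exists if and only if $N_{A,d}\subseteq N_{A,f^\ast\delta}=P$, with $P$ the pullback constructed in \Cref{rmk:def-Fast}. Since $f^\ast$ is continuous (cf.\ \Cref{restriction-properties}), $f^\ast N_{B,\delta}$ is the kernel of $f^\ast q_\delta$, and therefore $P$, being the pullback of this kernel along $f_u$, is exactly the kernel of the composite $f^\ast q_\delta\circ f_u\colon\Omega^1_{A,u}\to f^\ast\Omega^1_{B,\delta}$ displayed above. Thus $N_{A,d}\subseteq P$ holds precisely when that composite vanishes on $N_{A,d}$, which is the condition obtained in the previous paragraph. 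This gives the Galois equivalence and hence the adjunction $F_!\dashv F^\ast$; as a right adjoint, $F^\ast$ is in particular a functor, which completes the proof.

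The main obstacle I anticipate is the bookkeeping in the second step: one must be careful that only right exactness of $f_!$ (not exactness) is needed to identify $\ker(f_!p_d)$ with the image of $f_!N_{A,d}$, and, more delicately, that transporting the vanishing condition across $f_!\dashv f^\ast$ really yields $f^\ast q_\delta\circ f_u$ rather than a twisted variant — this is precisely where the defining property of $\widehat{f_u}$ as the adjunct of $f_u$, together with \Cref{prop:fu-bimod-map}, is used. Once both conditions are reduced to the vanishing of the single composite above, the remainder is formal.
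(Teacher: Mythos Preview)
Your argument is correct and rests on the same underlying mechanism as the paper's proof: both exploit that $\CalcA$ and $\CalcB$ are preorders to reduce the adjunction to a single biconditional, and both pivot on transporting the compatibility condition across $f_!\dashv f^\ast$ using that $\widehat{f_u}$ is the adjunct of $f_u$. The organizational difference is that the paper treats the two implications separately---in $(\Rightarrow)$ it transports $h$ to $h_d\colon\Omega^1_{A,d}\to f^\ast\Omega^1_{B,\delta}$ and then chases through kernel, pullback, and cokernel universal properties to produce the map into $F^\ast\Omega^1_{B,\delta}$; in $(\Leftarrow)$ it postcomposes with $f^\delta$, transposes, and invokes the pushout---whereas you collapse both sides to the single test ``the composite $N_{A,d}\to\Omega^1_{A,u}\xrightarrow{f_u}f^\ast\Omega^1_{B,u}\xrightarrow{f^\ast q_\delta}f^\ast\Omega^1_{B,\delta}$ vanishes'' and observe that the two characterizations coincide. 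Your packaging is more symmetric and makes the role of $P=\ker(f^\ast q_\delta\circ f_u)$ explicit. One point to watch: your identification of $\ker\!\bigl(\Omega^1_{B,u}\twoheadrightarrow F_!\Omega^1_{A,d}\bigr)$ as an image, and the biconditional ``$N_{A,d}\subseteq P$ iff a $\CalcA$-morphism exists'', both use that $p_d$ is the cokernel of its kernel; the paper's step~(3) uses the same fact, so you are not assuming more than the paper, but it is worth noting that this is where both arguments lean on abelian-type behaviour of $\V$ rather than mere pre-abelianness. If you want to avoid images altogether, you can rephrase the left-hand reduction directly via the pushout universal property plus the adjunction (existence of $\beta'\colon\Omega^1_{A,d}\to f^\ast\Omega^1_{B,\delta}$ with $\beta'\circ p_d=f^\ast q_\delta\circ f_u$), which is exactly the paper's $h_d$.
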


\begin{proof}
We start showing that the assignment of objects $F_!$ is actually a functor $\CalcA\to\CalcB$. 
Since both categories are posetal, it is enough to show that given a morphism $\Omega_{A,d}^1\epi\Omega_{A,d'}^1\in\CalcA$, then there is a morphism $\Omega_{B,f_!d}^1\epi\Omega_{B,f_!'}^1\in\CalcB$. 
This follows by the universal property of the pushout $\Omega_{B,f_!d}^1$ as shown below, where the outer diagram is the pushout diagram defining $\Omega_{B,f_!d'}^1$ and the top triangle commutes since $\Omega_{A,d}^1\twoheadrightarrow\Omega_{A,d'}^1$ is a morphism in $\CalcA$. 
\[\begin{tikzcd}[ampersand replacement=\&]
	{f_!\Omega^1_{A,u}} \& {f_!\Omega^1_{A,d}} \& {f_!\Omega^1_{A,d'}} \\
	{\Omega^1_{B,u}} \& {\Omega_{B,f_!d}^1} \\
	\&\& {\Omega_{B,f_!d'}^1}
	\arrow[two heads, from=1-1, to=1-2]
	\arrow[curve={height=-18pt}, two heads, from=1-1, to=1-3]
	\arrow["{\widehat{f_u}}"', from=1-1, to=2-1]
	\arrow[two heads, from=1-2, to=1-3]
	\arrow["{\widehat{f_d}}", from=1-2, to=2-2]
	\arrow[from=1-3, to=3-3]
	\arrow[two heads, from=2-1, to=2-2]
	\arrow[curve={height=6pt}, two heads, from=2-1, to=3-3]
	\arrow["\lrcorner"{anchor=center, pos=0.125, rotate=180}, draw=none, from=2-2, to=1-1]
	\arrow["{\exists!}"{description}, two heads, from=2-2, to=3-3]
\end{tikzcd}\]

Now we have left to prove that there is a natural isomorphism $\CalcB(F_!\Omega_{A,d}^1,\Omega_{B,\delta}^1)\cong\CalcA(\Omega_{A,d}^1,F^*\Omega_{B,\delta}^1)$, since this, together with the first part, will ensure that also $F^*$ is a functor.
Again, since both $\CalcA$ and $\CalcB$ are posetal, it suffices to prove that 
\begin{center}
	there exists $F_!\Omega_{A,d}^1\to\Omega_{B,\delta}^1\in\CalcB$ $\quad\Longleftrightarrow\quad$ there exists $\Omega_{A,d}^1\to F^*\Omega_{B,\delta}^1\in\CalcA$. 
\end{center}
\begin{itemize}
	\item[($\Rightarrow$)] Let us consider a morphism $h\colon F_!\Omega_{A,d}^1\to\Omega_{B,\delta}^1\in\CalcB$. Then, the map $h\circ\widehat{f_d}\colon f_!\Omega^1_{A,d}\to\Omega_{B,\delta}$ corresponds to a map $h_d\colon \Omega^1_ {A,d}\to f^\ast\Omega^1_{B,\delta}$ through the adjunction $f_!\dashv f^\ast$. 
	In order to find the desired morphism, we will proceed as follows, looking at the diagram below. 

	\begin{enumerate}[(1)]
		\item By definition of $f^\delta$, the adjunction  $f_!\dashv f^\ast$, and the definition of $h_d$, we can see that $h_d\circ p_d=f^\ast q_\delta\circ f_u$. Hence, by the universal property of the kernel $f^\ast N_{B,\delta}$, there exists a unique $k_d$ such that $f^\ast i_\delta\circ k_d=f_u\circ i_d$. 
		\item Therefore, by the universal property of the pullback $ N_{A,f^\ast \delta}$, there exists a unique $k\colon N_{A,d}\to N_{A,f^\ast \delta}$ such that $i_{f^\ast\delta}\circ k=i_d$.
		\item Finally, by the universal property of the cokernel $F^\ast\Omega^1_{B,\delta}$, there exists a unique map $\Omega^1_{A,d}\to F^\ast\Omega^1_{B,\delta}$, as wanted. 
	\end{enumerate}
\[\begin{tikzcd}[ampersand replacement=\&]
	{N_{A,d}} \&\&\&\& {\Omega^1_{A,d}} \\
	\& {N_{A,f^\ast\delta}} \& {\Omega^1_{A,u}} \& {F^*\Omega_{B,\delta}^1} \\
	\& {f^*N_{B,\delta}} \& {f^*\Omega_{B,u}} \& {f^*\Omega^1_{B,\delta}}
	\arrow["{\textcolor{teal}{(2)}\exists!k}"{description}, dashed, from=1-1, to=2-2]
	\arrow["{i_d}", tail, from=1-1, to=2-3]
	\arrow["{\textcolor{teal}{(1)}\,\exists!k_d}"', curve={height=12pt}, dashed, from=1-1, to=3-2]
	\arrow["{\textcolor{teal}{(3)}\,\exists!}"{description}, dashed, from=1-5, to=2-4]
	\arrow["{h_d}", curve={height=-12pt}, from=1-5, to=3-4]
	\arrow["{i_{f^\ast\delta}}"', tail, from=2-2, to=2-3]
	\arrow["{k^\delta}"', from=2-2, to=3-2]
	\arrow["\lrcorner"{anchor=center, pos=0.125}, draw=none, from=2-2, to=3-3]
	\arrow["{p_d}", two heads, from=2-3, to=1-5]
	\arrow[two heads, from=2-3, to=2-4]
	\arrow["{f_u}", from=2-3, to=3-3]
	\arrow["{f^\delta}"{pos=0.4}, from=2-4, to=3-4]
	\arrow["{f^\ast i_\delta}"', tail, from=3-2, to=3-3]
	\arrow["{f^\ast q_\delta}"', two heads, from=3-3, to=3-4]
\end{tikzcd}\]
	\item[($\Leftarrow$)] Let us start with a morphism $t\colon\Omega_{A,d}^1\to F^\ast\Omega_{B,\delta}=\Omega_{A,f^\ast\delta}$. 
	Then, postcomposing with $f^\delta$ we get a morphism $f^\delta\circ t\colon \Omega_{A,d}^1\to\Omega^1_{A,f^\ast\delta}\to f^\ast\Omega_{B,\delta}^1$. 
	This corresponds uniquely, through the adjunction $f_!\dashv f^\ast$, to a morphism $\widetilde{f^\delta t}\colon f_!\Omega_{A,d}^1\to \Omega_{B,\delta}^1$. 
	We notice that, since the diagram below left commutes (by definition of $f^\delta$), then the outer diagram below right commutes as well (using the adjunction $f_!\dashv f^\ast$).
	Therefore, thanks to the universal property of the pullback below right, we find the desired morphism $F_!\Omega_{A,d}^1\to\Omega_{B,\delta}^1$.
\begin{center}
	
	\begin{tikzcd}[ampersand replacement=\&]
		\& {\Omega^1_{A,d}} \\
		{\Omega^1_{A,u}} \& {F^*\Omega_{B,\delta}^1} \\
		{f^*\Omega_{B,u}} \& {f^*\Omega^1_{B,\delta}}
		\arrow["t", from=1-2, to=2-2]
		\arrow["{p_d}", two heads, from=2-1, to=1-2]
		\arrow[two heads, from=2-1, to=2-2]
		\arrow["{f_u}"', from=2-1, to=3-1]
		\arrow["{f^\delta}"{pos=0.4}, from=2-2, to=3-2]
		\arrow["{f^\ast q_\delta}"', two heads, from=3-1, to=3-2]
	\end{tikzcd}\hspace{1cm}
	\begin{tikzcd}[ampersand replacement=\&]
		{f_!\Omega^1_{A,u}} \& {f_!\Omega^1_{A,d}} \\
		{\Omega^1_{B,u}} \& {F_!\Omega_{A,d}^1} \\
		\&\& {\Omega_{B,\delta}^1}
		\arrow["{f_!p_d}", two heads, from=1-1, to=1-2]
		\arrow["{\widehat{f_u}}"', from=1-1, to=2-1]
		\arrow["{\widehat{f_d}}", from=1-2, to=2-2]
		\arrow["{\widetilde{f^\delta t}}", curve={height=-12pt}, from=1-2, to=3-3]
		\arrow[two heads, from=2-1, to=2-2]
		\arrow["{q_\delta}"', curve={height=12pt}, two heads, from=2-1, to=3-3]
		\arrow["\lrcorner"{anchor=center, pos=0.125, rotate=180}, draw=none, from=2-2, to=1-1]
		\arrow["{\exists!}"{description}, dashed, from=2-2, to=3-3]
	\end{tikzcd}
	
\end{center}

\end{itemize}
\end{proof}

\begin{rmk}
	It is interesting to notice that the two squares below do not commute, but there are natural transformations as shown. This follows immediately by looking at the definitions of $F_!$ and $F^\ast$.
	\begin{center}
		\begin{tikzcd}[ampersand replacement=\&]
			\CalcA \& \CalcB \\
			\AModA \& \BModB
			\arrow["{F_!}", from=1-1, to=1-2]
			\arrow[""{name=0, anchor=center, inner sep=0}, from=1-1, to=2-1]
			\arrow[""{name=1, anchor=center, inner sep=0}, from=1-2, to=2-2]
			\arrow["{f_!}"', from=2-1, to=2-2]
			\arrow[shorten <=25pt, shorten >=25pt, Rightarrow, from=0, to=1]
		\end{tikzcd}
		\hspace{0.5cm}
		\begin{tikzcd}[ampersand replacement=\&]
			\CalcB \& \CalcA \\
			\BModB \& \AModA
			\arrow["{F^\ast}", from=1-1, to=1-2]
			\arrow[""{name=0, anchor=center, inner sep=0}, from=1-1, to=2-1]
			\arrow[""{name=1, anchor=center, inner sep=0}, from=1-2, to=2-2]
			\arrow["{f^\ast}"', from=2-1, to=2-2]
			\arrow[shorten <=25pt, shorten >=25pt, Rightarrow, from=1, to=0]
		\end{tikzcd}
	\end{center}
\end{rmk}

\subsection{First order differential calculi and left adjoints}\label{subsec:calc_and_left_adjoints}

Let $\V$ be an additive monoidal category with finite limits and colimits, such that the tensor product preserves finite colimits in both variables. 
Denote by $\Calc(\V)$ the category whose objects are pairs 
$$(A,\Omega_d)$$
where $A\in\Mon(\V)$ and $\Omega_d$ is a first order differential calculus on $A$. A morphism 
$$ (f,g)\colon (A,\Omega_d)\to (A',\Omega_{d'}) $$ 
is given by $f\colon A\to A'$ in $\tx{Mon}(\V)$ and $g\colon \Omega_d\to f^*\Omega_{d'}$ in $\AModA$, such that $gd=d'f$ in $\V$. 
Note that a map $g\colon \Omega_d\to \Omega_{d'}$ in $\V$ defines a morphism  $g\colon \Omega_d\to f^*\Omega_{d'}$ in $\AModA$ as above if and only if the two squares below commute.
\begin{center}
	\begin{tikzpicture}[baseline=(current  bounding  box.south), scale=2]
		
		\node (01) at (-1,0.8) {$A\otimes \Omega_d$};
		\node (a1) at (0.1,0.8) {$\Omega_d$};
		\node (b1) at (1.2,0.8) {$\Omega_{d}\otimes A$};
		\node (c1) at (2.4,0.8) {$\Omega_{d}$};
		
		\node (00) at (-1,0) {$A'\otimes\Omega_{d'}$};
		\node (a0) at (0.1,0) {$\Omega_{d'}$};
		\node (b0) at (1.2,0) {$\Omega_{d'}\otimes A'$};
		\node (c0) at (2.4,0) {$\Omega_{d'}$};

		\path[font=\scriptsize]

		(01) edge [->] node [above] {$\cdot$} (a1)
		(01) edge [->] node [left] {$f\otimes g$} (00)
		(b1) edge [->] node [above] {$\cdot$} (c1)
		
		(a1) edge [->] node [right] {$g$} (a0)
		(b1) edge [->] node [left] {$g\otimes f$} (b0)
		(c1) edge [->] node [right] {$g$} (c0)
		
		(00) edge [->] node [below] {$\cdot$} (a0)
		(b0) edge [->] node [below] {$\cdot$} (c0);
	\end{tikzpicture}	
\end{center}

\begin{prop}\label{prop:univ-calc-adjunction}
	The first component projection $\pi\colon \Calc(\V)\to\Mon(\V)$ has:\begin{enumerate}
		\item a right adjoint, which is given by sending $A$ to $(A,0)$; 
		\item a left adjoint, which is given by sending $A$ to the universal calculus $(A,\Omega^1_{A,u})$.
	\end{enumerate} 
\end{prop}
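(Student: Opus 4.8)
\emph{Plan.} I will establish the two adjunctions separately; the right adjoint is essentially formal, while the left adjoint rests on the universal property of $\Omega^1_u$ (\Cref{universal-prop}), routed so as to stay within results already proved. Throughout I use that $\V$ is additive, so that $\AModA$ has a zero object $0$ (by \Cref{forgetful} it inherits from $\V$ all limits and colimits preserved by $\otimes$, in particular the zero object), and I write $f^*$ for restriction of scalars, which is the identity on underlying objects and morphisms.

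\emph{Right adjoint.} First I would check that $(0,\;0\colon A\to 0)$ is a first order differential calculus over $A$: the Leibniz rule holds because every morphism with codomain $0$ vanishes, and the surjectivity condition holds because any morphism into the zero (hence terminal) object is an epimorphism. This defines a functor $R\colon\Mon(\V)\to\Calc(\V)$, $A\mapsto(A,0)$, $f\mapsto(f,0)$. For $\pi\dashv R$ I would then observe that a morphism $(f,g)\colon(B,\Omega_\delta)\to(A,0)$ in $\Calc(\V)$ consists of a monoid map $f\colon B\to A$ together with $g\colon\Omega_\delta\to f^*0=0$ satisfying $g d_\delta=0\cdot f$; but $\Hom_{\BModB}(\Omega_\delta,0)$ is a singleton and the compatibility is then automatic, so $(f,g)\mapsto f$ is a bijection $\Calc(\V)((B,\Omega_\delta),(A,0))\cong\Mon(\V)(B,A)$, visibly natural in both variables.

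\emph{Left adjoint.} Put $LA:=(A,\Omega^1_{A,u})$, using that the universal calculus exists by \Cref{prop:univ-1-d-cal-well-def}. I must produce a natural bijection $\Calc(\V)((B,\Omega^1_{B,u}),(A,\Omega_d))\cong\Mon(\V)(B,A)$. Given $f\colon B\to A$ in $\Mon(\V)$, I would take as companion the composite bimodule map $g:=f^*(p_d)\circ f_u\colon\Omega^1_{B,u}\to f^*\Omega^1_{A,u}\to f^*\Omega_d$, where $f_u$ is the $B$-bimodule map of \Cref{prop:fu-bimod-map} and $p_d\colon\Omega^1_{A,u}\twoheadrightarrow\Omega_d$ is the canonical epimorphism of \Cref{universal-prop}; this is a composite of bimodule maps, and since the defining square of $f_u$ in \Cref{rmk:induced-morph-betw-univ-calc} forces $f_u\circ d_{B,u}=d_{A,u}\circ f$, we get $g\circ d_{B,u}=p_d\circ d_{A,u}\circ f=d\circ f$, so $(f,g)$ is a morphism in $\Calc(\V)$. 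Conversely, any morphism $(f,g')\colon(B,\Omega^1_{B,u})\to(A,\Omega_d)$ has its second component determined by $f$: being a left $B$-module map, $g'\circ(1_B\cdot d_{B,u})=1_B\cdot(g' d_{B,u})=1_B\cdot(d f)=g\circ(1_B\cdot d_{B,u})$, and $1_B\cdot d_{B,u}$ is an epimorphism by the surjectivity condition on $\Omega^1_{B,u}$, whence $g'=g$. This yields the bijection, and naturality in $A$ and $B$ follows from the same uniqueness principle.

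\emph{Main obstacle.} The only delicate point is that one cannot feed $(f^*\Omega_d,\,d\circ f)$ directly into \Cref{universal-prop}, since this pair is in general only a \emph{generalized} first order differential calculus over $B$ — it need not be generated over $B$ by the image of $d\circ f$. Factoring $g$ through $f_u$ and $p_d$ sidesteps this and keeps everything within results already established; an alternative is to verify, by unwinding the restricted actions of \Cref{prop:def-restr-of-scalars}, that $(f^*\Omega_d,\,d\circ f)$ satisfies the $B$-Leibniz rule and that the comparison map built in the proof of \Cref{universal-prop} never used surjectivity of its target, but this is the more laborious route.
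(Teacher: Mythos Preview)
Your proof is correct and follows essentially the same route as the paper: for the right adjoint both arguments observe that the zero bimodule forces the second component to be trivial, and for the left adjoint both construct the companion of $f$ as the composite $f^*(p_d)\circ f_u$ (the paper writes $e$ for your $p_d$) and then deduce uniqueness from the surjectivity condition on $\Omega^1_{B,u}$ exactly as you do. Your closing remark about why one cannot invoke \Cref{universal-prop} directly on $(f^*\Omega_d,\,d\circ f)$ is a useful clarification that the paper leaves implicit.
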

\begin{proof}
	(i). We need to show that
	$$ \Mon(\V)(A,B)\cong \Calc(\V)((A,\Omega_d), (B,0)) $$
	naturally in $(A,\Omega_d)$ in $\Calc(\V)$ and $B\in\Mon(\V)$.  But, since the first order differential calculus on $B$ is trivial, every map $(A,\Omega_d)\to (B,0) $ is of the form $(f,0)$, and the commutativity condition is always satisfied. Thus the isomorphism follows.
	
	(ii). We need to prove that 
	$$\Calc(\V)((A,\Omega^1_{A,u}), (B,\Omega_d)) \cong \Mon(\V)(A,B) $$
	naturally in the two variables. Clearly, given a map
	$$ (f,g)\colon (A,\Omega^1_{A,u})\to (B,\Omega_{d}) $$
	in $\Calc(\V)$ we obtain a map $f\colon A\to B$ in $\Mon(\V)$, by forgetting $g$. To conclude we need to prove that for any such $f$ there is a unique $g\colon \Omega^1_{A,u}\to f^*\Omega_d$ such that $gd_u=df$ in $\V$. \\
	Consider the following diagram 
	\begin{center}
		\begin{tikzpicture}[baseline=(current  bounding  box.south), scale=2]
			
			\node (01) at (-1,0.8) {$A$};
			\node (a1) at (0.1,0.8) {$\Omega^1_{A,u}$};
			\node (b1) at (1.2,0.8) {$A\otimes A$};
			
			\node (00) at (-1,0) {$B$};
			\node (a0) at (0.1,0) {$\Omega^1_{B,u}$};
			\node (b0) at (1.2,0) {$B\otimes B$};
			
			\node (a-1) at (0.1,-0.5) {$\Omega_d$};
			
			\path[font=\scriptsize]

			(01) edge [->] node [above] {$d_u^A$} (a1)
			(01) edge [->] node [left] {$f$} (00)
			(a1) edge [>->] node [above] {$\iota^A_u$} (b1)
			
			(a1) edge [dashed, ->] node [right] {$ f_u$} (a0)
			(b1) edge [->] node [right] {$f\otimes f$} (b0)
			
			(00) edge [->] node [above] {$d_u^B$} (a0)
			(a0) edge [>->] node [below] {$\iota^B_u$} (b0)
			
			(00) edge [->] node [below] {$d$} (a-1)
			(a0) edge [->] node [right] {$e$} (a-1);
		\end{tikzpicture}	
	\end{center} 
where the solid part of the diagram commutes since $f$ is a morphism of monoids, and the map $e$ is uniquely determined as a morphism in $\Calc_B$. The morphism $f_u$ is given by \Cref{prop:fu-bimod-map} and defines a map of $A$-bimodules $\Omega^1_{A,u}\to f^*\Omega^1_{B,u}$. Thus we can set
$$ g\colon\Omega^1_{A,u}\xrightarrow{\ f_u\ } f^*\Omega^1_{B,u}\xrightarrow{\ f^*(e)\ }f^*\Omega_d.$$
It remains to show that such a $g$ is uniquely determined from $f$. Consider the diagram
\begin{center}
	\begin{tikzpicture}[baseline=(current  bounding  box.south), scale=2]
		
		\node (a1) at (0.1,0.8) {$A$};
		\node (b1) at (1.2,0.8) {$A\otimes \Omega_u$};
		\node (c1) at (2.4,0.8) {$\Omega_u$};
		
		\node (a0) at (0.1,0) {$B$};
		\node (b0) at (1.2,0) {$B\otimes \Omega_d$};
		\node (c0) at (2.4,0) {$\Omega_d$};
		
		\path[font=\scriptsize]

		(a1) edge [->] node [above] {$1_A\otimes d_u$} (b1)
		(b1) edge [->] node [above] {$\cdot$} (c1)
		
		(a1) edge [->] node [left] {$ f$} (a0)
		(b1) edge [->] node [right] {$f\otimes g$} (b0)
		(c1) edge [->] node [right] {$g$} (c0)
		
		(a0) edge [->] node [below] {$1_B\otimes d$} (b0)
		(b0) edge [->] node [below] {$\cdot$} (c0);
	\end{tikzpicture}	
\end{center} 
where the left-hand-side commutes since $gd_u=df$, and the right-hand-side commutes since $g$ is a morphism of left $A$-modules. But the top composite is an epimorphism (surjectivity condition on $\Omega_u$). It follows that $g$ is unique with the property that $g(1_A\cdot d_u)= (1_B\cdot d)f$;
and thus is uniquely determined by $f$ and by being a morphism in $\Calc(\V)$. 
\end{proof}

Similarly to how we defined $\Calc(\V)$, we now introduce the category $\Mod(\V)$ whose objects are pairs 
$$(A,M)$$
where $A\in\Mon(\V)$ and $M\in\AModA$. A morphism 
$$ (f,g)\colon (A,M)\to (B,N) $$ 
in $\Mod(\V)$ is given by $f\colon A\to B$ in $\Mon(\V)$ and $g\colon M\to f^*N$ in $\AModA$. Identities and compositions are inherited from the categories $\Mon(\V)$ and $\AModA$, for a monoid $A$.

\begin{defi}\label{square_zero}
The \emph{square-zero extension functor} is the functor
$$ U\colon \Mod(\V)\longrightarrow\Mon(\V) $$
that sends $(A,M)$ to the monoid whose underlying object is $A\oplus M\in \V$, and whose unit and multiplication are

\begin{center}
	\begin{tikzpicture}[baseline=(current  bounding  box.south), scale=2]
		
		\node (00) at (0,0.5) {$(A\oplus M)\otimes (A\oplus M)$};
		\node (a0) at (0,0) {$(A\otimes A)\oplus ((A\otimes M)\oplus (M\otimes A)\oplus (M\otimes M))$};
		\node (b0) at (0,-0.7) {$A\oplus M.$};

		\node (c'0) at (-2.8,0.4) {$I$};
		\node (b'0) at (-2.8,-0.4) {$A\oplus M$};
		
		\path[font=\scriptsize]
		
		(00) edge [->] node [right] {$\cong $} (a0)
		(a0) edge [->] node [right] {$(m_A, \mu^l_M+\mu^r_M+0)$} (b0)
		
		(c'0) edge [->] node [left] {$(i_A,0_M)$} (b'0);
	\end{tikzpicture}	
\end{center}
A morphism $(f,g)\colon (A,M)\to (B,N)$ in $\Mod(\V)$ is sent to $U(f,g):=f\oplus g$ (note that $N=f^*N$ as objects of $\V$).
\end{defi}

\begin{prop}\label{universal-calc-left-adj}
	The functor $U\colon \Mod(\V)\to\Mon(\V)$ has a left adjoint $L$ which is given pointwise by sending a monoid $A$ to $LA=(A,\Omega^1_{A,u})$, where $\Omega^1_{A,u}$ is the universal first order differential calculus on $A$.
\end{prop}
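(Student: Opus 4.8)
The plan is to prove the adjunction by exhibiting, for each monoid $A$, a universal arrow from $A$ to $U$ whose codomain is $(A,\Omega^1_{A,u})$; since such universal arrows will exist for every $A$, this produces the left adjoint $L$ with $L A=(A,\Omega^1_{A,u})$ together with its unit. The candidate universal arrow is $\eta_A:=(1_A,d_u)\colon A\to A\oplus\Omega^1_{A,u}=U(A,\Omega^1_{A,u})$.

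First I would analyse what a morphism of monoids $h\colon A\to U(B,N)=B\oplus N$ amounts to. Writing $h=(h_1,h_2)$ with $h_1\colon A\to B$ and $h_2\colon A\to N$ in $\V$, and using that $\otimes$ is additive in each variable to split $(B\oplus N)^{\otimes 2}\cong (B\otimes B)\oplus(B\otimes N)\oplus(N\otimes B)\oplus(N\otimes N)$, a direct unwinding of \Cref{square_zero} shows that $h$ is a morphism of monoids exactly when $h_1$ is a morphism of monoids and, regarding $h_2$ as a morphism $A\to h_1^{*}N$ into the restricted $A$-bimodule, $h_2$ satisfies the Leibniz rule $h_2 m_A=1_A\cdot h_2+h_2\cdot 1_A$; the unit condition $h_2\circ i_A=0$ is then automatic, by the same computation that gives $d\circ i=0$ for any first order differential calculus (that computation uses only the Leibniz rule and the unit axioms of the bimodule). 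In short, a morphism of monoids $A\to U(B,N)$ is the same datum as a pair $(f,\partial)$ with $f\colon A\to B$ a morphism of monoids and $\partial\colon A\to f^{*}N$ a \emph{generalized} first order differential calculus on $A$ in the sense of \Cref{def:enriched-1st-diff-calc}.

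Next I would show that such generalized calculi out of $A$ are corepresented by $\Omega^1_{A,u}$: for every $A$-bimodule $M$, post-composition with $d_u$ gives a bijection between $\AModA(\Omega^1_{A,u},M)$ and the set of Leibniz maps $A\to M$. For surjectivity, given a Leibniz map $\partial\colon A\to M$ one sets $g_\partial:=(1_A\cdot\partial)\circ\iota_u\colon\Omega^1_{A,u}\to M$; the computations in the proof of \Cref{universal-prop} showing this is a morphism of $A$-bimodules with $g_\partial\circ d_u=\partial$ use only the Leibniz rule and $\partial\circ i=0$, not the surjectivity of $\partial$, so they carry over unchanged. For injectivity, if $g,g'$ agree after $d_u$ then $g\circ(1_A\cdot d_u)=1_A\cdot(g\circ d_u)=1_A\cdot(g'\circ d_u)=g'\circ(1_A\cdot d_u)$, and $1_A\cdot d_u$ is an epimorphism by the surjectivity condition on $\Omega^1_{A,u}$ (\Cref{prop:univ-1-d-cal-well-def}). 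When $\V$ is abelian one may alternatively factor $\partial$ through the sub-bimodule of $M$ generated by $\partial A$, which is a first order differential calculus by \Cref{surjectivity-bimodule-map} and \Cref{lem:epi-from-univ-always-calc}, and then invoke \Cref{universal-prop} directly.

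Finally, I would assemble these steps: a morphism of monoids $h\colon A\to U(B,N)$ is a pair $(f,\partial)$ as in the first step, and $\partial$ corresponds to a unique bimodule map $g\colon\Omega^1_{A,u}\to f^{*}N$ by the second step; the pair $(f,g)$ is then the unique morphism $(A,\Omega^1_{A,u})\to(B,N)$ in $\Mod(\V)$ with $U(f,g)\circ\eta_A=h$, because $U(f,g)\circ\eta_A=(f\oplus g)\circ(1_A,d_u)=(f,g\circ d_u)$. This makes $\eta_A$ a universal arrow, and the standard criterion upgrades the object assignment $A\mapsto(A,\Omega^1_{A,u})$ to a functor left adjoint to $U$ with unit $\eta$. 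The main obstacle is the middle step: the universal property of $\Omega^1_{A,u}$ is required against an arbitrary bimodule target rather than against a first order differential calculus, so \Cref{universal-prop} is not literally applicable and one must check that its proof survives this weakening (it does). The rest — the biproduct bookkeeping of the first step and the passage from universal arrows to an adjunction — is routine, and notably the extension–restriction of scalars machinery is not needed here.
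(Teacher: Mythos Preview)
Your proposal is correct and follows essentially the same approach as the paper: the same unit $\eta_A=(1_A,d_u)$, the same inverse construction $g=(1_A\cdot h_2)\circ\iota_u$ (which equals the paper's $(h_1\cdot h_2)\circ\iota_u$ once one unwinds the restricted action on $f^*N$), and the same verification via the Leibniz rule and the surjectivity of $1_A\cdot d_u$. Your organization is slightly more conceptual---you isolate the characterization of monoid maps $A\to U(B,N)$ as pairs (monoid map, Leibniz map) and then prove the corepresenting property of $\Omega^1_{A,u}$ for arbitrary Leibniz maps separately, correctly flagging that \Cref{universal-prop} must be mildly extended to drop the surjectivity hypothesis on the target---whereas the paper constructs the two halves of the bijection directly and checks they are mutually inverse; but the underlying computations are the same.
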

\begin{proof}
	We need to prove that there is a bijection
	$$ \Mod(\V)((A,\Omega^1_{A,u}),(B,N))\cong \Mon(\V)(A,U(B,N)) $$
	natural in $(B,N)\in\Mod(\V)$.
	
	Given $(f,g)\colon (A,\Omega^1_{A,u})\to (B,N)$ we define
	$$h\colon A\xrightarrow{(1_A,d_u)}U(A,\Omega^1_{A,u})\xrightarrow{U(f,g)}U(B,N), $$ 
	where $(1_A,d_u)\colon A \to U(A,\Omega^1_{A,u})$ is a morphism in $\Mon(\V)$ since that amounts to $d_u$ satisfying the Leibniz rule; in fact, this is going to be the unit of the adjunction at the component $A$.
	
	Conversely, given a morphism $h=(h_1,h_2)\colon A\to B\oplus N=U(B,N)$ in $\Mon(\V)$ we define $(f,g)\colon (A,\Omega^1_{A,u})\to (B,N)$ by $f:=h_1$ and 
	$$ g\colon\Omega^1_{A,u}\xrightarrow{\iota_u^1} A\otimes A\xrightarrow{h_1\cdot h_2} N. $$
	Then $(f,g)$ is a map in $\Mod(\V)$ since both $(1_A,\iota_u^1)$ and $(h_1, h_1\cdot h_2)$ are (the first by definition, the latter since $h$ is a map of monoids).
	
	We need to prove that these operations are one the inverse of the other; the fact that they are natural follows by construction since we are applying functorial operations. Given $h\colon A\to B\oplus N$, this amounts to show that the composite 
	$$ A\xrightarrow{d_u}\Omega^1_{A,u}\xrightarrow{\iota_u^1} A\otimes A\xrightarrow{h_1\otimes h_2}B\otimes N\xrightarrow{\mu_N} N. $$
	coincides with $h_2$. Using the definition of $d_u$ we can rewrite this as in the top left:
	\begin{align}
	 (h_1\cdot h_2)\circ (\eta_A\otimes 1_A-1_A\otimes \eta_A)&= (h_1\cdot h_2)\circ (\eta_A\otimes 1_A) - (h_1\cdot h_2)\circ (1_A\otimes \eta_A)\notag\\
	&= (\eta_B\cdot  h_2 )- (h_1\cdot h_2(\eta_A))\tag{$\pi_1h$ monoid map}\\
	&= h_2 - (h_1\cdot h_2(\eta_A))\tag{$\eta_B$ unit}\\
	&= h_2\tag{$h$ map of monoids}.
	\end{align} 
	where the last equality is obtained using that $h$ is a map of monoid and how the monoid structure of $B\oplus N$ is defined.
	
	Conversely, given $(f,g)\colon (A,\Omega^1_{A,u})\to (B,N)$, it is enough to prove that 
	$$ \Omega^1_{A,u}\xrightarrow{\iota_u^1} A\otimes A\xrightarrow{f\otimes (g\circ d_u)}B\otimes N\xrightarrow{\mu_N} N. $$
	is equal to $g$. Consider then the following chain of equalities
	\begin{align}
		(f\cdot_N(g\circ d_u))\circ \iota_u&= (1_A\cdot_{f^*N}(g\circ d_u))\circ \iota_u\tag{def of $\lambda_{f^*N}$}\\
		&= (1_A\cdot_{f^*N}g)\circ(1_A\otimes d_u)\circ  \iota_u\tag{functoriality}\\
		&= g\circ (1_A\cdot d_u)\circ \iota_u\tag{$g$ module map}\\
		&= g\circ (-d_u\cdot 1_A)\circ \iota_u\tag{def of $\iota_u$}\\
		&= g\tag{$(-d_u\cdot 1_A)\circ \iota_u=1_{\Omega_u^1}$}.
	\end{align} 
\end{proof}

\begin{rmk}\label{rmk:L-factors-through-calc}
	In \Cref{universal-calc-left-adj} above, we defined a functor $L\colon\Mon(\V)\to\Mod(\V)$, which on objects is the same as the left adjoint of $\pi\colon\Calc(\V)\to\Mon(\V)$ defined in \Cref{prop:univ-calc-adjunction}, which we call here $L_u\colon\Mon(\V)\to\Calc(\V)$. 
	Moreover, we will show that $L$ on morphisms is also the same as $L_u$, showing therefore that the following triangle is commutative.
	 \[\begin{tikzcd}[ampersand replacement=\&]
	 	\& {\Mod(\V)} \\
	 	{\Mon(\V)} \& {\Calc(\V)}
	 	\arrow["L", from=2-1, to=1-2]
	 	\arrow["{L_u}"', from=2-1, to=2-2]
	 	\arrow[tail, from=2-2, to=1-2]
	 \end{tikzcd}\]
	In order to see this, we recall that \Cref{universal-calc-left-adj} tells us that, for any $f\colon A\to B\in\Mon(\V)$, then $Lf$ is defined as the top horizontal composite in the diagram below. 
	\[\begin{tikzcd}[ampersand replacement=\&]
		{\Omega^1_{A,u}} \&\& {A\otimes A} \&\& {B\otimes\Omega^1_{B,u}} \& {\Omega^1_{B,u}} \\
		{\Omega^1_{B,u}} \&\& {B\otimes B} \&\& {B\otimes B\otimes B} \\
		\&\&\&\&\& {B\otimes B}
		\arrow["{\iota^1_{A,u}}", tail, from=1-1, to=1-3]
		\arrow["{f_u}"', from=1-1, to=2-1]
		\arrow["{f\otimes (d_u^B\circ f)}", from=1-3, to=1-5]
		\arrow["{f\otimes f}"{description}, from=1-3, to=2-3]
		\arrow["{l_u^B}", from=1-5, to=1-6]
		\arrow["{1_B\otimes \iota^1_{B,u}}"{description}, from=1-5, to=2-5]
		\arrow["{\iota^1_{B,u}}", tail, from=1-6, to=3-6]
		\arrow["{\iota^1_{B,u}}"'{pos=0.6}, tail, from=2-1, to=2-3]
		\arrow["{\iota^1_{B,u}}"', curve={height=12pt}, tail, from=2-1, to=3-6]
		\arrow["{1_B\otimes d_u^B}"{description}, from=2-3, to=1-5]
		\arrow["{1_B\otimes(i_B\otimes 1_B-1_B\otimes i_B)}"', from=2-3, to=2-5]
		\arrow["{m^B\otimes1_B}"{description}, from=2-5, to=3-6]
	\end{tikzcd}\]
	
	Since $\iota_{B,u}^1$ is a mono, we get that $Lf=f_u$ if and only if $\iota_{B,u}^1\circ Lf=\iota_{B,u}^1\circ f_u$. 
	The latter equality follows because all the squares and triangles in the diagram above commute for the following reasons (from left to right, top to bottom): definition of $f_u$; $\otimes$ functoriality; definition of $d_u^B$; definition of $l_u^B$; first addendum goes to the identity by unit laws of $B$ and second to zero by definition of $\iota_{B,u}^1$.
	Therefore, $Lf=f_u$ and so $Lf$ coincides with the action on morphisms of the left adjoint $\Mon(\V)\to\Calc(\V)$ described in \Cref{prop:univ-calc-adjunction}.
\end{rmk}

Consider now another forgetful functor
$$ V\colon\Mod(\V)\longrightarrow\Mon(\V) $$
which acts by taking the first component projection, so that $V(A,M)=A$. For any $A\in\Mod(\V)$, we define the fiber $V^{-1}(A)$ as the subcategory of $\Mod(\V)$ consisting of those objects whose image is $A$ and those morphisms whose image is $1_A$. It follows by definition that $V^{-1}(A)\cong\AModA$. 

In fact, $V$ is a Grothendieck fibration. Meaning that it is given by applying the Grothendieck construction to the pseudofunctor
$$ \Phi\colon \Mon(\V)^\op\longrightarrow\mathbf{Cat} $$
defined by $\Phi(A):=V^{-1}(A)=\AModA$ and $\Phi(f)=f^*$. 

\begin{rmk}\label{lp-cat}
	Below we consider the notion of locally presentable category. Given a regular cardinal $\lambda$, we say that a category $\K$ is {\em locally $\lambda$-presentable} if it is cocomplete and has a strong generator $\G$ made of $\lambda$-presentable objects. This means that a map $f$ in $\K$ is an isomorphisms if and only if $\K(G,f)$ is for any $G\in \G$, and for each functor $\K(G,-)\colon \K\to\bf{Set}$, for $G\in\G$, preserves $\lambda$-filtered colimits. A category $\K$ is locally presentable if it is locally $\lambda$-presentable for some $\lambda$. A functor $F\colon\K\to\L$ is called accessible if it preserves $\lambda$-filtered colimits for some regular cardinal $\lambda$. For a standard reference on local presentability we refer the reader to~\cite{AR94:libro, MP89:libro}. 
\end{rmk}

\begin{prop}\label{l.p.mod.mon}
	Assume that $\V$ is locally presentable and the tensor product functors $X\otimes-$ and $-\otimes X$ are accessible for any $X\in\V$. Then $\Mon(\V)$ and $\Mod(\V)$ are also locally presentable. Moreover, the functors $$U,V\colon \Mod(\V)\to \Mon(\V)$$ introduced above are both continuous and accessible.
\end{prop}
\begin{proof}
	The argument to show that $\Mon(\V)$ is locally presentable is standard (see for instance \cite{porst2008categories}); nonetheless, we give some details. Consider the forgetful functor $W\colon\Mon(\V)\to \V$; this has a left adjoint $F\colon \V\to\Mon(V)$ which sends $X$ to $FX=\sum_{n\geq 0} X^{\otimes n}$ (the monoid structure is obtained by stacking together the isomorphism $X^{\otimes n}\otimes X^{\otimes m}\cong X^{\otimes n+m}$). 
	Moreover, $\Mon(\V)$ is cocomplete~\cite[Proposition~10]{porst2019colimits} and $W$ is conservative and accessible.
	It follows that $\Mon(\V)$ has a strong generator made of $\lambda$-presentable objects (namely, the image through $F$ of the $\lambda$-presentables in $\V$), where $\lambda$ is such that $\V$ is locally $\lambda$-presentable. Hence $\VMon$ is locally presentable.
	
	Regarding $\Mod(\V)$, since it is obtained by applying the Grothendieck construction to $ \Phi\colon \Mon(\V)^\op\longrightarrow\mathbf{Cat} $, to prove that it is locally presentable it is enough to show that $\Phi(A)$ is locally presentable for any object $A$, $\Phi(f)$ is continuous and accessible for any arrow $f$, and that $\Phi(-)$ sends filtered colimits in $\Mon(\V)$ to bilimits in $\mathbf{Cat}$ (this follows from~\cite[Theorem~5.3.4]{MP89:libro} plus the fact that completeness is preserved for the total category of a fibration if all $\Phi(A)$ are complete and all $\Phi(f)$ continuous).
	
	Given $A\in\Mon(\V)$ the category $\AModA$ is cocomplete by~\cite[4.1.10]{brandenburg2014tensor} and the forgetful functor $\AModA\to\V$ is continuous, conservative, accessible, and has a left adjoint. Thus $\AModA$ is locally presentable. Similarly, given $f\colon A\to B$ in $\Mon(\V)$ the functor $\Phi(f)=f^*\colon\BMod_B\to\AModA$ is continuous (since it has a left adjoint) and accessible (since $\lambda$-filtered colimits are computed as in $\V$ both in $\AModA$ and $\BMod_B$). 
	
	Finally, given a filtered colimit $A\cong\colim_{i\in D}A_i $ in $\Mon(\V)$, we need to check that
	$$ \AModA \simeq \lim_{i\in D}\  (_{A_i}\!\Mod_{A_i}). $$ 
	An object of the limit on the right, is by definition a family $(M_i\in_{A_i}\!\Mod_{A_i})_{i\in D}$ such that $M_j=f_{ij}^*M_i$ for any $f_{ij}\colon A_j\to A_i$ in the filtered diagram $D$. It follows that all the $M_i$ have same underlying object $M\in\V$, and that the left and right multiplications are compatible with the connecting maps. One can then define $\mu_M:=\colim_i (\mu_{M_i})$ and $\nu_M:=\colim_i (\nu_{M_i})$ to obtain an $A$-bimodule $(M,\mu,\nu)$. Let $(f_i\colon A_i\to A)_{i\in D}$ be the colimit cocone in $\Mon(\V)$, then it is easy to see that $M_i=f_i^*M$ for any $i\in D$. Therefore we get a bijection between the objects of $ \AModA$ and those of $\lim_{i\in D}\ ( _{A_i}\!\Mod_{A_i})$. It is easy to see that such bijection extends also at the level of arrows providing an equivalence of categories.
	
	The fact that $V$ is accessible and preserves limits follows again from \cite[Theorem~5.3.4]{MP89:libro}. Moreover, $U$ is continuous and accessible by \cite[Theorem~1.66]{AR94:libro} since it has a left adjoint (Proposition~\ref{universal-calc-left-adj}).
\end{proof}

\subsection{Bimonoids and Hopf modules}\label{sect:bimon}

In this subsection we consider the case of an object equipped with both a multiplication and comultiplication, and modules over these structures. 
We then use these notions to define a generalized version of Hopf modules, in a monoidal category. 

First, we need to recall the dual notions of monoids and bimodules, called comonoids and cobimodules. 
These are actually instances of monoids and bimodules; indeed, we define the category $\CoMon(\V)$ of comonoids in a monoidal category $\V$ as $$\CoMon(\V):=\Mon(\V^\op)^\op;$$ see \cite[Section~2.2]{porst2008categories}.
Similarly, given comonoids $C$ and $D$ in $\V$, the category $\Bicomod{C}{D}$ of $C$-$D$ bicomodules is defined as $$\Bicomod{C}{D}:=\Bimod{C}{D}(\V^\op)^\op\footnote{We explicitly write here where the bimodules are taken to make the duality clearer. As in the case of monoids then one has to take the dual again to get the right notion of morphism.}$$  where we see $C$ and $D$ as monoids in $\V^\op$. 
Explicitly, a comonoid is an object of $\V$ with a \emph{comultiplication} $\delta\colon C\otimes C\to C\in\V$ and a {\em counit} $\epsilon\colon C\to I$ satisfying dual axioms to associativity and right and left unital axioms, which we might refer to as \emph{coassociativity} and \emph{counital} axioms.
Moreover, comonoid maps from $(C,\delta.\epsilon)$ to $(D,\phi,\kappa)$ are still maps $h\colon C\to D$ compatible with  comultiplication and counits. 
Similarly, a $C$-$D$ bicomodule $M$ has coactions $\lambda\colon M\to M\otimes D$ and $\rho\colon M\to C\otimes M$ satisfying dual axioms. 

Furthermore, in this section we will need to assume that $\V$ is braided, so that $\VMon$ inherits a monoidal structure from $\V$ (see discussion before Remark 5.2 in \cite{JoyalStreet:brad-mon-cat})\footnote{The case for $\V$ symmetric and regarding comonoids was also stated in \cite{Fox1976}.}. 
We recall that, given two monoids $(A,m,i),(B,n,j)\in\Mon(\V)$, their tensor product is defined as below, where we omit the needed associator isomorphisms.  
$$(A\otimes B,\ A\otimes B\otimes A \otimes B\xrightarrow{1_A\otimes \beta_{B,A}\otimes 1_B}A\otimes A\otimes B \otimes B\xrightarrow{m\otimes n}A\otimes B,\ I\xrightarrow{i\otimes j}A\otimes B ).$$

Now, we are ready to recall the definition of bimonoid, which can both be seen as a monoid in $\CoMon(\V)$ or a comonoid in $\Mon(\V)$; see \cite[Section~4]{porst2008categories} for details. \black 

\begin{defi}
	A bimonoid $(A,m,i,\delta,\epsilon)$ in $\V$ is a comonoid object in $\VMon$. That is,  a quintuple $(A,m,i,\delta,\epsilon)$ such that:\begin{enumerate}
		\item $(A,m,i)$ is a monoid in $\V$,
		\item $(A,\delta,\epsilon)$ is a comonoid in $\V$,
		\item $\delta$ and $\epsilon$ are monoid morphisms (or equivalently, $m$ and $i$ are comonoid morphisms).
	\end{enumerate}  
	A morphism between any two bimonoids is a map between the underlying objects which is both a monoid and a comonoid morphism. We denote by $\VBimon$ the category of bimonoids and morphisms between them.
\end{defi}

\begin{defi}
	Let $A,B$ be monoids and $C,D$ comonoids in $\V$. An $(A,B,C,D)$-Hopf module is a quintuple $(M,l,r,\lambda,\rho)$ where:\begin{enumerate}
		\item $(M,l,r)$ is an $(A,B)$-bimodule,
		\item $(M,\lambda,\rho)$ is an $(C,D)$-bicomodule,
		\item $\lambda$ is a morphism of right $B$-modules and $\rho$ is a morphism of left $A$-modules (or equivalently, $l$ is a morphism of right $D$-comodules and $r$ is a morphisms of left $C$-comodules).  
	\end{enumerate}
	A morphism between such is a map between the underlying objects which is both a morphism of $(A,B)$-bimodules and $(C,D)$-bicomodules. We denote by $\HBimod{C}{D}{A}{B}$ the category of $(A,B,C,D)$-Hopf modules and morphisms between them.
\end{defi}

\begin{rmk}\label{rmk:bi-co-mod-as-hopf-mod}
	Clearly, if we set $C=D=I$ as comonoid, then $\HBimod{I}{I}{A}{B}\cong\Bimod{A}{B}$ and analogously setting $A=B=I$ we get $\HBimod{C}{D}{I}{I}\cong\Bicomod
	{C}{D}$. 
\end{rmk}

\begin{rmk}\label{rmk:bicomonoids-as-hopf-bimodules}
	If we consider an object $A$ in $\V$ with both a monoid and a comonoid structure, then we can consider the category of $A$-Hopf bimodules $\HBimodA$. 
	Clearly, using its multiplication and comultiplication, $A$ can be seen both as a $A$-bimodule and $A$-bicomodule. 
	Then, the axiom required for $A$ to be a bimonoid is exactly the same as the one required for $A$ to be an Hopf $A$-bimodule.
\end{rmk}

\begin{prop}\label{prop:restr-for-hopf-mod}
	For any pair of monoid morphisms $f\colon A\to B$ and $f'\colon A'\to B'$ in $\VMon$, and any pair of comonoid morphisms $g\colon C\to D$ and $g'\colon C'\to D'$ in $\VCoMon$, there is a functor
	$$\begin{bmatrix}
		g & g' \\
		f & f'
	\end{bmatrix}^\ast\colon\HBimod{C}{C'}{B}{B'}\to\HBimod{D}{D'}{A}{A'}$$
	which sends a $(B,B',C,C')$-Hopf module $(M,l,r,\lambda,\rho)$ to the $(A,A',D,D')$-Hopf module
	$$(M,l\circ (f\otimes 1_M),r\circ (1_M\otimes f'),(g\otimes 1_M)\circ\lambda,(1_M\otimes g')\circ\rho).$$ 
\end{prop}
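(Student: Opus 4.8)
The plan is to realize the functor $\begin{bmatrix} g & g' \\ f & f' \end{bmatrix}^{\ast}$ as the combination of two copies of the restriction of scalars of \Cref{prop:def-restr-of-scalars}: one applied to the underlying bimodule structure along the monoid morphisms $f,f'$, and one applied to the underlying bicomodule structure along the comonoid morphisms $g,g'$. For the latter, recall that $\VCoMon=\Mon(\V^{\op})^{\op}$ and that bicomodules in $\V$ are bimodules in $\V^{\op}$, so that $g$ and $g'$ become monoid morphisms in $\V^{\op}$; applying \Cref{prop:def-restr-of-scalars} in $\V^{\op}$ therefore produces a functor $\Bicomod{C}{C'}\to\Bicomod{D}{D'}$ which is the identity on underlying objects and post-composes the two coactions with $g$ and $g'$ respectively, i.e.\ sends $(M,\lambda,\rho)$ to $(M,(g\otimes 1_M)\circ\lambda,(1_M\otimes g')\circ\rho)$. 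Dually, $(f,f')^{\ast}$ sends the $(B,B')$-bimodule $(M,l,r)$ to $(M,l\circ(f\otimes 1_M),r\circ(1_M\otimes f'))$. Taking these together accounts for conditions (i) and (ii) in the definition of an $(A,A',D,D')$-Hopf module for the prospective image quintuple.

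The one point needing an argument is condition (iii): that, for the new structures, $\lambda':=(g\otimes 1_M)\circ\lambda$ remains a morphism of right $A'$-modules and $\rho':=(1_M\otimes g')\circ\rho$ remains a morphism of left $A$-modules. I would verify this by a short diagram chase. For $\lambda'$, the restricted right $A'$-action on $M$ factors through the right $B'$-action along $1_M\otimes f'$, and likewise the restricted right $A'$-action on the codomain of $\lambda'$ factors through the corresponding right $B'$-action; moreover the map induced by $g$ on the codomain is right $B'$-linear, since $g$ only touches a tensor factor disjoint from the one on which $B'$ acts (this is pure functoriality of $\otimes$). Stitching these observations together, the right-$A'$-linearity square for $\lambda'$ factors through the right-$B'$-linearity square for $\lambda$, which commutes because $M$ is a $(B,B',C,C')$-Hopf module. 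The verification for $\rho'$ against left $A$-modules is entirely symmetric, exchanging the roles of left and right and of $(f,g)$ with $(f',g')$. This shows the image quintuple is a genuine $(A,A',D,D')$-Hopf module.

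Finally, functoriality is immediate: like restriction of scalars, the functor is the identity on underlying morphisms, and if $t\colon M\to M'$ is a morphism of $(B,B',C,C')$-Hopf modules — a morphism in $\V$ intertwining all four structure maps — then it still intertwines the (co)actions obtained by composing with $f,f',g,g'$, again by functoriality of $\otimes$; identities and composites are visibly preserved. The only real obstacle I anticipate is the bookkeeping in condition (iii): keeping track of the associators and of which of $\lambda,\rho$ is the left and which the right coaction, together with the compatible module structures on the relevant tensor products. Conceptually, however, all of the substance is already contained in \Cref{prop:def-restr-of-scalars} together with its instance in $\V^{\op}$, so no genuinely new work is required. (An alternative, more structural route would be to present $\HBimod{C}{C'}{B}{B'}$ as a category of bimodule objects internal to a category of bicomodules and invoke restriction of scalars there, but the direct argument avoids imposing further hypotheses on $\V$.)
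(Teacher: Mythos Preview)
Your proposal is correct and follows essentially the same approach as the paper: invoke \Cref{prop:def-restr-of-scalars} for the bimodule part, its dual in $\V^{\op}$ for the bicomodule part, and then verify condition (iii) by the diagram chase you describe (the paper presents this as a two-square diagram, the left square commuting because $\lambda$ is right-$B'$-linear---hence right-$A'$-linear after restriction---and the right square by functoriality of $\otimes$). Your explanation of (iii) is in fact slightly more explicit than the paper's about how the $A'$-linearity square factors through the $B'$-linearity square via $1_M\otimes f'$, but the content is the same.
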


\begin{proof}
	First, we need to check that the quintuple defined above gives a $(A,A',D,D')$-Hopf module.
	The underlying $(A,A')$-bimodule structure is the one defined by the restriction of scalars for bimodules (\Cref{prop:def-restr-of-scalars}). 
	Similarly, the $(D,D')$-bicomodule structure follows by the restriction of scalars on bicomodules, which is defined using the definition of bicomodules as bimodules in $\V^\op$.
	Hence, the only thing that we need to check is that its left coaction is a morphism of right $A'$-modules and that the right one is a morphism of left $A$-modules.
	We will show only the proof for the left coaction since the right one is completely analogous. 
	 
	In order to see this, we need to prove that the outer diagram below commutes, which is true because both squares separately commute: the left one because $\lambda$ is a right $A'$-module morphisms (since $M\in\HBimod{C}{C'}{B}{B'}$) and the right one by functoriality of $\otimes$. 
\[\begin{tikzcd}[ampersand replacement=\&]
	{M\otimes A'} \& {C\otimes M\otimes A'} \& {D\otimes M\otimes A'} \\
	M \& {C\otimes M} \& {D\otimes M}
	\arrow["{\lambda\otimes1_{A'}}", from=1-1, to=1-2]
	\arrow["r"', from=1-1, to=2-1]
	\arrow["{g\otimes 1}", from=1-2, to=1-3]
	\arrow["{1_C\otimes r}"{description}, from=1-2, to=2-2]
	\arrow["{1_D\otimes r}", from=1-3, to=2-3]
	\arrow["\lambda"', from=2-1, to=2-2]
	\arrow["{g\otimes 1_M}"', from=2-2, to=2-3]
\end{tikzcd}\] 
To conclude, we notice that the image of any morphism $h\colon M\to N\in\HBimod{C}{C'}{B}{B'}$ under the restriction of scalars above is both a morphism of $(A,A')$-bimodules and $(D,D')$-bicomodules, hence it is a morphism in $\HBimod{D}{D'}{A}{A'}$.
\end{proof}

\begin{rmk}
	By construction, the following squares commute, where the vertical arrows are the functor forgetting either the bimodule or bicomodule structure of the Hopf modules, and the bottom functors are the restriction of scalars applied to bimodules and bicomodules. 
	\begin{center}
\begin{tikzcd}[ampersand replacement=\&]
	{\HBimod{C}{C'}{B}{B'}} \&\&\& {\HBimod{D}{D'}{A}{A'}} \\
	{\Bimod{B}{B'}} \&\&\& {\Bimod{A}{A'}}
	\arrow["{\begin{bmatrix} g & g' \\ f & f' \end{bmatrix}^\ast }"{description}, from=1-1, to=1-4]
	\arrow[from=1-1, to=2-1]
	\arrow[from=1-4, to=2-4]
	\arrow["{(f,f')^\ast}"', from=2-1, to=2-4]
\end{tikzcd}
\hspace{0.5cm}
\begin{tikzcd}[ampersand replacement=\&]
	{\HBimod{C}{C'}{B}{B'}} \&\&\& {\HBimod{D}{D'}{A}{A'}} \\
	{\Bicomod{C}{C'}} \&\&\& {\Bicomod{D}{D'}}
	\arrow["{\begin{bmatrix} 		g & g' \\ 		f & f' 	\end{bmatrix}^\ast }"{description}, from=1-1, to=1-4]
	\arrow[from=1-1, to=2-1]
	\arrow[from=1-4, to=2-4]
	\arrow["{(g,g')^\ast}"', from=2-1, to=2-4]
\end{tikzcd}
	\end{center}
From this, we can notice that if we take $g=g'=1_I$, then the restriction of scalars in \Cref{prop:restr-for-hopf-mod} is isomorphic to the one for bimodules (using \Cref{rmk:bi-co-mod-as-hopf-mod}). 
Similarly, setting $f=f'=1_I$ one gets back the restriction of scalars for bicomodules.
\end{rmk}

\begin{rmk} 
	For any bimonoid morphism $f\colon(A,m_A,i_A,\delta_A,\epsilon_A)\rightarrow(B,m_B,i_B,\delta_B,\epsilon_B)\in\VBimon$, since $f$ is both a monoid and comonoid morphism, applying \Cref{prop:restr-for-hopf-mod} we get two functors as below.
	\begin{center}
	$f^\ast_{\comod}:=\begin{bmatrix}
		f & f \\
		1_A & 1_A
	\end{bmatrix}^\ast\colon\HBimodA\to\HBimod{B}{B}{A}{A}$ \\
$f^\ast_{\Mod}:=\begin{bmatrix}
	1_B & 1_B \\
	f & f
\end{bmatrix}^\ast\colon\HBimodB\to\HBimod{B}{B}{A}{A}$
	\end{center}

\end{rmk}

\begin{prop}\label{Hopf-universal-def}
	Let $A$ be a bimonoid and $(\Omega^1_{u},d_u)$ be the universal first order differential calculus over the underlying monoid on $A$. Then:\begin{enumerate}
		\item $\Omega^1_{u}$ is an $A$-Hopf module and $\iota_u\colon \Omega^1_u\to A\otimes A$ is an $A$-Hopf module morphism;
		\item $d_u\colon A\to\Omega^1_u$ is an $A$-comodule morphism.
	\end{enumerate}
\end{prop}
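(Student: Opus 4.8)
The plan is to exploit the universal property of $\Omega^1_u$ as a kernel, together with the fact that the ambient category $\V$ is braided and $A$ is a bimonoid, so that $A \otimes A$ already carries a canonical $A$-Hopf module structure. First I would observe that $A \otimes A$ is an $A$-Hopf bimodule: its bimodule structure is the free one (left action $m \otimes 1_A$, right action $1_A \otimes m$) as in the construction of the universal calculus, while its bicomodule structure is given by the comultiplication on each tensor factor, suitably braided, i.e. the right coaction $A \otimes A \to (A \otimes A) \otimes A$ built from $\delta$ on the second factor and the left coaction $A \otimes A \to A \otimes (A \otimes A)$ built from $\delta$ on the first factor, using the braiding $\beta$ to reorganize the tensor factors. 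The compatibility between module and comodule structures (condition (iii) in the definition of Hopf module) follows from the bimonoid axioms relating $m$, $i$, $\delta$, $\epsilon$, and the coherence of the braiding; this is the kind of diagram chase that is routine but tedious. Equivalently, this is just \Cref{rmk:bicomonoids-as-hopf-bimodules} tensored with the free construction, or can be seen as $F$ (the free $A$-Hopf bimodule functor) applied to $A \in \V$.

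Next I would show that the multiplication $m \colon A \otimes A \to A$ is a morphism of $A$-Hopf bimodules, where $A$ itself is viewed as the $A$-Hopf bimodule of \Cref{rmk:bicomonoids-as-hopf-bimodules}. That $m$ is a bimodule map is associativity (as already used in \Cref{prop:univ-1-d-cal-well-def}); that $m$ is a bicomodule map is precisely the statement that $\delta$ is a monoid morphism, i.e. one of the bimonoid axioms. Since $\Omega^1_u$ is defined as $\ker(m)$ in $\V$, and since kernels in the category of $A$-Hopf bimodules are computed as in $\V$ — this follows because the forgetful functor $\HBimodA \to \V$ creates limits, being a composite of the forgetful functors from Hopf modules to bimodules (to bicomodules) and then to $\V$, all of which create limits by the analogue of \Cref{forgetful} — it follows that $\iota_u \colon \Omega^1_u \to A \otimes A$ is the kernel of a Hopf-module map, hence $\Omega^1_u$ inherits a canonical $A$-Hopf module structure and $\iota_u$ is an $A$-Hopf module morphism. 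This proves (i).

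For (ii), recall that $d_u \colon A \to \Omega^1_u$ is the unique map with $\iota_u \circ d_u = i \otimes 1_A - 1_A \otimes i$ (in the shorthand of \Cref{i-tensor}). To check that $d_u$ is a morphism of $A$-bicomodules it suffices, since $\iota_u$ is a monomorphism and a bicomodule map, to check that the composite $\iota_u \circ d_u$ is a bicomodule morphism from $A$ (with its comultiplication coaction) to $A \otimes A$ (with the coaction described above); and by symmetry it is enough to do this for, say, the left coaction. So one reduces to verifying that the map $i \otimes 1_A - 1_A \otimes i \colon A \to A \otimes A$ intertwines $\delta \colon A \to A \otimes A$ with the left coaction of $A \otimes A$. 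This is a direct computation using the counit axiom $\epsilon$, the fact that $i$ is a comonoid morphism from the trivial comonoid $I$ (so $\delta \circ i = i \otimes i$ and $\epsilon \circ i = 1_I$), naturality of $\beta$, and additivity of $\otimes$; the two terms $i \otimes 1_A$ and $1_A \otimes i$ are handled separately and the braiding contributions cancel against the counit.

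The main obstacle I anticipate is bookkeeping rather than conceptual: correctly writing down the Hopf-module structure on $A \otimes A$ — in particular getting the placement of the braiding isomorphism $\beta$ right in the two coactions so that the module/comodule compatibility genuinely reduces to the bimonoid axioms — and then verifying that compatibility. Once $A \otimes A$ is set up correctly as an $A$-Hopf module with $m$ a Hopf-module map, statement (i) is immediate from kernels being created by the forgetful functor, and (ii) is a short monomorphism-cancellation argument followed by a routine diagram chase.
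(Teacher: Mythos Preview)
Your overall strategy for (i) --- equip $A\otimes A$ with its canonical $A$-Hopf bimodule structure, check that $m$ is a Hopf bimodule map, and then take the kernel --- is sound, and your plan for (ii) is essentially what the paper does. But there is a genuine gap in your justification of the key step in (i).

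You claim that the forgetful functor $\HBimodA\to\V$ creates limits ``by the analogue of \Cref{forgetful}''. That proposition is about \emph{bimodules}: the forgetful $\AModA\to\V$ is monadic, hence creates limits. The forgetful from \emph{bicomodules}, however, is \emph{comonadic} (it has a right adjoint, the cofree bicomodule), so it creates colimits, not limits. In general a kernel computed in $\V$ does not inherit a bicomodule structure unless $A\otimes-$ and $-\otimes A$ preserve that kernel, which is not among the standing hypotheses on $\V$.

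Your argument can be salvaged by observing that the particular short exact sequence $0\to\Omega_u^1\xrightarrow{\iota_u}A\otimes A\xrightarrow{m}A\to0$ is \emph{split} in $\V$ (the unit $i\otimes 1_A$ splits $m$), and split exact sequences are preserved by any additive functor, in particular by $-\otimes A$ and $A\otimes-$. Hence $\Omega_u^1\otimes A=\ker(m\otimes 1_A)$ and $A\otimes\Omega_u^1=\ker(1_A\otimes m)$, and now the coactions on $\Omega_u^1$ are obtained by the usual kernel factorization. The paper does not phrase it this way: instead it explicitly defines the right coaction as $\rho_u:=((1_A\cdot d_u)\otimes 1_A)\circ\rho_{A^{\otimes 2}}\circ\iota_u$ (i.e.\ it applies a specific retraction), and then verifies directly that $(\iota_u\otimes 1_A)\circ\rho_u=\rho_{A^{\otimes 2}}\circ\iota_u$. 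The heart of that verification is the computation $(m\otimes i\otimes 1_A)\circ\rho_{A^{\otimes 2}}\circ\iota_u=0$, which uses that $m$ is a comonoid morphism together with $m\circ\iota_u=0$ --- exactly the content hidden behind your ``kernel of a Hopf-module map'' slogan. The remaining Hopf-module axioms then follow, as both you and the paper note, from those on $A\otimes A$ because $\iota_u$ is a split monomorphism.
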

\begin{proof}
	To prove the first point it is enough to define left and right coactions on $\Omega_u$ that make $\iota_u$ into a morphism of $A$-comodules. This will suffice since then the points (ii) and (iii) in the definition of Hopf bimodule are implied by the fact that they hold for $A\otimes A$ and that $\iota_u$ is a split monomorphism in $\V$. 
	
	We define the right coaction as the composite
	$$ \rho_u\colon\Omega^1_u\xrightarrow{\iota_u}A^{\otimes 2}\xrightarrow{\delta\otimes\delta}A^{\otimes 4}\xrightarrow{1_A\otimes\beta\otimes 1_A}A^{\otimes 4}\xrightarrow{1_A\otimes m}A^{\otimes 3}\xrightarrow{(1_A\cdot d)\otimes 1_A} \Omega^1_u\otimes A,$$
	where the middle composite $\rho_{A^{\otimes 2}}:=(1_A\otimes m) \circ(1_A\otimes\beta\otimes 1_A)\circ (\delta\otimes\delta)$ is the right coaction of $A\otimes A$.
	To show that $\rho_u$ is compatible with $\rho_{A^{\otimes 2}}$ through $\iota_u$ we need to prove that 
	$$ \rho_{A^{\otimes 2}} \circ \iota_u = (\iota_u\otimes 1_A) \circ \rho_u.$$
	To that end, consider the following equalities
	\begin{align}
		(\iota_u\otimes 1_A) \circ \rho_u&= (\iota_u\otimes 1_A)\circ ((1_A\cdot d)\otimes 1_A )\circ \rho_{A^{\otimes 2}}\circ \iota_u\tag{def of $\rho_u$}\\
		&= (\iota_u\otimes 1_A)\circ (l_u\otimes 1_A)\circ(1_A\otimes d\otimes 1_A )\circ \rho_{A^{\otimes 2}}\circ \iota_u\tag{def of $\cdot$}\\
		&= (m\otimes 1_A)\circ (1_A\otimes \iota_u\otimes 1_A)\circ(1_A\otimes d\otimes 1_A )\circ \rho_{A^{\otimes 2}}\circ \iota_u\tag{ $\iota_u\in\AModA$}\\
		&= (m\otimes 1_A)\circ (i\otimes 1_A-1_A\otimes i)\circ \rho_{A^{\otimes 2}}\circ \iota_u\tag{def of $d$}\\
		&= (1_{A^{\otimes 3}}- m\otimes i\otimes 1_A)\circ \rho_{A^{\otimes 2}}\circ \iota_u\tag{$i$ unit}.
	\end{align} 
	To conclude it is enough to prove that $(m\otimes i\otimes 1_A)\circ \rho_{A^{\otimes 2}}\circ \iota_u=0$. Thus:
	\begin{align}
		(m\otimes i\otimes 1_A)\circ \rho_{A^{\otimes 2}}\circ \iota_u&=(m\otimes i\otimes 1_A)\circ (1_A\otimes m) \circ(1_A\otimes\beta\otimes 1_A)\circ (\delta\otimes\delta)\circ \iota_u\tag{def of $\rho_{A^{\otimes 2}}$}\\
		&= (1_A\otimes i\otimes 1_A)\circ (m\otimes m) \circ(1_A\otimes\beta\otimes 1_A)\circ (\delta\otimes\delta)\circ \iota_u\tag{functoriality}\\
		&= \delta\circ m \circ \iota_u\tag{$m$ map of comonoids}\\
		&= 0\tag{$m\circ\iota_u=0$}.
	\end{align} 
	Symmetrically, we define the left coaction as the composite
	$$ \lambda_u\colon\Omega^1_u\xrightarrow{\iota_u}A^{\otimes 2}\xrightarrow{\delta\otimes\delta}A^{\otimes 4}\xrightarrow{1_A\otimes\beta\otimes 1_A}A^{\otimes 4}\xrightarrow{m\otimes 1_A}A^{\otimes 3}\xrightarrow{1_A\otimes (d\cdot 1_A)} A\otimes \Omega^1_u$$
	An analogous chain of equalities shows that this is compatible with the left coaction of $A\otimes A$.
	
	For point (ii), we shall prove that $d$ is a morphism of right $A$-comodules, the same arguments apply to show that it is also a morphism of left $A$-comodules. Thus, we need to show that $ (\rho_u)\circ d = (d\otimes 1_A)\circ \delta$;
	since $\iota_u$ is a split monomorphism, this is equivalent to showing that 
	$$  (\iota_u\otimes 1_A)\circ(\rho_u)\circ d = (\iota_u\otimes 1_A)\circ (d\otimes 1_A)\circ \delta. $$
	Now
	\begin{align}
		(\iota_u\otimes 1_A)\circ(\rho_u)\circ d &= (\rho_{A^{\otimes 2}})\circ \iota_u\circ d \tag{$(1_A\cdot d)\circ \iota_u=1_{\Omega_u^1}$}\\
		&= (1_A\otimes m) \circ(1_A\otimes\beta\otimes 1_A)\circ (\delta\otimes\delta)\circ \iota_u\circ d\tag{def of $\rho_{A^{\otimes 2}}$}\\
		&= (1_A\otimes m) \circ(1_A\otimes\beta\otimes 1_A)\circ (\delta\otimes\delta) (i\otimes 1_A-1_A\otimes i)\tag{def of $d$}\\
		&= (1_A\otimes m) \circ(1_A\otimes\beta\otimes 1_A)\circ  (i\otimes i \otimes \delta-\delta\otimes i\otimes i).\tag{$i$ comon. morph.}
	\end{align} 
	Expanding the subtraction and considering them separately we obtain:
	\begin{align}
		(1_A\otimes m) \circ(1_A\otimes\beta\otimes 1_A)\circ  (i\otimes i \otimes \delta) &=(1_A\otimes m) \circ(1_A\otimes\beta\otimes 1_A)\circ (i\otimes i\otimes 1_A\otimes 1_A)\circ \delta\tag{functoriality}\\
		&= (1_A\otimes m) \circ (i\otimes 1_A\otimes i\otimes 1_A)\circ\delta\tag{$\beta\circ (i\otimes 1_A)=1_A\otimes i$}\\
		&= (i\otimes 1_A\otimes 1_A)\circ \delta.\tag{$i$ unit for $m$}\\
		&= i\otimes \delta.\tag{functoriality}
	\end{align} 
	Similarly,
	$$ (1_A\otimes m) \circ(1_A\otimes\beta\otimes 1_A)\circ  (\delta\otimes i \otimes i)= \delta\otimes i. $$
	Thus, continuing the original chain of equalities we obtain:
	\begin{align}
		(\rho_{A^{\otimes 2}})\circ \iota_u\circ d &= i\otimes \delta-\delta\otimes i\notag\\
		&= (i\otimes 1_A-1_A\otimes i)\circ \delta\tag{functoriality}\\
		&= (\iota_u\otimes 1_A)\circ (d\otimes 1_A)\circ \delta.\tag{def of $d$}
	\end{align} 
\end{proof}

Next we define the notion of first order differential calculus that is relevant for the case of Hopf-modules.

\begin{defi}
	Let $A$ be a bimonoid. An {\em Hopf first order differential calculus over $A$} is the data of an object $\Omega\in\HBimod{A}{A}{A}{A}$ and $d\colon A\to\Omega$ in $\V$ such that:\begin{enumerate}
		\item $(\Omega,d)$ is a first order differential calculus over the underlying monoid on $A$;
		\item $d\colon A \to \Omega$ is an $A$-comodule morphism.
	\end{enumerate}
\end{defi}

It follows from Proposition~\ref{Hopf-universal-def} that $(\Omega^1_{u},d_u)$ is a Hopf first order differential calculus over $A$.

\begin{lemma}\label{lemma:Hopf.diff.induced.map}
	Let $A$ be a bimonoid and $A_0$ be its underlying monoid. Let $(\Omega_d,d)$ be a first order differential calculus over $A_0$, and assume that $\Omega_d\in\HBimod{A}{A}{A}{A}$ is equipped with an $A$-Hopf module structure. Then the uniquely induced morphism $f\colon\Omega^1_u\to \Omega_d$ in $\AModA$, with $f\circ d_u=d$, is an $A$-Hopf modules morphism if and only if $(\Omega_d,d)$ is an Hopf first order differential calculus over $A$.
\end{lemma}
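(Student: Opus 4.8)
The statement packages an ``if and only if'', and the plan is to handle the two directions separately: one will be immediate from \Cref{Hopf-universal-def}, the other a braided diagram chase modelled on the proof of that same proposition. For the direction ``$f$ a morphism of $A$-Hopf modules $\Rightarrow$ $(\Omega_d,d)$ a Hopf first order differential calculus over $A$'': by hypothesis $(\Omega_d,d)$ is already a first order differential calculus over $A_0$ and $\Omega_d$ carries an $A$-Hopf module structure, so the only thing left to verify is that $d\colon A\to\Omega_d$ is a morphism of $A$-comodules; but $d_u$ is one by \Cref{Hopf-universal-def}(ii), $f$ is one because it is a Hopf module morphism, and $d=f\circ d_u$ is therefore a composite of bicomodule morphisms.

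For the converse, suppose $(\Omega_d,d)$ is a Hopf first order differential calculus, so that $d$ is a morphism of $A$-bicomodules. By \Cref{universal-prop} the induced map is $f=(1_A\cdot d)\circ\iota_u$ and is already a morphism of $A$-bimodules, so it remains to show that it commutes with the two coactions. By symmetry I will only argue for the right coactions --- call them $\rho_u$ on $\Omega^1_u$ (as in \Cref{Hopf-universal-def}) and $\rho_{\Omega_d}$ on $\Omega_d$ --- and I must show $\rho_{\Omega_d}\circ f=(f\otimes 1_A)\circ\rho_u$. The key reduction is that $1_A\cdot d_u\colon A\otimes A\to\Omega^1_u$ is an epimorphism (the surjectivity condition on the universal calculus, \Cref{prop:univ-1-d-cal-well-def}, together with \Cref{prop:leibz-then-left-right}), so it is enough to check the equality after precomposing with $1_A\cdot d_u$.

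Two observations then do the job. First, since $f$ is a left $A$-module morphism with $f\circ d_u=d$, we get $f\circ(1_A\cdot d_u)=1_A\cdot d$. Second, writing $\rho_{A^{\otimes 2}}$ for the codiagonal right $A$-coaction on $A\otimes A$ used in the proof of \Cref{Hopf-universal-def}, both $1_A\cdot d_u=l_u\circ(1_A\otimes d_u)$ and $1_A\cdot d=l_d\circ(1_A\otimes d)$ are morphisms of right $A$-comodules out of $(A\otimes A,\rho_{A^{\otimes 2}})$: the left actions $l_u,l_d$ are such by the Hopf module axiom, and $1_A\otimes d_u$, $1_A\otimes d$ are such because $d_u$ (by \Cref{Hopf-universal-def}(ii)) and $d$ (by the hypothesis that $(\Omega_d,d)$ is a Hopf calculus) are. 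Concretely this gives $\rho_u\circ(1_A\cdot d_u)=((1_A\cdot d_u)\otimes 1_A)\circ\rho_{A^{\otimes 2}}$ and $\rho_{\Omega_d}\circ(1_A\cdot d)=((1_A\cdot d)\otimes 1_A)\circ\rho_{A^{\otimes 2}}$, and combining with the first observation, $\rho_{\Omega_d}\circ f\circ(1_A\cdot d_u)=((1_A\cdot d)\otimes 1_A)\circ\rho_{A^{\otimes 2}}=(f\otimes 1_A)\circ\rho_u\circ(1_A\cdot d_u)$; cancelling the epimorphism $1_A\cdot d_u$ yields the desired identity. The left-coaction case is the mirror image, using the (split) epimorphism $d_u\cdot 1_A$ and the fact that in a Hopf module the right action is a morphism of left $A$-comodules.

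The one genuinely computational step is the pair of comodule-morphism identities just invoked --- equivalently, that $1_A\cdot d_u$ and $1_A\cdot d$ are comodule morphisms from $(A\otimes A,\rho_{A^{\otimes 2}})$. These expand into precisely the kind of braided calculation already carried out in the proof of \Cref{Hopf-universal-def} to establish $\rho_{A^{\otimes 2}}\circ\iota_u=(\iota_u\otimes 1_A)\circ\rho_u$: one repeatedly uses naturality of the braiding, the bimonoid compatibilities of $m$ and $i$ with $\delta$, the Hopf module compatibility between action and coaction, and finally that $d$ (resp.\ $d_u$) is a comodule morphism. The only real subtlety is bookkeeping: one must fix the convention so that the right $A$-coaction on $A\otimes M$ entering the Hopf module axiom is the codiagonal one, matching $\rho_{A^{\otimes 2}}$ on $A\otimes A$; once that is pinned down the chase, and its left-handed mirror, are routine.
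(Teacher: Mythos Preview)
Your overall structure is correct and matches the paper's: the forward direction is immediate from \Cref{Hopf-universal-def}(ii), and for the converse you precompose with the surjectivity epimorphism and cancel. The paper does exactly this (choosing the left coaction and the epi $d_u\cdot 1_A$; you chose the mirror).

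There is, however, a genuine gap in your converse argument. You claim that $1_A\cdot d$ and $1_A\cdot d_u$ are right $A$-comodule morphisms out of $(A\otimes A,\rho_{A^{\otimes 2}})$ with the \emph{codiagonal} coaction, and you justify the step ``$l_d$ is a right comodule morphism'' by the Hopf module axiom. But the paper's Hopf module compatibility (Definition in \Cref{sect:bimon}) is the \emph{simple} one: $\rho_{\Omega_d}\circ l_d=(l_d\otimes 1_A)\circ(1_A\otimes\rho_{\Omega_d})$. This says $l_d$ is a comodule morphism when $A\otimes\Omega_d$ carries the coaction $1_A\otimes\rho_{\Omega_d}$, \emph{not} the codiagonal. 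With the codiagonal your identity is in fact false: in Sweedler notation the right-hand side would give $(a_{(1)}\cdot db_{(1)})\otimes a_{(2)}b_{(2)}$, whereas the Hopf module axiom together with $d$ being a comodule map gives $(a\cdot db_{(1)})\otimes b_{(2)}$ on the left. Your ``bookkeeping'' remark tries to resolve this by redefining the Hopf module axiom to use the codiagonal, but that is a strictly different (and stronger) condition than what is assumed.

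The fix is simple and recovers exactly the paper's argument: replace $\rho_{A^{\otimes 2}}$ by the one-sided map $1_A\otimes\delta$ on $A\otimes A$ (and $1_A\otimes\rho_{\Omega_d}$ on $A\otimes\Omega_d$). Then $1_A\otimes d$ intertwines $1_A\otimes\delta$ with $1_A\otimes\rho_{\Omega_d}$ because $d$ is a comodule map, and $l_d$ intertwines $1_A\otimes\rho_{\Omega_d}$ with $\rho_{\Omega_d}$ by the (simple) Hopf module axiom; the same works for $d_u$, and cancelling the epi finishes. This is precisely what the paper does (with left/right swapped), and no braided computation is needed at this step.
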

\begin{proof}
	Recall that the induced $f$ is unique by \Cref{universal-prop}. Assume first that $f$ is a Hopf $A$-modules morphism; we only need to show that then $d$ is an $A$-comodule morphism. To prove that it is a morphism of left comodules we need to prove that the diagram below left commutes.
	\begin{center}
		\begin{tikzpicture}[baseline=(current  bounding  box.south), scale=2]
			
			\node (a1) at (0.2,0.8) {$A$};
			\node (b1) at (1.5,0.8) {$\Omega_d$};
			\node (a0) at (0.2,0) {$A\otimes A$};
			\node (b0) at (1.5,0) {$A\otimes \Omega_d$};
			
			\node (a11) at (3.2,0.8) {$A$};
			\node (b11) at (4.5,0.8) {$\Omega^1_u$};
			\node (c11) at (5.8,0.8) {$\Omega_d$};
			\node (a01) at (3.2,0) {$A\otimes A$};
			\node (b01) at (4.5,0) {$A\otimes \Omega^1_u$};
			\node (c01) at (5.8,0) {$A\otimes \Omega_d$};
			
			\path[font=\scriptsize]
			
			(a1) edge [->] node [above] {$d$} (b1)
			(a0) edge [->] node [below] {$1_A\otimes d$} (b0)
			(a1) edge [->] node [left] {$\delta$} (a0)
			(b1) edge [->] node [right] {$\lambda_{\Omega_d}$} (b0)
			
			(a11) edge [->] node [above] {$d_u$} (b11)
			(a01) edge [->] node [below] {$1_A\otimes d_u$} (b01)
			(a11) edge [->] node [left] {$\delta$} (a01)
			(b11) edge [->] node [right] {$\lambda_{\Omega_u}$} (b01)
			(b11) edge [->] node [above] {$f$} (c11)
			(b01) edge [->] node [below] {$1_A\otimes f$} (c01)
			(c11) edge [->] node [right] {$\lambda_{\Omega_d}$} (c01);
		\end{tikzpicture}	
	\end{center} 
	But this is equal to the pasting of the two squares on the right, which commute since $d_u$ an $f$ are morphisms of $A$-comodules. The same arguments apply when considering the right coaction instead.
	
	Conversely, assume that $(\Omega_d,d)$ is an Hopf first order differential calculus over $A$. In the diagram below,
	\begin{center}
		\begin{tikzpicture}[baseline=(current  bounding  box.south), scale=2]
			
			\node (a11) at (3,0.8) {$A\otimes A$};
			\node (b11) at (4.5,0.8) {$\Omega_d\otimes A$};
			\node (c11) at (6,0.8) {$\Omega_d$};
			\node (a01) at (3,0) {$A^{\otimes 3}$};
			\node (b01) at (4.5,0) {$A\otimes \Omega_d\otimes A$};
			\node (c01) at (6,0) {$A\otimes \Omega_d$};
			
			\path[font=\scriptsize]
			
			(a11) edge [->] node [above] {$d\otimes 1_A$} (b11)
			(a01) edge [->] node [above] {$1_A\!\otimes\! d\!\otimes\! 1_A$} (b01)
			(a11) edge [->] node [left] {$\delta\otimes 1_A$} (a01)
			(b11) edge [->] node [right] {$\lambda_{\Omega_d}\otimes 1_A$} (b01)
			(b11) edge [->] node [above] {$r_{\Omega_d}$} (c11)
			(b01) edge [->] node [above] {$1_A\otimes r_{\Omega_d}$} (c01)
			(c11) edge [->] node [right] {$\lambda_{\Omega_d}$} (c01)
			
			(a11) edge [bend left, ->>] node [above] {$d\cdot 1_A$} (c11)
			(a01) edge [bend right, ->>] node [below] {$1_A\otimes (d\cdot 1_A)$} (c01);
		\end{tikzpicture}	
	\end{center} 
	both squares commute: that on the left since $d$ is a left $A$-comodule morphism, and that on the right since $\Omega_d$ is a Hopf module. 
	Now consider the diagram below, where the outer square coincides with the diagram above since $f\circ d_u=d$ and $f$ is a morphism of right $A$-modules.
	\begin{center}
		\begin{tikzpicture}[baseline=(current  bounding  box.south), scale=2]
			
			\node (a11) at (3,0.8) {$A\otimes A$};
			\node (b11) at (4.5,0.8) {$\Omega^1_u$};
			\node (c11) at (6,0.8) {$\Omega_d$};
			\node (a01) at (3,0) {$A^{\otimes 3}$};
			\node (b01) at (4.5,0) {$A\otimes \Omega^1_u$};
			\node (c01) at (6,0) {$A\otimes \Omega_d$};
			
			\path[font=\scriptsize]
			
			(a11) edge [->>] node [above] {$d_u\cdot 1_A$} (b11)
			(a01) edge [->] node [below] {$1_A\otimes (d_u\cdot 1_A)$} (b01)
			(a11) edge [->] node [left] {$\delta\otimes 1_A$} (a01)
			(b11) edge [->] node [right] {$\lambda_{\Omega_u}$} (b01)
			(b11) edge [->] node [above] {$f$} (c11)
			(b01) edge [->] node [below] {$1_A\otimes f$} (c01)
			(c11) edge [->] node [right] {$\lambda_{\Omega_d}$} (c01);
		\end{tikzpicture}	
	\end{center} 
	Now, the square on the left commutes (by the argument above applied to $d_u$) and $d_u\cdot 1_A$ is an epimorphism by the surjectivity condition on $\Omega^1_u$. Thus the fact that the outer square commutes implies that the square on the right commutes as well; that is, $f$ is a morphism of left $A$-comodules. A symmetric argument shows that $f$ is also a morphism of right $A$-comodules.
\end{proof}

As a consequence we can prove a universal property of the universal first order differential calculus over a bimonoid.

\begin{prop}\label{prop:univ-hopf-mod}
	Given a bimonoid $A$, then $(\Omega^1_u,d_u)$ is universal among the Hopf first order differential calculi over $A$. That is, for any Hopf first order differential calculus $(\Omega_d,d)$ over $A$ there exists a unique morphism $f\colon\Omega^1_u\to\Omega_d$ in $\HBimod{A}{A}{A}{A}$ such that $f\circ d_u=d$.
\end{prop}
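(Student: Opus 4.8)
The plan is to deduce this statement directly from the universal property of $\Omega^1_u$ among \emph{all} first order differential calculi (\Cref{universal-prop}), combined with the characterization, proved in \Cref{lemma:Hopf.diff.induced.map}, of exactly when the induced comparison map is compatible with the Hopf module structure. First I would record that the statement is well posed: by \Cref{Hopf-universal-def}, $\Omega^1_u$ is an $A$-Hopf module, $\iota_u$ is an $A$-Hopf module morphism, and $d_u$ is an $A$-comodule morphism, so $(\Omega^1_u,d_u)$ is indeed a Hopf first order differential calculus over $A$.

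For existence, let $(\Omega_d,d)$ be any Hopf first order differential calculus over $A$. In particular it is a first order differential calculus over the underlying monoid $A_0$, so \Cref{universal-prop} supplies a unique morphism $f\colon\Omega^1_u\to\Omega_d$ in $\AModA$ with $f\circ d_u=d$. Since $\Omega_d$ comes equipped with an $A$-Hopf module structure and $(\Omega_d,d)$ is by hypothesis a Hopf first order differential calculus over $A$, \Cref{lemma:Hopf.diff.induced.map} applies and tells us precisely that this $f$ is a morphism in $\HBimod{A}{A}{A}{A}$. This is the required map.

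For uniqueness, I would observe that the forgetful functor $\HBimod{A}{A}{A}{A}\to\AModA$ (which only discards the comodule structure) is the identity on morphisms, hence faithful. Therefore any $A$-Hopf module morphism $g\colon\Omega^1_u\to\Omega_d$ with $g\circ d_u=d$ is in particular an $A$-bimodule morphism satisfying the same equation, and so it coincides with $f$ by the uniqueness clause of \Cref{universal-prop}.

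I do not anticipate a genuine obstacle here: the substantive work has already been carried out in \Cref{Hopf-universal-def} and, above all, in \Cref{lemma:Hopf.diff.induced.map}, and the proof is simply their formal combination. The only point deserving a word of care is that uniqueness must be verified among Hopf module morphisms, but this reduces at once to uniqueness among $A$-bimodule morphisms because forgetting the comodule structure is faithful.
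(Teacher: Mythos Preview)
Your proof is correct and follows exactly the paper's approach: the paper's proof is the single sentence ``This is a direct consequence of \Cref{lemma:Hopf.diff.induced.map} above,'' and you have simply unpacked that sentence, invoking \Cref{universal-prop} for the underlying bimodule map and \Cref{lemma:Hopf.diff.induced.map} to upgrade it to a Hopf module morphism, with uniqueness descending from faithfulness of the forgetful functor.
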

\begin{proof}
	This is a direct consequence of \Cref{lemma:Hopf.diff.induced.map} above.
\end{proof}

Denote by $\HCalc(\V)$ the category whose objects are pairs 
$$(A,\Omega_d)$$
where $A\in\VBimon$ and $\Omega_d$ is a Hopf first order differential calculus over $A$. A morphism 
$$ (f,g)\colon (A,\Omega_d)\to (B,\Omega_{d'}) $$ 
is given by $f\colon A\to B$ in $\VBimon$ and $g\colon f^\ast_{\comod}(\Omega_d)\to f^\ast_{\Mod}(\Omega_{d'})$ in $\HBimod{B}{B}{A}{A}$, such that $gd=d'f$ in $\V$.
In the next Proposition we prove an analogous result to \Cref{prop:univ-calc-adjunction} for bimonoids and Hopf modules. 

\begin{rmk}\label{rmk:maps-of-hopf-calc-descr}
	Note that a map $g\colon \Omega_d\to \Omega_{d'}$ in $\V$ defines a morphism  $g\colon f^\ast_{\comod}(\Omega_d)\to f^\ast_{\Mod}(\Omega_{d'})$ in $\HBimod{B}{B}{A}{A}$ as above if and only if the four squares below commute. 
	The first two say that $g$ is a morphism of $A$-bimodules, while the last two that $g$ is a morphism of $B$-bicomodules. 
	\begin{center}
		\begin{tikzpicture}[baseline=(current  bounding  box.south), scale=2]
			
			\node (01) at (-1,0.8) {$A\otimes \Omega_d$};
			\node (a1) at (0.1,0.8) {$\Omega_d$};
			\node (b1) at (1.2,0.8) {$\Omega_{d}\otimes A$};
			\node (c1) at (2.4,0.8) {$\Omega_{d}$};
			
			\node (00) at (-1,0) {$B\otimes\Omega_{d'}$};
			\node (a0) at (0.1,0) {$\Omega_{d'}$};
			\node (b0) at (1.2,0) {$\Omega_{d'}\otimes B$};
			\node (c0) at (2.4,0) {$\Omega_{d'}$};

			\path[font=\scriptsize]

			(01) edge [->] node [above] {$l_{\Omega_d}$} (a1)
			(01) edge [->] node [left] {$f\otimes g$} (00)
			(b1) edge [->] node [above] {$r_{\Omega_{d}}$} (c1)
			
			(a1) edge [->] node [right] {$g$} (a0)
			(b1) edge [->] node [left] {$g\otimes f$} (b0)
			(c1) edge [->] node [right] {$g$} (c0)
			
			(00) edge [->] node [below] {$l_{\Omega_{d'}}$} (a0)
			(b0) edge [->] node [below] {$r_{\Omega_{d'}}$} (c0);
		\end{tikzpicture}	
\hspace{0.5cm}
		\begin{tikzpicture}[baseline=(current  bounding  box.south), scale=2]
		
		\node (01) at (-1,0.8) {$\Omega_d$};
		\node (a1) at (0.1,0.8) {$A\otimes \Omega_d$};
		\node (b1) at (1.2,0.8) {$\Omega_{d}$};
		\node (c1) at (2.4,0.8) {$\Omega_{d}\otimes A$};
		
		\node (00) at (-1,0) {$\Omega_{d'}$};
		\node (a0) at (0.1,0) {$B\otimes\Omega_{d'}$};
		\node (b0) at (1.2,0) {$\Omega_{d'}$};
		\node (c0) at (2.4,0) {$\Omega_{d'}\otimes B$};

		\path[font=\scriptsize]

		(01) edge [->] node [above] {$\lambda_{\Omega_d}$} (a1)
		(01) edge [->] node [left] {$g$} (00)
		(b1) edge [->] node [above] {$\rho_{\Omega_{d}}$} (c1)
		
		(a1) edge [->] node [right] {$f\otimes g$} (a0)
		(b1) edge [->] node [left] {$g$} (b0)
		(c1) edge [->] node [right] {$g\otimes f$} (c0)
		
		(00) edge [->] node [below] {$\lambda_{\Omega_{d'}}$} (a0)
		(b0) edge [->] node [below] {$\rho_{\Omega_{d'}}$} (c0);
	\end{tikzpicture}	
	\end{center} 
\end{rmk}

\begin{prop}
	The first component projection $\pi\colon \HCalc(\V)\to\VBimon$ has:\begin{enumerate}
		\item a right adjoint, which is given by sending $A$ to $(A,0)$; 
		\item a left adjoint, which is given by sending $A$ to the universal Hopf first order differential calculus $(A,\Omega^1_{A,u})$.
	\end{enumerate} 
\end{prop}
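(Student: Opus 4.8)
The plan is to follow closely the proof of \Cref{prop:univ-calc-adjunction}, adding only the extra bookkeeping required to track the (co)module structures carried by Hopf first order differential calculi, as encoded in \Cref{rmk:maps-of-hopf-calc-descr}. For the right adjoint this is immediate. Given a bimonoid $A$, the pair $(A,0)$ --- with $0\in\HBimodA$ the zero object and $d=0\colon A\to 0$ --- is a Hopf first order differential calculus over $A$: the Leibniz rule and the surjectivity condition are trivial, since $A\otimes A\to 0$ is an epimorphism in the additive category $\V$, and $d=0$ is vacuously an $A$-comodule morphism. A morphism $(B,\Omega_d)\to(A,0)$ in $\HCalc(\V)$ is a pair $(f,g)$ with $f\colon B\to A$ in $\VBimon$ and $g\colon f^\ast_{\comod}(\Omega_d)\to f^\ast_{\Mod}(0)$ in $\HBimod{A}{A}{B}{B}$ satisfying $gd=0$; but restriction of scalars (\Cref{prop:restr-for-hopf-mod}) does not change the underlying object, so $f^\ast_{\Mod}(0)=0$, the map $g$ is forced to be $0$, and the compatibility conditions are automatic. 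Hence $\HCalc(\V)((B,\Omega_d),(A,0))\cong\VBimon(B,A)$ naturally in both variables, exhibiting $A\mapsto(A,0)$ as a right adjoint to $\pi$.

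For the left adjoint, forgetting the second component of a morphism gives a map $\HCalc(\V)((A,\Omega^1_{A,u}),(B,\Omega_d))\to\VBimon(A,B)$, and I would show it is a bijection, naturally in $(B,\Omega_d)$. So fix $f\colon A\to B$ in $\VBimon$; what is needed is a unique morphism $g\colon f^\ast_{\comod}(\Omega^1_{A,u})\to f^\ast_{\Mod}(\Omega_d)$ in $\HBimod{B}{B}{A}{A}$ with $gd_u^A=df$. Uniqueness is inherited verbatim from the non-Hopf case: such a $g$ is in particular a morphism of left $A$-modules $\Omega^1_{A,u}\to f^\ast\Omega_d$, and the argument in the proof of \Cref{prop:univ-calc-adjunction}(ii) shows that any such morphism is determined by its composite with $1_A\cdot d_u^A\colon A\otimes A\to\Omega^1_{A,u}$, which is forced to equal $(1_B\cdot d)\circ(f\otimes f)$ and hence depends only on $f$; since $1_A\cdot d_u^A$ is an epimorphism (the surjectivity condition on $\Omega^1_{A,u}$), there is at most one such $g$.

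For existence, by \Cref{Hopf-universal-def} both $(\Omega^1_{A,u},d_u^A)$ and $(\Omega^1_{B,u},d_u^B)$ are Hopf first order differential calculi, over $A$ and $B$ respectively. I would first check that the canonical map $f_u\colon\Omega^1_{A,u}\to\Omega^1_{B,u}$ of \Cref{rmk:induced-morph-betw-univ-calc} defines a morphism $f^\ast_{\comod}(\Omega^1_{A,u})\to f^\ast_{\Mod}(\Omega^1_{B,u})$ in $\HBimod{B}{B}{A}{A}$: compatibility with the $A$-bimodule structures is exactly \Cref{prop:fu-bimod-map}, while compatibility with the $B$-bicomodule structures --- the source carrying the coactions of $\Omega^1_{A,u}$ pushed forward along the comonoid morphism $f$, the target those of $\Omega^1_{B,u}$ --- follows by a diagram chase modelled on the ones in \Cref{Hopf-universal-def}: since $\iota_u^B$, and hence $\iota_u^B\otimes 1_B$, is a split monomorphism it suffices to verify the relevant identity after composing with $\iota_u^B\otimes 1_B$, where it reduces to the naturality of $\delta$, $m$ and the braiding together with the fact that $f$ is a morphism of comonoids. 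Granting this, \Cref{prop:univ-hopf-mod} supplies a unique morphism $e\colon\Omega^1_{B,u}\to\Omega_d$ in $\HBimodB$ with $ed_u^B=d$, and functoriality of the restriction of scalars $f^\ast_{\Mod}$ makes $g:=f^\ast_{\Mod}(e)\circ f_u$ a morphism in $\HBimod{B}{B}{A}{A}$ with $gd_u^A=e\circ(f_ud_u^A)=e\circ(d_u^Bf)=df$, as required. Naturality in $(B,\Omega_d)$ is then immediate from the functoriality of the constructions involved, so $\pi$ has a left adjoint sending $A$ to $(A,\Omega^1_{A,u})$.

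I expect the only genuine work to be the $B$-bicomodule compatibility of $f_u$ in the existence step: this is the single ingredient not already available in the excerpt, being the Hopf counterpart of \Cref{prop:fu-bimod-map} fused with the coaction computations internal to \Cref{Hopf-universal-def}. Everything else is either formal or a direct transcription of the argument for ordinary first order differential calculi.
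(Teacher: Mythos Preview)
Your proposal is correct and follows essentially the same route as the paper. The paper packages part (ii) slightly differently---it observes that $\HCalc(\V)((A,\Omega^1_{A,u}),(B,\Omega_d))$ sits inside $\Calc(\V)((A,\Omega^1_{A,u}),(B,\Omega_d))$ and shows that the bijection of \Cref{prop:univ-calc-adjunction} restricts, which reduces to the implication ``$f$ a comonoid morphism $\Rightarrow$ $g$ a $B$-bicomodule morphism''---but the actual content is identical: the map $e$ is handled via \Cref{lemma:Hopf.diff.induced.map} (equivalently your use of \Cref{prop:univ-hopf-mod}), and the one genuine computation is exactly the $B$-bicomodule compatibility of $f_u$, which the paper verifies with a cube diagram using that $\iota_u^B$ is a split monomorphism, just as you anticipate.
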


\begin{proof}
(i). We need to show that $\VBimon(A,B)\cong \HCalc(\V)((A,\Omega_d), (B,0))$
naturally in $(A,\Omega_d)$ in $\HCalc(\V)$ and $B\in\VBimon$. But, since the first order differential calculus on $B$ is trivial, every map $(A,\Omega_d)\to (B,0) $ is of the form $(f,0)$, and the commutativity condition is always satisfied. Therefore the isomorphism follows. 

(ii). We need to prove that $$\HCalc(\V)((A,\Omega^1_{A,u}), (B,\Omega_d)) \cong \Bimon(\V)(A,B).$$ 
Notice that a morphism of Hopf first order differential calculi $(f,g)$ is exactly a morphism of differential calculi such that $f\in\CoMon(\V)$ and $g\in\Bicomod{B}{B}$, and similarly a map of bimonoids $f$ is a map of monoids which is also a map of comonoids. 
\[\begin{tikzcd}[ampersand replacement=\&]
	{\HCalc(\V)((A,\Omega^1_{A,u}), (B,\Omega_d))} \& {\Calc(\V)((A,\Omega^1_{A,u}), (B,\Omega_d))} \\
	{\Bimon(\V)(A,B)} \& {\Mon(\V)(A,B)}
	\arrow[hook, from=1-1, to=1-2]
	\arrow[dashed, from=1-1, to=2-1]
	\arrow["\cong", from=1-2, to=2-2]
	\arrow[hook, from=2-1, to=2-2]
\end{tikzcd}\]
Therefore, it suffices to prove that the adjunction isomorphism of \Cref{prop:univ-calc-adjunction} (vertical right arrow above) restricts to the required one (vertical left above); that is, we need to prove that given $(f,g)\in\Calc(\V)((A,\Omega^1_{A,u}), (B,\Omega_d))$ with $(A,\Omega^1_{A,u}), (B,\Omega_d)\in\HCalc(\V)$),
\begin{center}
	$f\in\CoMon(\V)$ and $g\in\Bicomod{B}{B}$ $\Leftrightarrow$ $f\in\CoMon(\V)$.
\end{center}
The only non trivial part is proving that if $f\in\CoMon(\V)$, then $g\in\Bicomod{B}{B}$. 
Let us recall that in \Cref{prop:univ-calc-adjunction} is constructed as $g=ef_u$ where $e\colon \Omega^1_{B,u}\to \Omega_d$ is the map induced by the universal property of $\Omega_u^B$ and $f_u$ is defined using the universal property of $\Omega^1_{B,u}$ as a kernel, see below. 
\begin{center}
	\begin{tikzpicture}[baseline=(current  bounding  box.south), scale=2]
		
		\node (a1) at (0.1,0.8) {$\Omega^1_{A,u}$};
		\node (b1) at (1.2,0.8) {$A\otimes A$};
		\node (c1) at (2.4,0.8) {$A$};
		
		\node (a0) at (0.1,0) {$\Omega^1_{B,u}$};
		\node (b0) at (1.2,0) {$B\otimes B$};
		\node (c0) at (2.4,0) {$B$};
		
		
		\path[font=\scriptsize]

		(a1) edge [>->] node [above] {$\iota^A_u$} (b1)
		(b1) edge [->] node [above] {$m_A$} (c1)
		
		(a1) edge [dashed, ->] node [left] {$f_u$} (a0)
		(b1) edge [->] node [right] {$f\otimes f$} (b0)
		(c1) edge [->] node [right] {$f$} (c0)
		
		(a0) edge [>->] node [below] {$\iota^B_u$} (b0)
		(b0) edge [->] node [below] {$m_B$} (c0);
%
	\end{tikzpicture}	
\end{center} 
By \Cref{lemma:Hopf.diff.induced.map}, we know that $e$ is a morphism in $\Bicomod{B}{B}$; therefore we are only left to prove that $f_u$ is in $\Bicomod{B}{B}$ as well. 
We will only show that $f_u$ respects the left coactions since the proof for the right ones is completely analogous. 
Using \Cref{rmk:maps-of-hopf-calc-descr} and postcomposing with the mono $1_B\otimes \iota_u^B$\footnote{Notice that $\iota_u^B$ is a split mono, hence it is preserved by any functor.}, $f_u$ is a morphism of left $B$-comodules if and only if the front part of the cube below commutes. 
\[
\scalebox{0.8}{
\begin{tikzcd}[ampersand replacement=\&]
	{\Omega^1_{A,u}} \&\& {A\otimes A} \\
	\& {A\otimes\Omega^1_{A,u}} \&\& {A^{\otimes3}} \\
	{\Omega^1_{B,u}} \&\& {B\otimes B} \\
	\& {B\otimes\Omega^1_{B,u}} \&\& {B^{\otimes3}}
	\arrow["{\iota_u^A}", tail, from=1-1, to=1-3]
	\arrow[from=1-1, to=2-2]
	\arrow["{f_u}"', from=1-1, to=3-1]
	\arrow["{\rho_{A^{\otimes2}}}", from=1-3, to=2-4]
	\arrow["{f\otimes f}"{description, pos=0.2}, dotted, from=1-3, to=3-3]
	\arrow["{1_B\otimes\iota_u^A}"{description}, tail, from=2-2, to=2-4]
	\arrow["{f\otimes f_u}"{pos=0.3}, from=2-2, to=4-2]
	\arrow["{f\otimes f\otimes f}", from=2-4, to=4-4]
	\arrow["{\iota_u^B}"{description, pos=0.3}, dotted, tail, from=3-1, to=3-3]
	\arrow[from=3-1, to=4-2]
	\arrow["{\rho_{B^{\otimes2}}}"{description}, dotted, from=3-3, to=4-4]
	\arrow["{1_B\otimes\iota_u^B}"', tail, from=4-2, to=4-4]
\end{tikzcd}
}\]
Then, we conclude noticing that: the back commutes since $f$ is a bimonoid morphism and naturality of $\beta$, plus definition of $f_u$; the top commutes since $\iota_u^A$ is a morphism of comodules; the bottom commutes since $\iota_u^B$ is a morphism of comodules. 
Therefore the front commutes as well. 
\end{proof}

\section{Constructing canonical first order differential calculi}\label{sec:canonical_calculi}

In this section we derive sufficient conditions on a category $\E$ equipped with a functor $(-)_0\colon\E\to\Mon(\V)$ in order to obtain a notion of first order differential calculus internal to a natural module category associated to $\E$. 
Moreover, we also show sufficient conditions such that this notion induces a first order differential calculus over a monoid in $\V$ (as in \Cref{sec:diff-calc}).
We end the section describing a family of examples coming from the theory of operads. 

\subsection{The pullback construction}\label{pullback_construction}

In this subsection we consider a category $\E$ equipped with a faithful isofibration $(-)_0\colon\E\to\Mon(\V)$ and construct a natural notion of \emph{bimodule} category for objects of $\E$,
which generalizes the case of bimodules over a monoid in $\V$. 
First, we consider the pullback below.
\begin{equation}
\label{pullback-can-calc}
\adjustbox{valign=c}{\begin{tikzpicture}[baseline=(current  bounding  box.south), scale=2]
		
		\node (a0) at (0,0.8) {$\EMod$};
		\node (a0') at (0.3,0.6) {$\lrcorner$};
		\node (b0) at (1.4,0.8) {$\Mod(\V)$};
		\node (c0) at (0,0) {$\E$};
		\node (d0) at (1.4,0) {$\Mon(\V)$};
		
		\path[font=\scriptsize]
		
		(a0) edge [->] node [above] {$K$} (b0)
		(a0) edge [->] node [left] {$U'$} (c0)
		(b0) edge [->] node [right] {$U$} (d0)
		(c0) edge [->] node [below] {$(-)_0$} (d0);
	\end{tikzpicture}}	
\end{equation} 

As the notation suggests, we think of the objects of $\EMod$ as modules over some $E\in\E$.

\begin{eg}
	\label{eg:central-bimod-as-pullback}
	Let us consider a braided monoidal category $\V$ and let us look into the full subcategory $\CMon_n(\V)\hookrightarrow\VMon$ of $n$-commutative monoids in monoids. 
	This inclusion, since it is fully faithful and replete, is in particular faithful and a conservative isofibration. 
	We now look at the definition of $\Mod(\CMon_n(\V))$ in this case. 
	\[\begin{tikzcd}[ampersand replacement=\&]
		{\Mod(\CMon_n(\V))} \& {\Mod(\V)} \\
		\CMon_n(\V)  \& \VMon
		\arrow["{f.f.}", hook, from=1-1, to=1-2]
		\arrow["{U'}"', from=1-1, to=2-1]
		\arrow["\lrcorner"{anchor=center, pos=0.125}, draw=none, from=1-1, to=2-2]
		\arrow["U", from=1-2, to=2-2]
		\arrow["{f.f.}"', hook, from=2-1, to=2-2]
	\end{tikzcd}\]
	Since the inclusion of commutative monoids in monoids is fully faithful, then the pullback above consists of objects $(A,M)\in\Mod(\V)$ such that $U(A,M)=A\oplus M\in\CMon_n(\V)$. 
	Moreover, we can describe more explicitly what the commutativity condition on $A\oplus M$ means. 
	By definition, this means that the diagram below is commutative. 
\[\begin{tikzcd}[ampersand replacement=\&]
	{(A\oplus M)\otimes (A\oplus M)} \& {(A\otimes A)\oplus(A\otimes M)\oplus(M\otimes A)\oplus(M\otimes M)} \\
	{(A\oplus M)\otimes (A\oplus M)} \& {(A\otimes A)\oplus(A\otimes M)\oplus(M\otimes A)\oplus(M\otimes M)} \&\& {A\oplus M}
	\arrow[from=1-1, to=1-2]
	\arrow["\beta^n"', from=1-1, to=2-1]
	\arrow["{(m,l+r+0)}", from=1-2, to=2-4]
	\arrow[from=2-1, to=2-2]
	\arrow["{(m,l+r+0)}"', from=2-2, to=2-4]
\end{tikzcd}\]
	Looking at the two different components, one sees that this equivalent to require:
	\begin{itemize}
		\item $A\in\CMon_n(\V)$;
		\item If $n$ is odd, $M$ satisfies the following axioms:
		\begin{center}
			\begin{tikzcd}[ampersand replacement=\&]
				{M\otimes A} \& {A\otimes M} \\
				\& M
				\arrow["\beta^n", from=1-1, to=1-2]
				\arrow["r"', from=1-1, to=2-2]
				\arrow["l", from=1-2, to=2-2]
			\end{tikzcd}
			\hspace{0.5cm}
			\begin{tikzcd}[ampersand replacement=\&]
				{A\otimes M} \& {M\otimes A} \\
				M
				\arrow["\beta^n", from=1-1, to=1-2]
				\arrow["l"', from=1-1, to=2-1]
				\arrow["r", from=1-2, to=2-1]
			\end{tikzcd}
		\end{center}
		\item If $n$ is even, $M$ satisfies the following axioms:
		\begin{center}
			\begin{tikzcd}[ampersand replacement=\&]
				{M\otimes A} \& {M\otimes A} \\
				\& M
				\arrow["\beta^n", from=1-1, to=1-2]
				\arrow["r"', from=1-1, to=2-2]
				\arrow["r", from=1-2, to=2-2]
			\end{tikzcd}
			\hspace{0.5cm}
			\begin{tikzcd}[ampersand replacement=\&]
				{A\otimes M} \& {A\otimes M} \\
				M
				\arrow["\beta^n", from=1-1, to=1-2]
				\arrow["l"', from=1-1, to=2-1]
				\arrow["l", from=1-2, to=2-1]
			\end{tikzcd}
		\end{center}
	\end{itemize}
	We will refer to $\Mod(\CMon_n(\V))$ as the category of \emph{$n$-commutative bimodules} in $\V$.
	If $n=2$, then the category of $n$-commutative bimodules is equivalent to the category of dyslectic bimodules over dyslectic monoids.
	If $\V$ is symmetric (and not just braided), and $n$ is odd, then just one of the relevant axioms would be sufficient, and further the category of $n$-commutative bimodules is equivalent to the category of central bimodules over commutative monoids.
	Moreover, for $\V$ symmetric and $n$ even, $\Mod(\CMon_n(\V))$ is equivalent to $\Mod(\V)$.
\end{eg}
\black

\begin{eg}\label{eg:Cinfty-rings-pullback}
	We now consider $\V=\bf{Ab}$ and $\E=C^\infty\tx{-Ring}$ to be the category of smooth rings; this is defined as follows. Let $\mathbb E$ be the full subcategory of the category of smooth manifolds spanned by the finite products of $\mathbb R$; then we define 
	$$ C^\infty\tx{-Ring}:=\tx{FP}(\mathbb E,\bf{Set})$$
	as the category of finite-product-preserving functors from $\mathbb E$ into $\bf{Set}$. More explicitly, an object $A\in C^\infty\tx{-Ring}$ is a set $A$ together with for any $\rho\in C^\infty(\mathbb R^n,\mathbb R)$ a function
	$$ \rho_A\colon A^n\longrightarrow A $$
	for which the assignment $\rho\mapsto \rho_A$ is functorial. 
	
	Now note that every $C^\infty$-ring $A$ has an underlying commutative ring $A_0$ whose operations are induced by interpreting the ring operations of $\mathbb R$ (which are smooth functions); for details see~\cite{stel2013cosimplicial}. In particular we have an underlying functor
	$$ (-)_0\colon C^\infty\tx{-Ring}\longrightarrow \tx{Ring}:=\Mon(\bf{Ab}) $$
	which factors through the inclusion $\tx{CRing}\rightarrowtail\tx{Ring}$; this is faithful, conservative, and an isofibration.
	
	It follows that we can define $\Mod(C^\infty\tx{-Ring})$ as the pullback at the beginning of the section; this can also be seen as the pasting of the two pullbacks below
	\begin{center}\label{C-infty-pb}
			\begin{tikzpicture}[baseline=(current  bounding  box.south), scale=2]
				
				\node (a0) at (0,0.8) {$\Mod(C^\infty\tx{-Ring})$};
				\node (a0') at (0.3,0.6) {$\lrcorner$};
				\node (b0) at (1.5,0.8) {$\Mod(\tx{CRing})$};
				\node (b0') at (1.8,0.6) {$\lrcorner$};
				\node (e0) at (3,0.8) {$\Mod(\bf{Ab})$};
				
				\node (c0) at (0,0) {$C^\infty\tx{-Ring}$};
				\node (d0) at (1.5,0) {$\tx{CRings}$};
				\node (f0) at (3,0) {$\tx{Ring}$};
				
				\path[font=\scriptsize]
				
				(a0) edge [->] node [above] {} (b0)
				(a0) edge [->] node [left] {$U_\infty$} (c0)
				(b0) edge [->] node [right] {} (d0)
				(c0) edge [->] node [below] {} (d0)
				
				(b0) edge [->] node [below] {} (e0)
				(e0) edge [->] node [right] {$U$} (f0)
				(d0) edge [->] node [below] {} (f0);
			\end{tikzpicture}	
	\end{center} 
	by \Cref{eg:central-bimod-as-pullback}.
	
	An explicit description of $\Mod(C^\infty\tx{-Ring})$ is given in~\cite[Theorem~20]{stel2013cosimplicial}. An object of it is a pair $(A,M)$ where $A\in C^\infty\tx{-Ring}$ and $M$ is a central bimodule over $A_0$ (that is, $(A_0,M)\in\Mod(\tx{CRing})$); given this, there is a unique $C^\infty$-ring structure on the ring $U(A_0,M)=A_0\oplus M$ given by
	$$\rho_{U(A_0,M)}(\bar a\oplus\bar m) := \rho_A(\bar a)\  \oplus\  \sum_{i=1}^{n} \left(\frac{\partial \rho}{\partial x_i}\right)_{\!\!\! A}\!(\bar a)\cdot m_i $$
	for any $\rho\in C^\infty(\mathbb R^n,\mathbb R)$, and this actually identifies an object of the pullback. \\
	A morphism $(f,g)\colon (A,M)\to (B,N)$ is determined by $f\colon A\to B$ in $C^\infty\tx{-Ring}$ and $g\colon M\to f^*_0N$ in $_{A_0}\Mod_{A_0}$ (then $f\oplus g$ will automatically be a morphism of smooth rings).
	
	Finally, note that if $A$ is a $C^\infty$-ring, then its underlying commutative ring $A_0$ inherits also a structure of $\mathbb R$-algebra (that is, of monoid in $\mathbb R\tx{-Vect}$); indeed, given $r\in\mathbb R$ the scalar multiplication by $r$ is defined as the interpretation $(\cdot r)_A\colon A\to A$, where we see $\cdot r\colon\mathbb R\to\mathbb R$ as a map in $\mathbb E$. Similarly, given $(A,M)\in \Mod(C^\infty\tx{-Ring})$, then $M$ inherits a structure of bimodule over $A_0\in \Mon(\mathbb R\tx{-Vect})$: the $\mathbb R$-vector space structure on $M$ is induced by $A_0$ by defining $r\cdot m:= (r\cdot_A 1)\cdot_M m$. Therefore $U(A_0,M)$ becomes an $\mathbb R$-algebra too with componentwise multiplication. Thus $\Mod(C^\infty\tx{-Ring})$ fits into the square below
	\begin{center}
		\begin{tikzpicture}[baseline=(current  bounding  box.south), scale=2]
			
			\node (a0) at (0,0.8) {$\Mod(C^\infty\tx{-Ring})$};
			\node (a0') at (0.3,0.6) {$\lrcorner$};
			\node (b0) at (1.6,0.8) {$\Mod(\mathbb R\tx{-Vect})$};
			
			\node (c0) at (0,0) {$C^\infty\tx{-Ring}$};
			\node (d0) at (1.6,0) {$\Mon(\mathbb R\tx{-Vect})$};
			
			\path[font=\scriptsize]
			
			(a0) edge [->] node [above] {} (b0)
			(a0) edge [->] node [left] {$U_\infty$} (c0)
			(b0) edge [->] node [right] {$U$} (d0)
			(c0) edge [->] node [below] {$(-)_0$} (d0);
		\end{tikzpicture}	
	\end{center} 
	which is easily seen to be a pullback. Indeed, it follows from the arguments above that if $(A,M)\in \Mod(C^\infty\tx{-Ring})$ then it is an object of the pullback. Conversely, if we are given $(B,N)\in \Mod(\mathbb R\tx{-Vect})$ for which $U(B,N)$ comes equipped with a structure of $C^\infty$-ring, then $(A,M)\in \Mod(C^\infty\tx{-Ring})$ since we can forget the $\mathbb R$-vector space structure and use the pullback that we have already established.
\end{eg}

We now introduce the property which will guarantee that the main theorem of this section will apply to our chosen functor $(-)_0\colon \E\to \VMon$. While this might seem a bit technical, in practice we always use the (stronger) condition outlined in Remark~\ref{rmk:other-suitable} below. 

\begin{defi}
	\label{def:suitable}
	We say that $(-)_0\colon \E\to\Mon(\V)$ is {\em \suitable} if for any  $E$ in $\E$, any $X$ in $\EMod$ with $KX=(A,M)$, and $f\colon E\to U'X$ in $\E$ with underlying map $f_0=(f_1,f_2)\colon E_0\to A\oplus M$ in $\Mon(\V)$ where $f_1$ is a monomorphism of monoids, there exists $Y$ in $\EMod$ with $UY=(E_0,N)$, together with morphisms as dashed below
	\begin{center}
		\begin{tikzpicture}[baseline=(current  bounding  box.south), scale=2]
			
			\node (c0) at (0.7,0) {$U'Y$};
			\node (a0) at (0,-0.5) {$E$};
			\node (b0) at (1.4,-0.5) {$U'X$};
			
			\path[font=\scriptsize]
			
			(a0) edge [->] node [below] {$f$} (b0)
			(a0) edge [dashed, ->] node [above] {$k\ \ \ $} (c0)
			(c0) edge [dashed, ->] node [above] {$\ \ \ U'l$} (b0);
		\end{tikzpicture}	
	\end{center} 
	in $\E$ with $k_0=(1_{E_0},k_1)\colon E_0\to E_0\oplus N$. (Then one necessarily has $(U'l)_0=f_1\oplus l_2\colon E_0\oplus N\to A\oplus M$ for some $l_2$.)
\end{defi}

\begin{rmk}\label{rmk:other-suitable}
	Suppose that for any $X$ in $\EMod$ with $KX=(A,M)$, any $E\in\E$, and $f\colon E\to U'X$ in $\E$ with underlying map $f_0=(f_1,f_2)\colon E_0\to A\oplus M$ in $\Mon(\V)$, where $f_1$ is a monomorphism, the following hold:
	\begin{enumerate}
		\item $A$ lifts to an object $\hat A\in\E$, and $\pi_1\colon U(A,M)\to A$ lifts to a (necessarily unique) map $U'X\to \hat A$ in $\E$;
		\item the object $(E_0,f_1^*M)\in\Mod(\V)$ lifts to $Y$ in $\Mod(\E)$ in such a way that 
			$$(f_1,1_M)\colon (E_0,f_1^*M)\to (A,M)$$
		lifts to a (necessarily unique) map $l\colon Y\to X$ in $\Mod(\E)$;
		\item the map $(1_{E_0},f_2)\colon E_0\to E_0\oplus f_1^*M$ lifts to $k\colon E\to U'Y$ in $\E$.
	\end{enumerate}
	Note that, since $\pi_1$ and $(f,1_M)$ above are respectively an epimorphism and a monomorphism, the objects $\hat A$ and $Y$ are uniquely determined by the lifting properties. 
	
	Assuming this, it is clear that $\E\to\VMon$ is \suitable\ (with the choice of $Y,l$ and $k$ given above). This property is the one that will be satisfied by most of our examples. 
\end{rmk}

\begin{eg}
	\label{ComMon-suitable}
	As in \Cref{eg:central-bimod-as-pullback}, let us consider the faithful conservative isofibration $\CMon_n(\V)\hookrightarrow\VMon$.
	We can see that this is \suitable. 
	
	Let $(A,M)\in\Mod(\CMon_n(\V))$, i.e. $A\in\CMon_n(\V)$ and $M$ is an $n$-commutative $(A,A)$-bimodule, and $E\in\CMon_n(\V)$. For any $f\colon E\to A\oplus M\in\CMon_n(\V)$ we need to find $(E,N)\in\Mod(\CMon_n(\V))$ and maps $k_1\colon E\to N$ and $g\colon (E,N)\to(A,M)\in\Mod(\CMon_n(\V))$ satisfying the property below. 
	\[\begin{tikzcd}[ampersand replacement=\&]
		\& {E\oplus N} \&\& {(E,N)} \\
		E \&\& {A\oplus M} \&\& {(A,M)}
		\arrow["{U'g}", from=1-2, to=2-3]
		\arrow["g", from=1-4, to=2-5]
		\arrow["{(1_E,k_1)}", from=2-1, to=1-2]
		\arrow["f"', from=2-1, to=2-3]
	\end{tikzcd}\]
Consider $f_1$, which we recall is defined as $E\xrightarrow{f} A\oplus M\xrightarrow{\pi_1}A$. 
	Since $f_1$ is a morphism of monoids (indeed $\pi_2$ is such by definition of the monoid structure on $A\oplus M$), then we can consider the $(E,E)$-bimodule $f_1^\ast M$. 
	Moreover, the commutative diagrams below show that $f_1^\ast M$ is an $n$-commutative bimodule. 
	\[\begin{tikzcd}[ampersand replacement=\&]
		{M\otimes E} \&\& {E\otimes M} \&\& {M\otimes E} \\
		{M\otimes A} \&\& {A\otimes M} \&\& {M\otimes A} \\
		\&\& M
		\arrow["{\beta^{2m+1}}", from=1-1, to=1-3]
		\arrow["{M\otimes f_1}"', from=1-1, to=2-1]
		\arrow["{\beta^{2m+1}}", from=1-3, to=1-5]
		\arrow["{f_1\otimes M}"{description}, from=1-3, to=2-3]
		\arrow["{M\otimes f_1}", from=1-5, to=2-5]
		\arrow["{\beta^{2m+1}}"{description}, from=2-1, to=2-3]
		\arrow["r"', from=2-1, to=3-3]
		\arrow["{\beta^{2m+1}}"{description}, from=2-3, to=2-5]
		\arrow["l"{description}, from=2-3, to=3-3]
		\arrow["r", from=2-5, to=3-3]
	\end{tikzcd}\]
\[\begin{tikzcd}
	{M\otimes E} && {M\otimes E} && {E\otimes M} && {E\otimes M} \\
	{M\otimes A} && {M\otimes A} && {A\otimes M} && {A\otimes M} \\
	&& M && M
	\arrow["{\beta^{2m}}", from=1-1, to=1-3]
	\arrow["{M\otimes f_1}"', from=1-1, to=2-1]
	\arrow["{M\otimes f_1}", from=1-3, to=2-3]
	\arrow["{\beta^{2m}}", from=1-5, to=1-7]
	\arrow["{f_1\otimes M}"', from=1-5, to=2-5]
	\arrow["{f_1\otimes M}", from=1-7, to=2-7]
	\arrow["{\beta^{2m}}"{description}, from=2-1, to=2-3]
	\arrow["r"', from=2-1, to=3-3]
	\arrow["r"{description}, from=2-3, to=3-3]
	\arrow["{\beta^{2m}}"{description}, from=2-5, to=2-7]
	\arrow["l"{description}, from=2-5, to=3-5]
	\arrow["l", from=2-7, to=3-5]
\end{tikzcd}\]
	We note that the squares above commute by naturality of $\beta$, and the triangles since $M$ is an $n$-commutative $(A,A)$-bimodule. 
	
	Moreover, given $f_2\colon E\xrightarrow{f}A\oplus M\xrightarrow{\pi_2}M$, we can show that $(1,f_2)\colon E\to E\oplus f_1^\ast M$ is a monoid morphism using the diagram below. 
\[\begin{tikzcd}[ampersand replacement=\&]
	{E\otimes E} \&\&\& E \\
	{(E\oplus f_1^\ast M)\otimes(E\oplus f_1^\ast M)} \& {(E\otimes E)\oplus(E\otimes M)\oplus(M\otimes A)\oplus(M\otimes M)} \&\& {E\oplus M} \\
	{(A\oplus f_1^\ast M)\otimes(A\oplus f_1^\ast M)} \& {(A\otimes A)\oplus(A\otimes M)\oplus(M\otimes A)\oplus(M\otimes M)} \&\& {A\oplus M}
	\arrow["{m_E}", from=1-1, to=1-4]
	\arrow["{(1_E,f_2)\otimes (1_E,f_2)}"', from=1-1, to=2-1]
	\arrow["{(1_E,f_2)}", from=1-4, to=2-4]
	\arrow["\cong"{description}, from=2-1, to=2-2]
	\arrow["{(f_1,1_M)\otimes(f_1,1_M)}"', from=2-1, to=3-1]
	\arrow["{(m_E,r+l+0)}"', from=2-2, to=2-4]
	\arrow["{(f_1,1_M)}", tail, from=2-4, to=3-4]
	\arrow["\cong"', from=3-1, to=3-2]
	\arrow["{(m_A,r+l+0)}"', from=3-2, to=3-4]
\end{tikzcd}\]
	We recall that we assume $f=(f_1,f_2)$ with $f_1$ mono, hence also $(f_1,1_M)$ is mono as well. 
	We know that the outer square commutes since $f=(f_1,f_2)$ is a monoid morphism, therefore (since $(f_1,1_M)$ is a mono) if the square below commutes, then also the one above does. 
	We conclude by noticing that the one below commutes since $f_1$ is a monoid morphism.
	
	Putting everything together, we set $N:=f_1^\ast M$, $k_1:=f_2$ and $g:=f_1$ and we get the desired commutative triangle as shown below. 
	\[\begin{tikzcd}[ampersand replacement=\&]
		\& {E\oplus N} \&\& {(E,N)} \\
		E \&\& {A\oplus M} \&\& {(A,M)}
		\arrow["{U'f_1}", from=1-2, to=2-3]
		\arrow["{f_1}", from=1-4, to=2-5]
		\arrow["{(1_E,f_2)}", from=2-1, to=1-2]
		\arrow["{f=(f_1,f_2)}"', from=2-1, to=2-3]
	\end{tikzcd}\]
\end{eg}
\black

\begin{eg}\label{eg:Cinfty-rings-suitable}
	Consider the faithful, conservative, isofibration $(-)_0\colon C^\infty\tx{-Ring}\to \Mon(\mathbb R\tx{-Vect}) $ as in \Cref{eg:Cinfty-rings-pullback}. We shall prove that $(-)_0\colon C^\infty\tx{-Ring}\to \Mon(\mathbb R\tx{-Vect}) $ is \suitable\ by showing that the properties of \Cref{rmk:other-suitable} hold. 
	
	Consider $E\in C^\infty\tx{-Ring}$, a pair $(A,M)\in\Mod(C^\infty\tx{-Ring})$ and a morphism 
	$$f=(f_1,f_2)\colon E\to U'(A,M)=A\oplus M$$
	where $f_1$ is a monomorphism, and $A\oplus M$ is endowed with the smooth-ring structure defined in \Cref{eg:Cinfty-rings-pullback}. Note that $f_1\colon E\to A$ is a smooth-ring morphism since $f$ is and so is the projection $U'(A,M)\to A$.\\
	Then, using the explicit description of $\Mod(C^\infty\tx{-Ring})$, we know that $(E,f_1^*M)\in\Mod(C^\infty\tx{-Ring})$ since $f_1^*M$ is a central bimodule over $E_0$. Moreover, the pair
	$$(f_1,1_M)\colon (E,f_1^*M)\to (A,M)$$
	defines a morphism in $\Mod(C^\infty\tx{-Ring})$ since $f_1$ is a map of smooth rings and $1_M\colon f_1^*M\to f_1^*M$ is clearly a map of bimodules.\\ 
	Finally, we need to show that $(1_E,f_2)\colon E\to U'(E,f_1^*M)$ defines a morphism of smooth rings. For any $\rho\in C^\infty(\mathbb R^n,\mathbb R)$ consider the following diagram
	\begin{center}
		\begin{tikzpicture}[baseline=(current  bounding  box.south), scale=2]
			
			\node (02) at (0,1.6) {$E^n$};
			\node (a2) at (1.8,1.6) {$E$};
			\node (01) at (0,0.8) {$(E\oplus f_1^*M)^n$};
			\node (a1) at (1.8,0.8) {$E\oplus f_1^*M$};
			\node (00) at (0,0) {$(A\oplus M)^n$};
			\node (a0) at (1.8,0) {$A\oplus M$};			
			
			\path[font=\scriptsize]
			
			(02) edge [->] node [above] {$\rho_E$} (a2)
			(02) edge [->] node [left] {$(1_E,f_2)^n$} (01)
			(a2) edge [->] node [right] {$(1_E,f_2)$} (a1)
			(01) edge [->] node [above] {$\rho_{U'(E,f_1^*M)}$} (a1)
			(01) edge [>->] node [left] {$(f_1\oplus 1_M)^n$} (00)
			(a1) edge [>->] node [right] {$f_1\oplus 1_M$} (a0)
			(00) edge [->] node [below] {$\rho_{U'(A,M)}$} (a0);
		\end{tikzpicture}	
	\end{center} 
	where the outer and the bottom squares commute because $(f_1,f_2)$ and $(f_1 \oplus 1_M)$ are maps of smooth rings; since $(f_1 \oplus 1_M)$ is a monomorphism it follows that also the top square commutes. Thus $(1_E,f_2)$ is a smooth-ring morphism.
\end{eg}

\subsection{First order $\E$-differential calculi}
In this subsection, we define a generalized notion of first order differential calculus internal to $\Mod(\E)$. 
We also give sufficient conditions to get a notion of first order universal calculus again internal to $\Mod(\E)$, and to get an induced first order differential calculus over a monoid in $\V$. 

First,
for any given $E\in \E$, we define the pullback below, where $U_E$ is a restriction of $U$ defined by $U_{E_0}(M):=E_0\oplus M$.
\begin{equation}
	\label{def-EModE}
	\adjustbox{valign=c}{\begin{tikzpicture}[baseline=(current  bounding  box.south), scale=2]
		
		\node (a0) at (0,0.8) {$\EEModE$};
		\node (a0') at (0.3,0.6) {$\lrcorner$};
		\node (b0) at (1.4,0.8) {$\EModE$};
		\node (c0) at (0,0) {$\E$};
		\node (d0) at (1.4,0) {$\Mon(\V)$};
		
		\path[font=\scriptsize]
		
		(a0) edge [->] node [above] {$(-)_0$} (b0)
		(a0) edge [->] node [left] {$U_E'$} (c0)
		(b0) edge [->] node [right] {$U_E$} (d0)
		(c0) edge [->] node [below] {$(-)_0$} (d0);
	\end{tikzpicture}}
\end{equation} 
An element $M\in\EEModE$ is a pair $M=(\hat E,M_0)$ with $M_0\in\EModE$, $\hat E\in\E$, and such that $\hat E_0=U(E_0,M_0)=U_E(M_0)$. By the universal property of pullbacks there is an induced faithful functor 
$$\EEModE\longrightarrow\EMod$$
which sends $(\hat E,M_0)\in\EEModE$ to $(\hat E,(E_0,M_0))\in\EMod$.

\begin{defi}
	Let $E\in\E$, a {\em generalized first order $\E$-differential calculus over $E$} is the data of $\Omega\in\EEModE$ together with a morphism $d\colon E_0\to \Omega_0$ in $\V$ such that: 
	\begin{itemize}
		\item (strong Leibniz). $E_0\xrightarrow{(1_{E_0},d)} E_0\oplus \Omega_0=U_E'(\Omega)_0$ is a morphism of monoids and, moreover, lifts to $\hat d\colon E\to U'_E(\Omega)$ in $\E$.
	\end{itemize}
\end{defi}

In the remark below we show how the condition above is a ``stronger'' Leibniz rule than the one in $\Mod(\V)$, which motivates its name. 

\begin{rmk}\label{Leibniz-square}
	In the context of the above definition the map $d$ always satisfies the Leibniz rule. In fact, $d\colon E_0\to M_0$ satisfies the Leibniz rule if and only if 
	$$ (1_{E_0},d)\colon E_0\to E_0\oplus M_0=U(E_0,M_0) $$
	is a morphism of monoids. To see this, note that $(1_{E_0},d)$ is a morphism of monoids if and only if the square
		\begin{center}
		\begin{tikzpicture}[baseline=(current  bounding  box.south), scale=2]
			
			\node (a0) at (0,0.7) {$E_0\otimes E_0$};
			\node (b0) at (0,0) {$(E_0\otimes E_0)\oplus((E_0\otimes \Omega_E) \oplus(\Omega_E\otimes E_0)\oplus(\Omega_E\otimes \Omega_E))$};
			\node (c0) at (3.5,0.7) {$E_0$};
			\node (d0) at (3.5,0) {$E_0\oplus \Omega_E$};
			
			\path[font=\scriptsize]
			
			(a0) edge [->] node [left] {$(1_{E_0\otimes E_0}, 1_{E_0}\otimes d, d\otimes 1_{E_0}, d\otimes d) $} (b0)
			(a0) edge [->] node [above] {$m_{E_0}$} (c0)
			(b0) edge [->] node [below] {$(m_{E_0}, \mu^l+\mu^r+0)$} (d0)
			(c0) edge [->] node [right] {$(1_{E_0},d)$} (d0);
		\end{tikzpicture}	
	\end{center}
	commutes (the unit is always preserved), where the bottom left corner is an expansion of $(E_0\oplus\Omega_E)\otimes (E_0\oplus\Omega_E)$. Commutativity of the first component is trivial, while that of the second component says exactly that 
	$$ dm= 1_{E_0}\cdot d+d\cdot 1_{E_0}, $$
	so that $d$ satisfies the Leibniz rule.
\end{rmk}

\begin{prop}\label{F'-gives-calculus1}
	Assume that $(-)_0\colon \E\to\Mon(\V)$ is \suitable. If $U'\colon\EMod\to \E$ has a left adjoint $L'$, then for any $E\in\E$: \begin{itemize}
		\item $L'E$ can be identified with an object $\Omega=(U'L'E,\Omega_0)\in\EEModE$ and the unit $\hat d:=\eta_E\colon E\to U'L'E=U'_E\Omega$ has underlying components
		$$\hat d_0\colon E_0\xrightarrow{\ (1_{E_0},d)\ }(U'L'E)_0\cong UKL'E=E_0\oplus\Omega_0$$
		in $\Mon(\V)$.
	\end{itemize}
	It follows that the pair $(\Omega,d\colon E_0\to \Omega_0)$ is a generalized first order $\E$-differential calculus over $E$.
\end{prop}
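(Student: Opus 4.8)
The plan is to exploit the adjunction $L'\dashv U'$ together with the \suitable ness hypothesis to pin down the structure of $L'E$. First I would analyze what $L'E$ is as an object of $\EMod$: by definition of $\EMod$ as the pullback \eqref{pullback-can-calc}, $L'E$ corresponds to a pair $(U'L'E, KL'E)$ with $KL'E=(A,M)\in\Mod(\V)$ and $(U'L'E)_0 = U(A,M)=A\oplus M$. The unit $\eta_E\colon E\to U'L'E$ is a morphism in $\E$; applying $(-)_0$ gives a morphism $(\eta_E)_0\colon E_0\to (U'L'E)_0\cong A\oplus M$ in $\Mon(\V)$, whose first component $(\eta_E)_1\colon E_0\to A$ is a monoid morphism. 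The key point is to show $(\eta_E)_1$ is an isomorphism, so that (up to replacing $M$ by the isomorphic bimodule) we may take $A=E_0$ and $(\eta_E)_1=1_{E_0}$; then $\Omega:=L'E$ lives in $\EEModE$ via the pullback \eqref{def-EModE}, $\Omega_0:=M$, and $\hat d:=\eta_E$ has underlying map of the asserted form $(1_{E_0},d)$.

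To prove $(\eta_E)_1$ is invertible I would use the universal property of $L'E$. Consider the object $X:=L'E\in\EMod$ with $KX=(A,M)$, and the unit map $\eta_E=:f\colon E\to U'X$. Its underlying map $f_0=((\eta_E)_1,(\eta_E)_2)$ need not a priori have monic first component, so one should first reduce to that case — but note that the image $\hat A\in\E$ of $A$ (which exists by the \suitable ness hypothesis or, more concretely, by Remark~\ref{rmk:other-suitable}(i)) receives a canonical map $U'X\to\hat A$ lifting $\pi_1$, hence $f$ followed by this lift is a map $E\to\hat A$ in $\E$; by the $L'\dashv U'$ adjunction (applied to the "square-zero with zero module" object $(\hat A,0)$, i.e. using the right adjoint of $U'$ if available, or directly by naturality) this corresponds to $A=E_0$ at the level of underlying monoids in the terminal calculus direction. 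More cleanly: the composite $E_0\xrightarrow{(\eta_E)_1} A \to E_0$, where the second map comes from the counit-type comparison, is the identity, and conversely; I would make this precise by feeding the adjunction the canonical morphism and using that $U'$ is faithful (so the lifted identity is genuinely an identity in $\E$). This is exactly the step where the faithful-isofibration hypothesis on $(-)_0$ and the \suitable ness condition do their work, so I expect it to be the main obstacle — specifically, assembling the lifting data of Definition~\ref{def:suitable} into the right commuting triangle and invoking the universal property of $L'E$ to deduce that the first component of the unit is invertible.

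Once $(\eta_E)_1$ is identified with an isomorphism we may, without loss of generality, take $A=E_0$ and $(\eta_E)_1=1_{E_0}$, so $KL'E=(E_0,\Omega_0)$ with $(U'L'E)_0=E_0\oplus\Omega_0$; then $L'E$ is an object of $\EEModE$ by the universal property of the pullback \eqref{def-EModE}, since $U'L'E\in\E$ and $(U'L'E)_0=U_E(\Omega_0)$. Writing $d:=(\eta_E)_2\colon E_0\to\Omega_0$, the unit $\eta_E\colon E\to U'L'E=U'_E\Omega$ is by construction a morphism in $\E$ whose underlying map of monoids is $(1_{E_0},d)\colon E_0\to E_0\oplus\Omega_0$. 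This is precisely the strong Leibniz condition of the definition of generalized first order $\E$-differential calculus: the underlying monoid map $(1_{E_0},d)$ lifts (namely, to $\hat d:=\eta_E$) to a morphism $E\to U'_E(\Omega)$ in $\E$. Hence $(\Omega,d)$ is a generalized first order $\E$-differential calculus over $E$, completing the argument. Throughout, the only computations needed are routine verifications that the various comparison maps produced by the pullbacks and the adjunction agree on underlying objects, which follow from faithfulness of the projection functors.
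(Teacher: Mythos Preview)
Your overall strategy is right: you want to show the first component $h:=(\eta_E)_1\colon E_0\to A$ of the unit is an isomorphism, then use that $(-)_0$ is an isofibration to normalize $A=E_0$. But the argument you sketch for invertibility of $h$ has a genuine gap.

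You correctly observe that $h$ need not a priori be mono, and then try to invoke the \suitable ness hypothesis (or Remark~\ref{rmk:other-suitable}) to produce a lift $\hat A$ of $A$ and a lift of $\pi_1$. This is circular: both Definition~\ref{def:suitable} and Remark~\ref{rmk:other-suitable} only apply to maps $f\colon E\to U'X$ whose first underlying component $f_1$ is already a monomorphism. So you cannot use suitability until you have first shown $h$ is mono by other means. Your ``counit-type comparison'' remark gestures at the right idea but does not supply it.

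The paper's fix is simple and requires no suitability at this stage. Observe that the pair $X:=(E,(E_0,0))$ is always an object of $\EMod$, since $U(E_0,0)\cong E_0$ and hence $U'X=E$. Then $(X,1_E)$ lies in the comma category $E/U'$, and initiality of $(L'E,\eta_E)$ gives a unique $f\colon L'E\to X$ with $U'f\circ\eta_E=1_E$. Writing $Kf=(g,0)\colon (A,\Omega_0)\to(E_0,0)$, the underlying equality $(g\oplus 0)\circ(h,d)=1_{E_0}$ forces $g\circ h=1_{E_0}$, so $h$ is split mono. Only now, with $h$ mono, does the paper apply \suitable ness to $\eta_E$ to obtain $Y=(U'Y,(E_0,N))$ and the factorization $\eta_E=U'l\circ k$ with $k_0=(1_{E_0},k_2)$. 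A second appeal to initiality yields $m\colon L'E\to Y$ with $U'm\circ\eta_E=k$; uniqueness forces $l\circ m=1_{L'E}$, so $h$ is also split epi, hence invertible. The remainder of your argument (normalizing via the isofibration and reading off the strong Leibniz condition from $\eta_E$) is fine.
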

\begin{proof}
	By definition $L'E= (U'L'E,(A,\Omega_0))$ for some $A\in\Mon(\V)$ and $\Omega_0\in\AModA$. Following this notation, then $(\eta_E)_0$ is a morphism of monoids
	$$ (h,d)\colon E_0\to A\oplus \Omega_0=U(A,\Omega_0) $$
	with components $h\colon E_0\to A$ and $d\colon E_0\to \Omega_0$ lying in $\V$.  
	
	By one of the equivalent definitions of adjointness, we know that $(L'E,\eta_E)$ is an initial object in the slice category $E/U'$ whose objects are pairs $(X,f)$ with $X\in\EMod$ and $f\colon E\to U'X$ in $\E$.\\ 
	We can then consider the object $X=(E,(E_0,0))\in \EMod$, where the pair $(E_0,0)$ is seen as an element of $\EModE$. It follows that $U'X=E$ and hence 
	the pair $(X,1_E)$ defines an object of $E/U'$.
	Thus there exists a unique morphism $f\colon L'E\to X$ in $\EMod$ such that $U'f\circ \eta_E=1_E$. By definition of pullback, $f$ is of the form $(U'f,Kf)$ with $U'f\colon U'L'E\to E$ in $\E$, and $Kf=(g,0)\colon (A,\Omega_0)\to (E_0,0)$ in $\Mod(\V)$, subject to $(U'f)_0=g\oplus 0$. It follows that in $\Mon(\V)$ the following triangle commutes
	\begin{center}
		\begin{tikzpicture}[baseline=(current  bounding  box.south), scale=2]
			
			\node (c0) at (0.8,0) {$A\oplus \Omega_0$};
			\node (a0) at (0,-0.5) {$E_0$};
			\node (b0) at (1.6,-0.5) {$E_0$};
			
			\path[font=\scriptsize]
			
			(a0) edge [->] node [below] {$1_{E_0}$} (b0)
			(a0) edge [->] node [above] {$(h,d)\ \ \ $} (c0)
			(c0) edge [->] node [above] {$\ \ \ g\oplus 0$} (b0);
		\end{tikzpicture}	
	\end{center} 
	and therefore also $g\circ h=1_{E_0}$, making $h$ a split monomorphism of monoids. 
	By suitability of $(-)_0\colon \E\to\Mon(\V)$ applied to $\eta_E\colon E\to U'L'E$ we find $Y=(U'Y,(E_0,N))$ and maps $k\colon E\to U'Y$ and $l\colon Y\to L'E$ for which $f=U'l\circ k$ and the induced underlying triangle is of the form
	\begin{center}
		\begin{tikzpicture}[baseline=(current  bounding  box.south), scale=2]
			
			\node (c0) at (0.8,0) {$E_0\oplus N$};
			\node (a0) at (0,-0.5) {$E_0$};
			\node (b0) at (1.6,-0.5) {$A\oplus \Omega_0$};
			
			\path[font=\scriptsize]
			
			(a0) edge [->] node [below] {$(h,d)$} (b0)
			(a0) edge [->] node [above] {$(1_{E_0},k_2)\ \ \ \ \ \ \ \ $} (c0)
			(c0) edge [->] node [above] {$\ \ \ \ \ \ \ \ \  h\oplus l_2$} (b0);
		\end{tikzpicture}	
	\end{center} 
	By initiality of $\eta_E$ there exists a unique map $m\colon L'E\to Y$ for which $U'm\circ \eta_E=k$. By uniqueness it follows that $l\circ m=1_{L'E}$ and thus that $h$ is also a split epimorphism.
	
	It follows that $h$ is an isomorphism in $\Mod(\V)$; therefore without loss of generality we can assume $h=1_E$ and $A=E_0$ (since $(-)_0$ is an isofibration). In particular that $L'E$ corresponds to the object $\Omega:=(U'L'E, \Omega_0)$ of $\EEModE$, and the unit $\hat d:=\eta_E$ satisfies the required property.
\end{proof}

\begin{defi}\label{def:geometrical}
	We say that $(-)_0\colon \E\to\Mon(\V)$ is {\em \verysuitable} if it is \suitable\ and for any $E\in\E$ the forgetful functor $V_E\colon \EEModE\to\V$ (see \eqref{def-EModE}), sending $M$ to the underlying object of $M_0$, has a left adjoint $L_E$.
\end{defi}

\begin{defi}
	\label{def:first-ord-diff-calc-E}
	Let $(-)_0\colon \E\to\Mon(\V)$ be \verysuitable. A {\em first order $\E$-differential calculus over $E$} is the data of $\Omega\in\EEModE$ together with a morphism $d\colon E_0\to \Omega_0$ in $\E$ such that:\begin{enumerate}
		\item $(\Omega,d)$ is a generalized first order $\E$-differential calculus over $E$.
		\item (Surjectivity). The transpose $d^t\colon L_E(E_0)\to \Omega$ of $d\colon E_0\to\Omega_0$ is an epimorphism in $\EEModE$.
	\end{enumerate}
\end{defi}

Note that (ii) is a generalized surjectivity condition saying that $d$ generates $\Omega$ as an $E$-bimodule. When $\E=\Mon(\V)$ this agrees with the usual notion of first order differential calculus. Indeed, the Leibniz rule follows from Remark~\ref{Leibniz-square} above, and in this case $E=E_0$, $L_E(E)=E^{\otimes 3}$, and $d^t=1_E\cdot d\cdot 1_E$, so that $d^t$ is an epimorphism if and only if $d$ satisfies the surjectivity condition by Proposition~\ref{prop:leibz-then-left-right}.

We can compare the surjectivity condition in $\EEModE$ and $\EModE$ as follows:

\begin{lemma}\label{lemma:surjectivity-comparison}
	Assume that the forgetful functor $J_E\colon\EEModE\to\EModE$ has a left adjoint $F_E$, and let $(\Omega, d\colon E_0\to\Omega_0)$ be a generalized first order $\E$-differential calculus over $E$. Then:
	\begin{enumerate}
		\item If $(\Omega_0,d)$ is a first order differential calculus over $E_0$, then $(\Omega,d)$ is a first order $\E$-differential calculus over $E$;
		\item Assume that the unit $\eta_{E_0^{\otimes 3}}\colon E_0^{\otimes 3}\to J_EF_E(E_0^{\otimes 3})$ is an epimorphism and $J_E$ preserves epimorphisms. Then $(\Omega,d)$ is a first order $\E$-differential calculus over $E$ if and only if $(\Omega_0,d)$ is a first order differential calculus over $E_0$.
	\end{enumerate}
	The hypotheses of (ii) are satisfied whenever $\V$ is abelian and $J_E\colon\EEModE\to\EModE$ is fully faithful and closed under subobjects.
\end{lemma}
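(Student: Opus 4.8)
The plan is to recognise that the surjectivity condition of \Cref{def:first-ord-diff-calc-E}(ii) for $(\Omega,d)$ and the surjectivity condition for $(\Omega_0,d)$ to be a first order differential calculus over $E_0$ are epimorphism conditions on \emph{one and the same} canonical bimodule map, transported across the adjunction $F_E\dashv J_E$. Since $V_E$ factors as $\EEModE\xrightarrow{\,J_E\,}\EModE\to\V$, where the second functor is the underlying-object functor with left adjoint $F$ sending $X$ to the free $E_0$-bimodule $E_0\otimes X\otimes E_0$ (\Cref{forgetful}(ii)), the composite left adjoint is $L_E\cong F_E\circ F$; in particular $L_E(E_0)=F_E(E_0^{\otimes3})$, and by the triangle identities the transpose $d^t\colon L_E(E_0)\to\Omega$ of $d\colon E_0\to\Omega_0$ satisfies
\[
  \tilde d \;=\; J_E(d^t)\circ\eta^{F_E}_{E_0^{\otimes3}}\colon\; E_0^{\otimes3}\longrightarrow\Omega_0,
\]
where $\tilde d=1_{E_0}\cdot d\cdot 1_{E_0}$ is the canonical $E_0$-bimodule map of \Cref{surjectivity-bimodule-map} and $\eta^{F_E}$ is the unit of $F_E\dashv J_E$. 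By \Cref{Leibniz-square} the Leibniz rule already holds for $d$, so by \Cref{prop:leibz-then-left-right} (and the remark following it) the statement ``$(\Omega_0,d)$ is a first order differential calculus over $E_0$'' is equivalent to ``$\tilde d$ is an epimorphism in $\EModE$''. Finally, $J_E$ is faithful, being a pullback of the faithful functor $(-)_0\colon\E\to\Mon(\V)$, so it reflects epimorphisms.

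For (i), assume $\tilde d$ is an epimorphism in $\EModE$. Since $\tilde d$ factors through $J_E(d^t)$, the map $J_E(d^t)$ is an epimorphism in $\EModE$; as $J_E$ reflects epimorphisms, $d^t$ is an epimorphism in $\EEModE$. Together with the hypothesis that $(\Omega,d)$ is a generalized first order $\E$-differential calculus, this verifies both clauses of \Cref{def:first-ord-diff-calc-E}, so $(\Omega,d)$ is a first order $\E$-differential calculus over $E$.

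For (ii), the implication ``$(\Omega_0,d)$ a first order differential calculus $\Rightarrow$ $(\Omega,d)$ a first order $\E$-differential calculus'' is exactly (i). Conversely, suppose $d^t$ is an epimorphism in $\EEModE$. Since $J_E$ preserves epimorphisms, $J_E(d^t)$ is an epimorphism in $\EModE$; since $\eta^{F_E}_{E_0^{\otimes3}}$ is an epimorphism, the composite $\tilde d=J_E(d^t)\circ\eta^{F_E}_{E_0^{\otimes3}}$ is an epimorphism in $\EModE$, whence $(\Omega_0,d)$ is a first order differential calculus over $E_0$.

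It remains to check that the two hypotheses of (ii) hold when $\V$ is abelian and $J_E$ is fully faithful and closed under subobjects; this is the delicate step. In this case $\EModE$ is abelian (\Cref{abelian-modules}) and, $J_E$ being fully faithful, $\EEModE$ is a full reflective subcategory of $\EModE$ with reflector $F_E$. Closure under subobjects forces every reflection unit $\eta_M\colon M\to J_EF_EM$ to be an epimorphism: factor $\eta_M$ in $\EModE$ as $M\epi\im(\eta_M)\mono J_EF_EM$; the object $\im(\eta_M)$, being a subobject of one lying in $\EEModE$, itself lies in $\EEModE$, so by the universal property of the reflection the monic inclusion $\im(\eta_M)\mono J_EF_EM$ is a split epimorphism, and being also a monomorphism it is an isomorphism, i.e.\ $\eta_M$ is epi; in particular $\eta^{F_E}_{E_0^{\otimes3}}$ is an epimorphism. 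That $J_E$ preserves epimorphisms is obtained by the same circle of ideas applied to the image factorisation of $J_Ef$ (for $f$ an epimorphism of $\EEModE$) in the abelian category $\EModE$: one uses closure together with the fullness and faithfulness of $J_E$ (which let one transport the factorisation back into $\EEModE$) to conclude that the monic part is an isomorphism. This last preservation property is the point where the argument is most sensitive to the precise closure hypothesis, and is the main obstacle; everything else is formal manipulation of the two composable adjunctions.
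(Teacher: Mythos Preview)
Your proof is correct and follows essentially the same approach as the paper: you identify $L_E(E_0)\cong F_E(E_0^{\otimes 3})$ via composition of left adjoints, establish the key factorisation $\tilde d = J_E(d^t)\circ\eta^{F_E}_{E_0^{\otimes3}}$, and then play the two epimorphism conditions against each other using faithfulness of $J_E$ (for reflecting epis) and the stated hypotheses (for the converse). Your treatment of the final clause via the (epi,mono) factorisation in the abelian category $\EModE$, closure under subobjects, and the reflection universal property is also the paper's argument; your observation that under full faithfulness $\EEModE$ sits as a reflective subcategory makes the split-epi step slightly cleaner than the paper's phrasing.
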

\begin{proof}
	Recall that $E_0^{\otimes 3}$ is the value of the left adjoint to $\EModE\to \V$ at $E_0$. Then, since left adjoints compose, we have that $L_E(E_0)\cong F_E(E_0^{\otimes 3})$. As a consequence the triangle 
	\begin{center}
		\begin{tikzpicture}[baseline=(current  bounding  box.south), scale=2]
			
			\node (c0) at (1.1,0.1) {$J_E(L_E(E_0))$};
			\node (a0) at (0,-0.5) {$E_0^{\otimes 3}$};
			\node (b0) at (2.2,-0.5) {$\Omega_0=J_E(\Omega)$};
			
			\path[font=\scriptsize]
			
			(a0) edge [->] node [below] {$1_{E_0}\cdot d\cdot 1_{E_0}$} (b0)
			(a0) edge [->] node [above] {$\eta_{E_0^{\otimes 3}}\ \ \ \ \ $} (c0)
			(c0) edge [->] node [above] {$\ \ \ \ \ \ J_E(d^t)$} (b0);
		\end{tikzpicture}	
	\end{center} 
	commutes.\\
	In the hypotheses of (i), we know that the bottom map is an epimorphism; thus also $J_E(d^t)$ is such. Since $J_E$ is faithful is follows that $d^t$ is an epimorphism too, making $(\Omega,d)$ is a first order $\E$-differential calculus over $E$.
	
	Under the assumptions of (ii), it is clear that if $(\Omega,d)$ is a first order $\E$-differential calculus over $E$, then $(1_{E_0}\cdot d\cdot 1_{E_0})$ is an epimorphism (being the composite of two epimorphisms) and hence that $(\Omega_0,d)$ is a first order differential calculus over $E_0$.
	
	For the final statement, assume that $\V$ is abelian and $J_E\colon\EEModE\to\EModE$ is fully faithful and closed under strong subobjects. Then $\EModE$ is abelian too and in particular has (epi,mono) factorizations. Consider such factorization for the unit at $E_0^{\otimes 3}$
	\begin{center}
		\begin{tikzpicture}[baseline=(current  bounding  box.south), scale=2]
			
			\node (c0) at (0.9,0) {$M$};
			\node (a0) at (0,-0.5) {$E_0^{\otimes 3}$};
			\node (b0) at (1.8,-0.5) {$J_E(F_E(E_0^{\otimes 3}))$};
			
			\path[font=\scriptsize]
			
			(a0) edge [->] node [below] {$\eta_{E_0^{\otimes 3}}$} (b0)
			(a0) edge [->] node [above] {$e\ \ \ \ \ $} (c0)
			(c0) edge [->] node [above] {$\ \ \ \ \ \ m$} (b0);
		\end{tikzpicture}	
	\end{center} 
	then by hypothesis $M=J_E(N)$ and $m=J_E(n)$, where $n$ is a monomorphism. It follows from the universal property of the adjunction that there is a map $t\colon L_E(E_0^{\otimes 3})\to N$ in $\EEModE$ such that $J_E(t)\circ \eta_{E_0^{\otimes 3}}=e$. By uniqueness of the factorization we must have $n\circ t=1$; therefore $n$ is also a split epimorphism and hence an isomorphism. Thus $m$ is also an isomorphism and $\eta_{E_0^{\otimes 3}}$ is an epimorphism.\\
	Finally, given an epimorphism $e\colon M\to N$ in $\EEModE$ consider the (epi,mono) factorization $Q$ of $J_E(e)$; by hypothesis this lifts to the (epi,mono) factorization of $e$; that is, $Q=J_E(P)$ where $P$ is the factorization of $e$. Since $e$ is an epimorphism we must have that $P\cong N$; thus also $Q\cong J_E(N)$. Therefore $J_E(e)$ is an epimorphism.
\end{proof}

\begin{prop}\label{F'-gives-calculus2}
	Assume that $\E$ is \verysuitable. If $U'\colon\EMod\to \E$ has a left adjoint $L'$, then for any $E\in\E$ the object $\Omega:=L'E\in\EEModE$ and the unit $\eta_E\colon E\to U'L'E$ induce a first order $\E$-differential calculus 
	$$(\Omega,d\colon E_0\to \Omega_0)$$
	over $E$.
\end{prop}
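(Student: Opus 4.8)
The plan is to verify in turn the two conditions of \Cref{def:first-ord-diff-calc-E}, the first of which is immediate from \Cref{F'-gives-calculus1}.

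Since every \verysuitable{} functor is in particular \suitable{}, and $L'$ exists by hypothesis, \Cref{F'-gives-calculus1} applies. It identifies $L'E$ with an object $\Omega=(U'L'E,\Omega_0)\in\EEModE$ and shows that the unit $\eta_E\colon E\to U'L'E=U_E'\Omega$ has underlying monoid morphism $(\eta_E)_0=(1_{E_0},d)\colon E_0\to E_0\oplus\Omega_0$ for a unique $d\colon E_0\to\Omega_0$ in $\V$, with $(\Omega,d)$ a generalized first order $\E$-differential calculus over $E$. This is precisely condition (i) of \Cref{def:first-ord-diff-calc-E}, so it remains only to check the surjectivity condition (ii): that the transpose $d^t\colon L_E(E_0)\to\Omega$ of $d$ across the adjunction $L_E\dashv V_E$ (available since $(-)_0$ is \verysuitable{}) is an epimorphism in $\EEModE$.

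For (ii), I would test epi-ness against a parallel pair rather than building an image factorization in $\EEModE$, which need not exist under these hypotheses. Let $u,v\colon\Omega\to Z$ in $\EEModE$ satisfy $u\circ d^t=v\circ d^t$; write $u_0,v_0\colon\Omega_0\to Z_0$ for the underlying $E_0$-bimodule parts and $\hat u,\hat v\colon U_E'\Omega\to U_E'Z$ for the underlying maps in $\E$, so that $(\hat u)_0=1_{E_0}\oplus u_0$ and $(\hat v)_0=1_{E_0}\oplus v_0$ in $\Mon(\V)$. Transposing $u\circ d^t=v\circ d^t$ along $L_E\dashv V_E$ gives $u_0\circ d=v_0\circ d$ in $\V$. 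Now apply the faithful functor $\iota\colon\EEModE\to\EMod$ induced after \eqref{def-EModE}; it satisfies $\iota(\Omega)=L'E$ and $U'\circ\iota=U_E'$, so by the bijection of $L'\dashv U'$ one gets $\iota(u)=\iota(v)$ as soon as $\hat u\circ\eta_E=\hat v\circ\eta_E$ in $\E$. Taking underlying monoid morphisms, $(\hat u\circ\eta_E)_0=(1_{E_0}\oplus u_0)\circ(1_{E_0},d)=(1_{E_0},u_0\circ d)$ and likewise $(\hat v\circ\eta_E)_0=(1_{E_0},v_0\circ d)$; these agree by the previous step. Since $(-)_0\colon\E\to\Mon(\V)$ is faithful, $\hat u\circ\eta_E=\hat v\circ\eta_E$, hence $\iota(u)=\iota(v)$, and $u=v$ because $\iota$ is faithful. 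Thus $d^t$ is an epimorphism and (ii) holds, completing the proof.

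The hard part is essentially bookkeeping: keeping straight the identifications between objects and morphisms of $\EEModE$, $\EMod$, $\Mod(\V)$, $\E$ and $\Mon(\V)$ and their underlying data, and recognising that epi-ness of $d^t$ can be extracted purely from the adjunctions $L_E\dashv V_E$ and $L'\dashv U'$ together with the faithfulness of $(-)_0$ and of $\iota$. The one slightly subtle point is that coequalizing $d$ does not directly force $u_0=v_0$; it is the adjunction $L'\dashv U'$, not a naive comparison of module parts, that yields $u=v$.
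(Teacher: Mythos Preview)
Your proof is correct and follows essentially the same argument as the paper's: both invoke \Cref{F'-gives-calculus1} for condition (i), and for (ii) both test $d^t$ against a parallel pair, reduce via faithfulness of $(-)_0$ to an equality $U'(u)\circ\eta_E=U'(v)\circ\eta_E$ in $\E$, and then use the universal property of the unit of $L'\dashv U'$ to conclude $u=v$. Your version is slightly more explicit about the bookkeeping---naming the faithful inclusion $\iota\colon\EEModE\to\EMod$ and the transposition along $L_E\dashv V_E$---but the content is the same.
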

\begin{proof}
	Given Proposition~\ref{F'-gives-calculus1} we only need to prove the surjectivity condition, meaning that the transpose $d^t\colon L_E(E_0)\to \Omega$ of $d$ is an epimorphism in $\EEModE$. 
	
	Consider two maps $f,g\colon\Omega\to M$ in $\EEModE$ for which $f\circ d^t=g\circ d^t$.  Therefore it follows that the equality
	$$ (1_{E_0}\oplus f_0)\circ (1_{E_0},d_0)=(1_{E_0}\oplus g_0)\circ (1_{E_0},d_0) $$
	holds in $\Mon(\V)$. By faithfulness, and the fact that $f$ and $g$ can be seen as maps in $\Mod(\E)$, we also deduce the equality
	$$ U'(f)\circ\eta_E=U'(g)\circ \eta_E. $$
	But $\eta_E=(1_E,d)$ is an initial object in $E/U'$; thus $f=g$ and hence $d^t$ is an epimorphism. 
\end{proof}

\subsection{The locally presentable case}

In this section we will consider the case when our base additive monoidal category is furthermore \emph{locally presentable}, we have already seen the definition in \Cref{lp-cat}. 

\begin{eg}\label{eg:locally-pres-add-mon-cats}
	Let us look at the examples in \Cref{eg:additive-monoidal-cats}; in most cases the local presentability of the category is well-known and follows from certain characterization theorems of~\cite{AR94:libro}. In Examples (i)-(vii) the tensor product is cocontinuous in each variable (since the monoidal structures are closed). In (viii)-(ix) if the tensor product is cocontinuous in both variables in $\V$, then the same holds in the constructed categories. 
	\begin{enumerate}
		\item The category of representation of a group $G$ is locally presentable since it is the category of functors from the single-object category induced by $G$ to $\tx{Vect}$.
		\item The category $\Mod$ of modules over a commutative ring is locally presentable; this is a particular case if (vii).
		\item The category $\GrMod$ of graded modules over a monoid is locally presentable; indeed this can be defined as a category of additive functors into $\Ab$; this is a particular case if (viii).
		\item The category of (co)chain complexes in locally presentable; again this can be defined as a category of additive functors into $\Ab$; this is a particular case if (viii).
		\item The category of condensed abelian groups is locally presentable if restricted to $\lambda$-small groups, for a given cardinal $\lambda$; this is because it can be presented as the category of sheaves into $\Ab$ out of a small site, and this is locally presentable. 
		\item Let $\V$ be symmetric monoidal closed, abelian and locally presentable, and $\C$ be a small category enriched on $\V$. Then the category $\V\tx{-Prof}(\C)$ is also locally presentable, since $\C$ is small  and we take $\V$-enriched functors valued in a locally presentable category.
		\item The category $\Ban$ is locally presentable; see~\cite[Example~1.48]{AR94:libro}.
		\item If $\V$ is locally presentable, then so are $\VMon$, $\AModA$ for any monoid $A$, and $\Mod(\V)$; see \Cref{l.p.mod.mon}. 
		\item If $\V$ is locally presentable, then so are the category $\tx{Gr}(\V)$ of objects of $\V$ graded by a monoid, the category $\Ch(\V)$ of cochain complexes in $\V$, and (more generally) the category $[\C,\V]$ of additive (or even $\V$-enriched) functors into $\V$ out of an additive (or $\V$-enriched) category $\C$. 
	\end{enumerate}
\end{eg}

\begin{lemma}\label{inducing-F'1}
	Let $\V$ be a locally presentable monoidal additive category such that the tensor product is accessible in each variable. If $\E$ is locally presentable and $(-)_0\colon\E\to\Mon(\V)$ is accessible and continuous, then also $\EMod$ is locally presentable and 
	$$ U'\colon \EMod\to \E $$
	has a left adjoint. The same holds for $U'_E\colon \EEModE\to \E$ and $V_E\colon \EEModE\to \V$ for any $E\in\E$.
\end{lemma}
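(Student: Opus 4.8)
The statement is a chain of three assertions: (1) $\EMod$ is locally presentable and $U'$ has a left adjoint; (2) the same for $U'_E\colon\EEModE\to\E$; (3) $V_E\colon\EEModE\to\V$ has a left adjoint. The key observation is that $\EMod$ and $\EEModE$ are defined as pullbacks \eqref{pullback-can-calc} and \eqref{def-EModE} of accessible functors between locally presentable categories, and that pseudo-pullbacks (equivalently, iso-comma squares) of accessible functors along accessible functors are again locally presentable, with the legs accessible — this is a standard fact about $\mathbf{LP}$, the 2-category of locally presentable categories and accessible functors, being closed under bilimits (see \cite[Theorem~5.1.6]{MP89:libro} or the discussion around \cite[Remark~2.75]{AR94:libro}). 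To then invoke the adjoint functor theorem I need each leg to also be \emph{continuous}, and then conclude by \cite[Theorem~1.66]{AR94:libro}, which says an accessible and continuous functor between locally presentable categories has a left adjoint.

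\textbf{Step 1: $\EMod$.} In the square \eqref{pullback-can-calc}, the right-hand vertical $U\colon\Mod(\V)\to\Mon(\V)$ is continuous and accessible by \Cref{l.p.mod.mon} (using the hypothesis that $\V$ is locally presentable with cocontinuous tensor), and both $\Mod(\V)$ and $\Mon(\V)$ are locally presentable by the same proposition; the bottom functor $(-)_0\colon\E\to\Mon(\V)$ is accessible and continuous by hypothesis, with $\E$ locally presentable. Since the square is a (strict) pullback of a pair of functors one of which is an isofibration, it is equivalent to the pseudo-pullback, so $\EMod$ is locally presentable and both projections $U'$ and $K$ are accessible. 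Moreover $U'$ is continuous: limits in the pullback are computed componentwise, $U'$ projects onto the $\E$-component, and $\E$ has all (small) limits since it is locally presentable — more precisely, given a small diagram in $\EMod$, form its limit in $\E$ via $U'$ and in $\Mod(\V)$ via $K$, and these agree over $\Mon(\V)$ because $U$ and $(-)_0$ are both continuous, yielding a limiting cone in $\EMod$ preserved by $U'$. Hence $U'$ is accessible and continuous between locally presentable categories, so by \cite[Theorem~1.66]{AR94:libro} it has a left adjoint.

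\textbf{Steps 2 and 3.} For \eqref{def-EModE}, the argument is identical: $U_E\colon\EModE\to\Mon(\V)$ is the restriction of $U$ along the point $E_0\colon I\to\Mon(\V)$ picking out $E_0$ — concretely $\EModE=\Mod(\V)\times_{\Mon(\V)}\{E_0\}\simeq {}_{E_0}\!\Mod_{E_0}$, which is locally presentable by \Cref{l.p.mod.mon}, and $U_E$ is continuous and accessible for the same reasons $U$ is. So the pullback $\EEModE$ is locally presentable with $U'_E$ and the top leg $(-)_0$ both accessible, and $U'_E$ is continuous by the componentwise-limit argument above; hence $U'_E$ has a left adjoint. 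Finally, $V_E\colon\EEModE\to\V$ factors as $\EEModE\xrightarrow{(-)_0}\EModE\simeq{}_{E_0}\!\Mod_{E_0}\xrightarrow{\text{forget}}\V$; the first functor is accessible and continuous by the pullback analysis, and the forgetful functor ${}_{E_0}\!\Mod_{E_0}\to\V$ is continuous, accessible, and has a left adjoint by \Cref{forgetful}. A composite of accessible continuous functors is accessible and continuous, and $\V$ and $\EEModE$ are locally presentable, so \cite[Theorem~1.66]{AR94:libro} again furnishes a left adjoint $L_E$ — alternatively one composes the left adjoint of $U'_E$-type reasoning with the free-bimodule left adjoint directly.

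\textbf{Expected obstacle.} The only genuinely delicate point is the closure of locally presentable categories under pseudo-pullback together with accessibility of the induced projections; everything else is bookkeeping with continuity and the adjoint functor theorem. One must also be slightly careful that the strict pullbacks \eqref{pullback-can-calc} and \eqref{def-EModE} really are equivalent to pseudo-pullbacks — this is where the hypothesis that $(-)_0$ is an \emph{isofibration} (and $U$ is as well, being monadic) is used, so that the strict and pseudo limits coincide. I would state this reduction explicitly and cite \cite[Theorem~5.1.6]{MP89:libro} for the $\mathbf{LP}$ bilimit closure, rather than re-proving it.
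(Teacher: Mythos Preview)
Your proposal is correct and follows essentially the same strategy as the paper: reduce the strict pullbacks to bipullbacks via the isofibration hypothesis, invoke closure of locally presentable categories under bilimits, and apply the adjoint functor theorem for accessible continuous functors. The only cosmetic difference is that the paper cites \cite[Theorem~6.10]{Bir84:tesi} to get local presentability of the pullback and the left adjoint in one stroke, whereas you separate these via \cite[Theorem~5.1.6]{MP89:libro} plus an explicit continuity check and \cite[Theorem~1.66]{AR94:libro}; your factorization of $V_E$ through ${}_{E_0}\!\Mod_{E_0}\to\V$ is also a minor variant of the paper's route through $\Mod(\V)\to\VMon\to\V$.
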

\begin{proof}
	By Proposition~\ref{l.p.mod.mon} both $\Mon(\V)$ and $\Mod(\V)$ are locally presentable and $U\colon\Mod(\V)\to\Mon(\V)$ is accessible and continuous. Since $(-)_0\colon\E\to\Mon(\V)$ is an isofibration it follows that the pullback defining $\EMod$ is actually a bipullback. It follows from \cite[Theorem~6.10]{Bir84:tesi} that $\EMod$ is locally presentable and $ U'\colon \EMod\to \E $ has a left adjoint. The same arguments apply for $\EEModE$ and $U'_E$. Regarding $V_E$, this can be seen as the composite 
	$$ \EEModE\xrightarrow{(-)_0}\EModE\hookrightarrow\Mod(\V)\xrightarrow{V }\VMon\to \V,$$
	where the last arrow is the forgetful functor out of monoids in $\V$. Since each of them is continuous and accessible, then also $V_E$ is. Thus it has a left adjoint by~\cite[Theorem~1.66]{AR94:libro}.
\end{proof}

\begin{theo}
	\label{thm:canonical-calc}
	Let $\V$ be a locally presentable monoidal additive category such that the tensor product is accessible in each variable. Let  $\E$ be locally presentable and $(-)_0\colon\E\to\Mon(\V)$ be \suitable, accessible, and continuous. Then $\E$ is \verysuitable, $U'\colon\EMod\to \E$ has a left adjoint $L'$, and for any $E\in\E$ we have $L'E=(E,\Omega)$ where $\Omega\in \EEModE$ is a first order $\E$-differential calculus over $E$, with $d\colon E_0\to\Omega_0$ induced by the unit of the adjunction. 
\end{theo}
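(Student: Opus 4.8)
The plan is to obtain this theorem as the synthesis of the three preceding results: \Cref{inducing-F'1} supplies, from local presentability, all the left adjoints that the earlier definitions require, and \Cref{F'-gives-calculus1} together with \Cref{F'-gives-calculus2} then extract the first order $\E$-differential calculus from those adjoints. No new construction is needed; the task is to check that the hypotheses line up.

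First I would apply \Cref{inducing-F'1}. Its hypotheses are exactly those assumed here: $\V$ is locally presentable with tensor product cocontinuous in each variable, $\E$ is locally presentable, and $(-)_0\colon\E\to\Mon(\V)$ is accessible and continuous. This yields at once that $\EMod$ is locally presentable, that $U'\colon\EMod\to\E$ has a left adjoint $L'$, and that for every $E\in\E$ the forgetful functor $V_E\colon\EEModE\to\V$ has a left adjoint $L_E$. Since $(-)_0$ is \suitable\ by assumption, the existence of the $L_E$ is precisely what \Cref{def:geometrical} demands for $(-)_0$ to be \verysuitable; so that part of the conclusion is immediate.

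Next, with $L'$ now in hand and $(-)_0$ \suitable, I would invoke \Cref{F'-gives-calculus1}: it identifies $L'E$ with a pair $\Omega=(U'L'E,\Omega_0)\in\EEModE$ whose monoid component is $E_0$ — equivalently $L'E=(E,\Omega)$, using that $(-)_0$ is an isofibration to normalize the a priori monoid component to $E_0$ itself — and it shows that the unit $\eta_E$ has underlying monoid morphism of the form $(1_{E_0},d)$, so that $(\Omega,d)$ is a generalized first order $\E$-differential calculus over $E$. Finally, since $(-)_0$ is now known to be \verysuitable\ and $L'$ exists, \Cref{F'-gives-calculus2} applies verbatim and upgrades $(\Omega,d)$ to a genuine first order $\E$-differential calculus: the surjectivity condition — that the transpose $d^t\colon L_E(E_0)\to\Omega$ is an epimorphism in $\EEModE$ — follows from the initiality of $\eta_E=(1_E,d)$ in the comma category $E/U'$.

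The work here is essentially organizational rather than a genuine difficulty: the only points requiring care are checking that ``accessible and continuous'' for $(-)_0$ is exactly the input \Cref{inducing-F'1} needs to run its bipullback argument, and that the isofibration assumption is what licenses the replacement, in \Cref{F'-gives-calculus1}, of the a priori monoid component $A$ by $E_0$. So I expect no real obstacle, only the usual bookkeeping of matching hypotheses across the cited lemma and propositions.
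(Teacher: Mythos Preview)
Your proposal is correct and matches the paper's proof exactly: the paper's argument is simply to ``put together \Cref{inducing-F'1}, Proposition~\ref{F'-gives-calculus1}, and Proposition~\ref{F'-gives-calculus2}'', which is precisely the synthesis you describe.
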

\begin{proof}
	Simply put together \Cref{inducing-F'1}, Proposition~\ref{F'-gives-calculus1}, and Proposition~\ref{F'-gives-calculus2}.
\end{proof}

Consider the hypotheses of the Theorem above. Given $E\in\E$ the surjectivity condition holds for $(\Omega,d\colon E_0\to \Omega_0)$ over $E$, but may not hold over $E_0$. That is, $d$ generates $\Omega$ as an $E$-bimodule, but may not generate it as an $E_0$-bimodule.
This said, the next proposition shows how we can still produce a first order differential calculus over $E_0$.

\begin{cor}\label{remark:functor-Calc_E}
	Under the hypothesis of \Cref{thm:canonical-calc}, there exists a functor $\Calc_\E\colon\E\to\Calc(\V)$ together with two natural transformations $\chi$ and $\beta$ as below. 
	Moreover, if the hypotheses of \Cref{lemma:surjectivity-comparison}(ii) are satisfied, then $\beta$ is a natural isomorphism.
	\begin{center}
		\begin{tikzpicture}[baseline=(current  bounding  box.south), scale=2]
			
			\node (a0) at (-0.3,0) {$\E$};
			
			\node (b0) at (1,0.7) {$\Mon(\V)$};
			\node (c0) at (1,0) {$\Calc(\V)$};
			\node (d0) at (1,-0.7) {$\Mod(\E)$};
			
			\node (e0) at (2.2,-0) {$\Mod(\V)$};
			
			\node (00) at (0.6,0.4) {$\Downarrow \chi$};
			\node (10) at (1,-0.35) {$\Downarrow \beta$};
			\node (20) at (1.5,0.4) {$=$};
			
			\path[font=\scriptsize]
			
			(a0) edge [bend left=30, ->] node [above] {$(-)_0\ \ \ \ \ \ \ \  $} (b0)
			(b0) edge [bend left=30, ->] node [above] {$\ \ \ \ L$} (e0)
			(b0) edge [->] node [right] {$L_u$} (c0)
			(a0) edge [->] node [above] {$\Calc_\E$} (c0)
			(c0) edge [>->] node [above] {$ J$} (e0)
			(a0) edge [bend right=30, ->] node [below] {$L'\ \ \ $} (d0)
			(d0) edge [bend right=30, ->] node [below] {$\ \ \ K$} (e0);
		\end{tikzpicture}	
	\end{center}
\end{cor}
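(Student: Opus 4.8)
The plan is to assemble the functor $\Calc_\E$ and the two natural transformations directly from the data produced by \Cref{thm:canonical-calc}, together with the adjunctions $L\dashv U$ (of \Cref{universal-calc-left-adj}), $L_u\dashv \pi$ (of \Cref{prop:univ-calc-adjunction}), and $L'\dashv U'$ (of \Cref{thm:canonical-calc} itself), plus the factorization $J\circ L_u = L$ (the content of \Cref{rmk:L-factors-through-calc}). First I would define $\Calc_\E$ on objects: given $E\in\E$, \Cref{thm:canonical-calc} gives $L'E=(E,\Omega)\in\Mod(\E)$ with $\Omega=(E,\Omega_0)\in\EEModE$ a first order $\E$-differential calculus over $E$, and $d\colon E_0\to\Omega_0$ (in $\E$, hence in $\V$) induced by the unit $\eta_E\colon E\to U'L'E$. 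By \Cref{prop:leibz-then-left-right} and \Cref{Leibniz-square}, $d$ satisfies the Leibniz rule over $E_0$, so $(\Omega_0,d)$ is a \emph{generalized} first order differential calculus over $E_0$; to land in $\Calc(\V)$ I would take its image under the truncation that quotients by the sub-bimodule generated by everything not in the image of $1_{E_0}\cdot d$ — equivalently, the pushout of $\Omega_0\leftarrow E_0^{\otimes 2}$ along $1_{E_0}\cdot d$ in the sense of the surjectivity-forcing construction used around \Cref{lem:epi-from-univ-always-calc}. Concretely, since $L_u\dashv\pi$ gives the universal calculus $\Omega^1_{E_0,u}$ on $E_0$ together with a canonical map to any generalized calculus, I would factor the induced $\Omega^1_{E_0,u}\to\Omega_0$ through its image, obtaining an epimorphism $\Omega^1_{E_0,u}\epi \Calc_\E(E)$, which by \Cref{lem:epi-from-univ-always-calc} is a genuine first order differential calculus over $E_0$. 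Functoriality on morphisms follows because all of this — $L'$, the unit, the universal-calculus comparison, and image factorization — is functorial; naturality of the image factorization in $\Mon(\V)$ uses that $\V$ (hence each $\AModA$) has (epi, mono) factorizations preserved by $f^*$, which holds under local presentability.

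Next I would construct $\beta$. By construction $J\circ\Calc_\E$ and $K\circ L'$ both send $E$ to an object of $\Mod(\V)$ whose monoid component is $E_0$ and whose module component is, respectively, $\Calc_\E(E)$ and $\Omega_0$; the defining epimorphism $\Omega^1_{E_0,u}\epi\Calc_\E(E)$ factors the comparison map $\Omega^1_{E_0,u}\to\Omega_0$, so there is a canonical bimodule map $\Calc_\E(E)\to\Omega_0$ over $E_0$ (it is the ``mono part'' of the factorization, after identifying $\Calc_\E(E)$ with the image). This is a morphism in $\Mod(\V)$ covering $1_{E_0}$, and it is natural in $E$ by functoriality of factorizations; this is $\beta$. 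When the hypotheses of \Cref{lemma:surjectivity-comparison}(ii) hold, $(\Omega_0,d)$ already satisfies the surjectivity condition over $E_0$, so $1_{E_0}\cdot d$ is epi, the image of $\Omega^1_{E_0,u}\to\Omega_0$ is all of $\Omega_0$, and the mono part of the factorization is an isomorphism — hence $\beta$ is a natural isomorphism, as claimed.

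For $\chi$, I would exploit the universal property of $L_u$. The transformation $\chi$ should go from $(-)_0$ to $\pi\circ\Calc_\E$ — equivalently, using $J\circ L_u = L$ and the triangle identities, I would define it by transposition. Unwinding: a natural transformation $\chi\colon (-)_0\Rightarrow \pi\circ L_u\circ\pi\circ(\text{something})$ is overkill; the cleaner route is to observe that $L_u\dashv\pi$ means $\Calc(\V)(L_u A, \Omega_d)\cong\Mon(\V)(A,\pi\Omega_d)$, and the generalized calculus $(\Omega_0,d)$ over $E_0$, after truncation to $\Calc_\E(E)$, receives the counit-type comparison $L_u E_0\to\Calc_\E(E)$ in $\Calc(\V)$; applying $J$ and composing with $\beta$-style maps, and using $JL_u = L$, gives a map $L E_0\to K L' E$ in $\Mod(\V)$ covering $1_{E_0}$, whose ``adjunct direction'' under $L\dashv U$ combined with $\eta_E$ produces the component $\chi_E$. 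I would then check the two commuting regions in the displayed diagram: the lower triangle ($K\circ L' = J\circ\Calc_\E$ up to $\beta$) and the outer compatibility ($L\circ(-)_0 \Rightarrow K\circ L'$ via $\chi$), both of which reduce to the triangle identities for the three adjunctions together with $JL_u=L$.

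The main obstacle I anticipate is \textbf{pinning down naturality of the image factorization} used to define $\Calc_\E$ on morphisms, and ensuring the various comparison maps are compatible enough that $\chi$ is well-defined as a natural transformation rather than merely a pointwise family. The subtlety is that $\Calc_\E(E)$ is defined by a factorization in $\AModA$ for $A=E_0$, and a morphism $\phi\colon E\to E'$ in $\E$ induces $\phi_0\colon E_0\to E_0'$, so comparing $\Calc_\E(E)$ and $\Calc_\E(E')$ requires moving between $\AModA$ and $\AModA[']$ via $\phi_0^*$; naturality then hinges on $\phi_0^*$ preserving (epi, mono) factorizations, which is guaranteed by \Cref{restriction-properties}(iv) (exactness of restriction of scalars) in the abelian case, and more generally by the local-presentability hypotheses of \Cref{thm:canonical-calc} together with \Cref{l.p.mod.mon}. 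Once this bookkeeping is in place, the verification that $\chi$ and $\beta$ satisfy the stated equalities is a routine diagram chase using triangle identities, so I would present it compactly rather than in full detail.
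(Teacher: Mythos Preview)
Your approach is essentially the paper's: define $\Calc_\E(E)$ as the image of the comparison map $h\colon\Omega^1_{E_0,u}\to\Omega_0$ (induced from the Leibniz property of $d$ via the universal property of the adjunction $L\dashv U$), take $\beta_E$ to be the mono part $(1_{E_0},m)$ of this (epi, strong mono) factorization, and invoke functoriality of orthogonal factorization systems for naturality. Your treatment of $\chi$, however, is needlessly convoluted: in the paper $\chi_E$ is simply the epi part $(1_{E_0},q)\colon L_u E_0=(E_0,\Omega^1_{E_0,u})\to (E_0,\Omega_0')=\Calc_\E(E)$ of that same factorization, regarded as a morphism in $\Calc(\V)$; no transposition or triangle-identity chase is needed, since $q\circ d_u=d'$ by construction and this is exactly the compatibility required for $(1_{E_0},q)$ to be a map of calculi. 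The equality in the upper-right region of the diagram is then literally $J\circ L_u=L$ from \Cref{rmk:L-factors-through-calc}, and the composite $\beta\circ(J\chi)$ recovers $h$ itself, which is the comparison $L\circ(-)_0\Rightarrow K\circ L'$.
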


\begin{proof}	
	Fix $E\in\E$ and the first order $\E$-differential calculus $(\Omega\in\EEModE,d\colon E_0\to\Omega_0)$ given by \Cref{thm:canonical-calc}. 
	Let $(\Omega_{E_0,u},d_u)$ be the universal first order differential calculus over $E_0$, so that $\Omega_{E_0,u}\in\EModE$. Then, by Proposition~\ref{universal-calc-left-adj}, the morphism of monoids $(1_{E_0},d)\colon E_0\to E_0\oplus\Omega_0=U(E_0,\Omega_0)$ induces a unique morphism 
	$$ h\colon \Omega_u\longrightarrow\Omega_0$$
	of $E_0$-bimodules. Consider then the (epi, strong mono) factorization below
	\begin{center}
		\begin{tikzpicture}[baseline=(current  bounding  box.south), scale=2]
			
			\node (c0) at (0.8,0) {$\Omega_0'$};
			\node (a0) at (0,-0.5) {$\Omega_{E_0,u}$};
			\node (b0) at (1.6,-0.5) {$\Omega_0$};
			
			\path[font=\scriptsize]
			
			(a0) edge [->] node [below] {$h$} (b0)
			(a0) edge [->>] node [above] {$q\ \ \ $} (c0)
			(c0) edge [>->] node [above] {$\ \ \ m$} (b0);
		\end{tikzpicture}	
	\end{center} 
	in $\EModE$. Then, by Lemma~\ref{lem:epi-from-univ-always-calc}, $(\Omega'_0,d':=q\circ d_u)$ defines a first order differential calculus over $E_0$ which is a sub- $E_0$-module of the calculus $(\Omega,d)$ over $E$. Note that under the hypotheses of \Cref{lemma:surjectivity-comparison}(ii) we know that $(\Omega_0,d)$ is already a first order differential calculus over $E_0$, and so $h$ is an epimorphism and $\Omega_0'\cong\Omega_0$.
	
	Since orthogonal factorization systems are functorial, and the (epi, strong mono) factorization is an example of such, it follows that the assignment $E\mapsto (E_0,\Omega_0',d')$ extends to a functor
	$$ \Calc_\E\colon \E\longrightarrow \Calc(\V). $$
	Moreover, by post-composing with the inclusion $J\colon \Calc(\V)\rightarrowtail\Mod(\V)$ we obtain the desired natural transformations.
	Precisely, the natural transformations are given componentwise by (recall that $L_u$ assigns to each $A\in\Mon(\V)$ the pair $(A,\Omega_{A,u})$)
$$\chi_E=(1_{E_0}\colon E_0\to E_0, q\colon \Omega_{E_0,u}\to\Omega_0')$$ 
and
$$\beta_E=(1_{E_0}\colon E_0\to E_0, m\colon \Omega_0'\to\Omega_0)$$ 
for any $E\in\E$. 
Note that, by the argument above, $\beta$ is a natural isomorphism when the hypotheses of \Cref{lemma:surjectivity-comparison}(ii) are satisfied, since in that case $h$ would be epimorphism. 
\end{proof}

In the next proposition we will see how, given two \suitable\ functors (again satisfying the hypothesis of \Cref{thm:canonical-calc}), we can obtain a natural transformation between their calculus functors defined in \Cref{remark:functor-Calc_E}.

\begin{prop}\label{rmk:comparison-two-canonical}
	Let $e_1\colon\E_1\to\Mon(\V)$ and $e_2\colon\E_2\to\Mon(\V)$ be two \suitable\ functors satisfying the hypothesis of \Cref{thm:canonical-calc}. 
	Then, a commutative-up-to-isomorphism triangle as below left induces a natural transformation as below right. Moreover, this mapping is functorial.
\[\begin{tikzcd}[ampersand replacement=\&,row sep=tiny]
	{\E_1} \&\&\& {\E_1} \\
	\& {\Mon(\mathcal{V})} \&\&\&\& {\Calc(\V)} \\
	{\E_2} \&\&\& {\E_2}
	\arrow[""{name=0, anchor=center, inner sep=0}, "{e_1}", from=1-1, to=2-2]
	\arrow["k"', from=1-1, to=3-1]
	\arrow[""{name=1, anchor=center, inner sep=0}, "{\Calc_{\E_1}}", from=1-4, to=2-6]
	\arrow["k"', from=1-4, to=3-4]
	\arrow[""{name=2, anchor=center, inner sep=0}, "{e_2}"', from=3-1, to=2-2]
	\arrow[""{name=3, anchor=center, inner sep=0}, "{\Calc_{\E_1}}"', from=3-4, to=2-6]
	\arrow["\cong"{description}, shift right=2, draw=none, from=0, to=2]
	\arrow["{\widetilde{\kappa}}"'{xshift=-0.1cm}, between={0.3}{0.7}, Rightarrow, from=1, to=3]
\end{tikzcd}\]
\end{prop}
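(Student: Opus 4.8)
The plan is to exploit the fact that everything in sight is built from left adjoints, so that a comparison of left adjoints will give a comparison of the calculus functors. First I would fix a commutative-up-to-isomorphism triangle $e_2 k \cong e_1$ with $k\colon\E_1\to\E_2$, and unwind what this means at the level of the pullback construction. The triangle induces, by the universal property of the pullback defining $\Mod(\E_2)$, a functor $\Mod(k)\colon\Mod(\E_1)\to\Mod(\E_2)$ over $\Mon(\V)$ (i.e.\ $K_2\circ\Mod(k)\cong K_1$ and $U_2'\circ\Mod(k)\cong k\circ U_1'$), since $\Mod(\E_i)$ is the pullback of $U\colon\Mod(\V)\to\Mon(\V)$ along $e_i$ and the square for $i=1$ factors through that for $i=2$ via $k$. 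The key point is then to compare the left adjoints $L_1'\dashv U_1'$ and $L_2'\dashv U_2'$ (which exist by \Cref{thm:canonical-calc}): there is a canonical mate
\[
\theta\colon L_2'\circ k\Longrightarrow \Mod(k)\circ L_1'
\]
obtained from the $2$-cell $U_2'\circ\Mod(k)\cong k\circ U_1'$ by transposing on both sides (using the units/counits of the two adjunctions). This $\theta$ is the fundamental comparison datum; everything else is bookkeeping on top of it.

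Next I would trace $\theta$ through the construction of $\Calc_{\E_i}$ in \Cref{remark:functor-Calc_E}. Recall $\Calc_{\E_i}(E)$ is extracted from $L_i'E=(E,\Omega_i)\in {}_{E_0}\!\Mod_{E_0}$ by taking the $(\text{epi},\text{strong mono})$-factorization of the canonical $E_0$-bimodule map $h_i\colon\Omega_{E_0,u}\to(\Omega_i)_0$ coming from the universal calculus. Since $k$ is a morphism over $\Mon(\V)$, for $E\in\E_1$ the monoid $(kE)_0\cong E_0$, so the universal calculus $\Omega_{E_0,u}$ is literally the same object; and the component $\theta_E$ of the mate is, after identifying underlying monoids, a map of $E_0$-bimodules $(\Omega_1)_0\to(\Omega_2)_0$ compatible with the two differentials $d_1,d_2$ (this compatibility is exactly what it means for $\theta$ to be the mate, since the differentials are the units of the respective adjunctions). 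Because $\theta_E$ commutes with the maps $h_1$ and $h_2$ out of $\Omega_{E_0,u}$, functoriality of the orthogonal factorization system $(\text{epi},\text{strong mono})$ gives a unique induced map $\Omega_1'\to\Omega_2'$ between the factorizations, hence a morphism $\Calc_{\E_1}(E)\to\Calc_{\E_2}(kE)$ in $\Calc(\V)$. Naturality in $E$ follows from naturality of $\theta$ together with functoriality of the factorization; this defines $\widetilde\kappa$.

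Finally, for the functoriality statement ("this mapping is functorial"), I would observe that given a further $k'\colon\E_2\to\E_3$ with $e_3k'\cong e_2$, the mate for the composite $k'k$ is the pasting of the two mates (a standard property: mates compose along pasting of adjunction squares), and likewise identity triangles give identity mates; since $\widetilde\kappa$ is manufactured functorially from the mate and from the (functorial) factorization system, $\widetilde\kappa_{k'k}$ is the vertical pasting of $\widetilde\kappa_{k'}$ and $\widetilde\kappa_k$, and $\widetilde\kappa_{1_\E}$ is the identity. I expect the main obstacle to be the careful identification of underlying objects: one must check that the chosen isomorphisms $e_2k\cong e_1$, the pullback-induced isomorphisms $K_2\Mod(k)\cong K_1$, and the identification $(kE)_0\cong E_0$ are all coherent enough that $\theta_E$ genuinely restricts to an $E_0$-bimodule map intertwining $d_1$ and $d_2$ — in other words, that no spurious twist by an automorphism of $E_0$ creeps in. Once the compatibility of $\theta$ with the two differentials is pinned down, the passage to $\Calc(\V)$ via the factorization system is routine.
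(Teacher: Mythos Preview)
Your proposal is correct and follows essentially the same route as the paper: construct the induced functor between the module categories via the pullback universal property, take the mate of the resulting square of adjunctions to obtain a comparison $L_2'k\Rightarrow \Mod(k)L_1'$, then push this through the (epi, strong mono) factorization defining $\Calc_{\E_i}$; functoriality comes from pasting of mates. The only point where the paper is slightly more careful is in observing that, since one is given only $e_2k\cong e_1$ up to isomorphism, one needs the \emph{bipullback} universal property of $\Mod(\E_2)$ rather than the strict pullback one --- this holds because pullbacks along isofibrations in $\mathbf{Cat}$ are automatically bipullbacks, and $e_2$ is an isofibration by hypothesis. Your concern about coherence of the identifications $(kE)_0\cong E_0$ is well-placed and is exactly the content of this bipullback step.
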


\begin{proof}

First, let us recall that, in the 2-category of categories, pullbacks along isofibrations are also bipullbacks\footnote{For the definition of bilimits see for instance \cite{Kelly_1989} or \cite[Chapter~5.1]{JohnsonYau_2d_cats} where they are called \emph{pseudo bilimits} and in our case we have a 2-category.}, 
therefore the pullback defined at the start of this section \eqref{pullback-can-calc} is a bipullback. 
In particular, it has a similar universal property to a pullback using diagrams that commute up to (coherent) isomorphisms. 
Therefore, the isomorphism $e_2k\cong e_1$ induces, by the universal property of the bipullback, a functor $\widehat{k}\colon\Mod(\E_2)\to\Mod(\E_1)$ making the diagram below left commutative up to (coherent) isomorphism.  
	\begin{center}
\begin{tikzcd}[ampersand replacement=\&]
	{\Mod(\E_1)} \& {\Mod(\E_2)} \\
	{\E_1} \& {\E_2}
	\arrow[""{name=0, anchor=center, inner sep=0}, "{\widehat{k}}", from=1-1, to=1-2]
	\arrow["{U_1'}"', from=1-1, to=2-1]
	\arrow["{U_2'}", shift right=2, from=1-2, to=2-2]
	\arrow[""{name=1, anchor=center, inner sep=0}, "k"', from=2-1, to=2-2]
	\arrow["\cong"{description}, draw=none, from=0, to=1]
\end{tikzcd}
	\hspace{0.2cm}
\begin{tikzcd}[ampersand replacement=\&]
	{\Mod(\E_1)} \& {\Mod(\E_2)} \\
	{\E_1} \& {\E_2}
	\arrow["{\widehat{k}}", from=1-1, to=1-2]
	\arrow["{L'_1}", from=2-1, to=1-1]
	\arrow["k"', from=2-1, to=2-2]
	\arrow["\gamma"', between={0.5}{0.7}, Rightarrow, from=2-2, to=1-1]
	\arrow["{L'_2}"', from=2-2, to=1-2]
\end{tikzcd}
$:=$
\begin{tikzcd}[ampersand replacement=\&,column sep=small]
	{\E_1} \& {\Mod(\E_1)} \& {\Mod(\E_2)} \\
	\& {\E_1} \& {\E_2} \& {\Mod(\E_2)}
	\arrow["{L_1'}", from=1-1, to=1-2]
	\arrow[""{name=0, anchor=center, inner sep=0}, curve={height=12pt}, equals, from=1-1, to=2-2]
	\arrow[""{name=1, anchor=center, inner sep=0}, "{\widehat{k}}", from=1-2, to=1-3]
	\arrow["{U_1'}"', from=1-2, to=2-2]
	\arrow["{U_2'}", shift right=2, from=1-3, to=2-3]
	\arrow[""{name=2, anchor=center, inner sep=0}, curve={height=-12pt}, equals, from=1-3, to=2-4]
	\arrow[""{name=3, anchor=center, inner sep=0}, "k"', from=2-2, to=2-3]
	\arrow["{L_2'}"', from=2-3, to=2-4]
	\arrow["{\eta_1'}", between={0.3}{0.7}, Rightarrow, from=0, to=1-2]
	\arrow["\cong"{description}, draw=none, from=1, to=3]
	\arrow["{\varepsilon_2'}", between={0.3}{0.7}, Rightarrow, from=2-3, to=2]
\end{tikzcd}
	\end{center}
	Since, by \Cref{thm:canonical-calc}, both $U_1$ and $U_2$ admit a left adjoint, then we get a comparison between these adjoints (shown above right). 
	Explicitly, $\gamma_{E_1}\colon L_2'kE_1\to \widehat{k}L_1'E_1$, for any $E_1\in\E_1$, is defined as the composite
\[\begin{tikzcd}[ampersand replacement=\&]
	{L'_2kE_1} \&\& {L'_2kU_1'L'_1E_1\cong L'_2U_2'\widehat{k}L_1'E_1} \&\& {\widehat{k}L_1'E_1,}
	\arrow["{L'_2k(\eta_1')_{E_1}}", from=1-1, to=1-3]
	\arrow["{(\epsilon_2')_{\widehat{k}L_1'E_1}}", from=1-3, to=1-5]
\end{tikzcd}\]
where the first morphism is the image of the differential of $L'_1E_1$ seen as a first order $\E_2$-differential calculus, and the second is the counit of the adjunction $L'_2\dashv U'_2$. 

Following the notation used in \Cref{thm:canonical-calc} and \Cref{remark:functor-Calc_E}, we can write $L_1E_1:=(e_1E_1,\Omega_{e_1E_1})$ and $L_2kE_1:=(e_2kE_1,\Omega_{e_2kE_1})$. 
Using this notation, the components of $\gamma$ then fit in the following left square below, which is commutative by definition of $\gamma$ and one of the triangle equality of the adjunction $L_2'\dashv U_2'$.
\begin{center}
\begin{tikzcd}[ampersand replacement=\&]
	{e_1E_1} \& {U(e_1E,\Omega_{e_1E_1})} \\
	{e_2kE_1} \& {U(e_2kE_1,\Omega_{e_2kE_1})}
	\arrow["{e_1(\eta'_1)_{E_1}}", from=1-1, to=1-2]
	\arrow["\cong"', from=1-1, to=2-1]
	\arrow["{U\gamma_{E_1}}", from=1-2, to=2-2]
	\arrow["{e_2(\eta'_2)_{kE_1}}"', from=2-1, to=2-2]
\end{tikzcd}
	\hspace{0.5cm}
\begin{tikzcd}[ampersand replacement=\&]
	{L(e_1E)=\Omega_{e_1E_1,u}} \& {\Omega_{e_1E_1}} \\
	{L(e_2kE_1)=\Omega_{e_2kE_1,u}} \& {\Omega_{e_2kE_1}}
	\arrow["{h_1}", from=1-1, to=1-2]
	\arrow["\cong"', from=1-1, to=2-1]
	\arrow[from=1-2, to=2-2]
	\arrow["{h_2}"', from=2-1, to=2-2]
\end{tikzcd}
\end{center}
Using the adjunction $L\dashv U$, we get the corresponding commutative square above right. 
Therefore, since the factorization (epi, strong mono) is functorial, we get a natural morphism $\widetilde{\kappa}\colon\Calc_{\E_1}(E_1)\to\Calc_{\E_2}(kE_1)$.
\[\begin{tikzcd}[ampersand replacement=\&,row sep=small]
	{\Omega_{e_1E_1,u}} \&\& {\Omega_{e_1E_1}} \\
	\& {\Calc_{\E_1}(E_1)} \\
	\& {\Calc_{\E_2}(kE_1)} \\
	{\Omega_{e_2kE_1,u}} \&\& {\Omega_{e_2kE_1}}
	\arrow["{h_1}", from=1-1, to=1-3]
	\arrow[two heads, from=1-1, to=2-2]
	\arrow["\cong"', from=1-1, to=4-1]
	\arrow[from=1-3, to=4-3]
	\arrow[tail, from=2-2, to=1-3]
	\arrow["{\widetilde{\kappa}}"{xshift=0.1cm,yshift=0.1cm}, dotted, from=2-2, to=3-2]
	\arrow[tail, from=3-2, to=4-3]
	\arrow[two heads, from=4-1, to=3-2]
	\arrow["{h_2}"', from=4-1, to=4-3]
\end{tikzcd}\]

Since all the steps we have used are functorial, the resulting mapping is functorial as well.
\end{proof}

\begin{rmk}
	Clearly, by definition $\widetilde{\kappa}$ commutes with $\chi$, i.e. the following equation hold.
	\begin{center}
\begin{tikzcd}[ampersand replacement=\&,row sep=tiny]
	\&\& {\Mon(\V)} \\
	{\E_1} \\
	\&\& {\Calc(\V)} \\
	{\E_2}
	\arrow["{L_u}", from=1-3, to=3-3]
	\arrow[""{name=0, anchor=center, inner sep=0}, "{e_1}", from=2-1, to=1-3]
	\arrow[""{name=1, anchor=center, inner sep=0}, "{\Calc_{\E_1}}"{description}, from=2-1, to=3-3]
	\arrow["k"', from=2-1, to=4-1]
	\arrow[""{name=2, anchor=center, inner sep=0}, "{\Calc_{\E_2}}"', from=4-1, to=3-3]
	\arrow["{\widetilde{\kappa}}"'{xshift=-0.1cm}, between={0.3}{0.7}, Rightarrow, from=1, to=2]
	\arrow["{\chi_1}"{xshift=0.1cm}, shift left=5, between={0.3}{0.7}, Rightarrow, from=0, to=1]
\end{tikzcd}
				$=$
\begin{tikzcd}[ampersand replacement=\&,row sep=tiny]
	\&\& {\Mon(\V)} \\
	{\E_1} \\
	\&\& {\Calc(\V)} \\
	{\E_2}
	\arrow["{L_u}", from=1-3, to=3-3]
	\arrow["{e_1}", from=2-1, to=1-3]
	\arrow["k"', from=2-1, to=4-1]
	\arrow[""{name=0, anchor=center, inner sep=0}, "{e_2}", from=4-1, to=1-3]
	\arrow["{\Calc_{\E_2}}"', from=4-1, to=3-3]
	\arrow["\cong"{description, pos=0.3}, draw=none, from=2-1, to=0]
	\arrow["{\chi_2}"{xshift=0.1cm}, between={0.3}{0.8}, Rightarrow, from=0, to=3-3]
\end{tikzcd}
	\end{center}
\end{rmk}

\Cref{rmk:comparison-two-canonical} suggests the following definition of morphisms of \suitable~functors, and so of the category $\wgeomfun$.

\begin{defi}\label{def:cat-of-geom-fun}
	The category $\wgeomfun$ has objects \suitable\ functors $e\colon\E\to\Mon(\V)$ and morphisms pairs $(k,\xi)\colon(\E_1,e_1)\to(\E_2,e_2)$ consisting of a functor $k\colon\E_1\to\E_2$ and $\xi$ an invertible natural transformation as below.
	\[\begin{tikzcd}[ampersand replacement=\&,row sep=tiny]
		{\E_1} \\
		\& {\Mon(\mathcal{V})} \\
		{\E_2}
		\arrow[""{name=0, anchor=center, inner sep=0}, "{e_1}", from=1-1, to=2-2]
		\arrow["k"', from=1-1, to=3-1]
		\arrow[""{name=1, anchor=center, inner sep=0}, "{e_2}"', from=3-1, to=2-2]
		\arrow["\xi"'{xshift=-0.1cm}, shift right=2, between={0.3}{0.7}, Rightarrow, from=0, to=1]
	\end{tikzcd}\]
	Composition is given by pasting and the identity is given by $(1_\E,1_{e})\colon(\E,e)\to(E,e)$. 
	
	The category $\geomfun$ is defined as the full subcategory of $\wgeomfun$ with objects \verysuitable\ functors. 
\end{defi}

\begin{rmk}
	The $\V$-geometrical functor $1_{\Mon(\V)}\colon\Mon(\V)\to\Mon(\V)$ is the terminal object both in $\wgeomfun$ and $\geomfun$.
\end{rmk}

\begin{eg}\label{ex:smooth_algebraic_comparison}
	In \Cref{rmk:operads-morph-of-geom-fun} we will see a family of examples coming from the theory of operads. 
	A concrete example, of a morphism of weakly $\Rvect$-geometrical functors, is the restriction of the functor $(-)_0\colon C^\infty\tx{-Ring}\to \Mon(\mathbb R\tx{-Vect})$ (described in \Cref{eg:Cinfty-rings-pullback}) to $\CMon(\mathbb R\tx{-Vect})$. 
\[\begin{tikzcd}[ampersand replacement=\&,row sep=tiny]
	{	C^\infty\tx{-Ring}} \\
	\& {\Mon(\mathbb R\tx{-Vect})} \\
	{\CMon(\mathbb R\tx{-Vect})}
	\arrow["{(-)_0}", from=1-1, to=2-2]
	\arrow[from=1-1, to=3-1]
	\arrow[hook, from=3-1, to=2-2]
\end{tikzcd}\]
\end{eg}
 
\begin{eg}\label{ex:cmon_comparison}
	It follows from \Cref{ComMon-suitable} and the discussion on $n$-commutative monoids in \Cref{rmk:comm-mon-full-subcat}, that another family of examples of morphisms of $\V$-geometrical functors are given by the factorization of full subcategory inclusions $\CMon_n(\V)\hookrightarrow \CMon_{qn}(\V)\hookrightarrow \Mon(\V)$, for $q\in\mathbb{Z}$. 
	\[\begin{tikzcd}[ampersand replacement=\&,row sep=tiny]
	{	\CMon_n(\V)} \\
	\& {\Mon(\V)} \\
	{\CMon_{qn}(\V)}
	\arrow[hook, from=1-1, to=2-2]
	\arrow[hook, from=1-1, to=3-1]
	\arrow[hook, from=3-1, to=2-2]
\end{tikzcd}\]
\end{eg}

\begin{eg}
	\label{ComMon-canonical-calc}
	Let us consider, for a braided monoidal category $\V$, the category $\E:=\CMon_n(\V)$. 
	In \Cref{ComMon-suitable} we show that the inclusion $\CMon_n(\V)\hookrightarrow\VMon$ is \suitable. 
	Let us moreover assume that $\V$ is a locally presentable additive category such that the tensor product preserves colimits in each variable.
	Then, since $\Mon(\V)$ is locally presentable (see \Cref{l.p.mod.mon}) and $\CMon_n(\V)$ is closed under limits and $\lambda$-filtered colimits (for the same $\lambda$ such that $\otimes$ preserves $\lambda$-filtered colimits), also $\CMon_n(\V)$ is locally presentable (\cite[Corollary~2.48]{AR94:libro}).
	It follows that $\CMon_n(\V)\hookrightarrow\VMon$ is accessible and continuous, therefore by \Cref{inducing-F'1} it is also \verysuitable. 
	
	Thus, we can apply \Cref{thm:canonical-calc} to $\CMon_n(\V)\hookrightarrow \Mon(\V)$. 
	\[\begin{tikzcd}[ampersand replacement=\&]
		{\Mod(\CMon_n(\V))} \&\& {\Mod(\V)} \\
		{\CMon_n(\V)} \& {} \& {\Mon(\V)}
		\arrow[hook, from=1-1, to=1-3]
		\arrow[""{name=0, anchor=center, inner sep=0}, "{U'}"', shift right=2, from=1-1, to=2-1]
		\arrow["\lrcorner"{anchor=center, pos=0.125}, draw=none, from=1-1, to=2-2]
		\arrow[""{name=1, anchor=center, inner sep=0}, "U"'{yshift=-0.1cm}, shift right=2, from=1-3, to=2-3]
		\arrow[""{name=2, anchor=center, inner sep=0}, "{L'}"'{yshift=-0.1cm}, shift right=2, curve={height=6pt}, from=2-1, to=1-1]
		\arrow[hook, from=2-1, to=2-3]
		\arrow[""{name=3, anchor=center, inner sep=0}, "L"', shift right=2, curve={height=6pt}, from=2-3, to=1-3]
		\arrow["\vdash"{description}, draw=none, from=0, to=2]
		\arrow["\vdash"{description}, draw=none, from=1, to=3]
	\end{tikzcd}\]

	Note that, if we set $\V=\kvect$, then $\Mon(\kvect)=\kalg$ is the category of $\bk$-algebras and $\CMon(\kvect)=\kcalg$ the subcategory of the commutative ones.   
	Using \Cref{thm:canonical-calc} in this case, for any commutative $\bk$-algebra $A$ the canonical functor $L'$ gives the classical notion of K\"ahler differentials. 
	We will show this more generally for any braided monoidal category $\V$, using a suitable coequalizer to describe the K\"ahler differentials.

	For any $A\in\CMon_n(\V)$ we get $L'A=(A,\Omega)$ with $A\in\CMon_n(\V)$ and $(\Omega,l,r)\in \Bimod{A}{A}$ an $n$-commutative bimodule which is also a first order differential calculus $d\colon A\to \Omega$ (see \Cref{def:first-ord-diff-calc-E}) by Theorem~\ref{thm:canonical-calc}. Then, by \Cref{lemma:surjectivity-comparison} this is also a first order differential calculus in the ordinary sense.
	Therefore, there exists a unique $d'\colon\Omega^1_u\to\Omega\in\AModA$ such that $d'd_u=d$ (by \Cref{universal-prop}).    
	On the other hand, let us consider $A$ as a monoid and denote with $\Omega_u$ its universal first order differential calculus. 
	 
	 To proceed, we split the remainder of the argument into even and odd cases.
	 
	 First, we consider $n$ odd.
	 Following \cite[Theorem 2.14]{blute2015derivations} and \cite{blute2011kahler,ONeill:master-thesis},  
	we can define the following coequalizer in $\Bimod{A}{A}$, which in the case $\V=\kvect$ gives back the definition of K\"ahler differentials. 
\[\begin{tikzcd}[ampersand replacement=\&]
	{A\otimes A} \&\& {\Omega_u^1} \& {\Omega_K}
	\arrow["{(1_A\cdot d)\circ \beta^{2m+1}_{A,A}}", shift left=2, from=1-1, to=1-3]
	\arrow["{d\cdot1_A}"', shift right=2, from=1-1, to=1-3]
	\arrow["{d_K}", two heads, from=1-3, to=1-4]
\end{tikzcd}\]	
	We can show that $\Omega_K\cong\Omega=L'A$, proving that our construction provides a generalized version of the K\"ahler differential. 
	
	First, we will find a morphism $\varphi\colon\Omega_K\to\Omega$ thanks to the universal property of the coequalizer $\Omega_K$. 
	In fact, the following diagram commutes by naturality of $\beta$, definition of $d'$, $d'$ being a bimodule map, and $\Omega$ being a central bimodule.   
	 \[\begin{tikzcd}[ampersand replacement=\&]
	 	{A\otimes A} \&\& {A\otimes A} \&\& {A\otimes \Omega_u^1} \& {\Omega_u^1} \\
	 	\& {\Omega\otimes A} \&\& {A\otimes \Omega} \\
	 	{\Omega_u^1\otimes A} \&\& {\Omega_u^1} \&\&\& \Omega
	 	\arrow["{\beta^{2m+1}}", from=1-1, to=1-3]
	 	\arrow["{d\otimes A}"{description}, from=1-1, to=2-2]
	 	\arrow["{d_u\otimes A}"', from=1-1, to=3-1]
	 	\arrow["{A\otimes d_u}", from=1-3, to=1-5]
	 	\arrow["{A\otimes d}"{description}, from=1-3, to=2-4]
	 	\arrow["{l_u}", from=1-5, to=1-6]
	 	\arrow["{A\otimes d'}"{description}, from=1-5, to=2-4]
	 	\arrow["{d'}", from=1-6, to=3-6]
	 	\arrow["{\beta^{2m+1}}"{description}, from=2-2, to=2-4]
	 	\arrow["{d'\otimes A}"{description}, from=2-2, to=3-1]
	 	\arrow["r"{description}, from=2-2, to=3-6]
	 	\arrow["l"{description}, from=2-4, to=3-6]
	 	\arrow["{r_u}"', from=3-1, to=3-3]
	 	\arrow["{d'}"', from=3-3, to=3-6]
	 \end{tikzcd}\]
	Hence, there exists a unique $\varphi\colon \Omega_K\to\Omega\in\AModA$ such that $d'=\varphi \circ d_K$. 
	
	Now, let us find a morphism $\psi\colon \Omega\to\Omega_K$ (which later we will show to be the inverse of $\varphi$).
	We define $\psi\colon L'A=\Omega\to\Omega_K$ as the unique morphism corresponding to $(1_A,d_Kd_u)\colon A\to A\oplus \Omega_K=U'(\Omega_K)$ through the adjunction $L'\dashv U'$. 
	
Using the universal properties of the coequalizer $\Omega_K$ and the adjunction $L'\dashv U'$, we see that $\psi$ and $\varphi$ are inverses of each other. 

 Next, consider the case where $n$ is even, which can be viewed as a dyslectic analogue of the K\"{a}hler differentials. 
We define $\Omega_K$ to be the pushout 
\[\begin{tikzcd}
	{\Omega^1_u} & {\Omega_{K_r}} \\
	{\Omega_{K_l}} & {\Omega_{K}}
	\arrow["{d_{K_r}}", from=1-1, to=1-2]
	\arrow["{d_{K_l}}"', from=1-1, to=2-1]
	\arrow[from=1-2, to=2-2]
	\arrow[from=2-1, to=2-2]
	\arrow["\lrcorner"{anchor=center, pos=0.125, rotate=180}, draw=none, from=2-2, to=1-1]
\end{tikzcd}\]
of the following coequalizers in $\Bimod{A}{A}$.  
\[\begin{tikzcd}
	{A\otimes A} && {\Omega_u^1} & {\Omega_{K_l}} & {A\otimes A} && {\Omega_u^1} & {\Omega_{K_r}}
	\arrow["{(1_A\cdot d)\circ \beta^{2m}_{A,A}}", shift left=2, from=1-1, to=1-3]
	\arrow["{1_A\cdot d}"', shift right=2, from=1-1, to=1-3]
	\arrow["{d_{K_l}}", two heads, from=1-3, to=1-4]
	\arrow["{(d\cdot1_A)\circ \beta^{2m}_{A,A}}", shift left=2, from=1-5, to=1-7]
	\arrow["{d\cdot1_A}"', shift right=2, from=1-5, to=1-7]
	\arrow["{d_{K_r}}", two heads, from=1-7, to=1-8]
\end{tikzcd}\]	
	We can show that $\Omega_K\cong\Omega=L'A$, proving that our construction provides a dyslectic analogue of the K\"ahler differentials. 
	
	First, we will find a morphism $\varphi\colon\Omega_K\to\Omega$ thanks to the universal property of the coequalizer $\Omega_K$. 
	In fact, the following diagrams commute by naturality of $\beta$, definition of $d'$, $d'$ being a bimodule map, and $\Omega$ being a central bimodule.   
\[\begin{tikzcd}
	{A\otimes A} & {A\otimes A} && {\Omega_u^1\otimes A } & {\Omega_u^1} & {A\otimes A} & {A\otimes A} && {A\otimes \Omega_u^1} & {\Omega_u^1} \\
	& {\Omega\otimes A} && {\Omega\otimes A} &&& {A\otimes\Omega} && {A\otimes \Omega} \\
	{\Omega_u^1\otimes A} & {\Omega_u^1} &&& \Omega & {A\otimes \Omega_u^1} & {\Omega_u^1} &&& \Omega
	\arrow["{\beta^{2m}}", from=1-1, to=1-2]
	\arrow["{d\otimes A}"{description}, from=1-1, to=2-2]
	\arrow["{d_u\otimes A}"', from=1-1, to=3-1]
	\arrow["{d_u\otimes A}", from=1-2, to=1-4]
	\arrow["{d\otimes A}"{description}, from=1-2, to=2-4]
	\arrow["{r_u}", from=1-4, to=1-5]
	\arrow["{d'\otimes A}"{description}, from=1-4, to=2-4]
	\arrow["{d'}", from=1-5, to=3-5]
	\arrow["{\beta^{2m}}", from=1-6, to=1-7]
	\arrow["{A\otimes d}"{description}, from=1-6, to=2-7]
	\arrow["{A\otimes d_u}"', from=1-6, to=3-6]
	\arrow["{A\otimes d_u}", from=1-7, to=1-9]
	\arrow["{A\otimes d}"{description}, from=1-7, to=2-9]
	\arrow["{l_u}", from=1-9, to=1-10]
	\arrow["{A\otimes d'}"{description}, from=1-9, to=2-9]
	\arrow["{d'}", from=1-10, to=3-10]
	\arrow["{\beta^{2m}}"{description}, from=2-2, to=2-4]
	\arrow["{d'\otimes A}"{description}, from=2-2, to=3-1]
	\arrow["r"{description}, from=2-2, to=3-5]
	\arrow["r"{description}, from=2-4, to=3-5]
	\arrow["{\beta^{2m}}"{description}, from=2-7, to=2-9]
	\arrow["{A\otimes d'}"{description}, from=2-7, to=3-6]
	\arrow["l"{description}, from=2-7, to=3-10]
	\arrow["l"{description}, from=2-9, to=3-10]
	\arrow["{r_u}"', from=3-1, to=3-2]
	\arrow["{d'}"', from=3-2, to=3-5]
	\arrow["{l_u}"', from=3-6, to=3-7]
	\arrow["{d'}"', from=3-7, to=3-10]
\end{tikzcd}\]
	Hence, by the universal properties of the coequalizer, there exist unique $\varphi_l\colon \Omega_{K_l}\to\Omega\in\AModA$ and $\varphi_r\colon \Omega_{K_r}\to\Omega\in\AModA$ such that $d'=\varphi_l \circ d_{K_l}$ and  $d'=\varphi_r \circ d_{K_r}$. 
	By the universal property of the pushout there exists a unique $\phi\colon\Omega_K\to\Omega$ such that $\phi\circ d_{K_l}=\phi_l$ and $\phi\circ d_{K_r}=\phi_r$.
	
	Now, let us find a morphism $\psi\colon \Omega\to\Omega_K$ (which later we will show to be the inverse of $\varphi$).
	We define $\psi\colon L'A=\Omega\to\Omega_K$ as the unique morphism corresponding to $(1_A,d_Kd_u)\colon A\to A\oplus \Omega_K=U'(\Omega_K)$ through the adjunction $L'\dashv U'$. 
	
Using the universal properties of the pushout and coequalizer, together with the adjunction $L'\dashv U'$, we see that $\psi$ and $\varphi$ are inverses of each other.
\end{eg}

\begin{eg}\label{eg:final-Cinfty-rings}
	Consider again $(-)_0\colon C^\infty\tx{-Ring}\to \Mon(\mathbb R\tx{-Vect})$ as in \Cref{eg:Cinfty-rings-pullback}; we know that this is \suitable\ by \Cref{eg:Cinfty-rings-suitable}. Moreover, $C^\infty\tx{-Ring}$ is locally presentable, being the category of models of a Lawvere theory. Finally, $(-)_0$ is accessible and continuous: we have a commutative triangle
	\begin{center}
		\begin{tikzpicture}[baseline=(current  bounding  box.south), scale=2]
			
			\node (a0) at (0,0.7) {$C^\infty\tx{-Ring}$};
			\node (b0) at (1.4,0.7) {$\Mon(\mathbb R\tx{-Vect})$};
			\node (c0) at (0.7,0) {$\bf{Set}$};
			
			\path[font=\scriptsize]
			
			(a0) edge [->] node [above] {$(-)_0$} (b0)
			(a0) edge [->] node [left] {} (c0)
			(b0) edge [->] node [right,xshift=0.1cm] {} (c0);
		\end{tikzpicture}	
	\end{center} 
	where the two vertical arrows are the forgetful functors to $\bf{Set}$; these are conservative, continuous, and preserve filtered colimits by \cite[Proposition~11.8]{ARV2010algebraic}. Thus so is $(-)_0$.
	
	Therefore we can apply \Cref{thm:canonical-calc}. It follows that the map $U_\infty$ below left
	\begin{center}
		\begin{tikzpicture}[baseline=(current  bounding  box.south), scale=2]
			
			\node (a0) at (0,0.8) {$\Mod(C^\infty\tx{-Ring})$};
			\node (a0') at (0.35,0.6) {$\lrcorner$};
			\node (b0) at (1.6,0.8) {$\Mod(\mathbb R\tx{-Vect})$};
			
			\node (c0) at (0,0) {$C^\infty\tx{-Ring}$};
			\node (d0) at (1.6,0) {$\Mon(\mathbb R\tx{-Vect})$};
			
			\node (e0) at (0.1,0.4) {$\vdash$};
			\node (f0) at (1.7,0.4) {$\vdash$};
			
			\path[font=\scriptsize]
			
			(a0) edge [->] node [above] {} (b0)
			(a0) edge [->] node [left] {$U_\infty$} (c0)
			(b0) edge [->] node [left] {$U$} (d0)
			(c0) edge [->] node [below] {} (d0)
			
			(c0) edge [bend right=40, ->] node [right] {$L_\infty$} (a0)
			(d0) edge [bend right=40, ->] node [right] {$L$} (b0);
		\end{tikzpicture}	
	\end{center} 
	has a left adjoint $L_\infty$ which assigns $A\in C^\infty\tx{-Ring}$ to a first order $\E$-differential calculus $(\Omega^\infty,d^\infty\colon A_0\to \Omega_0)$ over $A$. The stronger Leibniz property here says that the pair
	$$(1_A,d^\infty)\colon A\to U_\infty(A,M)$$
	defines a morphism of $C^\infty$-rings, not simply of $\mathbb R$-algebras (which would give the usual Leibniz rule); this means that for any $\rho\in C^\infty(\mathbb R^n,\mathbb R)$ and $\bar a=(a_1,\cdots,a_n)\in A^n$ the differential $d^\infty$ satisfies
	$$ d^\infty(\rho_A(\bar a))=  \sum_{i=1}^{n} \left(\frac{\partial \rho}{\partial x_i}\right)_{\!\!\! A}\!(\bar a)\cdot d^\infty(a_i); $$
	these are usually called {\em $C^\infty$-derivations}, see for instance~\cite[Section~2]{dubuc19841} and \cite{C-inf-ring-modality}.\\
	Note now that, given $A\in C^\infty\tx{-Ring}$, the forgetful functor out of $\AModA$ factors as
	\begin{center}
		\begin{tikzpicture}[baseline=(current  bounding  box.south), scale=2]
			
			\node (c0) at (0.9,0) {$_{A_0}\!\tx{CMod}_{A_0}$};
			\node (a0) at (0,-0.5) {$\AModA$};
			\node (b0) at (1.8,-0.5) {$_{A_0}\!\Mod_{A_0}$};
			
			\path[font=\scriptsize]
			
			(a0) edge [->] node [below] {} (b0)
			(a0) edge [->] node [above] {$R\ \ \ $} (c0)
			(c0) edge [->] node [above] {$\ \ \ S$} (b0);
		\end{tikzpicture}	
	\end{center}
	where $_{A_0}\!\tx{CMod}_{A_0}$ is the category of central $A_0$-bimodules. It follows from the explicit description given in \Cref{eg:Cinfty-rings-pullback}, that $R$ is actually an equivalence of categories. Moreover $S$ is fully faithful and closed under subobjects. Thus, by \Cref{lemma:surjectivity-comparison}, $(\Omega^\infty_0,d^\infty)$ is also a first order differential calculus over $A_0$. 
\end{eg}

\begin{eg}\label{eg:Banach-pullback}
	Let $\V=\Banb$ be the category of Banach spaces with bounded linear maps, equipped with the projective tensor product. Note that $\Banb$ is additive, has finite limits and colimits, and the tensor product preserves cokernels in each variable.\\
	Within $\Banb$ we can consider the (non-full) subcategory $\Ban$ spanned by the same objects, but with morphisms the linear contractions. This is complete and cocomplete (even locally presentable) and the projective tensor product defines a symmetric monoidal closed structure. However, $\Ban$ is not additive. 
	
	A monoid in $\Banb$ is a Banach space $\mathbb B$ together with a monoid structure on its underlying $\mathbb C$-vector space which in addition satisfies
	$$ \norm{ a\cdot b}\leq C\norm{a}\norm{b} $$
	for any $a,b\in\mathbb B$, and a given $C\in\mathbb R$. A monoid in $\Ban$ is one as above where we can take $C=1$; this is commonly called a {\em Banach algebra}. 
	
	We would like to consider a notion of universal first order differential calculus over a Banach algebra, but since $\Ban$ is not additive we cannot get such notion using the results of \Cref{sec:univ first ord calc}. To solve this, we can apply the results of that section to $\Banb$ and then try to restrict to $\Ban$.
	
	First note that $U_b\colon\Mod(\Banb)\to\Mon(\Banb)$ sends a pair $(A,M)$ to the monoid $U(A,B)=A\oplus M$ whose norm can be chosen to be $$\norm{(a,m)}_{U(A,M)}=\norm{a}_A+\norm{m}_M$$ (there are other equivalent norms that make $A\oplus M$ a biproduct in $\Banb$, we are just fixing one). 
	Moreover, observe that the categories $\AModA$, for $A\in\Mon(\Ban)$, and $\Mod(\Ban)$ are well-defined (since these notions do not require additivity), and are subcategories of the corresponding constructions in $\Banb$. 
	The only part that a priori is not well-defined is $U_1\colon \Mod(\Ban)\to\Mon(\Ban)$. But we can take this to be the codomain restriction of the composite
	$$ \Mod(\Ban)\mono \Mod(\Banb)\xrightarrow{\ U\ }\Mon(\Banb) $$ 
	to the subcategory $\Mon(\Ban)\mono\Mon(\Banb)$; this restriction exists since if $(A,M)\in\Mod(\Ban)$ then $U(A,M)=A\oplus M$, with the norm given above, is a Banach algebra, and each morphism  $(A,M)\to (B,N)$ in $\Mod(\Ban)$ induces a contraction $U(A,M)\to U(B,N)$. Therefore we have a commutative square as below.
	\begin{center}
		\begin{tikzpicture}[baseline=(current  bounding  box.south), scale=2]
			
			\node (a0) at (0,0.8) {$\Mod(\Ban)$};
			\node (a0') at (0.3,0.6) {$\lrcorner$};
			\node (b0) at (1.5,0.8) {$\Mod(\Banb)$};
			\node (c0) at (0,0) {$\Mon(\Ban)$};
			\node (d0) at (1.5,0) {$\Mon(\Banb)$};
			
			\path[font=\scriptsize]
			
			(a0) edge [>->] node [above] {} (b0)
			(a0) edge [->] node [left] {$U_1$} (c0)
			(b0) edge [->] node [right] {$U_b$} (d0)
			(c0) edge [>->] node [below] {} (d0);
		\end{tikzpicture}	
	\end{center} 
	It is easy to see that this is a pullback; indeed a pair $(A,M)\in\Mod(\Banb)$ is such that $U(A,M)\in\Mon(\Ban)$ if and only if $A$ is a Banach algebra, and the scalar multiplication on $M$ is a contraction, if and only if $(A,M)\in\Mod(\Ban)$. Therefore $\Mod(\Ban)\cong\Mod(\Mon(\Ban))$. 
	
	However, $U_1$ doesn't seem to preserve products, so it cannot have a left adjoint. Consider then $L_b\dashv U_b$ in the bounded setting. We can apply it to $A\in\Mon(\Ban)$, so that $LA=(A,\Omega_u)$ with unit $(1,d)\colon A\to U(A,\Omega_u)$ in $\Mon(\Banb)$. It is easy to see that $(A,\Omega_u)$ lies in $\Mod(\Ban)$, since by definition $\Omega_u$ is the kernel of $m\colon A\otimes A\to A$ which is a contraction. However, the morphism $(1,d)\colon A\to U(A,\Omega_u)$ is not a generally contraction, and hence does not lie in $\Mon(\Ban)$. An easy calculations shows that $(1,d)\circ U(-)$ induces a bijection
	$$ \Mod(\Ban)((A,\Omega_u^1),(B,N))\cong \Mon(\Banb)(A,U(B,N))_{(1,\norm{d})} $$
	where on the right we consider those $h\colon A\to U(B,N)$ such that $\norm{h_1}\leq 1$ and $\norm{h_2}\leq\norm{d}$. The inverse is given as in the proof of Proposition~\ref{universal-calc-left-adj}. 
\end{eg}

\subsection{A Family of Examples: Algebras over Operads}\label{subsec:operads}

Examples \ref{ComMon-canonical-calc} and \ref{eg:final-Cinfty-rings} fit in a more general framework, i.e. the one of algebras over an \emph{operad}. 
We start by recalling some definitions and notions regarding operads (for a more complete background we refer to \cite{KvrMay:operads-alg-mod-motives,markl:operads,Leinster_2004}).
Then, we provide a general strategy to find potential $\V$-geometrical functors with \Cref{cor:operad-examples} and \ref{cor:operad-ex-diff-basis}. 

An operad $X$ consists of: for any natural number a set $X(n)$ (which is thought as the set of \emph{operations of arity $n$}); an element $1\in X(1)$ (called the \emph{unit}); for each $n,m_1,\ldots,m_n\in\mathbb{N}$, a function (called \emph{substitution}) 
$$X(n)\times X(m_1)\times\ldots\times X(m_n)\longrightarrow X(\displaystyle\Sigma_{i=1}^nm_i).$$
These data are required to satisfy associativity and unit axioms. 
Moreover, one can also consider \emph{braided/symmetric} operads, where there is an action of the braided/symmetric group on each $X(n)$ (compatible with substitution and unit). 

\begin{eg}\label{eg:operads}
	\begin{enumerate}
		\item[]
		\item Let $\V$ be a monoidal category and $A\in\V$ an object in it. 
		We can define a canonical operad $\Op(A)$ setting $\Op(A)(n):=\V(A^{\otimes n},A)$, unit the identity $1_A\in\V(A,A)$ and substitution defined using functoriality of the tensor product in $\otimes$. 
		
		\item There is a \emph{terminal} operad $\ass$ where $\ass(n)$ consists of a unique object for any $n\in\mathbb{N}$. 
		Unit and composition are defined in the unique possible way.
		
		\item Similarly to $\ass$, there is also a \emph{terminal braided} operad $\Brd$, defined as: $\Brd(n):=B_n$ is the braid group of degree $n$; the unit is the only element of $B_1$; substitution is given by, for $\xi\in B_m$, $\eta_i\in B_{n_i}$ for $i=1,\ldots,m$, 
		$$n_{\xi1}+\ldots+n_{\xi m}\xrightarrow{<\xi>}n_{1}+\ldots+n_{m}\xrightarrow{\eta_1+\ldots\eta_m}n_{1}+\ldots+n_{m}.$$

		\item  We can define the \emph{smooth operad} $\smoothop$, where $\smoothop(n):=C^\infty(\mathbb{R}^n,\mathbb{R})$ is defined as the set of smooth maps $\mathbb{R}^n\to\mathbb{R}$, the unit is given by the identity $1_\mathbb{R}$ and substitution as composition of smooth maps. 
		Clearly, each $\smoothop(n)$ admit an action of the symmetric group compatible with composition, hence $\smoothop$ can be seen also as a symmetric operad. 

	\end{enumerate}
\end{eg}
Morphisms of operads $f\colon X\to Y$ consists of, for any $n$, functions $f_n\colon X(n)\to Y(n)$ compatible with the unit and functions of the operads. 

Given an operad $X$ and a monoidal category $\V$, one can define the category $\V^X$ of $X$-algebras in $\V$. 
A concise way of defining $X$-algebras in $\V$ is as objects $A\in\V$ together with an \emph{operad} morphism $a\colon X\to\Op(A)$.
Morphisms of $X$-algebras $\varphi\colon(A,a)\to (B,b)$ in $\V$ are given by maps $\varphi\colon A\to B$ in $\V$ respecting the actions, i.e. making the following square commutative for any $n$. 
\[\begin{tikzcd}[ampersand replacement=\&]
	{X(n)} \& {\V(A^{\otimes n},A)} \\
	{\V(B^{\otimes n},B)} \& {\V(A^{\otimes n},B)}
	\arrow["{a_n}", from=1-1, to=1-2]
	\arrow["{b_n}"', from=1-1, to=2-1]
	\arrow["{\varphi\circ-}", from=1-2, to=2-2]
	\arrow["{-\circ\varphi^{\otimes n}}"', from=2-1, to=2-2]
\end{tikzcd}\]

\begin{eg}\label{eg:algebras-operads}
\begin{enumerate}
\item[]	
\item Let $\V$ be a monoidal category. The category $\V^{\ass}$ of algebras of $\ass$ in $\V$ is isomorphic to $\Mon(\V)$. 
	\item Similarly, for a braided monoidal category $\V$, $\V^{\Brd}\cong\CMon(\V)$.
	\item Finally, $\Set^{\smoothop}\cong C^\infty\tx{-Ring}$. 
\end{enumerate}
\end{eg}

Clearly, there is a forgetful functor $W^X\colon\V^X\to\V$, sending an algebra $(A,a)$ to its underlying object $A$. 

\begin{prop}[{\cite[Section 3]{LACK199965}}]\label{prop:monadic-alg-operad}
	Let $\V$ a monoidal category which is cocomplete and such that, for some regular cardinal $\alpha$, each $A\otimes -$ and $-\otimes A$ preserve $\alpha$-filtered colimits.
	Then, the forgetful functor $W\colon \V^X\to\V$ is $\alpha$-ary monadic (that is, monadic and preserve $\alpha$-filtered colimits). 
	Therefore, if $\V$ is locally presentable, also $\V^X$ will be such. 
\end{prop}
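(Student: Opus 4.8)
The plan is to realise $\V^X$ as the Eilenberg--Moore category of a monad on $\V$, from which both assertions follow formally.

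\emph{Step 1 (the free-algebra monad).} First I would produce a monad $T_X$ on $\V$ whose algebras are precisely the $X$-algebras, with $W$ corresponding to the forgetful functor $\V^{T_X}\to\V$. On objects $T_X A$ is the free $X$-algebra on $A$: morally it is built from the copowers $X(n)\cdot A^{\otimes n}$ over all arities $n$ (replacing $X(n)\cdot A^{\otimes n}$ by its quotient under the action of $\Sigma_n$, or $B_n$, in the symmetric or braided case), all of which exist since $\V$ is cocomplete, with unit induced by $1\in X(1)$ and multiplication induced by operad substitution together with the associativity and unit constraints of $\otimes$. The delicate point is that $\otimes$ is assumed to preserve only $\alpha$-filtered colimits, not all coproducts, so some care is needed to produce the monad multiplication and to check that $T_X$ is $\alpha$-accessible; this is exactly what is carried out in \cite[Section~3]{LACK199965}. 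The underlying reason it works: each $A\mapsto A^{\otimes n}$ and each copower $X(n)\cdot(-)$ is $\alpha$-accessible, and the coproduct over arities is an $\alpha$-filtered colimit of its finite subcoproducts, with which $\otimes$ commutes; hence the endofunctor underlying $T_X$, and then $T_X$ itself, is $\alpha$-accessible.

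\emph{Step 2 ($\alpha$-ary monadicity).} Granting Step~1, $\V^X\simeq\V^{T_X}$ over $\V$, so $W$ is monadic. Since the forgetful functor out of an Eilenberg--Moore category creates any colimit preserved by both $T_X$ and $T_X^2$, and $T_X$ --- hence $T_X^2$, preservation of $\alpha$-filtered colimits being stable under composition --- preserves $\alpha$-filtered colimits by $\alpha$-accessibility, the functor $W$ creates, in particular preserves, $\alpha$-filtered colimits. Thus $W$ is $\alpha$-ary monadic. (Alternatively, one can avoid naming $T_X$ and verify Beck's monadicity theorem directly: $W$ has a left adjoint by the transfinite free-algebra construction, and it creates coequalizers of $W$-split pairs by the usual description of colimits of operad algebras.)

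\emph{Step 3 (local presentability).} If moreover $\V$ is locally presentable it is locally $\beta$-presentable for some regular cardinal $\beta\geq\alpha$, and then $T_X$ is $\beta$-accessible. By the standard fact that the Eilenberg--Moore category of an accessible monad on a locally presentable category is again locally presentable --- $\V^{T_X}$ is cocomplete, and the free algebras $T_X G$ on a strong generator of $\beta$-presentable objects $G\in\V$ form a strong generator of $\beta$-presentable objects of $\V^{T_X}$, cf.\ \cite{AR94:libro} --- we conclude that $\V^X$ is locally presentable. The step I expect to be the main obstacle is Step~1: constructing $T_X$ and establishing its $\alpha$-accessibility without cocontinuity of $\otimes$, i.e.\ controlling the interaction of the tensor product with the infinite coproduct over arities; once this is in place, Steps~2 and~3 are purely formal consequences of the theory of Eilenberg--Moore categories and accessible monads.
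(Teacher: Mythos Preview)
The paper does not give its own proof of this proposition: it is stated with a citation to \cite[Section~3]{LACK199965} and no argument follows. Your outline is essentially the argument of that reference, and it is correct in structure; the one phrasing I would adjust is in Step~1, where you say the coproduct over arities is an $\alpha$-filtered colimit of finite subcoproducts --- what you actually need (and what suffices) is simply that coproducts commute with all colimits, so that $T_X$ preserves $\alpha$-filtered colimits because each $A\mapsto X(n)\cdot A^{\otimes n}$ does. The genuine obstacle you correctly flag is that without cocontinuity of $\otimes$ one cannot naively expand $(T_XA)^{\otimes m}$ to define the monad multiplication, and this is precisely where Lack's transfinite construction enters.
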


\begin{rmk}\label{rmk:mon-discr-isofib}
	\Cref{prop:monadic-alg-operad} proves also that, in those cases, for any operad $X$, $W^X\colon \V^X\to\V$ is an isofibration (since it is monadic).
	Actually, in the case of $X=\ass$ we can see that $W^{\ass}\colon\VMon\cong\V^{\ass}\to\V$ is a \emph{discrete isofibration}\footnote{Discrete isofibrations are isofibrations where the lifting of isomorphism is unique.} (for any monoidal category $\V$).
	Indeed, let us consider a monoid $(A,m,i)\in\VMon$ and an isomorphism $f\colon A\cong B\in\V$. Then, there exists a unique morphism $B\otimes B\to B$ making the following diagram commutative. 
	\[\begin{tikzcd}[ampersand replacement=\&]
		{A\otimes A} \& {B\otimes B} \\
		A \& B
		\arrow["{f\otimes f}", from=1-1, to=1-2]
		\arrow["m"', from=1-1, to=2-1]
		\arrow["{\exists!\,n}", dashed, from=1-2, to=2-2]
		\arrow["f"', from=2-1, to=2-2]
	\end{tikzcd}\]
	This morphism $B\otimes B\to B$, together with $j:=fi\colon I\to A\to B$, provides a monoid structure $(B,n,j)\in\VMon$. 
\end{rmk}

From the concise definition of algebra for an operad, it is straightforward to see that any morphism of operads $f\colon X\to Y$ induces a functor between the categories of algebras making the following triangle commutative. 
\begin{equation}\label{eq:operad-map-induces-alg-map}
	\begin{tikzcd}[ampersand replacement=\&]
		{\V^Y} \&\& {\V^X} \\
		\& \V
		\arrow["{\V^f}", from=1-1, to=1-3]
		\arrow["{W^Y}"', from=1-1, to=2-2]
		\arrow["{W^X}", from=1-3, to=2-2]
	\end{tikzcd}
\end{equation} 
	Explicitly, for any $(A,a\colon X\to\Op(A))\in\V^Y$, $\V^f(A,a)$ is defined as $A$ together with the operad morphism
$$X\xrightarrow{f}Y\xrightarrow{a}\Op(A).$$

Now, we can give a general technique to find candidates for $\V$-geometrical functors in such setting. 
The idea is that if we start with a morphisms of operads $f\colon \ass\to X$, then we get a good candidate for a \verysuitable\ functor. 
Before diving into this problem, we first recall some properties of isofibrations. 

\begin{lemma}\label{lemma:isofibrations}
	Let $F\colon\A\to\B$ and $G\colon\B\to\C$ be functors. 
	If $GF$ is an isofibration and $G$ a discrete isofibration, then $F$ is an isofibration as well. 
	
	If, moreover, also $GF$ is a discrete isofibration, then $F$ is discrete as well.
\end{lemma}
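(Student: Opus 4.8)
The plan is to unwind the definitions directly. Recall that a functor $F\colon\A\to\B$ is an isofibration when, for every $a\in\A$ and every isomorphism $g\colon Fa\to b$ in $\B$, there is an isomorphism out of $a$ in $\A$ mapped by $F$ to $g$; it is a \emph{discrete} isofibration when such a lift is moreover unique. With these definitions in hand the proof is short.

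For the first assertion, I would fix $a\in\A$ and an isomorphism $g\colon Fa\to b$ in $\B$. Applying $G$ yields an isomorphism $Gg\colon GFa\to Gb$ in $\C$, which, since $GF$ is an isofibration, lifts to an isomorphism $h\colon a\to a'$ in $\A$ with $GF(h)=Gg$. The key step is then to check that this $h$ is already a lift of $g$ along $F$. Applying $F$ to $h$ gives an isomorphism $Fh\colon Fa\to Fa'$ with $G(Fh)=G(g)$; comparing codomains yields $GFa'=Gb$, so $g\circ(Fh)^{-1}$ is a well-defined isomorphism $Fa'\to b$ in $\B$ satisfying $G\big(g\circ(Fh)^{-1}\big)=\mathrm{id}_{GFa'}$. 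Since $\mathrm{id}_{Fa'}$ is another isomorphism out of $Fa'$ with the same image under $G$, and $G$ is a \emph{discrete} isofibration, the two must coincide; hence $b=Fa'$ and $g=Fh$. Thus $F$ is an isofibration, the lift being obtained directly from one through $GF$.

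For the second assertion, assume in addition that $GF$ is a discrete isofibration, and suppose $h_1\colon a\to a_1$ and $h_2\colon a\to a_2$ both lift an isomorphism $g\colon Fa\to b$ along $F$. Applying $G$, both $h_1$ and $h_2$ are isomorphisms out of $a$ mapped by $GF$ to $Gg\colon GFa\to Gb$, so discreteness of $GF$ forces $h_1=h_2$. Hence $F$ is a discrete isofibration.

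The argument is elementary; the one point I would be most careful about is the transport in the first part, where one must verify that the lift $h$ furnished by $GF$ actually maps to $g$, rather than merely to some isomorphism $Fa\to Fa'$. This is precisely where the hypothesis that $G$ be a \emph{discrete} isofibration — not merely an isofibration — is used, so I would make sure the write-up isolates that use clearly.
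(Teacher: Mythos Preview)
Your proof is correct and follows essentially the same route as the paper's. The only cosmetic difference is that, where you compose with $(Fh)^{-1}$ to reduce to the uniqueness of lifts of $\mathrm{id}_{GFa'}$, the paper observes more directly that $Fh$ and $g$ are both lifts of $Gg$ out of $Fa$ and invokes discreteness of $G$ there; the second part is identical.
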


\begin{proof}
	Let us consider $X\in\A$, $Y\in\B$ and an isomorphism $\phi\colon FX\cong Y$.
	Then, also $G\phi\colon GFX\cong GY$ is an isomorphism. 
	Using the isofibration $GF$, we know that there exist $X'\in\A$ and $\psi\colon X'\cong X$ such that $GF\psi= G\phi$. 
	Since $G$ is a discrete isofibration, and both $F\psi$ and $\phi$ are lifting of the isomorphism $G\phi$, then they must be equal $F\psi=\phi$ (and in particular, $FX'=Y$). 
	Therefore, $\psi$ is a lifting of $\phi$. 
	
	Clearly, if $GF$ is discrete, then the lifting is unique and so $F$ is discrete as well. 
\end{proof}

We are now ready to move to the main result. 

\begin{lemma}\label{lemma:operad-alg-morph-almost-geom}
	Let $\V$ satisfy the hypotheses of \Cref{prop:monadic-alg-operad}. 
	Given a morphisms of operads $f\colon \ass\to X$, the induced functor $\V^f\colon \V^X\to \V^{\ass}\cong\Mon(\V)$ between the categories of algebras as in \eqref{eq:operad-map-induces-alg-map} is faithful, an isofibration, accessible, and continuous. 
	
\end{lemma}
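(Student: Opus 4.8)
The plan is to verify each of the four properties in turn, exploiting the commutative triangle \eqref{eq:operad-map-induces-alg-map} together with the monadicity of the forgetful functors $W^X$ and $W^{\ass}$ provided by \Cref{prop:monadic-alg-operad}.

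\textbf{Faithfulness and continuity.} These are the easiest. For faithfulness, note that by \eqref{eq:operad-map-induces-alg-map} we have $W^{\ass}\circ\V^f=W^X$, and $W^X$ is faithful (being monadic, hence conservative and faithful); since a functor $F$ with $GF$ faithful is itself faithful, $\V^f$ is faithful. For continuity, again $W^X=W^{\ass}\circ\V^f$ and both $W^X$ and $W^{\ass}$ are monadic, hence create (in particular preserve) all limits; thus if $D$ is a diagram in $\V^X$ with limit $L$, then $W^X L=W^{\ass}\V^f L$ is the limit of $W^X D$, which equals the limit of $W^{\ass}\V^f D$, and since $W^{\ass}$ creates limits, $\V^f L$ is the limit of $\V^f D$. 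Alternatively, and more cleanly: $\V^X$ and $\Mon(\V)$ are locally presentable and $\V^f$ has a left adjoint by the adjoint functor theorem once we know it is accessible (see below), so continuity would also follow; but the direct argument via $W^X$ is shorter.

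\textbf{Accessibility.} By \Cref{prop:monadic-alg-operad}, $W^X$ and $W^{\ass}$ are $\alpha$-ary monadic, so both preserve $\alpha$-filtered colimits. Given an $\alpha$-filtered diagram $D$ in $\V^X$ with colimit $C$, we have $W^X C\cong\colim W^X D=\colim W^{\ass}\V^f D$, and since $W^{\ass}$ creates $\alpha$-filtered colimits, the comparison map $\colim\V^f D\to\V^f C$ in $\Mon(\V)$ is sent by $W^{\ass}$ to an isomorphism, hence is an isomorphism (as $W^{\ass}$ is conservative). Therefore $\V^f$ preserves $\alpha$-filtered colimits and is accessible.

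\textbf{The isofibration property.} This is the step I expect to require the most care, and it is exactly where \Cref{lemma:isofibrations} and \Cref{rmk:mon-discr-isofib} are designed to be used. We have $W^{\ass}\circ\V^f=W^X$, where $W^X\colon\V^X\to\V$ is an isofibration by \Cref{rmk:mon-discr-isofib} (it is monadic under the hypotheses of \Cref{prop:monadic-alg-operad}), and $W^{\ass}\colon\Mon(\V)\to\V$ is a \emph{discrete} isofibration, again by \Cref{rmk:mon-discr-isofib}. Hence \Cref{lemma:isofibrations}, applied with $F=\V^f$, $G=W^{\ass}$, $GF=W^X$, yields that $\V^f$ is an isofibration. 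This completes the proof. The only subtlety to watch is that \Cref{lemma:isofibrations} requires $G$ to be a discrete isofibration (not merely an isofibration), which is precisely the content of the second part of \Cref{rmk:mon-discr-isofib}; this is why the remark bothers to record that $W^{\ass}$ is discrete. No additional hypotheses beyond those of \Cref{prop:monadic-alg-operad} are needed.
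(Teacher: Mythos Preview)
Your proof is correct and follows essentially the same approach as the paper: both argue via the commutative triangle $W^{\ass}\circ\V^f=W^X$, using monadicity of $W^X$ and $W^{\ass}$ for faithfulness, continuity, and accessibility, and invoking \Cref{lemma:isofibrations} together with the discreteness of $W^{\ass}$ from \Cref{rmk:mon-discr-isofib} for the isofibration property. The only difference is that where you spell out the continuity and accessibility arguments directly (via creation of limits and $\alpha$-filtered colimits), the paper packages these into a single citation of \cite[Proposition~11.8]{ARV2010algebraic}.
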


\begin{proof}
		First, since both $W^X$ and $W^{\ass}$ are faithful (as they are monadic functors by \Cref{prop:monadic-alg-operad}), then $\V^f$ is faithful as well.
		
		Monadic functors are also isofibrations, hence $W^X$ is such.
		Moreover, $W^{\ass}\colon\Mon(\V)\to \V$ is a discrete isofibration (see \Cref{rmk:mon-discr-isofib}). 
		We conclude that $\V^f$ is an isofibration using \Cref{lemma:isofibrations}
		
		Since $W^X$ and $W^{\ass}$ are conservative, accessible, and continuous (see \cite[Section 3]{LACK199965}), then also $\V^f$ is accessible and continuous by \cite[Proposition~11.8]{ARV2010algebraic}. 
\end{proof}

\begin{cor}\label{cor:operad-examples}
	Let $\V$ as in \Cref{thm:canonical-calc}, $X$ be an operad, and $f\colon \ass\to X$ be an operad morphism. 
	Then, the induced functor $\V^f\colon \V^X\to \Mon(\V)$ is \verysuitable\ if and only if it is \suitable.
\end{cor}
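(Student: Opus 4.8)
The plan is to recognize this as a direct application of \Cref{inducing-F'1} combined with \Cref{lemma:operad-alg-morph-almost-geom}. The statement to prove is that $\V^f\colon\V^X\to\Mon(\V)$ is \verysuitable\ if and only if it is \suitable, so only one implication requires work: the forward direction is trivial since \verysuitable\ is by definition stronger than \suitable\ (see \Cref{def:geometrical}). For the nontrivial direction, suppose $\V^f$ is \suitable.

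First I would invoke \Cref{lemma:operad-alg-morph-almost-geom}: since $\V$ satisfies the hypotheses of \Cref{prop:monadic-alg-operad} (which follow from the hypotheses of \Cref{thm:canonical-calc}, namely $\V$ locally presentable with tensor product cocontinuous in each variable), the functor $\V^f\colon\V^X\to\Mon(\V)$ is faithful, an isofibration, accessible, and continuous. In particular $\V^X$ is locally presentable, again by \Cref{prop:monadic-alg-operad}. So I would set $\E:=\V^X$ and $(-)_0:=\V^f$, and observe that all the hypotheses of \Cref{inducing-F'1} are met: $\V$ is locally presentable additive with $\otimes$ cocontinuous in each variable, $\E$ is locally presentable, and $(-)_0$ is accessible and continuous.

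The key step is then: \Cref{inducing-F'1} tells us that $\Mod(\E)$ is locally presentable, $U'\colon\Mod(\E)\to\E$ has a left adjoint, and — crucially for the definition of \verysuitable — for every $E\in\E$ the forgetful functor $V_E\colon\EEModE\to\V$ has a left adjoint $L_E$. This last assertion is precisely the extra condition (beyond being \suitable) that \Cref{def:geometrical} requires for $(-)_0$ to be \verysuitable. Therefore, assuming $\V^f$ is \suitable, it is automatically \verysuitable, since the existence of the left adjoints $L_E$ is supplied for free by \Cref{inducing-F'1} under the standing presentability and accessibility hypotheses. This completes the equivalence.

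I do not anticipate a genuine obstacle here; the proof is essentially bookkeeping, assembling \Cref{lemma:operad-alg-morph-almost-geom} to verify the input hypotheses and then quoting \Cref{inducing-F'1} to upgrade \suitable\ to \verysuitable. The only point requiring a moment's care is confirming that the hypotheses of \Cref{prop:monadic-alg-operad} are indeed implied by those of \Cref{thm:canonical-calc} (local presentability gives cocompleteness, and cocontinuity of $X\otimes-$ and $-\otimes X$ gives preservation of $\alpha$-filtered colimits for a suitable $\alpha$), so that $\V^X$ is locally presentable and the accessibility/continuity of $\V^f$ from \Cref{lemma:operad-alg-morph-almost-geom} can be fed into \Cref{inducing-F'1}.
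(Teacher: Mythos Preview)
Your proposal is correct and follows essentially the same route as the paper. The paper's proof cites \Cref{thm:canonical-calc} (together with \Cref{prop:monadic-alg-operad} and \Cref{lemma:operad-alg-morph-almost-geom}) rather than \Cref{inducing-F'1} directly, but since the relevant conclusion of \Cref{thm:canonical-calc} (that \suitable\ upgrades to \verysuitable) is itself obtained from \Cref{inducing-F'1}, this is only a cosmetic difference in what is cited.
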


\begin{proof}
	This is a direct consequence of \Cref{prop:monadic-alg-operad}, \Cref{lemma:operad-alg-morph-almost-geom} and \Cref{thm:canonical-calc}.
\end{proof}

	\Cref{ComMon-canonical-calc} is an instance of \Cref{cor:operad-examples} considering the operad morphism $i\colon\ass\to\Brd$ which picks the unit in each $B_n$. 
	Indeed, in this case $\V^i\colon\CMon(\V)\to\Mon(\V)$ is the inclusion of commutative monoids into monoids.  
On the other hand, in order to capture \Cref{eg:final-Cinfty-rings} we need to add something, since $C^\infty\tx{-Ring}$ are algebras of an operad $\smoothop$ evaluated in the monoidal category $\Set$, whereas $\Mon(\Rvect)$ are the algebras of $\ass$ in $\Rvect$.
 
First, let us notice that, given any operad $X$, any (lax) monoidal functor $\Phi\colon\V_1\to\V_2$ induces a functor $\Phi^X\colon\V_1^X\to\V_2^X$ between the categories of algebras evaluated in the respective monoidal categories, making the following square commutative.
\[\begin{tikzcd}[ampersand replacement=\&]
	{\V_1^X} \& {\V_2^X} \\
	{\V_1} \& {\V_2}
	\arrow["{\Phi^X}", from=1-1, to=1-2]
	\arrow["{W^X_1}"', from=1-1, to=2-1]
	\arrow["{W^X_2}", from=1-2, to=2-2]
	\arrow["\Phi"', from=2-1, to=2-2]
\end{tikzcd}\]
Precisely, for any algebra $(A,a\colon X\to\Op(A))\in\V_1^X$, we define an algebra structure on $\Phi A\in\V_2$ through the composite
\begin{equation}\label{eq:phi^X}
\begin{tikzcd}[ampersand replacement=\&]
	X \& {\V_1(A^{\otimes n},A)} \& {\V_2(\,\Phi(A^{\otimes n}),\Phi(A)\,)} \& {\V_2(\,\Phi(A)^{\otimes n},\Phi(A)\,),}
	\arrow["a", from=1-1, to=1-2]
	\arrow["\Phi", from=1-2, to=1-3]
	\arrow["{-\circ\Phi^2}", from=1-3, to=1-4]
\end{tikzcd}
\end{equation}
where $\Phi^2\colon\Phi(A)^{\otimes n}\to\Phi(A^{\otimes n})$ is the unique morphism generated by the monoidal constraint of $\Phi$. \footnote{Note that this is unique by the coherence theorem for monoidal functors \cite{JoyalStreet:brad-mon-cat}.}

\begin{lemma}\label{lem:change-of-basis-faith-cons-acc-cont}
	Let $\V$ satisfy the hypotheses of \Cref{prop:monadic-alg-operad}. 
	If $\Phi$ is conservative, accessible, and continuous, then also $\Phi^X$ is such.
\end{lemma}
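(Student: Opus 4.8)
The plan is to exploit the commuting square $W^X_2\circ\Phi^X=\Phi\circ W^X_1$ recorded just before the statement (with $\Phi^X$ the functor built from \eqref{eq:phi^X}), together with the structural facts about the forgetful functors supplied by \Cref{prop:monadic-alg-operad}. Since $\V_1,\V_2$ both satisfy the hypotheses there, $W^X_1\colon\V_1^X\to\V_1$ and $W^X_2\colon\V_2^X\to\V_2$ are $\alpha$-ary monadic (for a common regular $\alpha$, enlarging if necessary). In particular each $W^X_i$ is conservative, is a right adjoint (hence continuous), preserves $\alpha$-filtered colimits, and \emph{creates} all limits as well as all $\alpha$-filtered colimits. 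The single principle I would use three times is: if $G$ is conservative and creates a class $\mathcal{C}$ of (co)limits, and $G\circ F$ preserves the members of $\mathcal{C}$, then $F$ preserves them. So in each case it will suffice to note that $W^X_2\circ\Phi^X=\Phi\circ W^X_1$ preserves the relevant (co)limits, which follows from the corresponding property of $\Phi$ and $W^X_1$.

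Concretely, I would argue as follows. \emph{Conservativity:} if $\Phi^X f$ is invertible then so is $W^X_2\Phi^X f=\Phi W^X_1 f$; conservativity of $\Phi$ gives that $W^X_1 f$ is invertible, and conservativity of $W^X_1$ then gives that $f$ is. \emph{Continuity:} given a diagram $D$ in $\V_1^X$ admitting a limit, $W^X_1$ (a right adjoint) preserves that limit and $\Phi$ preserves limits, so $W^X_2\Phi^X=\Phi W^X_1$ preserves the limit of $D$; since $W^X_2$ is conservative and creates limits, $\Phi^X$ preserves the limit of $D$, so $\Phi^X$ is continuous. \emph{Accessibility:} fix a regular cardinal $\mu\geq\alpha$ for which $\Phi$ preserves $\mu$-filtered colimits — such a $\mu$ exists because $\Phi$ is accessible and preservation of $\lambda$-filtered colimits entails preservation of $\mu$-filtered colimits for every regular $\mu\geq\lambda$. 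Then $W^X_1,W^X_2$ both preserve and create $\mu$-filtered colimits, and $\Phi W^X_1=W^X_2\Phi^X$ preserves $\mu$-filtered colimits, so by the same reasoning $\Phi^X$ preserves $\mu$-filtered colimits, i.e.\ $\Phi^X$ is accessible.

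I do not expect a genuine obstacle here; the work is entirely bookkeeping. The two points requiring care are (i) isolating exactly which preservation/creation properties of an $\alpha$-ary monadic functor are being used — monadic $\Rightarrow$ right adjoint $\Rightarrow$ continuous and conservative; monadic $\Rightarrow$ creates limits; $\alpha$-ary $\Rightarrow$ preserves and creates $\alpha$-filtered colimits — and (ii) the cardinal step aligning the accessibility rank of $\Phi$ with the arity $\alpha$ of the monads on $\V_1^X$ and $\V_2^X$. It is also worth stating at the outset that $\Phi^X$ is well defined, i.e.\ that the composite \eqref{eq:phi^X} really is an $X$-algebra structure on $\Phi A$ and that the square $W^X_2\Phi^X=\Phi W^X_1$ commutes, but both of these are already established in the text preceding the lemma.
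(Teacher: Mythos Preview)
Your proof is correct and takes essentially the same approach as the paper: exploit the commuting square $W^X_2\circ\Phi^X=\Phi\circ W^X_1$ together with the fact that the forgetful functors $W^X_i$ are conservative, continuous, and accessible. The paper simply cites \cite[Section~3]{LACK199965} for these properties of the $W^X_i$ and \cite[Proposition~11.8]{ARV2010algebraic} for the transfer along the square, whereas you have unpacked that transfer explicitly (including the cardinal alignment for accessibility).
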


\begin{proof}
	This follows again by \cite[Section 3]{LACK199965} (which guarantees that $W^X_1$ and $W^X_2$ are conservative, accessible and continuous) and \cite[Proposition~11.8]{ARV2010algebraic}. 
\end{proof}

\begin{cor}\label{cor:operad-ex-diff-basis}
		Let $\V_1$ and $\V_2$ as in \Cref{thm:canonical-calc}, $X$ be an operad, $f\colon \ass\to X$ be an operad morphism, and $\Phi\colon\V_1\to\V_2$ be a lax monoidal functor which is conservative, accessible, and a discrete isofibration. 
		If there exists a diagonal filler $\Phi^f$ of the square below, then it is a faithful isofibration, conservative, accessible, and continuous. 
		\[\begin{tikzcd}[ampersand replacement=\&]
			{\V_1^X} \& {\Mon(\V_1)} \\
			{\V_2^X} \& {\Mon(\V_2)}
			\arrow["{\V_1^f}", from=1-1, to=1-2]
			\arrow["{\Phi^X}"', from=1-1, to=2-1]
			\arrow["{\Phi^{\ass}}", from=1-2, to=2-2]
			\arrow["{\Phi^f}"{description}, dashed, from=2-1, to=1-2]
			\arrow["{\V_2^f}"', from=2-1, to=2-2]
		\end{tikzcd}\]
		Furthermore, $\Phi^f$ is \verysuitable\ if and only if it is \suitable. 
\end{cor}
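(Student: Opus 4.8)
## Proof proposal for \Cref{cor:operad-ex-diff-basis}

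The plan is to verify each asserted property of $\Phi^f$ in turn, reducing everything to the properties of the two functors $\Phi^X\colon \V_1^X\to\V_2^X$ and $\V_1^f\colon \V_1^X\to\Mon(\V_1)$ already established in \Cref{lem:change-of-basis-faith-cons-acc-cont} and \Cref{lemma:operad-alg-morph-almost-geom}, together with the monadicity/isofibration facts from \Cref{prop:monadic-alg-operad} and \Cref{rmk:mon-discr-isofib}. First I would observe that, by commutativity of the square together with the diagonal filler, we have $\Phi^f\circ\Phi^X = \V_1^f$ and $\V_2^f\circ\Phi^f = \Phi^{\ass}$. The first factorization immediately gives faithfulness and conservativity of $\Phi^f$: since $\V_1^f = \Phi^f\circ\Phi^X$ is faithful (by \Cref{lemma:operad-alg-morph-almost-geom}), $\Phi^f$ is faithful; and since $\Phi^X$ is conservative (by \Cref{lem:change-of-basis-faith-cons-acc-cont}, using that $\Phi$ is conservative) and $\V_1^f$ is conservative (being faithful and, via monadicity of $W^X_1$, reflecting isomorphisms as in \Cref{lemma:operad-alg-morph-almost-geom}), a short diagram chase shows $\Phi^f$ reflects isomorphisms: if $\Phi^f(\psi)$ is invertible then so is $\V_2^f\Phi^f(\psi) = \Phi^{\ass}(\psi)$, and since $\Phi^{\ass}$ is conservative too, $\psi$ is invertible.

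For accessibility and continuity I would use the second factorization $\V_2^f\circ\Phi^f = \Phi^{\ass}$, but this alone only controls the composite. The cleaner route is to note that $\V_2^f\colon \V_2^X\to\Mon(\V_2)$ is itself a discrete isofibration when restricted appropriately — actually it is easier to argue directly on underlying objects. Composing with the forgetful functor $W^{\ass}_2\colon \Mon(\V_2)\to\V_2$, which is conservative, accessible and continuous, we get $W^{\ass}_2\circ\Phi^f = W^X_2\circ\Phi^X$ (using the definition of $\Phi^f$ as a filler and the triangle $W^X_2 = W^{\ass}_2\circ\V_2^f$). But $W^X_2$ is accessible and continuous (\cite[Section 3]{LACK199965}) and $\Phi^X$ is accessible and continuous (\Cref{lem:change-of-basis-faith-cons-acc-cont}), so $W^{\ass}_2\circ\Phi^f$ is accessible and continuous. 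Since $W^{\ass}_2$ is conservative and accessible-monadic, it creates limits and $\alpha$-filtered colimits, so a functor into $\Mon(\V_2)$ is continuous/accessible as soon as its composite with $W^{\ass}_2$ is; hence $\Phi^f$ is accessible and continuous.

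Next, the isofibration claim. I would apply \Cref{lemma:isofibrations} with $F = \Phi^f$ and $G = \V_2^f\colon \V_2^X\to\Mon(\V_2)$: wait — that gives $GF = \Phi^{\ass}$, and I would need $GF$ to be an isofibration and $G$ a discrete isofibration. The functor $\Phi^{\ass}\colon \Mon(\V_1)\to\Mon(\V_2)$ is an isofibration because $\Phi$ is a discrete isofibration and $W^{\ass}_i$ are discrete isofibrations (\Cref{rmk:mon-discr-isofib}), so one lifts isomorphisms of monoids by lifting the underlying isomorphism along $\Phi$ and transporting the monoid structure uniquely; alternatively this follows from \Cref{lemma:isofibrations} applied to $W^{\ass}_2\circ\Phi^{\ass} = \Phi\circ W^{\ass}_1$. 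And $\V_2^f$ is an isofibration by \Cref{lemma:operad-alg-morph-almost-geom}, but I actually need it to be \emph{discrete}; for this I would invoke \Cref{lemma:isofibrations} once more, noting $W^{\ass}_2\circ\V_2^f = W^X_2$ is a discrete isofibration (monadic, and monoid-like structure transports uniquely — indeed $W^X_2$ is a discrete isofibration by the same argument as \Cref{rmk:mon-discr-isofib} applied to operad algebras, since an isomorphism of underlying objects transports the operad action uniquely) and $W^{\ass}_2$ is a discrete isofibration, whence $\V_2^f$ is a discrete isofibration. Then \Cref{lemma:isofibrations} with $F=\Phi^f$, $G=\V_2^f$, $GF=\Phi^{\ass}$ yields that $\Phi^f$ is an isofibration. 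Finally, the last sentence — $\Phi^f$ is \verysuitable\ iff it is \suitable\ — is immediate from \Cref{cor:operad-examples} and \Cref{thm:canonical-calc}: we have now shown $\Phi^f$ is a faithful isofibration, accessible, and continuous, and $\V_2^X$ is locally presentable with $\V_2$ as in \Cref{thm:canonical-calc}, so \Cref{thm:canonical-calc} applies verbatim, promoting \suitable\ to \verysuitable.

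The main obstacle I anticipate is the bookkeeping around \emph{discreteness} of the various isofibrations: the statement of \Cref{lemma:isofibrations} requires the lower functor to be a \emph{discrete} isofibration, so I must be careful to establish discreteness (not merely the isofibration property) for $W^X_2$ and hence for $\V_2^f$ and $\Phi^{\ass}$, each time via the "unique transport of algebraic structure along an underlying isomorphism" argument that underlies \Cref{rmk:mon-discr-isofib}. Everything else is a formal consequence of composing the already-established faithful/conservative/accessible/continuous functors and of the two commuting triangles produced by the diagonal filler.
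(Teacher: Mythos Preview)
There is a genuine gap in your faithfulness argument, rooted in a systematic reversal of the lower triangle. You record the second commuting triangle as $\V_2^f\circ\Phi^f = \Phi^{\ass}$, but this composite does not typecheck: $\Phi^f$ lands in $\Mon(\V_1)$, which is not the domain of $\V_2^f$. The correct identity is $\Phi^{\ass}\circ\Phi^f = \V_2^f$. Your faithfulness step then fails independently of this typo: from $\V_1^f = \Phi^f\circ\Phi^X$ faithful you conclude that $\Phi^f$ is faithful, but faithfulness of a composite $GF$ only forces the \emph{first} factor $F$ to be faithful, never $G$. The paper uses the lower triangle instead: since $\V_2^f = \Phi^{\ass}\circ\Phi^f$ is faithful (\Cref{lemma:operad-alg-morph-almost-geom}), the first factor $\Phi^f$ is faithful.

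Once the lower triangle is oriented correctly, the remaining steps collapse to the paper's argument. For the isofibration claim, apply \Cref{lemma:isofibrations} with $F=\Phi^f$, $G=\Phi^{\ass}$, $GF=\V_2^f$: you have already shown $\Phi^{\ass}$ is a discrete isofibration, and $\V_2^f$ is an isofibration by \Cref{lemma:operad-alg-morph-almost-geom}, so $\Phi^f$ is an isofibration --- your detour establishing that $\V_2^f$ is \emph{discrete} is unnecessary. Likewise your accessibility/continuity route via ``$W^{\ass}_2\circ\Phi^f$'' does not typecheck; the paper simply observes that both $\V_2^f$ and $\Phi^{\ass}$ are conservative, accessible, and continuous (\Cref{lemma:operad-alg-morph-almost-geom} and \Cref{lem:change-of-basis-faith-cons-acc-cont}) and invokes \cite[Proposition~11.8]{ARV2010algebraic} on $\Phi^{\ass}\circ\Phi^f=\V_2^f$ to deduce the same for $\Phi^f$. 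Your final sentence on \suitable\ $\Leftrightarrow$ \verysuitable\ is correct.
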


\begin{proof}
	First, $\Phi^f$ is faithful since $\V_2^f$ is such (by \Cref{lemma:operad-alg-morph-almost-geom}). 
	Moreover, $\Phi^{\ass}$ is a discrete isofibration by \Cref{lemma:isofibrations}, since it makes the following diagram commutative with $W^{\ass}_1$, $W^{\ass}_2$ and $\Phi$ all discrete isofibrations (see \Cref{rmk:mon-discr-isofib} and hypotheses). 
	\[\begin{tikzcd}[ampersand replacement=\&]
		{\Mon(\V_1)} \& {\Mon(\V_2)} \\
		{\V_1} \& {\V_2}
		\arrow["{\Phi^{\ass}}", from=1-1, to=1-2]
		\arrow["{W^{\ass}_1}"', from=1-1, to=2-1]
		\arrow["{W^{\ass}_2}", from=1-2, to=2-2]
		\arrow["\Phi"', from=2-1, to=2-2]
	\end{tikzcd}\]
Therefore, since $\V_2^f$ is an isofibration by \Cref{lemma:operad-alg-morph-almost-geom}, again applying \Cref{lemma:isofibrations} we get that $\Phi^f$ is an isofibration. 

Now, both $\V_2^f$ and $\Phi^{\ass}$ are conservative, accessible and continuous, respectively by \Cref{lemma:operad-alg-morph-almost-geom} and \Cref{lem:change-of-basis-faith-cons-acc-cont}. 
Thus, by \cite[Proposition~11.8]{ARV2010algebraic}, also $\Phi^f$ is such. 

We conclude using \Cref{prop:monadic-alg-operad} to say that $\V_2^X$ is locally presentable and \Cref{thm:canonical-calc} to get that $\Phi^f$ is \verysuitable\ if and only if it is \suitable. 
\end{proof}

\begin{rmk}
	It is interesting to notice that the only property of $\ass$ that we used in the proof above is that $W^{\ass}\colon \Mon(\V)\to\V$ is a discrete isofibration (\Cref{rmk:mon-discr-isofib}). 
\end{rmk}

\begin{rmk}\label{rmk:operads-morph-of-geom-fun}
	Under the assumptions of \Cref{cor:operad-ex-diff-basis}, and moreover, $\Phi^f$ is \suitable\ (hence \verysuitable), then $\Phi^X$ is a morphism in $\wgeomfun$ (or $\geomfun$ if $\V_1^f$ is \verysuitable~as well). 
\end{rmk}

\begin{eg}
\Cref{cor:operad-ex-diff-basis} gives back exactly the case of $C^\infty\tx{-Ring}$ (\Cref{eg:final-Cinfty-rings}) when we set $\V_1:=\Rvect$, $\V_2:=\Set$ and $\Phi:=U\colon \Rvect\to\Set$ the forgetful functor into set. 
Indeed, since $U$ is strictly monadic, it is a discrete isofibration, conservative, and accessible. 
Moreover, $U$ is also lax monoidal with monoidal constraint given by, for any $\mathbb{R}$-vector spaces $V$ and $W$, the quotient $U(V)\times U(W)\to U(V\otimes_\mathbb{R} W)$. 
\end{eg}

\section{Differential calculi}\label{sec:extalg}

In this section we generalize the notion of differential calculi (also known as exterior algebras) from associative algebras to our setting (i.e. monoids in a monoidal additive category).

From this point on we fix $\V$ to be a monoidal additive category with finite limits and colimits such that the tensor product preserves finite colimits in each variable. Note that in this setting, by \Cref{prop:otimes_A-preserves-coequ}, we have a well defined tensor product of modules.

Recall from Example~\ref{MonoidInChain} that a monoid in $\Ch(\V)$ is a differentially graded algebra.

\begin{defi}\label{def:ext-alg}
	A \emph{differential calculus} $\Omega^\bullet_d$ over a monoid $(A,m,i)$ in $\V$, is a monoid $(\Omega^{\bullet}_d=\bigoplus_{n\ge 0}\Omega^n_d,d,\wedge,j)$ in $\Ch(\V)$ such that
	\begin{enumerate}
		\item $(\Omega^0_d,\wedge^{0,0},j)=(A,m,i)$;
		\item (\emph{surjectivity condition}) $A$ generates $\Omega^{\bullet}_d$ via $d$ and $\wedge$; that is, for any $n\in\mathbb{N}$, the following composite is an epimorphism in $\V$. 
\[\begin{tikzcd}[ampersand replacement=\&]
	{A^{\otimes n+1}} \&\& {A\otimes\Omega^1_d\otimes(\Omega^1_d)^{\otimes n-1}} \&\&\& {(\Omega^1_d)^{\otimes n}} \\
	\&\&\&\&\& {\Omega_d^n}
	\arrow["{1_{A}\otimes d^{\otimes n}}", from=1-1, to=1-3]
	\arrow[""{name=0, anchor=center, inner sep=0}, "{p_n}"', curve={height=6pt}, two heads, from=1-1, to=2-6]
	\arrow[""{name=1, anchor=center, inner sep=0}, "{\mu_d\otimes1_{(\Omega^1_d)^{\otimes n-1}}}", from=1-3, to=1-6]
	\arrow["\wedge", from=1-6, to=2-6]
	\arrow["{:=}"{marking, allow upside down}, draw=none, from=0, to=1]
\end{tikzcd}\]
	\end{enumerate}
\end{defi}

\begin{rmk}
	Note that if $M^\bullet$ is a monoid in $\Ch(\V)$, then:\begin{itemize}
		\item $M^0$ is a monoid in $\V$ with unit the same as $M^\bullet$ and monoid action
		$$ \wedge_M^{0,0}\colon M^0\otimes M^0\to M^0;$$
		\item each $M^n$ inherits a structure of $M^0$-bimodule with left and right actions given by
		\begin{center}
			$\wedge_M^{0,n}\colon M^0\otimes M^n\to M^n$ \quad and \quad $\wedge_M^{n,0}\colon M^n\otimes M^0\to M^n$.
		\end{center}
	\end{itemize} 
	Indeed, the monoid and bimodule axioms follow from the associativity and unitality axioms of $\wedge_M$. Similarly, if $f^\bullet\colon M^\bullet \to N^\bullet$ is a morphism in $\Ch(\V)$, then $f_0\colon M^0\to N^0$ is a morphism of monoids and 
	$$ f^n\colon M^n\to (f^0)^*N^n $$
	defines a morphism of $M^0$-bimodules.
	In particular, if $\Omega^\bullet_d$ is a differential calculus over a monoid $A$, then $\Omega^n_d$ inherits a structure of $A$-bimodule, for any $n>0$.
\end{rmk}

\subsection{Maximal prolongation}

In this subsection, we show a standard procedure\footnote{In principle, one could consider maximal prolongation procedure in $\Mod(\E)$, yielding a notion of $\E$-differential calculus. We do not pursue this idea here.} to extend a first order differential calculus to a differential calculus, which generalizes what is commonly known as the \emph{maximal prolongation} in noncommutative differential geometry. 

\begin{defi}
	Given a monoid $(A,m,i)$ in $\V$, the maximal prolongation $\Omega^\bullet_u$ of the universal calculus $\Omega^1_u$ is defined by\begin{enumerate}
		\item $\Omega^n_u:= (\Omega^1_u)^{\otimes_An}$;
		\item the operation $\wedge^{n,m}_u\colon \Omega^n_u\otimes \Omega^m_u\to\Omega^{n+m}_u$ is the quotient map
		 $$ \Omega^n_u\otimes \Omega^m_u\longrightarrow\Omega^n_u\otimes_A \Omega^m_u\cong \Omega^{\otimes_An+m}_u $$
		defining the tensor product over $A$.
		\item the differential $d^n_u\colon \Omega^n_u\to \Omega^{n+1}_u$ is 
		$$ d_u^n\colon\Omega_u^n\xrightarrow{\iota_u^n} A^{\otimes n+1}\xrightarrow{d^{\otimes n+1}}\Omega_u^{\otimes n+1}\xrightarrow{\wedge_u}\Omega_u^{n+1}. $$
	\end{enumerate}
\end{defi}

\begin{rmk}\label{compatibility-for d^n}
	The following observation will be useful in the proofs. Given $\Omega_u^\bullet$ (or indeed any differential calculus $\Omega_d^\bullet$) we keep denoting by $d_u=d_u^0\colon A\to\Omega_u^1$ the differential on degree $0$. Then the following diagram commutes 
		\begin{center}
		\begin{tikzpicture}[baseline=(current  bounding  box.south), scale=2]
			
			\node (01) at (-1,0.8) {$A^{\otimes 2}\otimes A^{\otimes n}$};
			\node (a1) at (1.9,0.8) {$\Omega_u^{1}\otimes \Omega_u^{\otimes n-1}$};
			\node (00) at (-1,0) {$A^{\otimes n+1}$};
			\node (a0) at (1.9,0) {$\Omega_u^{n}$};

			\path[font=\scriptsize]

			(01) edge [->] node [above] {$-(d_u\cdot 1_A)\!\otimes\! ((1_A\cdot d_u)\otimes d_u^{\otimes n-2})$} (a1)
			(01) edge [->] node [left] {$1_A\otimes m\otimes 1_{A^{\otimes n-1}}$} (00)
			(a1) edge [->] node [right] {$\wedge_u^{1,n-1}$} (a0)
			(00) edge [->] node [below] {$-(d_u\cdot 1_A)\wedge d_u^{\wedge n-1}$} (a0);
		\end{tikzpicture}	
	\end{center} 
	for any $n\geq 1$. Indeed, this can be broken into the commutativity of the two squares below
		\begin{center}
		\begin{tikzpicture}[baseline=(current  bounding  box.south), scale=2]
			
			\node (01) at (-1.3,0.8) {$A^{\otimes 2}\otimes A^{\otimes n}$};
			\node (11) at (2,0.8) {$\Omega_u^1\otimes A\otimes A\otimes \Omega_u^{\otimes n-1}$};
			\node (a1) at (5,0.8) {$\Omega_u^1\otimes \Omega_u^{\otimes n-1}$};

			\node (00) at (-1.3,0) {$A^{\otimes n+1}$};
			\node (10) at (2,0) {$\Omega_u^1\otimes A\otimes \Omega_u^{\otimes n-1}$};
			\node (a0) at (5,0) {$\Omega_u^{n}$};

			\path[font=\scriptsize]

			(01) edge [->] node [above] {$-(d_u\otimes 1_A)\!\otimes\! (1_A\otimes d_u^{\otimes n-1})$} (11)
			(11) edge [->] node [above] {$\wedge^{1,0}\otimes \wedge^{0,1}\otimes 1_{\Omega_u^{\otimes n-1}}$} (a1)
			
			(01) edge [->] node [right] {$1_A\otimes m\otimes 1_{A^{\otimes n-1}}$} (00)
			(11) edge [->] node [right] {$1_{\Omega_u^1}\otimes m\otimes 1_{\Omega_u^{\otimes n-1}}$} (10)
			(a1) edge [->] node [right] {$\wedge_u^{1,n-1}$} (a0)
			
			(00) edge [->] node [below] {$-d_u\otimes 1_A\otimes d_u^{\otimes n-1}$} (10)
			(10) edge [->] node [below] {$\wedge$} (a0);
		\end{tikzpicture}	
	\end{center} 
	where the left commutes by functoriality of $\otimes$ and the right by associativity of $\wedge$ (for $\Omega_u$ the wedge is associative by definition since it is a coequalizer map and coequalizers commute with other coequalizers).\\
	Since $-(d_u\cdot 1_A)$ is a map of right $A$-modules and $((1_A\cdot d_u)\otimes d_u^{\otimes n-2})$ is a map of left $A$-modules, this says in particular that
	$$-(d_u\cdot 1_A) \otimes_A ((1_A\cdot d_u)\wedge d_u^{\wedge n-2})= -(d_u\cdot 1_A)\wedge d_u^{\wedge n-1} $$
	under the identifications $A^{\otimes 2}\otimes_A A^{\otimes n}\cong A^{\otimes n+1}$ and $\Omega_u^{1}\otimes_A \Omega_u^{n}\cong\Omega_u^{n+1}$.
\end{rmk}

\begin{prop}
	$(\Omega^\bullet_u,\wedge_u,d_u)$ is well-defined and is a differential calculus
\end{prop}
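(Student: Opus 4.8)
The plan is to verify the three structural claims in turn: that $\wedge_u$ is associative and unital (so that $(\Omega^\bullet_u,\wedge_u)$ is a monoid in the graded sense), that $d_u$ is a differential making $(\Omega^\bullet_u,\wedge_u,d_u)$ a monoid in $\Ch(\V)$ (i.e.\ a dg-algebra), and finally that the surjectivity condition of \Cref{def:ext-alg} holds.

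First I would treat the monoid-in-$\V$ part. By construction $\Omega^n_u=(\Omega^1_u)^{\otimes_A n}$ and $\wedge^{n,m}_u$ is the canonical quotient $\Omega^n_u\otimes\Omega^m_u\twoheadrightarrow\Omega^n_u\otimes_A\Omega^m_u\cong\Omega^{n+m}_u$, where the last isomorphism uses that $\otimes_A$ is associative on $\AModA$ (by \Cref{prop:otimes_A-preserves-coequ}, since $\V$ has coequalizers preserved by $\otimes$, a standing hypothesis of the section). Associativity of $\wedge_u$ is then exactly the coherence of the iterated $\otimes_A$: the two ways of bracketing $\Omega^n_u\otimes\Omega^m_u\otimes\Omega^k_u\to\Omega^{n+m+k}_u$ agree because coequalizer quotient maps compose canonically and $\otimes_A$ is associative. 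Unitality with unit $i\colon I\to A=\Omega^0_u$ follows since $A$ is the monoidal unit of $(\AModA,\otimes_A)$ (again \Cref{prop:otimes_A-preserves-coequ}), so $A\otimes_A\Omega^n_u\cong\Omega^n_u\cong\Omega^n_u\otimes_A A$ and these isomorphisms are the degree-$0$ wedge actions.

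Next I would check the dg-algebra axioms, namely $d_u^{n+1}d_u^n=0$ and the graded Leibniz rule $d^{n+m}_u(\wedge^{n,m}_u)=\wedge^{n+1,m}_u(d^n_u\otimes 1)+(-1)^n\wedge^{n,m+1}_u(1\otimes d^m_u)$. For $d^2=0$: unwinding the definition $d^n_u=\wedge_u\circ d^{\otimes n+1}\circ\iota^n_u$ and using that $d_u\circ i=0$ (the lemma before \Cref{sec:univ first ord calc}) together with the Leibniz rule for $d_u$ applied twice, one reduces to a cancellation of signed terms; \Cref{compatibility-for d^n} is designed precisely to handle rewriting $\iota$-composites into wedge form, and I would lean on it heavily here. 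For the graded Leibniz rule, the key point is that $\wedge_u$ on $\Omega_u$ is literally the $\otimes_A$-quotient, so $d$ applied to a wedge splits along the two tensor factors exactly as in the cochain-complex tensor product of \Cref{MonoidInChain}; the sign $(-1)^n$ comes from moving $d$ past $n$ one-forms, and this is a bookkeeping argument using functoriality of $\otimes$ and the Leibniz rule for $d_u$ in degree zero. The surjectivity condition is the easiest: the composite $p_n$ of \Cref{def:ext-alg}(ii) factors through $(1_A\cdot d_u\cdot 1_A)^{\otimes_A n}$ (up to the identifications $\Omega^1_u\otimes_A\cdots\otimes_A\Omega^1_u\cong\Omega^n_u$), and each $(1_A\cdot d_u\cdot 1_A)$ is an epimorphism by \Cref{surjectivity-bimodule-map} and \Cref{prop:leibz-then-left-right}; since $\otimes_A$ preserves epimorphisms (\Cref{cor:otimes_A-preserves-epimorp}) and epimorphisms compose, $p_n$ is an epimorphism.

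The main obstacle I anticipate is the sign- and index-careful verification of the graded Leibniz rule and of $d^2_u=0$: these are not conceptually hard but require threading the identifications $\Omega^n_u\otimes_A\Omega^m_u\cong\Omega^{n+m}_u$ through the definitions of $d^n_u$ in terms of $\iota^n_u$ and $d^{\otimes n+1}$, and making the signs from \Cref{MonoidInChain}'s tensor-product differential match up. I would organize this by first establishing, via \Cref{compatibility-for d^n}, the single identity expressing $d^n_u$ on a decomposable wedge $(d_u\cdot 1_A)\wedge d_u^{\wedge n-1}$, then bootstrapping to general degree by the surjectivity (epimorphism) of $p_n$, which lets one check equalities of maps out of $\Omega^n_u$ by precomposing with $p_n$ and working in $A^{\otimes n+1}$ where everything is explicit.
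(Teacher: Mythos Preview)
Your overall strategy---verify the graded monoid axioms via associativity of $\otimes_A$, then the dg-algebra axioms by direct sign-tracking, then the surjectivity condition---is workable in principle, but it diverges from the paper and has a real gap in the surjectivity step.

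The paper does not verify the dg-algebra axioms directly. Instead it introduces the Amitsur complex $A^\bullet$ with $A^n:=A^{\otimes n+1}$, differential $d_A^n=\sum_i(-1)^i(1^{\otimes i}\otimes i_A\otimes 1^{\otimes n+1-i})$, and multiplication $\wedge_A^{n,m}=1^{\otimes n}\otimes m\otimes 1^{\otimes m}$, which is already a monoid in $\Ch(\V)$. It then defines $\iota_u^n:=(\iota_u^1)^{\otimes_A n}\colon\Omega_u^n\to (A\otimes A)^{\otimes_A n}\cong A^{\otimes n+1}$, proves that $\iota_u^n$ is a split monomorphism with retraction $p_u^n$ (your $p_n$) by induction using \Cref{compatibility-for d^n}, and checks that $\iota_u^\bullet$ intertwines $\wedge_u$ with $\wedge_A$ and $d_u^\bullet$ with $d_A^\bullet$. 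All the dg-algebra axioms for $\Omega_u^\bullet$ then follow by reflection along the monomorphisms $\iota_u^n$, and the surjectivity condition is immediate because $p_u^n\circ\iota_u^n=1$ makes $p_u^n$ a split epimorphism. This bypasses entirely the delicate direct verification of $d^2=0$ and graded Leibniz that you flag as the main obstacle.

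Your surjectivity argument, as written, is the wrong way round. You assert that $p_n$ factors through the epimorphism $(1_A\cdot d_u\cdot 1_A)^{\otimes_A n}$, but the latter has domain $(A^{\otimes 3})^{\otimes_A n}\cong A^{\otimes 2n+1}$, not $A^{\otimes n+1}$; writing $p_n=(\text{epi})\circ h$ for some $h\colon A^{\otimes n+1}\to A^{\otimes 2n+1}$ does not force $p_n$ to be epi. What you would need is the reverse factorization $(1_A\cdot d_u\cdot 1_A)^{\otimes_A n}=p_n\circ k$ for some $k\colon A^{\otimes 2n+1}\to A^{\otimes n+1}$, and producing such a $k$ requires iterated use of the Leibniz rule to rewrite $a\,d(b)\,c$ as $a\,d(bc)-ab\,d(c)$---exactly the kind of careful inductive manipulation you were hoping to avoid. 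The paper's split-mono argument via $\iota_u^n$ sidesteps this entirely. Your plan to check equalities in higher degree by precomposing with $p_n$ also presupposes that $p_n$ is epi, so you cannot defer the surjectivity proof until after the dg-algebra axioms as you suggest.
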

\begin{proof}
	Consider the Amitsur complex $A^\bullet$ (\cite[Remark~4.1.12]{brandenburg2014tensor}) defined by $A^n:=A^{\otimes n+1}$ with differential given by
	$$  d_A^n\colon \sum_{i=0}^{n+1} (-1)^i(1_{A^{\otimes i}} \otimes i \otimes 1_{A^{\otimes n+1-i}}) \colon A^{\otimes n+1}\to A^{\otimes n+2}.$$ This comes equipped with operations
	$$ \wedge_A^{n,m}:=1_{A^{\otimes n} }\otimes m \otimes 1_{A^{\otimes m}}\colon A^{\otimes n+1}\otimes A^{\otimes m+1}\to A^{\otimes n+m+1} $$ 
	that make it a monoid in chain complex over $\V$.\\
	For each $n$ consider the composite
	$$ \iota_u^n\colon \Omega_u^n= (\Omega^1_u)^{\otimes_An}\xrightarrow{(\iota_u^1)^{\otimes_An}} (A\otimes A)^{\otimes_A n}\xrightarrow{\ \cong\ }A^{\otimes n+1}; $$
	where the last isomorphism follows from the definition of $\otimes_A$, see Remark~\ref{Otimes_A cancels A}. Note that by definition we have
	\begin{equation}\label{iota-n+m}
		\iota_u^n\otimes_A\iota_u^m\cong\iota_u^{n+m}
	\end{equation}
	for any $n,m\geq 0$. 
	
	We shall prove that $\iota_u^n$, seen as a morphism in $\V$, has left inverse the map
	$$ p_u^n\colon A^{\otimes n+1}\xrightarrow{(1_A\cdot d)\otimes d^{\otimes n-1}}\Omega_u^1\otimes\Omega_u^{\otimes n-1}\xrightarrow{\ \wedge_u\ }\Omega_u^n  $$
	appearing in the definition of surjectivity condition. This will imply that $\iota_u^n$ is a monomorphism and that the surjectivity condition for $\Omega_u^\bullet$ holds.
	We prove this fact by induction on $n$; for $n=1$ it has been shown in Proposition~\ref{prop:univ-1-d-cal-well-def} since $p^1=(1_A\cdot d)$.

	Let us now assume that the equality $p_u^{n-1}\circ \iota_u^{n-1}=1$ holds for a given $n-1\geq 1$, and prove it for $n$. For that, consider the following chain of equalities:
	\begin{align}
		p_u^{n}\circ \iota_u^{n}&=((1_A\cdot d)\wedge d^{\wedge n-1})\circ (\iota_u^1\otimes_A \iota_u^{n-1}) \tag{def}\\
		&=(-(d\cdot 1_A)\wedge d^{\wedge n-1})\circ (\iota_u^1\otimes_A \iota_u^{n-1}) \tag{$(1_A\cdot d)\circ\iota_u^1=-(d\cdot 1_A)\circ\iota_u^1$}\\
		&= (-(d_u\cdot 1_A) \otimes_A ((1_A\cdot d_u)\wedge d_u^{\wedge n-2}))\circ (\iota_u^1\otimes_A \iota_u^{n-1}) \tag{by \ref{compatibility-for d^n}}\\
		&=( (-d\cdot 1_A)\circ \iota_u^1) \otimes_A (p^{n-1}\circ \iota_u^{n-1})\\
		&= 1\otimes_A 1= 1\tag{inductive hyp.}
		\end{align}

	Since $\wedge_u$ and $\wedge_A$ are both constructed as the quotient maps that define $\otimes_A$ as a coequalizer, and this construction is natural, the isomorphism in~\ref{iota-n+m} says that for any $n,m\geq 0$ the square below commutes.
	\begin{center}
		\begin{tikzpicture}[baseline=(current  bounding  box.south), scale=2]
			
			\node (01) at (-1,0.8) {$\Omega_u^n\otimes \Omega_u^m$};
			\node (a1) at (0.7,0.8) {$\Omega_u^{n+m}$};
			\node (00) at (-1,0) {$A^{\otimes n+1}\otimes A^{\otimes m+1}$};
			\node (a0) at (0.7,0) {$A^{\otimes n+m+1}$};

			\path[font=\scriptsize]

			(01) edge [->] node [above] {$\wedge^{n,m}_u$} (a1)
			(01) edge [>->] node [left] {$\iota_u^n\otimes \iota_u^m$} (00)
			(a1) edge [>->] node [right] {$\iota_u^{n+m}$} (a0)
			(00) edge [->] node [below] {$\wedge^{n,m}_A$} (a0);
		\end{tikzpicture}	
	\end{center} 
	Then, since $A^\bullet$ is a monoid in $\Ch(\V)$ and the vertical maps above are monomorphisms, it will follows that $\Omega_u^\bullet$ is also a monoid in $\Ch(\V)$ once we prove that it has a compatible differential: arguing as in the proof of Proposition~\ref{prop:univ-1-d-cal-well-def}, the monoid axioms for $\Omega_u^\bullet$ hold since those of $A^\bullet$ do.
	
	For any $n$ define the differential of $\Omega_u^\bullet$ as
	$$ d_u^n\colon\Omega_u^n\xrightarrow{\iota_u^n} A^{\otimes n+1}\xrightarrow{d^{\otimes n+1}}\Omega_u^{\otimes n+1}\xrightarrow{\wedge_u}\Omega_u^{n+1}. $$
	To show the compatibility with the differential of $A^\bullet$ we need to show that
	$$ \iota_u^{n+1}\circ d_u^n= d_A^n\circ \iota_u^n. $$
	Consider the following equalities
	\begin{align}
		\iota_u^{n+1}\circ d_u^n&=\iota_u^{n+1}\circ \wedge_u\circ d^{\otimes n+1}\circ\iota_u^n \tag{def of $d_u^n$}\\
		&= \wedge_A\circ \iota_u^{\otimes n+1}\circ d_u^{\otimes n+1}\circ\iota_u^n \tag{def of $\iota_u^{n+1}$}\\
		&=(1_A\otimes m^{\otimes n}\otimes 1_A)\circ( (\iota_u\circ d_u)^{\otimes n+1})\circ\iota_u^n \tag{def of $\wedge_A$}\\
		&=(1_A\otimes m^{\otimes n}\otimes 1_A)\circ \bigotimes_{j=1}^{n+1}(i\otimes 1_A-1_A\otimes i) \circ\iota_u^n \tag{def of $d_u$}\\
		&=(1_A\otimes m^{\otimes n}\otimes 1_A)\circ \sum_J (h_1\otimes \cdots\otimes h_{n+1}) \circ\iota_u^n \notag
	\end{align}
	where in the last equality $h_j\in\{i\otimes 1_A, -1_A\otimes i\}$, and $J$ is the set of all possible choices.
	Now, given a choice of $\bar h:= h_1\otimes \cdots\otimes h_{n+1}$, suppose that there exists $j<n+1$ such that $h_j= i\otimes 1_A$ and $h_{j+1}=-1_A\otimes i$; then $\wedge_A\circ \bar h$ is of the form $f_1\otimes (-m)\otimes f_2$, for some $f_1,f_2$; it follows that $\wedge_A\circ \bar h\circ \iota_u^n=0$ (by definition of $\iota_u^1$ and $\iota_u^n$).
	An easy calculation shows that the only summands that remain are of the form
	$$ (-1_A\otimes i)^{\otimes k}\otimes (i\otimes 1_A)^{\otimes n-k+1} $$
	for $0\leq k\leq n+1$. Continuing the equalities it follows that
	\begin{align}
		\iota_u^{n+1}\circ d_u^n&=(1_A\otimes m^{\otimes n}\otimes 1_A)\circ \sum_{k=0}^{n+1} ((-1_A\otimes i)^{\otimes k}\otimes (i\otimes 1_A)^{\otimes n-k+1} ) \circ\iota_u^n \notag\\
		&= \sum_{k=0}^{n+1} (-1)^k(1_{A^{\otimes k}} \otimes i \otimes 1_{A^{\otimes n-k+1}})\circ \iota_u^n \tag{applying $m$}\\
		&= d_A^n\circ \iota_u^n\tag{def of $d_A^n$}
	\end{align}
	as claimed.

\end{proof}

The following is analogous to Proposition~\ref{universal-prop}

\begin{prop}\label{univ-ext-alg}
	The differential calculus $\Omega_u^\bullet$ is universal; that is, for any other differential calculus $\Omega_d^\bullet$, there exists a unique morphism of monoids $f\colon \Omega_u^\bullet\to\Omega_d^\bullet$. 
	Such morphism is always an epimorphism (that is, componentwise an epimorphism).
\end{prop}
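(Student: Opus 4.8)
The plan is to build the morphism $f$ degree by degree, starting from the first order data and extending multiplicatively, then to verify uniqueness via the surjectivity condition on $\Omega_u^\bullet$. Concretely, by Proposition~\ref{universal-prop} there is a unique morphism of $A$-bimodules $f^1\colon\Omega_u^1\to\Omega_d^1$ with $f^1\circ d_u=d$. Since $\Omega_d^n$ is an $A$-bimodule for each $n$ (via the degree-zero part of the wedge), and since $\Omega_u^n=(\Omega_u^1)^{\otimes_An}$, one obtains a map $f^n:=(f^1)^{\otimes_A n}\colon\Omega_u^n\to(\Omega_d^1)^{\otimes_A n}$; composing with the iterated wedge $\wedge_d\colon(\Omega_d^1)^{\otimes_A n}\to\Omega_d^n$ of $\Omega_d^\bullet$ gives $f^n\colon\Omega_u^n\to\Omega_d^n$. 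The key point making this well-defined is that $\wedge_u^{n,m}$ is precisely the quotient map defining $\otimes_A$ as a coequalizer (by the definition of the maximal prolongation), so compatibility of the $f^n$ with $\wedge$ is essentially automatic from functoriality of $\otimes_A$ and associativity of $\wedge_d$.

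First I would check that the collection $(f^n)_{n\ge 0}$ is a morphism of monoids in $\Ch(\V)$. Compatibility with $\wedge$: using the commuting square in the proof of the previous proposition relating $\wedge_u$ and $\wedge_A$ (together with the analogous relation $f^n\otimes_A f^m$ over $A$), both $f^{n+m}\circ\wedge_u^{n,m}$ and $\wedge_d^{n,m}\circ(f^n\otimes f^m)$ are seen to agree after passing through the coequalizer presentations; I would phrase this as naturality of the $\otimes_A$-coequalizer. Preservation of the unit is immediate since $f^0=1_A$. Compatibility with the differential, $f^{n+1}\circ d_u^n=d_d^n\circ f^n$, is the genuinely computational step: expand $d_u^n=\wedge_u\circ d_u^{\otimes n+1}\circ\iota_u^n$ from the definition, use that $f^1\circ d_u=d$, use the Leibniz rule for $\Omega_d^\bullet$ (which, as in Remark~\ref{compatibility-for d^n}, expresses how $d_d$ interacts with $\wedge_d$), and reduce to an identity of sums of tensor factors exactly as in the computation of $\iota_u^{n+1}\circ d_u^n=d_A^n\circ\iota_u^n$. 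The bookkeeping here — tracking signs and which $h_j$-summands vanish after applying the wedge — is where I expect the main effort to lie, though it is structurally identical to the argument already carried out for the Amitsur complex.

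Next I would address uniqueness. Suppose $g\colon\Omega_u^\bullet\to\Omega_d^\bullet$ is any monoid morphism in $\Ch(\V)$. In degree $0$, $g^0$ is a monoid map $A\to A$ fixing the unit; since $\Omega_u^0=\Omega_d^0=A$ and $g$ is a morphism of differential calculi, $g^0=1_A$ is forced. In degree $1$, $g^1$ is an $A$-bimodule map with $g^1\circ d_u=d$ (this last equality comes from $g$ commuting with the differential in degree $0$ and $g^0=1_A$), so $g^1=f^1$ by the uniqueness in Proposition~\ref{universal-prop}. For $n\ge 2$, the surjectivity condition for $\Omega_u^\bullet$ gives that $p_u^n=\wedge_u\circ((1_A\cdot d_u)\otimes d_u^{\otimes n-1})\colon A^{\otimes n+1}\twoheadrightarrow\Omega_u^n$ is an epimorphism; since $g$ preserves $\wedge$ and $\cdot$ and satisfies $g^1\circ d_u=d$, one computes $g^n\circ p_u^n=f^n\circ p_u^n$ (both equal the corresponding generating map built from $d$ and $\wedge_d$ in $\Omega_d^n$), whence $g^n=f^n$ because $p_u^n$ is epic. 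Thus $g=f$.

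Finally, $f$ is an epimorphism in each degree: the surjectivity condition for $\Omega_d^\bullet$ says the map $p_d^n=\wedge_d\circ((1_A\cdot d)\otimes d^{\otimes n-1})\colon A^{\otimes n+1}\twoheadrightarrow\Omega_d^n$ is an epimorphism, and by construction $p_d^n=f^n\circ p_u^n$ (the generators of $\Omega_d^n$ are the $f^n$-images of the generators of $\Omega_u^n$), so $f^n$ is an epimorphism as well. The main obstacle is the sign- and summand-bookkeeping in verifying that $f$ commutes with the differential; everything else is a formal consequence of the coequalizer descriptions of $\otimes_A$, the universal property of $\Omega_u^1$ from Proposition~\ref{universal-prop}, and the two surjectivity conditions.
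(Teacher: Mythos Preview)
Your approach is correct and closely parallel to the paper's, but organized differently. The paper defines each $f^n$ as the composite $\Omega_u^n\xrightarrow{\iota_u^n}A^{\otimes n+1}\xrightarrow{(1_A\cdot d)\wedge d^{\wedge n-1}}\Omega_d^n$, and then spends an inductive computation proving $\wedge$-compatibility (that is, $f^{k+1}\circ\wedge_u^{k+1}=\wedge_d^{k+1}\circ(f^1)^{\otimes k+1}$). Your definition $f^n=\wedge_d\circ(f^1)^{\otimes_A n}$ is the same map (this is exactly what the paper's induction establishes), and with your presentation $\wedge$-compatibility is indeed essentially free from functoriality of $\otimes_A$ and associativity of $\wedge_d$; so your route buys a cleaner treatment of the multiplicative structure. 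The uniqueness and epimorphism arguments via the surjectivity maps $p_u^n$, $p_d^n$ are the same in both.

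Where your sketch is weakest is the differential step, and you slightly mischaracterize what the computation looks like. The paper does \emph{not} redo the Amitsur sign-bookkeeping; instead it uses the already-established identity $\iota_u^{n+1}\circ d_u^n=d_A^n\circ\iota_u^n$, observes that post-composing $d_A^n$ with $(1_A\cdot d)\wedge d^{\wedge n}$ kills every summand except $k=0$ (because $d\circ i=0$), leaving $d^{\wedge n+1}\circ\iota_u^n$, and then applies the graded Leibniz rule together with $d_d^n(d^{\wedge n})=0$ (from $d^1d^0=0$) to rewrite this as $d_d^n\circ((1_A\cdot d)\wedge d^{\wedge n-1})\circ\iota_u^n=d_d^n\circ f^n$. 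From your side, one computes $f^{n+1}\circ d_u^n=\wedge_d\circ(f^1)^{\otimes n+1}\circ d_u^{\otimes n+1}\circ\iota_u^n=d^{\wedge n+1}\circ\iota_u^n$ directly (using $f^1d_u=d$), and then the same Leibniz/$d^2=0$ step finishes; so the argument is short and clean rather than a sum-over-$J$ calculation. Finally, note that your assertion ``$g^0=1_A$ is forced'' is not justified by what you wrote (a monoid endomorphism of $A$ need not be the identity); the paper's proof also tacitly works with morphisms fixing degree~$0$, so this is an ambiguity in the statement rather than a defect peculiar to your argument.
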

\begin{proof}
	We define $f^\bullet$ componentwise as the composite
	$$ f^n\colon \Omega^n_u\xrightarrow{\iota_u^n} A^{\otimes n+1}\xrightarrow{(1_A\cdot d)\wedge d^{\wedge n-1}} \Omega^n_d, $$
	where the second map expresses the surjectivity condition on $\Omega^\bullet_d$, and we are denoting $d:=d^0$. We need to check that this is a map of differential calculi. Note that for $n=0$ we have $f^0=1_A$ and for $n=1$ the morphism $f^1$ coincides with the one defined in Proposition~\ref{universal-prop}.
	
	First, we show that $f^\bullet$ respects the differentials. For any $n\geq 0$ we have
	\begin{align}
		f^{n+1}\circ d_u^n&=((1_A\cdot d)\wedge d^{\wedge n})\circ \iota_u^{n+1}\circ d_u^n \tag{def}\\
		&=((1_A\cdot d)\wedge d^{\wedge n})\circ d_A^n\circ\iota_u^n \tag{$\iota^\bullet$ map of monoids}\\
		&=((1_A\cdot d)\wedge d^{\wedge n})\circ (i\otimes 1_{A^{\otimes n+1}})\circ\iota_u^n \tag{def of $d_A$ and $d(i)=0$}\\
		&= d^{\wedge n+1}\circ\iota_u^n\tag{$i$ unit}\\
		&= (d\wedge d^{\wedge n} + 1_A\wedge d^n(d^{\wedge n}) )\circ\iota_u^n\tag{$d\circ d=0$}\\
		&= d^n\circ (1_A\wedge^{0,n} d^{\wedge n})\circ \iota_u^n\tag{n-dimensional Leibniz}\\
		&= d^n\circ ((1_A\cdot d)\wedge d^{\wedge n-1})\circ \iota_u^n \tag{$\cdot=\wedge^{0,n}$}\\
		&= d^n\circ f^n\tag{def}
	\end{align}
	
	Next, we need to show that it respects the $\wedge$; that is, for any $n,m\geq 0$ the following diagram
	\begin{center}
		\begin{tikzpicture}[baseline=(current  bounding  box.south), scale=2]
			
			\node (01) at (-1,0.8) {$\Omega_u^n\otimes \Omega_u^m$};
			\node (a1) at (0.7,0.8) {$\Omega_d^n\otimes \Omega_d^m$};
			\node (00) at (-1,0) {$\Omega_u^{n+m}$};
			\node (a0) at (0.7,0) {$\Omega_d^{n+m}$};

			\path[font=\scriptsize]

			(01) edge [->] node [above] {$f^n\otimes f^m$} (a1)
			(01) edge [->] node [left] {$\wedge^{n,m}_u$} (00)
			(a1) edge [->] node [right] {$\wedge^{n,m}$} (a0)
			(00) edge [->] node [below] {$f^{n+m}$} (a0);
		\end{tikzpicture}	
	\end{center} 
	commutes. For that, it is enough to show that the square below left commutes for any $k\geq 0$, where for simplicity we are writing $\Omega_u=\Omega_u^1$ and $\Omega_d=\Omega_d^1$.
	\begin{center}
		\begin{tikzpicture}[baseline=(current  bounding  box.south), scale=2]
			
			\node (b1) at (-4,1.2) {$\Omega_u^{\otimes k}$};
			\node (c1) at (-2.7,1.2) {$\Omega_d^{\otimes k}$};
			\node (b0) at (-4,0.4) {$\Omega_u^{k}$};
			\node (c0) at (-2.7,0.4) {$\Omega_d^{k}$};
			
			\node (*) at (-2,0.8) {$\stackrel{k=n+m}{=}$};
	
			\node (02) at (-1,1.6) {$\Omega_u^{\otimes n}\otimes \Omega_u^{\otimes m}$};
			\node (a2) at (0.9,1.6) {$\Omega_d^{\otimes n}\otimes \Omega_d^{\otimes m}$};
			\node (01) at (-1,0.8) {$\Omega_u^n\otimes \Omega_u^m$};
			\node (a1) at (0.9,0.8) {$\Omega_d^n\otimes \Omega_d^m$};
			\node (00) at (-1,0) {$\Omega_u^{n+m}$};
			\node (a0) at (0.9,0) {$\Omega_d^{n+m}$};			
			
			\path[font=\scriptsize]
			
			(b1) edge [->] node [above] {$(f^1)^{\otimes k}$} (c1)
			(b1) edge [->] node [left] {$\wedge_u^k$} (b0)
			(c1) edge [->] node [right] {$\wedge^k$} (c0)
			(b0) edge [->] node [below] {$f^{k}$} (c0)
			
			(02) edge [->] node [above] {$(f^1)^{\otimes n}\!\otimes\! (f^1)^{\otimes m}$} (a2)
			(02) edge [->] node [left] {$\wedge_u^n\otimes \wedge_u^m$} (01)
			(a2) edge [->] node [right] {$\wedge^n\otimes \wedge^m$} (a1)
			(01) edge [->] node [above] {$f^n\otimes f^m$} (a1)
			(01) edge [->] node [left] {$\wedge^{n,m}_u$} (00)
			(a1) edge [->] node [right] {$\wedge^{n,m}$} (a0)
			(00) edge [->] node [below] {$f^{n+m}$} (a0);
		\end{tikzpicture}	
	\end{center} 
	Indeed, notice that the equality above holds thanks to associativity of the wedge operation, where the top-right square is obtained by tensoring together two copies of the left square (for $k=n,m$). Then, since $\wedge_u\otimes\wedge_u$ is an epimorphism, the bottom-right square commutes if the left one does. 
	
	We prove commutativity of the left square by induction on $k$. For $k=0,1$ it is trivial; assume then that the square commutes for a given $k\geq 1$ and let us prove it for $k+1$:
	\begin{align}
		f^{k+1}\circ \wedge_u^{k+1}&=f^{k+1}\circ \wedge_u^{1,k}\circ (1_{\Omega_u}\otimes\wedge^k_u) \tag{associativity}\\
		&= ((1_A\cdot d)\wedge d^{\wedge k})\circ \iota_u^{k+1} \circ \wedge_u^{1,k}\circ (1_{\Omega_u}\otimes\wedge^k_u)\tag{def of $f^\bullet$}\\
		&= ((1_A\cdot d)\wedge d^{\wedge k})\circ (\iota_u^1\otimes_A \iota_u^{k})\circ \wedge_u^{1,k}  \circ (1_{\Omega_u}\otimes\wedge^k_u)\tag{def of $\iota_u^\bullet$}\\
		&= ((-d\cdot 1_A)\wedge d^{\wedge k})\circ (\iota_u^1\otimes_A \iota_u^{k})\circ \wedge_u^{1,k}  \circ (1_{\Omega_u}\otimes\wedge^k_u)\tag{ $(1_A\!\cdot\! d)\!\circ\!\iota_u^1\!=\!(-d\!\cdot\! 1_A)\!\circ\!\iota_u^1$}\\
		&= ((-d\cdot 1_A)\wedge d^{\wedge k})\circ \wedge_A^{1,k}\circ (\iota_u^1\otimes \iota_u^{k})  \circ (1_{\Omega_u}\otimes\wedge^k_u) \tag{def of $\iota_u^\bullet$}\\
		&= \wedge^{1,k}\circ \left((-d\cdot 1_A)\otimes ((1_A\cdot d)\wedge d^{\wedge k-1})\right)\circ (\iota_u^1\otimes \iota_u^{k})  \circ (1_{\Omega_u}\otimes\wedge^k_u) \tag{by \ref{compatibility-for d^n}}\\
		&= \wedge^{1,k}\circ \left(((-d\cdot 1_A)\circ \iota_u^1) \otimes f^k\right) \circ (1_{\Omega_u}\otimes\wedge^k_u) \tag{def}\\
		&= \wedge^{1,k}\circ \left((f^1\circ (-d_u\cdot 1_A)\circ \iota_u^1) \otimes f^k\right) \circ (1_{\Omega_u}\otimes\wedge^k_u) \tag{$f^1$ map of diff. calc.}\\
		&= \wedge^{1,k}\circ (f^1 \otimes f^k) \circ (1_{\Omega_u}\otimes\wedge^k_u) \tag{$(-d_u\cdot 1_A)\circ \iota_u^1=1_{\Omega_u}$}\\
		&= \wedge^{1,k}\circ (f^1 \otimes (f^k\circ \wedge^k_u)) \tag{functoriality}\\
		&= \wedge^{1,k}\circ (f^1 \otimes (\wedge^k\circ (f^1)^{\otimes k}))\tag{induction}\\
		&= \wedge^{1,k}\circ (1_{\Omega_d}\otimes \wedge^k)\circ (f^1 \otimes (f^1)^{\otimes k})\tag{functoriality}\\
		&= \wedge^{k+1}\circ (f^1)^{\otimes k+1}.  \tag{associativity}
	\end{align}
	Therefore $f^\bullet\colon\Omega_u^\bullet\to\Omega_d^\bullet$ defines a morphism of differential calculi. As a consequence the triangle 
	\begin{center}
		\begin{tikzpicture}[baseline=(current  bounding  box.south), scale=2]
			
			\node (c0) at (0.6,0) {$A^{\otimes n+1}$};
			\node (a0) at (0,-0.7) {$\Omega_u^n$};
			\node (b0) at (1.2,-0.7) {$\Omega_{d}^n$};
			
			\path[font=\scriptsize]
			
			(a0) edge [->] node [below] {$f^n$} (b0)
			(c0) edge [->>] node [left] {$(1_A\cdot d_u)\wedge_u d_u^{\wedge_u n-1} $} (a0)
			(c0) edge [->>] node [right] {$ (1_A\cdot d)\wedge d^{\wedge n-1}$} (b0);
		\end{tikzpicture}	
	\end{center} 
	commutes for any $n\geq 0$. Since the vertical maps are epimorphisms (by the surjectivity condition) it follows that $f^n$ is an epimorphism for each $n$ and that is the only morphism $\Omega_u^n\to\Omega_d^n$ of $\V$ making such a triangle commute. Therefore $f^\bullet$ is itself an epimorphism and is the only morphism $\Omega_u^\bullet\to\Omega_d^\bullet$.
\end{proof}

In the other direction, analogously to result \Cref{lem:epi-from-univ-always-calc}, we can prove:

\begin{prop}
	Let $f^\bullet\colon \Omega_u^\bullet\to M^\bullet$ be an epimorphism of monoids in $\Ch(\V)$ for which $f^0=1_A$. Then $M^\bullet$ is a differential calculus
\end{prop}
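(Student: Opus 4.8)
The plan is to check the two conditions of \Cref{def:ext-alg} for $M^\bullet$. Condition (i) is immediate: since $f^\bullet$ is a morphism of monoids in $\Ch(\V)$ with $f^0 = 1_A$, the degree-zero multiplication $\wedge_M^{0,0}$ is intertwined with $\wedge_u^{0,0} = m$ by the identity, so $\wedge_M^{0,0} = m$; likewise the unit of $M^\bullet$ is the image under $f^\bullet$ of the unit of $\Omega_u^\bullet$. Hence $M^0 = A$ as monoids in $\V$.

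For condition (ii), set $d_M := d_M^0\colon A \to M^1$, $\mu_M := \wedge_M^{0,1}\colon A\otimes M^1 \to M^1$, and write $\wedge_M^{(n)}\colon (M^1)^{\otimes n}\to M^n$ for the iterated wedge, with analogous notation $d_u, \mu_u, \wedge_u^{(n)}$ for $\Omega_u^\bullet$; the map we must prove epimorphic is
\[
p_M^n \;=\; \wedge_M^{(n)}\circ(\mu_M\otimes 1_{(M^1)^{\otimes n-1}})\circ(1_A\otimes d_M^{\otimes n})\colon A^{\otimes n+1}\longrightarrow M^n.
\]
I claim that $p_M^n = f^n\circ p_u^n$. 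This follows by unwinding the facts that $f^\bullet$, being a morphism of monoids in $\Ch(\V)$, is a chain map and respects $\wedge$: in degree zero the chain-map condition gives $f^1\circ d_u = d_M$ (using $f^0 = 1_A$), the $\wedge$-compatibility in bidegree $(0,1)$ gives $f^1\circ\mu_u = \mu_M\circ(1_A\otimes f^1)$, and iterating $\wedge$-compatibility gives $f^n\circ\wedge_u^{(n)} = \wedge_M^{(n)}\circ(f^1)^{\otimes n}$. Substituting these into $f^n\circ p_u^n$ and applying functoriality of $\otimes$ produces $p_M^n$ in a few lines; this is the graded analogue of the computation in \Cref{lem:epi-from-univ-always-calc}.

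It remains to observe that both factors of $p_M^n = f^n\circ p_u^n$ are epimorphisms. The map $f^n$ is an epimorphism by hypothesis, and $p_u^n$ is a split epimorphism: as shown in the proof that $(\Omega^\bullet_u,\wedge_u,d_u)$ is a differential calculus, $\iota_u^n$ is a right inverse to $p_u^n$. Therefore $p_M^n$ is an epimorphism for every $n$, which is exactly condition (ii), so $M^\bullet$ is a differential calculus over $A$. The only step requiring care is the verification of the identity $p_M^n = f^n\circ p_u^n$, where one must be attentive to which structural compatibility of $f^\bullet$ is invoked at each stage — the degree-zero differential, the left action $\wedge^{0,1}$, and the iterated wedge $\wedge^{(n)}$ — but each of these is part of the data of $f^\bullet$ being a monoid morphism in $\Ch(\V)$, so no real difficulty arises.
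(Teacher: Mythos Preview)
Your proof is correct and follows essentially the same approach as the paper: establish the commutativity of the triangle $p_M^n = f^n\circ p_u^n$ using that $f^\bullet$ is a morphism of monoids in $\Ch(\V)$, then conclude from $f^n$ being an epimorphism by hypothesis and $p_u^n$ being an epimorphism by the surjectivity condition for $\Omega_u^\bullet$. The paper is terser about condition~(i) and about why the triangle commutes, but your more explicit treatment of these points is accurate and matches the underlying reasoning.
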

\begin{proof}
	We only need to prove that $M^\bullet$ satisfies the surjectivity condition. But, for any $n\geq 0$, the following triangle 
	\begin{center}
		\begin{tikzpicture}[baseline=(current  bounding  box.south), scale=2]
			
			\node (c0) at (0.6,0) {$A^{\otimes n+1}$};
			\node (a0) at (0,-0.7) {$\Omega_u^n$};
			\node (b0) at (1.2,-0.7) {$M^n$};
			
			\path[font=\scriptsize]
			
			(a0) edge [->>] node [below] {$f^n$} (b0)
			(c0) edge [->>] node [left] {$(1_A\cdot d_u)\wedge_u d_u^{\wedge_u n-1} $} (a0)
			(c0) edge [->] node [right] {$ (1_A\cdot d_M^0)\wedge (d_M^0)^{\wedge n-1}$} (b0);
		\end{tikzpicture}	
	\end{center}
	commutes. Now, since $f^n$ is an epimorphism by hypothesis and so is the left vertical map (by the surjectivity condition on $\Omega_u^\bullet$), it follows that also the right vertical map is an epimorphism. Hence $M^\bullet$ is a differential calculus
\end{proof}

\begin{defi}
	Let $\Omega^1_d$ be a first order differential calculus. We define its maximal prolongation $\Omega_{d,max}^\bullet=\bigoplus_{n\geq 0}\Omega^n_d$ as follows:
	\begin{itemize}
		\item $\Omega_d^0=A$, $\eta\colon I\to A$ and $\wedge_d^{0,0}\colon A\otimes A\to A$ are the monoid unit and multiplication.
		\item $\Omega_d^1:=\Omega^1_d$, $d^0=d$, and $\wedge_d^{0,1}$ and $\wedge_d^{1,0}$ are the left and right actions of $\Omega^1_d$.
		\item For $n\geq 2$, we define $\Omega_d^n$ recursively as the colimit of the solid diagram below
		\begin{center}
			\begin{tikzpicture}[baseline=(current  bounding  box.south), scale=2]
				
				\node (a1) at (-0.3,0.9) {$\bigoplus_{i+j=n}^{i,j\geq 1} \Omega^i_u\otimes \Omega^j_u$};
				\node (b1) at (1.5,0.4) {$\Omega^n_u$};
				\node (c1) at (3,0.9) {$\Omega^{n-1}_u$};
				\node (a0) at (-0.3,0) {$\bigoplus_{i+j=n}^{i,j\geq 1} \Omega^i_d\otimes \Omega^j_d$};
				\node (b0) at (1.5,-0.5) {$\Omega^n_d$};
				\node (c0) at (3,0) {$\Omega^{n-1}_d$};
				
				\path[font=\scriptsize]
				
				(a1) edge [->] node [above] {$\wedge_u$} (b1)
				(b1) edge [<-] node [above] {$d^{n-1}_u$} (c1)
				(a0) edge [dashed, ->] node [below] {$\wedge_d$} (b0)
				(b0) edge [dashed, <-] node [below] {$d^{n-1}$} (c0)
				(a1) edge [->] node [left] {$\bigoplus f^i\otimes f^j$} (a0)
				(b1) edge [dashed, ->] node [right] {$f^n$} (b0)
				(c1) edge [->] node [right] {$f^{n-1}$} (c0);
			\end{tikzpicture}	
		\end{center} 
		where $f^0=1_A$ and $f^1$ is the unique map arising from Proposition~\ref{universal-prop}.
		\item The monoid structure $\wedge_d^{i,j}\colon \Omega^i_d\otimes \Omega^j_d\to \Omega^{i+j}_d$, for $i,j\geq 1$, and the differential $d^n\colon \Omega^n_d\to\Omega^{n+1}_d$ are induced recursively from the construction above; while, $\wedge_d^{0,j}$ and $\wedge_d^{i,0}$ will be constructed from the universal property of the colimit (see proposition below).
	\end{itemize}
\end{defi}

\begin{prop}
	For any first order differential calculus $\Omega_d^1$, its maximal prolongation $\Omega^\bullet_{d,max}$ is a differential calculus
\end{prop}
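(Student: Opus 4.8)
The plan is to exhibit the canonical comparison morphism $f^\bullet\colon\Omega_u^\bullet\to\Omega^\bullet_{d,max}$ and then reduce to the differential-calculus analogue of \Cref{lem:epi-from-univ-always-calc} proved just above: an epimorphism of monoids in $\Ch(\V)$ out of the universal calculus which is the identity in degree $0$ has a differential calculus as its codomain. Thus it suffices to check three things: that the recursive colimit recipe produces a well-defined monoid $\Omega^\bullet_{d,max}$ in $\Ch(\V)$ with $\Omega^0_{d,max}=A$; that the structure maps assemble into a morphism $f^\bullet\colon\Omega_u^\bullet\to\Omega^\bullet_{d,max}$ of monoids in $\Ch(\V)$ with $f^0=1_A$; and that each $f^n$ is an epimorphism in $\V$. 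The cited proposition then applies verbatim.

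The objects $\Omega^n_d$ together with the comparison maps $f^n$, the differentials $d^n$ and the wedge components $\wedge^{i,j}_d$ with $i,j\ge 1$ are produced simultaneously by the universal property of the defining colimit; this colimit exists because $\V$ — hence every category in play — has finite colimits and the tensor product preserves them in each variable. The degenerate components $\wedge^{0,j}_d$ and $\wedge^{i,0}_d$ are obtained by observing that $A\otimes(-)$ and $(-)\otimes A$ preserve the defining colimit, so that $A\otimes\Omega^n_d$ and $\Omega^n_d\otimes A$ are again colimits onto which the left/right $A$-actions of $\Omega^\bullet_u$ and the recursively available lower actions of $\Omega^\bullet_d$ induce canonical maps; one checks these form cocones. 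That $f^\bullet$ is a morphism of monoids in $\Ch(\V)$ with $f^0=1_A$ is then built into the construction: the squares in the definition of $\Omega^\bullet_{d,max}$ say exactly that $f^\bullet$ commutes with $\wedge$ and with the differentials in degrees $\ge 1$, while in degrees $0,1$ the compatibility with $\wedge$ and $d$ is the content of \Cref{universal-prop} ($f^1$ is an $A$-bimodule map with $f^1 d_u=d$).

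That $f^n$ is an epimorphism follows by induction once one decomposes the colimit defining $\Omega^n_d$ into two successive pushouts, first gluing $\Omega^n_u$ to $\bigoplus_{i+j=n}\Omega^i_d\otimes\Omega^j_d$ along $\bigoplus f^i\otimes f^j$, then gluing the result to $\Omega^{n-1}_d$ along $f^{n-1}$: since $\otimes$ and $\oplus$ preserve epimorphisms, $\bigoplus_{i,j}(f^i\otimes f^j)$ and $f^{n-1}$ are epimorphisms by the inductive hypothesis, pushouts of epimorphisms are epimorphisms, and so $f^n$ is a composite of two epimorphisms. With the $f^n$ known to be epimorphic, the remaining dg-algebra axioms for $\Omega^\bullet_{d,max}$ — associativity and unitality of $\wedge_d$, the identities $d^n d^{n-1}=0$, and the graded Leibniz rule — are verified by induction on degree: each is an equality of parallel maps out of some $\Omega^i_d\otimes\cdots$ or out of $\Omega^{n-1}_d$, and can be tested after precomposition with an epimorphism of the form $f^i\otimes\cdots\otimes f^k$ (or simply $f^{n-1}$), whereupon the naturality of $f^\bullet$ with respect to $\wedge$ and $d$ reduces it to the corresponding, already-established identity in the differential calculus $\Omega^\bullet_u$ (\Cref{univ-ext-alg}) together with the inductive hypothesis in lower degrees.

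The main obstacle is this last inductive bookkeeping: verifying associativity and the graded Leibniz identity for the colimit-defined $\wedge_d$, and checking that the degenerate wedge components $\wedge^{0,j}_d,\wedge^{i,0}_d$ coincide with the module structure of $\Omega^j_d$ and are coherent with the higher products and with the differential. None of these steps is conceptually difficult given the tools above — the recurring device is that any identity among maps into an $\Omega^n_d$ can be tested against the jointly epimorphic generating legs of its defining colimit, and any identity among maps out of a tensor power of the $\Omega^i_d$ can be tested after tensoring the epimorphisms $f^i$ — but assembling all the cases is where the work lies.
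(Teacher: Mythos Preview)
Your proposal is correct and follows essentially the same approach as the paper: prove each $f^n$ is an epimorphism by induction (using that tensor products, direct sums, and pushouts preserve epimorphisms), construct the degenerate wedge components $\wedge^{0,n}_d$, $\wedge^{m,0}_d$ via the universal property of the colimit after applying $A\otimes-$ or $-\otimes A$, and then verify the dg-algebra axioms by precomposing with the epimorphisms $f^\bullet$ to reduce to the corresponding identities in $\Omega_u^\bullet$. Note that the ``reduction'' to the cited proposition only buys you the surjectivity condition, since that proposition already assumes the codomain is a monoid in $\Ch(\V)$; thus the bulk of the work---establishing the dg-algebra structure---is exactly what the paper does, and your last paragraph correctly identifies where it lies.
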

\begin{proof}
	By definition we have maps $f^n\colon \Omega^n_u\to\Omega^n_d$ for any $n\geq 0$. 
	Note that $f^0$ and $f^1$ are epimorphisms (the first is the identity on $A$ and the second is epi by \ref{universal-prop}), and so is $f^n$: assume inductively that $f^i$ is an epimorphism for any $i<n$, then $f^n$ is obtained from these $f^i$ by taking tensor products, direct sums, and iterative pushouts.
	Then $f^n$ is an epimorphism since all these operations preserve epimorphism.
	
	As mentioned in the definition above, we need to construct $\wedge_d^{0,n}$ and $\wedge_d^{m,0}$ for $n,m\geq0$. Let us consider $\wedge_d^{0,n}$ first and define it recursively on $n$. For $n=0,1$ the map is given by the monoid multiplication of $A$ ($n=0$) and by the left action of $A$ on $\Omega^1_d$ as a left $A$-module ($n=1$). Consider now $n\geq 2$, and assume to have defined $\wedge_d^{0,i}$, for $i<n$, and that these are preserved by $f$. Since $A\otimes -$ preserves finite colimits, the colimit of the solid part of the diagram above is $A\otimes \Omega_d^n$.
	\begin{center}
		\begin{tikzpicture}[baseline=(current  bounding  box.south), scale=2]
			
			\node (a1) at (-0.3,0.9) {$\bigoplus_{i+j=n}^{i,j\geq 1} A\otimes \Omega^i_u\otimes \Omega^j_u$};
			\node (b1) at (1.5,0.4) {$A\otimes \Omega^n_u$};
			\node (c1) at (3.2,0.9) {$A\otimes \Omega^{n-1}_u$};
			
			\node (a0) at (-0.3,0) {$\bigoplus_{i+j=n}^{i,j\geq 1} A\otimes \Omega^i_d\otimes \Omega^j_d$};
			\node (c0) at (3.2,0) {$A\otimes \Omega^{n-1}_d$};
			
			\node (a01) at (-0.1,-0.9) {$\bigoplus_{i+j=n}^{i,j\geq 1} \Omega^i_{d'}\otimes \Omega^j_{d'}$};
			
			\node (b0) at (1.5,-1.4) {$\Omega^n_{d'}$};
			
			\path[font=\scriptsize]
			
			(a1) edge [->] node [above] {$1_A\otimes \wedge_u$} (b1)
			(b1) edge [<-] node [above] {$1_A\otimes d^{n-1}_u$} (c1)
			(a1) edge [->] node [left] {$\bigoplus 1_A\otimes f^i\otimes f^j$} (a0)
			(c1) edge [->] node [right] {$1_A\otimes f^{n-1}$} (c0)
			
			(a0) edge [bend right=10, dashed, ->] node [left] {$\bigoplus \wedge_d^{0,i}\otimes 1_{\Omega_d^j}$} (a01)
			(b1) edge [dashed, ->] node [right] {$f^n(\wedge_u^{0,n})$} (b0)
			(c0) edge [bend left=40, dashed, ->] node [right] {$\ d^{n-1}(\wedge_d^{0,n-1})- \wedge_d^{1,n-1}(d^1\otimes 1_{\Omega_d^{n-1}})$} (b0)
			(a01) edge [bend right=15, dashed, ->] node [below] {$\wedge_d^{i,j}$} (b0);
		\end{tikzpicture}	
	\end{center} 
	The dashed part form a cocone for the solid diagram; thus there is a unique induced map $\wedge_d^{0,n}\colon A\otimes \Omega_d^n\to\Omega_d^n$ which distributes over the differential (commutativity of the diagram on the right) and satisfies the associativity condition with respect to the other degrees. To define $\wedge_d^{m,0}$ one argues in a similar way by tensoring with $-\otimes A$.
	
	Now, by definition the following squares
		\begin{center}
		\begin{tikzpicture}[baseline=(current  bounding  box.south), scale=2]
			
			\node (a1) at (0,0.8) {$\Omega^i_u\otimes \Omega^j_u$};
			\node (b1) at (1.5,0.8) {$\Omega^{i+j}_u$};
			\node (a0) at (0,0) {$\ \Omega^i_d\otimes \Omega^j_d$};
			\node (b0) at (1.5,0) {$\Omega^{i+j}_d$};
			
			\node (a11) at (3,0.8) {$\Omega^n_u$};
			\node (b11) at (4.5,0.8) {$\Omega^{n+1}_u$};
			\node (a01) at (3,0) {$\ \Omega^n_d$};
			\node (b01) at (4.5,0) {$\Omega^{n+1}_d$};
			
			\path[font=\scriptsize]
			
			(a11) edge [->] node [above] {$d^n_u$} (b11)
			(a01) edge [->] node [below] {$d^n$} (b01)
			(a11) edge [->>] node [left] {$f^n$} (a01)
			(b11) edge [->>] node [right] {$f^{n+1}$} (b01)
			
			(a1) edge [->] node [above] {$\wedge_u$} (b1)
			(a0) edge [->] node [below] {$\wedge_d$} (b0)
			(a1) edge [->>] node [left] {$f^i\otimes f^j$} (a0)
			(b1) edge [->>] node [right] {$f^n$} (b0);
		\end{tikzpicture}	
	\end{center} 
	commute for any $i,j,n\geq0$. Using this plus the fact that each $f^n$ is an epimorphism it follows easily that $d^n$ is a differential, that $\wedge_d$ satisfies the monoid conditions from Example~\ref{MonoidInChain}, and that the surjectivity condition holds. Indeed, by precomposition with $f^n$ (or any tensors of them) each commutativity condition involving $\Omega^\bullet_{d,max}$ can be seen as a consequence of the corresponding commutativity condition for $\Omega_u^\bullet$.
\end{proof}

\begin{prop}\label{unique-morphism-ealg}
	Let $\Omega^1_d$ be a first order differential calculus and $\Omega^\bullet_{d'}$ be a differential calculus with $\Omega^1_{d'}=\Omega^1_d$ and $d'^1=d$. 
	Then there exists a unique morphism of differential calculi $h\colon \Omega^\bullet_{d,max}\to\Omega^\bullet_{d'}$ with $h^0=1_A$ and $h^1=1_{\Omega^1_d}$.
\end{prop}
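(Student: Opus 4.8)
The plan is to construct $h^\bullet$ by induction on the degree, exploiting the recursive colimit definition of $\Omega^\bullet_{d,max}$. The base cases are prescribed: $h^0 = 1_A$ and $h^1 = 1_{\Omega^1_d}$, and these visibly respect $\wedge^{0,0}$, $\wedge^{0,1}$, $\wedge^{1,0}$ and $d^0 = d = d'^1$. For the inductive step, suppose $h^i \colon \Omega^i_{d,max} \to \Omega^i_{d'}$ has been defined for all $i < n$ ($n \ge 2$), compatibly with all the $\wedge$-operations and differentials in those degrees. Recall that $\Omega^n_{d,max}$ is the colimit of the solid diagram with legs $\wedge_u \colon \bigoplus_{i+j=n}^{i,j\ge 1}\Omega^i_u\otimes\Omega^j_u \to \Omega^n_u$ and $d_u^{n-1}\colon \Omega^{n-1}_u \to \Omega^n_u$, and the maps from these into $\Omega^n_d$. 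To get a map out of this colimit into $\Omega^n_{d'}$ I must produce a cocone: on the summand $\Omega^i_{d,max}\otimes\Omega^j_{d,max}$ use $\wedge^{i,j}_{d'}\circ(h^i\otimes h^j)$, and on $\Omega^{n-1}_{d,max}$ use $d'^{n-1}\circ h^{n-1}$. The cocone condition is precisely that these agree after pulling back along the connecting maps of the diagram, which follows from the inductive hypothesis that $h^{n-1}$ (and lower) respect $\wedge$ and $d$, together with the fact that in $\Omega^\bullet_{d'}$ we have $d'(\omega\wedge\omega') = d'\omega\wedge\omega' \pm \omega\wedge d'\omega'$ (the dg-algebra axiom from Example~\ref{MonoidInChain}). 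This yields a unique $h^n\colon \Omega^n_{d,max}\to\Omega^n_{d'}$ whose composites with $\wedge_d$ and $d^{n-1}$ are the prescribed maps; in particular $h^n$ respects $\wedge^{i,j}_d$ for $i,j\ge 1$ with $i+j=n$ and respects $d^{n-1}$.

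After constructing all the $h^n$, I would verify the remaining axioms of a morphism of differential calculi: that $h^\bullet$ commutes with $d^n$ for every $n$ (not just $n-1$ feeding into degree $n$, but also $d^n$ out of degree $n$), and that it respects the ``mixed'' products $\wedge^{0,n}_d$ and $\wedge^{n,0}_d$ which were defined separately via the universal property of $A\otimes-$ and $-\otimes A$ applied to the colimit. For the latter, the cleanest route is to observe that $\wedge^{0,n}_d$ is itself induced from a cocone, and that $h^0\otimes h^n = 1_A\otimes h^n$ followed by $\wedge^{0,n}_{d'}$ defines a compatible cocone morphism; uniqueness in the colimit then forces $h^n\circ\wedge^{0,n}_{d,max} = \wedge^{0,n}_{d'}\circ(1_A\otimes h^n)$. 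The compatibility with $d^n$ out of degree $n$ follows similarly since $d^n$ on $\Omega^n_{d,max}$ is, by construction, the map induced on the colimit, and $h$ was built to commute with the colimit structure maps.

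For uniqueness: any morphism $g^\bullet$ of differential calculi with $g^0 = 1_A$ and $g^1 = 1_{\Omega^1_d}$ must, by induction, agree with $h^\bullet$ in each degree. In degree $n$, the surjectivity condition on $\Omega^\bullet_{d,max}$ (Definition~\ref{def:ext-alg}(ii)) says the composite $(1_A\cdot d)\wedge d^{\wedge n-1}\colon A^{\otimes n+1}\to\Omega^n_{d,max}$ is an epimorphism. Both $g^n$ and $h^n$, being morphisms of differential calculi restricting to the identity in degrees $0$ and $1$, send this epimorphism to the analogous composite $(1_A\cdot d)\wedge d^{\wedge n-1}$ into $\Omega^n_{d'}$ (using the inductive hypothesis in lower degrees together with compatibility with $\wedge$ and $d$); hence $g^n$ and $h^n$ agree after precomposition with an epimorphism, so $g^n = h^n$.

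The main obstacle I anticipate is the cocone-compatibility verification in the inductive step, i.e.\ checking that the prescribed maps $\wedge^{i,j}_{d'}\circ(h^i\otimes h^j)$ on the various summands and $d'^{n-1}\circ h^{n-1}$ genuinely assemble into a cocone over the defining diagram of $\Omega^n_{d,max}$. This requires carefully tracking how the connecting maps $\wedge_u$ and $d_u^{n-1}$ of the universal diagram interact, and unwinding the recursive structure so that the dg-algebra identities in $\Omega^\bullet_{d'}$ (associativity of $\wedge$ and the graded Leibniz rule for $d'$) can be applied at each node; the bookkeeping is somewhat intricate, though conceptually routine. A secondary subtlety is ensuring the separately-defined mixed products $\wedge^{0,n}$, $\wedge^{n,0}$ are handled — but as indicated above, invoking the relevant universal properties reduces this to the main induction rather than requiring new work.
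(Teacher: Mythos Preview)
Your overall strategy matches the paper's, but there is a genuine gap in the existence argument. The solid diagram whose colimit is $\Omega^n_{d,max}$ has \emph{five} objects, not four: it includes $\Omega^n_u$ as a vertex, receiving the arrows $\wedge_u$ and $d_u^{n-1}$. A cocone therefore requires a component $\Omega^n_u\to\Omega^n_{d'}$, which you never supply. You specify legs only on $\bigoplus\Omega^i_{d,max}\otimes\Omega^j_{d,max}$ and $\Omega^{n-1}_{d,max}$, and then claim the cocone condition reduces to the graded Leibniz rule in $\Omega^\bullet_{d'}$; but that identity does not by itself produce the missing leg, nor does it show that the two pullbacks to $\bigoplus\Omega^i_u\otimes\Omega^j_u$ and $\Omega^{n-1}_u$ factor compatibly through $\Omega^n_u$.

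The paper handles this by first invoking Proposition~\ref{univ-ext-alg}: the universal property of $\Omega^\bullet_u$ gives a unique morphism of differential calculi $g^\bullet\colon\Omega^\bullet_u\to\Omega^\bullet_{d'}$, and the component $g^n$ is precisely the required leg on $\Omega^n_u$. Compatibility of $g^n$ with the other legs then holds because $g^\bullet$ is a dg-algebra map and (inductively) $h^i\circ f^i=g^i$ for $i<n$. So the fix is short, but it does require citing that earlier proposition rather than the Leibniz rule alone. Your uniqueness argument via the surjectivity epimorphism $A^{\otimes n+1}\twoheadrightarrow\Omega^n_{d,max}$ is fine and amounts to the same thing as the paper's (which uses that the $f^n$ are epimorphisms together with uniqueness of $g^\bullet$).
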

\begin{proof}
	By Proposition~\ref{univ-ext-alg} there exists a unique morphism $g\colon \Omega^\bullet_u\to\Omega_{d'}^\bullet$ which is moreover an epimorphism. By uniqueness of $g$, the morphism $h$ we define will satisfy $hf=g$ and will be unique since $g$ is an epimorphism.
	
	Define $h^0=1_A$ and $h^1=1_{\Omega^1_d}$; for $n\geq 2$ we assume to have already defined $h^i$, for $i<n$, and argue recursively. Consider the (dashed) cocone on the diagram which defines $\Omega^n_d$ as a colimit:
	\begin{center}
		\begin{tikzpicture}[baseline=(current  bounding  box.south), scale=2]
			
			\node (a1) at (-0.3,0.9) {$\bigoplus_{i+j=n}^{i,j\geq 1} \Omega^i_u\otimes \Omega^j_u$};
			\node (b1) at (1.5,0.4) {$\Omega^n_u$};
			\node (c1) at (3,0.9) {$\Omega^{n-1}_u$};
			
			\node (a0) at (-0.3,0) {$\bigoplus_{i+j=n}^{i,j\geq 1} \Omega^i_d\otimes \Omega^j_d$};
			\node (c0) at (3,0) {$\Omega^{n-1}_d$};
			
			\node (a01) at (-0.3,-0.9) {$\bigoplus_{i+j=n}^{i,j\geq 1} \Omega^i_{d'}\otimes \Omega^j_{d'}$};
			\node (c01) at (3,-0.9) {$\Omega^{n-1}_{d'}$};
			
			\node (b0) at (1.5,-1.4) {$\Omega^n_{d'}$};
			
			\path[font=\scriptsize]
			
			(a1) edge [->] node [above] {$\wedge_u$} (b1)
			(b1) edge [<-] node [above] {$d^{n-1}_u$} (c1)
			(a1) edge [->] node [left] {$\bigoplus f^i\otimes f^j$} (a0)
			(c1) edge [->] node [right] {$f^{n-1}$} (c0)
			
			(a0) edge [dashed, ->] node [left] {$\bigoplus h^i\otimes h^j$} (a01)
			(b1) edge [dashed, ->] node [right] {$g^n$} (b0)
			(c0) edge [dashed, ->] node [right] {$h^{n-1}$} (c01)
			(a01) edge [dashed, ->] node [below] {$\wedge_d$} (b0)
			(b0) edge [dashed, <-] node [below] {$d^{n-1}$} (c01);
		\end{tikzpicture}	
	\end{center} 
	Then, by the universal property of the colimit defining $\Omega^n_d$, we obtain a map $h^n\colon \Omega^n_d\to\Omega^n_{d'}$ which satisfies the required commutativity properties.
\end{proof}

\subsection{Differential calculi and left adjoints}
\label{sec:ext-alg-left-adj}

In this subsection we show the universal property of the maximal prolongation as a left adjoint. 
Let us begin by denoting by $\EAlg(\V)$ the full subcategory of $\Mon(\Ch(\V))$ whose objects are the differential calculi.

\begin{lemma}\label{unique-extension-ealg}
	Given two differential calculi $\Omega^\bullet_d$ and $\Theta^\bullet_\delta$, and a morphism of monoids $f\colon \Omega_d^0\to \Theta_\delta^0$ there exists at most one morphism of differential calculi
	$$ f^\bullet\colon \Omega^\bullet_d\to\Theta^\bullet_\delta $$
	for which $f^0=f$.
\end{lemma}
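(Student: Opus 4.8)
The plan is to exploit the surjectivity condition of \Cref{def:ext-alg}(ii). For each $n$ it provides an epimorphism in $\V$
$$
p_n\colon (\Omega^0_d)^{\otimes n+1}\xrightarrow{\ 1\otimes (d^0)^{\otimes n}\ }\Omega^0_d\otimes(\Omega^1_d)^{\otimes n}\xrightarrow{\ \wedge^{0,1}_d\otimes 1\ }(\Omega^1_d)^{\otimes n}\xrightarrow{\ \wedge\ }\Omega^n_d,
$$
and likewise an analogous $p_n^\delta$ attached to $\Theta^\bullet_\delta$. A morphism of differential calculi $f^\bullet\colon\Omega^\bullet_d\to\Theta^\bullet_\delta$ is by definition a morphism of monoids in $\Ch(\V)$, hence at once a chain map, so $f^{k+1}\circ d^k=\delta^k\circ f^k$, and compatible with the wedge products, so $f^{i+j}\circ\wedge^{i,j}_d=\wedge^{i,j}_\delta\circ(f^i\otimes f^j)$. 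The idea is to compute $f^n\circ p_n$ by pushing the maps $f^k$ through these two families of identities until only $f^0=f$ remains.

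Concretely, I would first use compatibility with $\wedge$ repeatedly to rewrite $f^n$ composed with the iterated wedge $(\Omega^1_d)^{\otimes n}\to\Omega^n_d$ as the iterated wedge of $\Theta^\bullet_\delta$ precomposed with $(f^1)^{\otimes n}$ (all bracketings agreeing by associativity of $\wedge$, which holds in any monoid in $\Ch(\V)$). Then, using functoriality of $\otimes$ together with $f^1\circ\wedge^{0,1}_d=\wedge^{0,1}_\delta\circ(f^0\otimes f^1)$, I would move $f^0$ out past $\wedge^{0,1}$; and finally the chain-map identity $f^1\circ d^0=\delta^0\circ f^0$ eliminates every remaining occurrence of $f^1$. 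The outcome is the identity
$$
f^n\circ p_n\;=\;p_n^\delta\circ (f^0)^{\otimes n+1}\;=\;p_n^\delta\circ f^{\otimes n+1},
$$
whose right-hand side depends only on the prescribed $f$. Since $p_n$ is an epimorphism, this pins down $f^n$ uniquely from $f$ for every $n\ge 1$, while $f^0=f$ is given; hence at most one such $f^\bullet$ can exist. No induction is actually needed, as each $f^n$ is determined directly.

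The only delicate part is the bookkeeping in that displayed computation — keeping track of which tensor factors the maps $\wedge^{i,j}$ and the implicit associator isomorphisms act on while commuting the $f^k$'s past them — but this is entirely routine and runs exactly parallel to the uniqueness argument already carried out in the proof of \Cref{univ-ext-alg} for the universal differential calculus $\Omega^\bullet_u$, of which the present statement is essentially the general-codomain variant. I do not foresee any genuine obstacle beyond this calculation.
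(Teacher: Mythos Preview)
Your proposal is correct and is essentially the same argument as the paper's: both establish the commuting square $f^n\circ p_n=p_n^\delta\circ (f^0)^{\otimes n+1}$ from the fact that $f^\bullet$ preserves the differential and the wedge, and then invoke that $p_n$ is an epimorphism to conclude uniqueness of $f^n$. The paper's version is terser, simply noting that the $p_n$ are built from $d$ and $\wedge$ and hence are automatically intertwined by any morphism of differential calculi, whereas you spell out the same computation step by step.
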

\begin{proof}
	Suppose we have a morphism $f^\bullet\colon \Omega^\bullet_d\to\Theta^\bullet_\delta$ in $\EAlg(\V)$, and consider the epimorphisms
	$$ p_d\colon (\Omega^0_d)^{\otimes n+1}\twoheadrightarrow \Omega^n_d \text{\ \ and\ \ } p_\delta\colon (\Theta^0_\delta)^{\otimes n+1}\twoheadrightarrow \Theta^n_\delta $$
	induced by the surjectivity conditions. Since these are constructed using the differentials and the monoid operations of the differential calculi, and $f^\bullet$ preserves these by definition, it follows that the following diagram
	\begin{center}
		\begin{tikzpicture}[baseline=(current  bounding  box.south), scale=2]
			
			\node (a0) at (0,0.8) {$(\Omega^0_d)^{\otimes n+1}$};
			\node (b0) at (0,0) {$(\Theta^0_\delta)^{\otimes n+1}$};
			\node (c0) at (1.3,0.8) {$\Omega^n_d$};
			\node (d0) at (1.3,0) {$\Theta^n_\delta$};
			
			\path[font=\scriptsize]
			
			(a0) edge [->] node [left] {$(f^0)^{\otimes n+1}$} (b0)
			(a0) edge [->>] node [above] {$p_d$} (c0)
			(b0) edge [->>] node [below] {$p_\delta$} (d0)
			(c0) edge [->] node [right] {$f^n$} (d0);
		\end{tikzpicture}	
	\end{center}
	commutes. Since $p_d$ is an epimorphism in $\V$ it follows that, given $f^0$, the map $f^n$ is the only map with this commutativity property. 
\end{proof}

We can then define a functor 
$$ \pi\colon  \EAlg(\V) \longrightarrow \Calc(\V)$$
sending $\Omega^\bullet$ to $\pi(\Omega^\bullet):= (\Omega^0,\Omega^1)$. This is well-defined: if $\Omega^\bullet=(\Omega^\bullet,\eta,\wedge,d)$ is a differential calculus, then:\begin{itemize}
	\item $\Omega^0$ is a a monoid in $\V$ with unit $\eta$ and multiplication
	$$ \wedge^{0,0}\colon\Omega^0\otimes \Omega^0\to\Omega^0;$$
	\item $\Omega^1$ is a $\Omega^0$-bimodule with actions
	$$ \wedge^{0,1}\colon\Omega^0\otimes \Omega^1\to\Omega^1 \text{\ \  and\ \ } \wedge^{1,0}\colon\Omega^1\otimes \Omega^0\to\Omega^1; $$
	\item $(\Omega^1,d^0)$ is a first order differential calculus over $\Omega^0$, where the surjectivity condition for this follows from the surjectivity condition for $\Omega^\bullet$ at degree $0$.
\end{itemize}

\begin{prop}\label{prop:right-and-left-adj-of-EAlg-Calc}
	The projection $\pi\colon \EAlg(\V)\to\Calc(\V)$ has:\begin{enumerate}
		\item a right adjoint, which is given by sending $(A,\Omega_d)$ to the trivial extension $\Omega_{d,0}^\bullet$ defined by $\Omega_{d,0}^0=A$, $\Omega_{d,0}^1=\Omega_d$, $d^0=d$, and is trivial everywhere else; 
		\item a left adjoint, which is given by sending $(A,\Omega_d)$ to the maximal prolongation $\Omega_{d,max}^\bullet$.
	\end{enumerate} 
\end{prop}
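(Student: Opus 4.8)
The plan is to verify the two adjunctions separately, each time exploiting the fact that a morphism of differential calculi is uniquely determined by its degree-zero part (Lemma~\ref{unique-extension-ealg}), so that the bijections we must exhibit become bijections between morphism sets in $\Calc(\V)$ and in $\Mon(\V)$ (or rather between suitable subsets of them). For the right adjoint, I would show
$$ \EAlg(\V)(\Omega_d^\bullet, \Theta_{\delta,0}^\bullet)\cong \Calc(\V)((\Omega_d^0,\Omega_d^1),(A,\Theta_\delta)) $$
naturally, where $(A,\Theta_\delta)$ is the given object of $\Calc(\V)$. The key observation is that, since $\Theta_{\delta,0}^\bullet$ vanishes in degrees $\geq 2$, any morphism $f^\bullet\colon\Omega_d^\bullet\to\Theta_{\delta,0}^\bullet$ of monoids in $\Ch(\V)$ is forced to be zero in degrees $\geq 2$, and the compatibility with $\wedge$ and $d$ in degrees $0$ and $1$ reduces exactly to the condition that $(f^0,f^1)$ is a morphism in $\Calc(\V)$; conversely any such $(f^0,f^1)$ extends (uniquely, by Lemma~\ref{unique-extension-ealg}) to a morphism of differential calculi by padding with zeros, the surjectivity condition guaranteeing that the degree-$n$ components are determined and that they are indeed zero. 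Naturality is automatic since everything is built from functorial operations.

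For the left adjoint, the statement to prove is
$$ \EAlg(\V)(\Omega_{d,max}^\bullet, \Theta^\bullet)\cong \Calc(\V)((A,\Omega_d),(\Theta^0,\Theta^1)) $$
naturally in $(A,\Omega_d)\in\Calc(\V)$ and $\Theta^\bullet\in\EAlg(\V)$. Given a morphism $(f,g)\colon(A,\Omega_d)\to(\Theta^0,\Theta^1)$ in $\Calc(\V)$, I would produce a morphism of differential calculi $f^\bullet\colon\Omega_{d,max}^\bullet\to\Theta^\bullet$ with $f^0=f$ and $f^1=g$ (under the identification $g\colon\Omega_d\to f^*\Theta^1$) by the recursive universal-property argument of Proposition~\ref{unique-morphism-ealg}: in degree $0$ and $1$ it is $f$ and $g$, and in degree $n\geq 2$ one uses that $\Omega^n_{d,max}$ is a colimit of the explicit diagram in the definition of the maximal prolongation, the images of its legs under the would-be $f^\bullet$ assembling into a cocone over that diagram with vertex $\Theta^n$ because $\Theta^\bullet$ is itself a differential calculus (so it has compatible $\wedge$'s and $d$'s to receive the cocone). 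The naturality and functoriality of the colimit construction give a map of differential calculi, and Lemma~\ref{unique-extension-ealg} shows it is the \emph{only} one restricting to $f$ in degree zero; this simultaneously gives injectivity of the assignment. Surjectivity in the other direction is immediate: given $f^\bullet\colon\Omega_{d,max}^\bullet\to\Theta^\bullet$, its pair $(f^0,f^1)$ lies in $\Calc(\V)$ since $f^\bullet$ preserves the degree-zero monoid structure, the $A$-module structure on $\Omega^1$, and the differential $d^0$.

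I would organize the remaining bookkeeping by first recording, as a short lemma or inline remark, that for any differential calculus $\Theta^\bullet$ the degree-zero and degree-one parts form an object of $\Calc(\V)$ and that a morphism $h\colon\Omega^0_d\to\Theta^0$ of monoids together with a compatible $\Omega^0_d$-bimodule map $\Omega^1_d\to h^*\Theta^1$ intertwining the differentials is precisely the data needed to invoke the recursion; this is exactly the content of the definition of morphism in $\Calc(\V)$, so no new work is required. The unit of the left adjunction at $(A,\Omega_d)$ is then $(1_A,1_{\Omega_d})\colon(A,\Omega_d)\to\pi(\Omega_{d,max}^\bullet)$ (which makes sense because $\pi\Omega_{d,max}^\bullet=(A,\Omega_d)$ by construction), and the counit at $\Theta^\bullet$ is the unique morphism of differential calculi $\Omega^\bullet_{(\Theta^1),max}\to\Theta^\bullet$ restricting to the identity in degrees $0$ and $1$, guaranteed by Proposition~\ref{unique-morphism-ealg}.

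The main obstacle is verifying that the recursively defined maps $f^\bullet$ in the left-adjoint part genuinely respect the monoid multiplication $\wedge$ (including the mixed actions $\wedge^{0,n}$ and $\wedge^{n,0}$, which in the maximal prolongation are constructed via a separate universal-property argument using that $A\otimes-$ and $-\otimes A$ preserve finite colimits) and the differential in \emph{all} bidegrees, not just the ones directly appearing in the defining colimit; this requires carefully unwinding the recursion and repeatedly using that each $f^n$ is an epimorphism (as each is built from the epimorphisms $f^0=1_A$, $f^1$, via tensor products, direct sums, and pushouts, all of which preserve epimorphisms) so that the higher-degree coherence identities can be checked after precomposition with the surjections $p_n$ — precisely the technique already used in the proof that $\Omega^\bullet_{d,max}$ is a differential calculus. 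Once that epimorphism-reduction is in place the verifications are routine diagram chases, so I would not grind through them but merely indicate that each coherence square for $\Omega^\bullet_{d,max}$ maps to the corresponding one for $\Theta^\bullet$.
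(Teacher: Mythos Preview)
Your approach is essentially the same as the paper's: for (i) you observe that maps into the trivial extension are determined by degrees $0$ and $1$, and for (ii) you combine Lemma~\ref{unique-extension-ealg} (injectivity) with the recursive colimit construction of Proposition~\ref{unique-morphism-ealg} (surjectivity), exactly as the paper does, only noting that $h^0$ need not be an identity.

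One point to clean up: in your final paragraph you use $f^n$ for two different families of maps. Earlier you set $f^\bullet\colon\Omega_{d,max}^\bullet\to\Theta^\bullet$ with $f^0=f$ the given monoid morphism, but then you write ``each $f^n$ is an epimorphism (as each is built from the epimorphisms $f^0=1_A$, $f^1$, \ldots)''. The maps $\Omega_{d,max}^n\to\Theta^n$ are \emph{not} epimorphisms in general (take $\Theta^\bullet=\Omega_u^\bullet$ and $\Omega_d$ a proper quotient of $\Omega_u^1$). What you need, and what the paper's technique actually uses, are the structural epimorphisms $\Omega_u^n\twoheadrightarrow\Omega_{d,max}^n$ from the universal calculus: precomposing with these (or with the surjections $p_n\colon A^{\otimes n+1}\twoheadrightarrow\Omega_{d,max}^n$) reduces the coherence checks for your $f^\bullet$ to identities already known for $\Omega_u^\bullet$ and $\Theta^\bullet$. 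Once you disentangle the two uses of $f^n$, your argument goes through.
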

\begin{proof}
	(i). We need to show that
	$$ \EAlg(\V)(\Theta_\delta^\bullet, \Omega^\bullet_{d,0})\cong\Calc(\V)(\pi(\Theta_\delta^\bullet),(A,\Omega_d)) $$
	naturally in $\Theta_\delta^\bullet$ in $\EAlg(\V)$ and $(A,\Omega_d)$ in $\Calc(\V)$.  But, since $\Omega^n_{d,0}=0$ for $n>1$, a map $f^\bullet\colon \Theta_\delta^\bullet\to\Omega_{d,0}^\bullet$ is determined by $f^0\colon\Theta_\delta^0\to A$ and $f^1\colon\Theta_\delta^1\to\Omega_d$ (because $f^n=0$ for any $n>1$). It follows from the definition of $\pi(\Theta_\delta^\bullet):= (\Theta_\delta^0,\Theta_\delta^1)$, that $f^\bullet\colon \Theta_\delta^\bullet\to\Omega_{d,0}^\bullet$ is a map of differential calculi if and only if $(f^0,f^1)\colon\pi(\Theta_\delta^\bullet)\to (A,\Omega_d)$ is a map of calculi.
	
	(ii). We need to prove that 
	$$\EAlg(\V)(\Omega_{d,max}^\bullet, \Theta_\delta^\bullet) \cong \Calc(\V)((A,\Omega_d),\pi(\Theta_\delta^\bullet)) $$
	naturally in the two variables. Given a morphism of differential calculi $h\colon\Omega_{d,max}^\bullet\to\Theta_\delta^\bullet$, by restricting on degrees $0$ and $1$ we obtain a morphism of calculi
	$$ (h^0,h^1)\colon (A,\Omega_d)\to(\Theta_\delta^0,\Theta_\delta^1)=\pi(\Theta_\delta^\bullet). $$
	By Lemma~\ref{unique-extension-ealg} the assignment $h^\bullet\mapsto(h^0,h^1)$ is injective. Thus, we only need to prove that any map of calculi $ (h^0,h^1)\colon (A,\Omega_d)\to\pi(\Theta_\delta^\bullet)$ extends to a map of differential calculi $h^\bullet\colon \Omega_{d,max}^\bullet\to\Theta_\delta^\bullet$. This is done exactly as in Proposition~\ref{unique-morphism-ealg} with the only difference that $h^0$ is not an identity morphism in general.
\end{proof}

\subsection{de Rham Functors}\label{subsec:deRham}

In this subsection, we will construct 
a generalized \emph{de Rham} functor in our setting. 
We recall that in classical differential geometry the de Rham functor associates to any smooth manifold $M$ its de Rham complex $\Omega^\bullet(M)$ (the classical exterior algebra), which is the cochain complex with degree $n$ consisting of the differential forms of degree $n$. 
In our setting, this will be represented by a functor $\E\to\EAlg(\V)$ for a suitable category $\E$.

\begin{defi}\label{def:de-Rham-funct}
	Let $(-)_0\colon\E\to\Mon(\V)$ be a weakly $\V$-geometrical functor satisfying the hypotheses of \Cref{thm:canonical-calc}. 
	We define the \emph{de Rham functor} $\dr_\E$ on $\E$ as the composite below, 
	\[\begin{tikzcd}[ampersand replacement=\&]
		\E \&\& {\Calc(\V)} \&\& {\EAlg(\V)}
		\arrow["{\Calc_\E}", from=1-1, to=1-3]
		\arrow["{(-)_{\tx{max}}}", from=1-3, to=1-5]
	\end{tikzcd}\]
	where $\Calc_\E$ is defined in \Cref{remark:functor-Calc_E} and $(-)_{\tx{max}}$ is the left adjoint defined in \Cref{prop:right-and-left-adj-of-EAlg-Calc}.
\end{defi}

\begin{rmk}\label{rmk:diffeos}
Let $(-)_0\colon\E\to\Mon(\V)$ be a weakly $\V$-geometrical functor satisfying the hypotheses of \Cref{thm:canonical-calc}. 
	Then by faithfulness of $(-)_0$ and functoriality of $\dr_\E$ we can regard morphisms and isomorphisms in $\E$ as generalizations of smooth maps and diffeomorphisms, respectively, in $\E$.
\end{rmk}

\begin{prop}\label{rmk:comparison-of-de-Rham-functors}
 Let $e_i\colon\E_i\to\Mon(\V)$ (for $i=1,2$) two \suitable\ functors satisfying the hypotheses of \Cref{thm:canonical-calc}. 
 Then, the left commutative-up-to-isomorphism triangle below induces the natural transformation, as on the right below, between their associated de Rham functors.
 \[\begin{tikzcd}[ampersand replacement=\&,row sep=tiny]
 	{\E_1} \&\&\& {\E_1} \\
 	\& {\Mon(\mathcal{V})} \&\&\&\& {\EAlg(\V)} \\
 	{\E_2} \&\&\& {\E_2}
 	\arrow[""{name=0, anchor=center, inner sep=0}, "{e_1}", from=1-1, to=2-2]
 	\arrow["k"', from=1-1, to=3-1]
 	\arrow[""{name=1, anchor=center, inner sep=0}, "{\dr_{\E_1}}", from=1-4, to=2-6]
 	\arrow["k"', from=1-4, to=3-4]
 	\arrow[""{name=2, anchor=center, inner sep=0}, "{e_2}"', from=3-1, to=2-2]
 	\arrow[""{name=3, anchor=center, inner sep=0}, "{\dr_{\E_2}}"', from=3-4, to=2-6]
 	\arrow["\cong"{description}, shift right=2, draw=none, from=0, to=2]
 	\arrow["{\mathring{\kappa}}"'{xshift=-0.1cm,scale=1.1}, between={0.3}{0.7}, Rightarrow, from=1, to=3]
 \end{tikzcd}\]
 Moreover, this mapping is functorial.\footnote{Precisely, we mean that there is a functor between the full subcategory of weakly $\V$-geometrical functors satisfying the hypotheses of \Cref{thm:canonical-calc} into the lax comma category $\operatorname{Cat}\sslash\EAlg(\V)$.}
\end{prop}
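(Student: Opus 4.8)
The plan is to reduce everything to \Cref{rmk:comparison-two-canonical} by exploiting the factorization $\dr_{\E_i}=(-)_{\max}\circ\Calc_{\E_i}$ of \Cref{def:de-Rham-funct}, where $(-)_{\max}\colon\Calc(\V)\to\EAlg(\V)$ is the maximal prolongation functor of \Cref{prop:right-and-left-adj-of-EAlg-Calc}. Given the commutative-up-to-isomorphism triangle, \Cref{rmk:comparison-two-canonical} already produces a natural transformation $\widetilde\kappa\colon\Calc_{\E_1}\Rightarrow\Calc_{\E_2}\circ k$ and asserts that this assignment is functorial. I would then simply set
$$\mathring\kappa:=(-)_{\max}\,\widetilde\kappa\colon\ \dr_{\E_1}=(-)_{\max}\Calc_{\E_1}\ \Longrightarrow\ (-)_{\max}\Calc_{\E_2}k=\dr_{\E_2}k,$$
i.e. the left whiskering of $\widetilde\kappa$ by the functor $(-)_{\max}$; its component at $E\in\E_1$ is the morphism of differential calculi obtained by applying maximal prolongation to $\widetilde\kappa_E\colon\Calc_{\E_1}(E)\to\Calc_{\E_2}(kE)$. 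The direction matches the $2$-cell displayed in the statement, and naturality of $\mathring\kappa$ is immediate from naturality of $\widetilde\kappa$ together with functoriality of $(-)_{\max}$.

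For the functoriality claim of the footnote, I would phrase \Cref{rmk:comparison-two-canonical} as providing a functor $\mathcal F$ from the full subcategory $\mathcal W\subseteq\wgeomfun$ consisting of those $e\colon\E\to\Mon(\V)$ satisfying the hypotheses of \Cref{thm:canonical-calc} into the lax comma category $\operatorname{Cat}\sslash\Calc(\V)$, sending $(\E,e)\mapsto(\E,\Calc_\E)$ and $(k,\xi)\mapsto(k,\widetilde\kappa)$. Next I would introduce the evident functor
$$(-)_{\max}\circ(-)\colon\operatorname{Cat}\sslash\Calc(\V)\longrightarrow\operatorname{Cat}\sslash\EAlg(\V),\qquad (\mathcal C,G)\mapsto(\mathcal C,(-)_{\max}\circ G),\quad (k,\alpha)\mapsto(k,(-)_{\max}\alpha),$$
which is indeed a functor because left whiskering by a fixed functor preserves identities and, by the interchange law in $\operatorname{Cat}$, carries the comma-category composite $(\alpha_2 k_1)\cdot\alpha_1$ to $\big((-)_{\max}\alpha_2\big)k_1\cdot\big((-)_{\max}\alpha_1\big)$, which is exactly the composite in $\operatorname{Cat}\sslash\EAlg(\V)$. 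The composite $\big((-)_{\max}\circ(-)\big)\circ\mathcal F$ then sends $(\E,e)$ to $(\E,\dr_\E)$ and $(k,\xi)$ to $(k,\mathring\kappa)$, which is the asserted functor into $\operatorname{Cat}\sslash\EAlg(\V)$.

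The step I expect to require the most care is not a genuine difficulty but the bookkeeping: one must unwind the composition laws of the two lax comma categories and the up-to-isomorphism coherence (already discharged inside \Cref{rmk:comparison-two-canonical} for the $e_2k\cong e_1$ datum) and verify that whiskering by $(-)_{\max}$ is compatible with both, i.e. the $2$-functoriality of whiskering. Once \Cref{rmk:comparison-two-canonical} is granted, everything here is formal and no further properties of maximal prolongation beyond functoriality are needed.
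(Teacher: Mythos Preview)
Your proposal is correct and matches the paper's approach exactly: the paper also obtains $\mathring\kappa$ by whiskering the natural transformation $\widetilde\kappa$ of \Cref{rmk:comparison-two-canonical} with $(-)_{\max}$, and deduces functoriality from the functoriality already established there. Your write-up is in fact more detailed than the paper's, which dispatches the argument in two sentences and a diagram.
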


\begin{proof}
	Using \Cref{rmk:comparison-two-canonical} we get a natural transformation $\widetilde{\kappa}\colon\Calc_{\E_1}\to\Calc_{\E_2}k$. 
	Postcomposing with $(-)_{\tx{max}}$ we get the desired natural transformation. 
	\[\begin{tikzcd}[ampersand replacement=\&,row sep=tiny]
		{\E_1} \\
		\&\& {\Calc(\V)} \&\& {\EAlg(\V)} \\
		{\E_2}
		\arrow[""{name=0, anchor=center, inner sep=0}, "{\Calc_{\E_1}}"{description}, from=1-1, to=2-3]
		\arrow[""{name=1, anchor=center, inner sep=0}, "{\dr_{\E_1}}", curve={height=-12pt}, from=1-1, to=2-5]
		\arrow["k"', from=1-1, to=3-1]
		\arrow["{(-)_{\tx{max}}}"{description}, from=2-3, to=2-5]
		\arrow[""{name=2, anchor=center, inner sep=0}, "{\Calc_{\E_2}}"{description}, from=3-1, to=2-3]
		\arrow[""{name=3, anchor=center, inner sep=0}, "{\dr_{\E_1}}"', curve={height=12pt}, from=3-1, to=2-5]
		\arrow["{\widetilde{\kappa}}"'{xshift=-0.1cm,scale=1.1}, between={0.3}{0.7}, Rightarrow, from=0, to=2]
		\arrow["{:=}"{marking, allow upside down,scale=0.8}, draw=none, from=1, to=2-3]
		\arrow["{:=}"{marking, allow upside down,scale=0.8}, draw=none, from=3, to=2-3]
	\end{tikzcd}\]
	Since the procedure in \Cref{rmk:comparison-two-canonical} is functorial, so too is this procedure.
\end{proof}

The proposition above further motivates the definition of morphisms in the categories $\wgeomfun$ and $\geomfun$ (see \Cref{def:cat-of-geom-fun}). 

In the setting above, we can define the cohomology functor
$$ H^\bullet\colon \Ch(\V)\longrightarrow \tx{Gr}(\V)$$
into the category of graded objects in $\V$. Indeed, given a cochain complex $A^\bullet$ in $\V$ and $n\geq0$ we can first take the kernel od the differential at level $n$
$$ \tx{Ker}(d_{n})\rightarrowtail A^{n}\xrightarrow{\ d^n} A^{n+1} $$
and then define $H^n(A^\bullet)$ as the cokernel
$$ A^{n-1}\xrightarrow{\tilde d^{n-1}} \tx{Ker}(d_{n}) \twoheadrightarrow \tx{Coker}(\tilde d^{n-1})=:H^n(A^\bullet) $$
where $\tilde d^{n-1}$ is induced by the universal property of the kernel since $d^{n}\circ d^{n-1}=0$. For $n=0$ we set $A^{-1}=0$, so that $H^0( A^\bullet)=\tx{Ker}(d^0)$.
It follows that we can define 
$$H^\bullet(A^\bullet):= (H^n(A^\bullet))_{n\geq 0};$$
the fact that this action is functorial is an easy consequence of the universal property of kernels and cokernels.

\begin{defi}\label{rmk:deRhamCohomology}
Given $(-)_0\colon\E\to\Mon(\V)$ as in \Cref{def:de-Rham-funct}, we can consider the composite, which we denote $H^\bullet_{\dr_\E}$ and term the \emph{de Rham cohomology functor on $(-)_0\colon\E\to\Mon(\V)$ }
$$ \E\xrightarrow{ \Calc_\E(-) } \Calc(\V)\xrightarrow{(-)_{\tx{max}}} \EAlg(\V)\xrightarrow{\ J\ } \Ch(\V)\xrightarrow{ H^\bullet } \tx{Gr}(\V), $$
where $J$ is the inclusion. 
\end{defi}

\begin{rmk}\label{rmk:comparison-of-de-Rham-cohomology-functors}
Given two $e_i\colon\E_i\to\VMon$ (for $i=1,2$) satisfying the hypotheses of \Cref{rmk:comparison-of-de-Rham-functors}, using \Cref{rmk:comparison-two-canonical} and \Cref{rmk:comparison-of-de-Rham-functors}, we obtain a natural transformation $k_{\E_1,\E_2}\colon H^\bullet_{\dr_{\E_1}}\rightarrow H^\bullet_{\dr_{\E_2}}k$ between their corresponding de Rham cohomology functors. 
\end{rmk}

\begin{eg}
\label{ex:smooth_algebraic_de_rham}
It follows from \Cref{ex:smooth_algebraic_comparison} that  
the restriction of the functor $(-)_0\colon C^\infty\tx{-Ring}\to \Mon(\mathbb R\tx{-Vect})$ (described in \Cref{eg:Cinfty-rings-pullback}) to $\CMon(\mathbb R\tx{-Vect})$
induces a natural transformations as in \Cref{rmk:comparison-of-de-Rham-cohomology-functors} relating their corresponding de Rham cohomology functors, i.e. those arising from $\CMon(\mathbb R\tx{-Vect})\rightarrow\Mon(\mathbb R\tx{-Vect})$ and $(-)_0\colon C^\infty\tx{-Ring}\to \Mon(\mathbb R\tx{-Vect})$.
\end{eg}

\begin{eg}
\label{ex:cmon_de_rham}
\Cref{ex:cmon_comparison} and \Cref{rmk:comm-mon-full-subcat} imply that for $q\in\mathbb{Z}$ the restriction of the full subcategory inclusion functor $\CMon_n(\V)\hookrightarrow \Mon(\V)$ to $\CMon_{qn}(\V)$ induces a natural transformations as in \Cref{rmk:comparison-of-de-Rham-cohomology-functors} relating their corresponding de Rham cohomology functors, i.e. those arising from $\CMon_{qn}(\V)\hookrightarrow\Mon(\V)$ and $\CMon_n(\V)\hookrightarrow \Mon(\V)$.
\end{eg}

\bibliography{biblio}
\bibliographystyle{plain}

\end{document}